


\documentclass[a4paper, reqno]{amsart}

\usepackage[T1]{fontenc}
\usepackage[latin1]{inputenc}
\usepackage{newlfont, verbatim, indentfirst, enumerate}

\usepackage{amsmath, amsthm, amssymb, amscd}
\usepackage{latexsym, amsfonts, amsbsy, mathrsfs}
\usepackage{array, pst-all, graphicx, rotating}
\usepackage{tikz} \usetikzlibrary{arrows}
  \usetikzlibrary{patterns} \usetikzlibrary{decorations.markings}
\usepackage[unicode]{hyperref}

\allowdisplaybreaks

\numberwithin{equation}{section}

\theoremstyle{plain}
\newtheorem{theorem}{Theorem}[section]
\newtheorem{theo}[theorem]{Theorem}
\newtheorem{prop}[theorem]{Proposition}
\newtheorem{lem}[theorem]{Lemma}   
\newtheorem{cor}[theorem]{Corollary}

\theoremstyle{definition}

\newtheorem{rem}[theorem]{Remark}

\renewenvironment{itemize}{\begin{list}{$\bullet$}{\leftmargin=0.5cm}\parindent=0pt}{\end{list}}

\newcommand {\thickarrowpersonalized} [1]  
  {\draw[rounded corners,decoration={markings, mark=at position 1 with
  {\arrow[scale=2]{>}}}, postaction={decorate}] #1;}

\newcommand {\functor} [1] {\mathcal{#1}}  
\newcommand {\emphatic} [1] {\emph{#1}}

\newcommand {\thetaa} [3] {\theta_{#1,#2,#3}}  
\newcommand {\thetab} [3] {\theta_{#1,#2,#3}^{-1}}  

\def \CATA {\mathbf{\mathscr{A}}}  
\def \CATB {\mathbf{\mathscr{B}}}
\def \CATC {\mathbf{\mathscr{C}}}
\def \CATD {\mathbf{\mathscr{D}}}

\def \id {\operatorname{id}}  

\def \SETW {\mathbf{W}} 
\def \SETWsat {\mathbf{W}_{\mathbf{\operatorname{sat}}}}  
\def \SETWinv {\mathbf{W}^{-1}}  
\def \SETWsatinv {\mathbf{W}_{\mathbf{\operatorname{sat}}}^{-1}}  

\def \SETWA {\mathbf{W}_{\mathbf{\mathscr{A}}}}  
\def \SETWAsat {\mathbf{W}_{\mathbf{\mathscr{A}},\mathbf{\operatorname{sat}}}}
\def \SETWAinv {\mathbf{W}^{-1}_{\mathbf{\mathscr{A}}}}

\def \SETWB {\mathbf{W}_{\mathbf{\mathscr{B}}}}  
\def \SETWBsat {\mathbf{W}_{\mathbf{\mathscr{B}},\mathbf{\operatorname{sat}}}}
\def \SETWBsatsat {\mathbf{W}_{\mathbf{\mathscr{B}},\mathbf{\operatorname{sat}},\mathbf{\operatorname{sat}}}}
\def \SETWBinv {\mathbf{W}^{-1}_{\mathbf{\mathscr{B}}}}
\def \SETWBsatinv {\mathbf{W}^{-1}_{\mathbf{\mathscr{B}},\mathbf{\operatorname{sat}}}}
\def \SETWBmin {\mathbf{W}_{\mathbf{\mathscr{B}},\mathbf{\operatorname{min}}}}
\def \SETWBmininv {\mathbf{W}^{-1}_{\mathbf{\mathscr{B}},\mathbf{\operatorname{min}}}}
\def \SETWBequiv {\mathbf{W}_{\mathbf{\mathscr{B}},\mathbf{\operatorname{equiv}}}}




\def \RedAtl {(\mathbf{\mathcal{R}ed\,\mathcal{A}tl})}  





\hypersetup{unicode=true, pdftoolbar=true, pdfmenubar=true, pdffitwindow=false,
  pdfstartview={FitH}, pdftitle={Some insights on bicategories of fractions - III},
  pdfauthor={Matteo Tommasini}, pdfkeywords={bicategories of fractions, equivalences of bicategories},
  pdfnewwindow=true,colorlinks=false, linkcolor=red, citecolor=red, filecolor=magenta, urlcolor=cyan}

\setlength\parindent{0pt}

\numberwithin{equation}{section}

\hyphenation{pseu-do-func-tor pseu-do-func-tors ge-ne-ra-li-zing li-te-ra-tu-re dif-fe-ren-tia-ble
e-qui-va-len-ce e-qui-va-len-ces pseu-do-na-tu-ral ca-te-go-ry ca-te-go-ries
bi-ca-te-go-ry bi-ca-te-go-ries par-ti-cu-lar sa-tu-ra-tion sa-tu-ra-tions as-so-cia-ted as-so-cia-te in-ver-ti-ble}

\begin{document}

\title[Some insights on bicategories of fractions - III]
{Some insights on bicategories of fractions - III \\ - \\ \mdseries{\small{Equivalences of bicategories of fractions}}}

\author{Matteo Tommasini}

\address{\flushright Mathematics Research Unit\newline University of Luxembourg\newline
6, rue Richard Coudenhove-Kalergi\newline L-1359 Luxembourg\newline\newline
website: \href{http://matteotommasini.altervista.org/}
{\nolinkurl{http://matteotommasini.altervista.org/}}\newline\newline
email: \href{mailto:matteo.tommasini2@gmail.com}{\nolinkurl{matteo.tommasini2@gmail.com}}}

\date{\today}
\subjclass[2010]{18A05, 18A30, 22A22}
\keywords{bicategories of fractions, bicalculus of fractions, equivalences of bicategories,
smooth \'etale groupoids}

\thanks{I would like to thank Dorette Pronk for several interesting discussions about her work on
bicategories of fraction and for some useful suggestions about this series of papers. This research
was performed at the Mathematics Research Unit of the University of Luxembourg, thanks to the grant
4773242 by Fonds National de la Recherche Luxembourg.}

\begin{abstract}
We fix any bicategory $\CATA$ together with a class of morphisms $\SETWA$, such that there is a
bicategory of fractions $\CATA\left[\SETWAinv\right]$ (as described by D.~Pronk). Given another such
pair $(\CATB,\SETWB)$ and any pseudofunctor $\functor{F}:\CATA\rightarrow\CATB$, we
find necessary and sufficient conditions in order to have an induced equivalence of bicategories
from $\CATA\left[\SETWAinv\right]$ to $\CATB\left[\SETWBinv\right]$. In
particular, this gives necessary and sufficient conditions in order to have an equivalence from
any bicategory of fractions $\CATA\left[\SETWAinv\right]$ to any given bicategory $\CATB$. 
\end{abstract}

\maketitle

\begingroup{\hypersetup{linkbordercolor=white}
\tableofcontents}\endgroup

\section*{Introduction}
In 1996 Dorette Pronk introduced the notion of (\emph{right}) \emph{bicalculus of fractions}
(see~\cite{Pr}), generalizing the concept of (right) calculus of fractions (described in 1967 by
Pierre Gabriel and Michel Zisman, see~\cite{GZ}) from the framework of categories to that of
bicategories. Pronk proved that given a bicategory $\CATC$ together with a class of morphisms $\SETW$
(satisfying a set of technical conditions called (BF)),
there is a bicategory $\CATC\left[\SETWinv\right]$ (called (\emph{right}) \emph{bicategory of
fractions}) and a pseudofunctor $\functor{U}_{\SETW}:\CATC \rightarrow\CATC\left[\SETWinv\right]$.
Such a pseudofunctor sends each element of $\SETW$ to an internal equivalence and is universal with
respect to such property (see~\cite[Theorem~21]{Pr}). In particular, the bicategory $\CATC\left[
\SETWinv\right]$ is unique up to equivalences of bicategories.\\

In~\cite[Definition~2.1]{T4} we introduced the notion of right saturated as follows: given
any pair $(\CATC,\SETW)$ as above, we define the (right) saturation $\SETWsat$ of $\SETW$ as the
class of all morphisms $f:B\rightarrow A$ in $\CATC$, such that there are a pair of objects $C,D$
and a pair of morphisms $g:C\rightarrow B$, $h:D\rightarrow C$, such that both $f\circ g$ and
$g\circ h$ belong to $\SETW$. Then we were able to prove that also $(\CATC,\SETWsat)$ admits a
right bicategory of fractions $\CATC\left[\SETWsatinv\right]$, that is equivalent
to $\CATC\left[\SETWinv\right]$. This allowed us to prove the following result (for the notations
used below for bicategories of fractions, we refer directly
to~\cite[\S~2.2 and~2.3]{Pr}).

\begin{theo}\label{theo-01}
\cite[Theorems~0.2 and 0.3(A) and Corollary~0.4]{T4}
Let us fix any $2$ pairs $(\CATA,\SETWA)$ and $(\CATB,\SETWB)$, both satisfying conditions
\emphatic{(BF)}, and any pseudofunctor $\functor{F}:\CATA\rightarrow
\CATB$. Then the following facts are equivalent:

\begin{enumerate}[\emphatic{(}i\emphatic{)}]
 \item $\functor{F}_1(\SETWA)\subseteq\SETWBsat$;
 \item $\functor{F}_1(\SETWAsat)\subseteq\SETWBsat$;
 \item there are a pseudofunctor $\functor{G}$ and a pseudonatural equivalence $\kappa$ as below
 
  \[
  \begin{tikzpicture}[xscale=2.2,yscale=-0.9]
    \node (A0_0) at (0, 0) {$\CATA$};
    \node (A0_2) at (2, 0) {$\CATB$};
    \node (A2_0) at (0, 2) {$\CATA\Big[\SETWAinv\Big]$};
    \node (A2_2) at (2, 2) {$\CATB\Big[\SETWBinv\Big]$;};
    
    \node (A1_1) [rotate=225] at (0.9, 1) {$\Longrightarrow$};
    \node (B1_1) at (1.2, 1) {$\kappa$};
    
    \path (A0_0) edge [->]node [auto] {$\scriptstyle{\functor{F}}$} (A0_2);
    \path (A0_0) edge [->]node [auto,swap] {$\scriptstyle{\functor{U}_{\SETWA}}$} (A2_0);
    \path (A0_2) edge [->]node [auto] {$\scriptstyle{\functor{U}_{\SETWB}}$} (A2_2);
    \path (A2_0) edge [->, dashed]node [auto,swap] {$\scriptstyle{\functor{G}}$} (A2_2);
  \end{tikzpicture}
  \]
 \item there is a pair $(\functor{G},\kappa)$ as in \emphatic{(}iii\emphatic{)},
  such that the pseudofunctor $\mu_{\kappa}:\CATA\rightarrow\operatorname{Cyl}\left(\CATB\left[
  \SETWBinv\right]\right)$ associated to $\kappa$ sends each element of $\SETWA$ to an internal
  equivalence \emphatic{(}here $\operatorname{Cyl}(\CATC)$ is the bicategory of cylinders associated
  to any given bicategory $\CATC$, see~\cite[pag.~60]{B}\emphatic{)}.
\end{enumerate}

Moreover, if any of the conditions above is satisfied, then there are a pseudofunctor

\[\widetilde{\functor{G}}:\CATA\Big[\SETWAinv\Big]\longrightarrow\CATB\Big[\SETWBsatinv\Big]\]
and a pseudonatural equivalence $\widetilde{\kappa}:\functor{U}_{\SETWBsat}\circ\functor{F}
\Rightarrow\widetilde{\functor{G}}\circ\functor{U}_{\SETWA}$, such that:

\begin{itemize}
 \item the pseudofunctor $\mu_{\widetilde{\kappa}}:\CATA\rightarrow\operatorname{Cyl}
  \left(\CATB\left[\SETWBsatinv\right]\right)$ associated to $\widetilde{\kappa}$ sends each morphism
  of $\SETWA$ to an internal equivalence;
 
 \item for each object $A_{\CATA}$, we have $\widetilde{\functor{G}}_0(A_{\CATA})=
  \functor{F}_0(A_{\CATA})$;

 \item for each morphism $(A'_{\CATA},\operatorname{w}_{\CATA},f_{\CATA}):A_{\CATA}\rightarrow
  B_{\CATA}$ in $\CATA\left[\SETWAinv\right]$, we have 
  
  \[\widetilde{\functor{G}}_1\Big(A'_{\CATA},\operatorname{w}_{\CATA},f_{\CATA}\Big)=\Big(
  \functor{F}_0(A'_{\CATA}),\functor{F}_1(\operatorname{w}_{\CATA}),\functor{F}_1(f_{\CATA})
  \Big);\]
  
 \item for each $2$-morphism

  \[\Big[A^3_{\CATA},\operatorname{v}^1_{\CATA},\operatorname{v}^2_{\CATA},\alpha_{\CATA},
  \beta_{\CATA}\Big]:\Big(A^1_{\CATA},\operatorname{w}^1_{\CATA},f^1_{\CATA}\Big)\Longrightarrow
  \Big(A^2_{\CATA},\operatorname{w}^2_{\CATA},f^2_{\CATA}\Big)\]
  in $\CATA\left[\SETWAinv\right]$, we have
  
  \begin{gather}
  \label{eq-07} \widetilde{\functor{G}}_2\Big(\Big[A^3_{\CATA},\operatorname{v}^1_{\CATA},
  \operatorname{v}^2_{\CATA},\alpha_{\CATA},\beta_{\CATA}\Big]\Big)=
  \Big[\functor{F}_0(A^3_{\CATA}),\functor{F}_1(\operatorname{v}^1_{\CATA}),\functor{F}_1
  (\operatorname{v}^2_{\CATA}),\\
  \nonumber \psi_{\operatorname{w}^2_{\CATA},
  \operatorname{v}^2_{\CATA}}^{\functor{F}}\odot\functor{F}_2(\alpha_{\CATA})\odot
  \Big(\psi_{\operatorname{w}^1_{\CATA},\operatorname{v}^1_{\CATA}}^{\functor{F}}\Big)^{-1},
  \psi_{f^2_{\CATA},\operatorname{v}^2_{\CATA}}^{\functor{F}}\odot\functor{F}_2(\beta_{\CATA})
  \odot\Big(\psi_{f^1_{\CATA},\operatorname{v}^1_{\CATA}}^{\functor{F}}\Big)^{-1}\Big]
  \end{gather}
  \emphatic{(}where the $2$-morphisms $\psi_{\bullet}^{\functor{F}}$ are the associators of
  $\functor{F}$\emphatic{)}.
\end{itemize}  

In addition, given any pair $(\functor{G},\kappa)$ as in
\emphatic{(}iv\emphatic{)}, the following facts are equivalent:

\begin{enumerate}[\emphatic{(}a\emphatic{)}]
 \item $\functor{G}:\CATA[\,\SETWAinv]\rightarrow\CATB[\,\SETWBinv]$ is an
  equivalence of bicategories;
 \item $\widetilde{\functor{G}}:\CATA[\,\SETWAinv]\rightarrow\CATB[\,\SETWBsatinv]$
  is an equivalence of bicategories;
\end{enumerate}
\end{theo}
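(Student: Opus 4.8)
The plan is to reduce the equivalence of (a) and (b) to the fact, recalled in the Introduction following \cite{T4}, that the two bicategories of fractions $\CATB[\SETWBinv]$ and $\CATB[\SETWBsatinv]$ are themselves equivalent. More precisely, since $\SETWB\subseteq\SETWBsat$, the pseudofunctor $\functor{U}_{\SETWBsat}:\CATB\rightarrow\CATB[\SETWBsatinv]$ sends every morphism of $\SETWB$ to an internal equivalence; hence by the universal property of $\functor{U}_{\SETWB}$ (see \cite[Theorem~21]{Pr}) there are an induced pseudofunctor $\functor{H}:\CATB[\SETWBinv]\rightarrow\CATB[\SETWBsatinv]$ together with a pseudonatural equivalence $\functor{H}\circ\functor{U}_{\SETWB}\Rightarrow\functor{U}_{\SETWBsat}$. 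The crucial point, already established in \cite{T4}, is that this $\functor{H}$ is an \emph{equivalence of bicategories}.

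First I would prove that $\widetilde{\functor{G}}$ is pseudonaturally equivalent to $\functor{H}\circ\functor{G}$. Indeed, the pseudonatural equivalence $\widetilde{\kappa}$ exhibits $\widetilde{\functor{G}}\circ\functor{U}_{\SETWA}$ as equivalent to $\functor{U}_{\SETWBsat}\circ\functor{F}$; on the other hand, whiskering $\kappa$ by $\functor{H}$ and pasting with the equivalence $\functor{H}\circ\functor{U}_{\SETWB}\Rightarrow\functor{U}_{\SETWBsat}$ exhibits $(\functor{H}\circ\functor{G})\circ\functor{U}_{\SETWA}$ as equivalent to $\functor{U}_{\SETWBsat}\circ\functor{F}$ as well. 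Thus $\widetilde{\functor{G}}$ and $\functor{H}\circ\functor{G}$ are, up to pseudonatural equivalence, two factorizations of the same pseudofunctor $\functor{U}_{\SETWBsat}\circ\functor{F}$ through $\functor{U}_{\SETWA}$. Since condition (i) gives $\functor{F}_1(\SETWA)\subseteq\SETWBsat$, the pseudofunctor $\functor{U}_{\SETWBsat}\circ\functor{F}$ sends each element of $\SETWA$ to an internal equivalence, so the essential-uniqueness part of the universal property of $\functor{U}_{\SETWA}$ (again \cite[Theorem~21]{Pr}) produces a pseudonatural equivalence $\widetilde{\functor{G}}\Rightarrow\functor{H}\circ\functor{G}$.

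Finally I would conclude using the stability of internal equivalences under composition. Since $\functor{H}$ is an equivalence of bicategories, $\functor{G}$ is an equivalence if and only if $\functor{H}\circ\functor{G}$ is; and, by the previous paragraph, $\functor{H}\circ\functor{G}$ is an equivalence if and only if $\widetilde{\functor{G}}$ is, because being an equivalence of bicategories is invariant under pseudonatural equivalence. Chaining these two biconditionals yields (a) $\Leftrightarrow$ (b).

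The main obstacle is the middle step: one must invoke the universal property of $\functor{U}_{\SETWA}$ in its full bicategorical form --- namely that precomposition with $\functor{U}_{\SETWA}$ is a biequivalence from the bicategory of pseudofunctors on $\CATA[\SETWAinv]$ onto the bicategory of those pseudofunctors on $\CATA$ that invert $\SETWA$ --- and then assemble the various pseudonatural equivalences together with the associators $\psi_{\bullet}^{\functor{F}}$ of $\functor{F}$ and those of $\functor{H}$ and $\functor{G}$ so that the two factorizations genuinely agree. This coherence bookkeeping is routine but delicate.
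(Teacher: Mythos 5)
You should first be aware that this paper never proves Theorem~\ref{theo-01} at all: it is quoted wholesale from \cite[Theorems~0.2 and 0.3(A) and Corollary~0.4]{T4}, so the benchmark is what the full statement requires. Measured against that, your proposal has a genuine gap of scope: it establishes only the final clause, the equivalence of (a) and (b), and it does so by treating as \emph{given} exactly the data whose existence and explicit form the theorem asserts. You never address the equivalence of conditions (i)--(iv), and you never construct $\widetilde{\functor{G}}$ and $\widetilde{\kappa}$: the pseudofunctor with $\widetilde{\functor{G}}_0=\functor{F}_0$, with $\widetilde{\functor{G}}_1(A'_{\CATA},\operatorname{w}_{\CATA},f_{\CATA})=(\functor{F}_0(A'_{\CATA}),\functor{F}_1(\operatorname{w}_{\CATA}),\functor{F}_1(f_{\CATA}))$ and with $\widetilde{\functor{G}}_2$ given by \eqref{eq-07} has to be shown to exist --- one must exhibit its associators and unitors, verify the pseudofunctor axioms, and produce a $\widetilde{\kappa}$ whose associated cylinder pseudofunctor inverts $\SETWA$. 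That construction is the bulk of the content (and of the proof in \cite{T4}), and it is precisely the part of the statement that the rest of the present paper uses: the lemmas of Sections~2 and~3 constantly manipulate the explicit formulas for $\widetilde{\functor{G}}_1$ and $\widetilde{\functor{G}}_2$. An argument that assumes $(\widetilde{\functor{G}},\widetilde{\kappa})$ cannot count as a proof of the theorem asserting them.

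For the clause you do address, your strategy is sound and is essentially the intended one: compare $\CATB\left[\SETWBinv\right]$ with $\CATB\left[\SETWBsatinv\right]$ through a pseudofunctor $\functor{H}$ obtained from the universal property (its being an equivalence of bicategories is the saturation theorem of \cite{T4}), identify $\widetilde{\functor{G}}$ with $\functor{H}\circ\functor{G}$ by essential uniqueness of factorizations through $\functor{U}_{\SETWA}$, and conclude by two-out-of-three together with invariance of ``being an equivalence'' under pseudonatural equivalence. Two points deserve more than the label ``routine but delicate bookkeeping''. First, the essential-uniqueness step takes place in Pronk's hom-bicategory whose $1$-morphisms are pseudonatural transformations whose \emph{associated cylinder pseudofunctor} inverts $\SETWA$; you must verify this condition both for $\widetilde{\kappa}$ (here it is part of the statement) and for the composite equivalence exhibiting $(\functor{H}\circ\functor{G})\circ\functor{U}_{\SETWA}\simeq\functor{U}_{\SETWBsat}\circ\functor{F}$ --- this is exactly where the hypothesis that $(\functor{G},\kappa)$ is a pair as in (iv), rather than merely as in (iii), is consumed, and it is the reason the theorem restricts the (a)$\Leftrightarrow$(b) clause to such pairs. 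Second, note that \cite[Proposition~0.1]{T4}, as used in Lemma~\ref{lem-01} of this paper, produces a comparison pseudofunctor in the direction $\CATB\left[\SETWBsatinv\right]\rightarrow\CATB\left[\SETWBinv\right]$; your $\functor{H}$ goes the other way, which is legitimate, but then both its existence and the fact that it is an equivalence require a short additional argument (universal property of $\functor{U}_{\SETWB}$ plus the quoted equivalence) that should be written down rather than implied.
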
 

Using the technical lemmas already proved in~\cite{T3} and~\cite{T4}, in the present paper we will
identify necessary and sufficient conditions such that (b) holds. Combining this with the previous
theorem, we will then prove the following result.

\begin{theo}\label{theo-02}
Let us fix any $2$ pairs $(\CATA,\SETWA)$ and $(\CATB,\SETWB)$, both satisfying conditions
\emphatic{(BF)},
and any pseudofunctor $\functor{F}:\CATA\rightarrow\CATB$ such that $\functor{F}_1(\SETWA)\subseteq
\SETWBsat$. Moreover, let us consider any pair $(\functor{G},\kappa)$ as in
\emphatic{Theorem~\ref{theo-01}(iv)}.
Then $\functor{G}$ is an equivalence of bicategories if and only if $\functor{F}$ satisfies the
following $5$ conditions.

\begin{enumerate}[\emphatic{(}{A}1\emphatic{)}]\label{A}
 \item\label{A1} For any object $A_{\CATB}$, there are a pair of objects $A_{\CATA}$ and
  $A'_{\CATB}$ and a pair of morphisms $\operatorname{w}^1_{\CATB}$ in $\SETWB$ and
  $\operatorname{w}^2_{\CATB}$ in $\SETWBsat$, as follows:
  
  \begin{equation}\label{eq-31}
  \begin{tikzpicture}[xscale=2.4,yscale=-1.2]
    \node (A0_0) at (-0.2, 0) {$\functor{F}_0(A_{\CATA})$};
    \node (A0_1) at (1, 0) {$A'_{\CATB}$};
    \node (A0_2) at (2, 0) {$A_{\CATB}$.};
    \path (A0_1) edge [->]node [auto,swap] {$\scriptstyle{\operatorname{w}^1_{\CATB}}$} (A0_0);
    \path (A0_1) edge [->]node [auto] {$\scriptstyle{\operatorname{w}^2_{\CATB}}$} (A0_2);
  \end{tikzpicture}
  \end{equation}

 \item\label{A2} Let us fix any triple of objects $A^1_{\CATA},A^2_{\CATA},A_{\CATB}$ and any pair of
  morphisms $\operatorname{w}^1_{\CATB}$ in $\SETWB$ and $\operatorname{w}^2_{\CATB}$ in
  $\SETWBsat$ as follows
  
  \begin{equation}\label{eq-08}
  \begin{tikzpicture}[xscale=2.4,yscale=-1.2]
    \node (A0_0) at (0, 0) {$\functor{F}_0(A^1_{\CATA})$};
    \node (A0_1) at (1, 0) {$A_{\CATB}$};
    \node (A0_2) at (2, 0) {$\functor{F}_0(A^2_{\CATA})$.};
    \path (A0_1) edge [->]node [auto,swap] {$\scriptstyle{\operatorname{w}^1_{\CATB}}$} (A0_0);
    \path (A0_1) edge [->]node [auto] {$\scriptstyle{\operatorname{w}^2_{\CATB}}$} (A0_2);
  \end{tikzpicture}
  \end{equation}
  Then there are an object $A^3_{\CATA}$, a pair of morphisms $\operatorname{w}^1_{\CATA}$ in
  $\SETWA$ and $\operatorname{w}^2_{\CATA}$ in $\SETWAsat$ as follows
  
  \[
  \begin{tikzpicture}[xscale=2.4,yscale=-1.2]
    \node (A0_0) at (0, 0) {$A^1_{\CATA}$};
    \node (A0_1) at (1, 0) {$A^3_{\CATA}$};
    \node (A0_2) at (2, 0) {$A^2_{\CATA}$};
    
    \path (A0_1) edge [->]node [auto,swap] {$\scriptstyle{\operatorname{w}^1_{\CATA}}$} (A0_0);
    \path (A0_1) edge [->]node [auto] {$\scriptstyle{\operatorname{w}^2_{\CATA}}$} (A0_2);
  \end{tikzpicture}
  \]
  and a set of data $(A'_{\CATB},\operatorname{z}^1_{\CATB},\operatorname{z}^2_{\CATB},
  \gamma^1_{\CATB},\gamma^2_{\CATB})$ as follows
  
  \[
  \begin{tikzpicture}[xscale=2.2,yscale=-0.8]
    \node (A0_2) at (2, 0) {$A_{\CATB}$};
    \node (A2_2) at (2, 2) {$A'_{\CATB}$};
    \node (A2_0) at (0, 2) {$\functor{F}_0(A^1_{\CATA})$};
    \node (A2_4) at (4, 2) {$\functor{F}_0(A^2_{\CATA})$,};
    \node (A4_2) at (2, 4) {$\functor{F}_0(A^3_{\CATA})$};
    
    \node (A2_3) at (2.8, 2) {$\Downarrow\,\gamma^2_{\CATB}$};
    \node (A2_1) at (1.2, 2) {$\Downarrow\,\gamma^1_{\CATB}$};
    
    \path (A4_2) edge [->]node [auto,swap]
      {$\scriptstyle{\functor{F}_1(\operatorname{w}^2_{\CATA})}$} (A2_4);
    \path (A0_2) edge [->]node [auto] {$\scriptstyle{\operatorname{w}^2_{\CATB}}$} (A2_4);
    \path (A2_2) edge [->]node [auto,swap] {$\scriptstyle{\operatorname{z}^1_{\CATB}}$} (A0_2);
    \path (A2_2) edge [->]node [auto] {$\scriptstyle{\operatorname{z}^2_{\CATB}}$} (A4_2);
    \path (A4_2) edge [->]node [auto]
      {$\scriptstyle{\functor{F}_1(\operatorname{w}^1_{\CATA})}$} (A2_0);
    \path (A0_2) edge [->]node [auto,swap] {$\scriptstyle{\operatorname{w}^1_{\CATB}}$} (A2_0);
  \end{tikzpicture}
  \]
  such that $\operatorname{z}^1_{\CATB}$ belongs to $\SETWB$ and both $\gamma^1_{\CATB}$ and
  $\gamma^2_{\CATB}$ are invertible.

 \item\label{A3} Let us fix any pair of objects $B_{\CATA},A_{\CATB}$ and any morphism $f_{\CATB}:
  A_{\CATB}\rightarrow\functor{F}_0(B_{\CATA})$. Then there are an object $A_{\CATA}$, a morphism
  $f_{\CATA}:A_{\CATA}\rightarrow B_{\CATA}$ and data $(A'_{\CATB},\operatorname{v}^1_{\CATB},
  \operatorname{v}^2_{\CATB},\alpha_{\CATB})$ as follows
  
  \[
  \begin{tikzpicture}[xscale=1.8,yscale=-0.6]
    \node (B0_0) at (-1, 0) {}; 
    \node (B1_1) at (5, 0) {}; 
    
    \node (A0_2) at (2, 0) {$A_{\CATB}$};
    \node (A2_2) at (0, 1) {$A'_{\CATB}$};
    \node (A2_4) at (4, 1) {$\functor{F}_0(B_{\CATA})$,};
    \node (A4_2) at (2, 2) {$\functor{F}_0(A_{\CATA})$};
    
    \node (A2_3) at (2, 1) {$\Downarrow\,\alpha_{\CATB}$};
        
    \path (A4_2) edge [->,bend left=15] node [auto,swap]
      {$\scriptstyle{\functor{F}_1(f_{\CATA})}$} (A2_4);
    \path (A0_2) edge [->,bend right=15] node [auto] {$\scriptstyle{f_{\CATB}}$} (A2_4);
    \path (A2_2) edge [->,bend right=15] node [auto]
      {$\scriptstyle{\operatorname{v}^1_{\CATB}}$} (A0_2);
    \path (A2_2) edge [->,bend left=15] node [auto,swap]
      {$\scriptstyle{\operatorname{v}^2_{\CATB}}$} (A4_2);
  \end{tikzpicture}
  \]
  with $\operatorname{v}^1_{\CATB}$ in $\SETWB$, $\operatorname{v}^2_{\CATB}$ in $\SETWBsat$ and
  $\alpha_{\CATB}$ invertible.

 \item\label{A4} Let us fix any pair of objects $A_{\CATA},B_{\CATA}$, any pair of morphisms
  $f^1_{\CATA},f^2_{\CATA}:A_{\CATA}\rightarrow B_{\CATA}$ and any pair of $2$-morphisms
  $\gamma^1_{\CATA},\gamma^2_{\CATA}:f^1_{\CATA}\Rightarrow f^2_{\CATA}$. Moreover, let us fix
  an object $A'_{\CATB}$ and a morphism $\operatorname{z}_{\CATB}:A'_{\CATB}
  \rightarrow\functor{F}_0(A_{\CATA})$ in $\SETWB$. If $\functor{F}_2(\gamma^1_{\CATA})
  \ast i_{\operatorname{z}_{\CATB}}=\functor{F}_2(\gamma^2_{\CATA})\ast
  i_{\operatorname{z}_{\CATB}}$, then there are an object $A'_{\CATA}$ and a morphism
  $\operatorname{z}_{\CATA}:A'_{\CATA}\rightarrow A_{\CATA}$ in $\SETWA$, such that
  $\gamma^1_{\CATA}\ast i_{\operatorname{z}_{\CATA}}=\gamma^2_{\CATA}\ast
  i_{\operatorname{z}_{\CATA}}$.

 \item\label{A5} Let us fix any triple of objects $A_{\CATA},B_{\CATA},A_{\CATB}$, any pair of
  morphisms $f^1_{\CATA},f^2_{\CATA}:A_{\CATA}\rightarrow B_{\CATA}$, any morphism
  $\operatorname{v}_{\CATB}:A_{\CATB}\rightarrow\functor{F}_0(A_{\CATA})$ in $\SETWB$ and any
  $2$-morphism
  
  \[
  \begin{tikzpicture}[xscale=1.8,yscale=-0.6]
    \node (B0_0) at (-1, 0) {}; 
    \node (B1_1) at (5, 0) {}; 
    
    \node (A0_1) at (2, 0) {$\functor{F}_0(A_{\CATA})$};
    \node (A1_0) at (0, 1) {$A_{\CATB}$};
    \node (A1_2) at (4, 1) {$\functor{F}_0(B_{\CATA})$.};
    \node (A2_1) at (2, 2) {$\functor{F}_0(A_{\CATA})$};
    
    \node (A1_1) at (2, 1) {$\Downarrow\,\alpha_{\CATB}$};
        
    \path (A1_0) edge [->,bend right=20]node [auto]
       {$\scriptstyle{\operatorname{v}_{\CATB}}$} (A0_1);
    \path (A1_0) edge [->,bend left=20]node [auto,swap]
      {$\scriptstyle{\operatorname{v}_{\CATB}}$} (A2_1);
    \path (A0_1) edge [->,bend right=20]node [auto]
      {$\scriptstyle{\functor{F}_1(f^1_{\CATA})}$} (A1_2);
    \path (A2_1) edge [->,bend left=20]node [auto,swap]
      {$\scriptstyle{\functor{F}_1(f^2_{\CATA})}$} (A1_2);
  \end{tikzpicture}
  \]
  Then there are a pair of objects
  $A'_{\CATA},A'_{\CATB}$, a triple of morphisms $\operatorname{v}_{\CATA}:A'_{\CATA}\rightarrow
  A_{\CATA}$ in $\SETWA$, $\operatorname{z}_{\CATB}:A'_{\CATB}\rightarrow\functor{F}_0(A'_{\CATA})$
  in $\SETWB$ and $\operatorname{z}'_{\CATB}:A'_{\CATB}\rightarrow A_{\CATB}$,
  a $2$-morphism 
  
  \[
  \begin{tikzpicture}[xscale=1.8,yscale=-0.6]
    \node (B0_0) at (-1, 0) {}; 
    \node (B1_1) at (5, 0) {}; 
    
    \node (A0_1) at (2, 0) {$A_{\CATA}$};
    \node (A1_0) at (0, 1) {$A'_{\CATA}$};
    \node (A1_2) at (4, 1) {$B_{\CATA}$};
    \node (A2_1) at (2, 2) {$A_{\CATA}$};

    \node (A1_1) at (2, 1) {$\Downarrow\,\alpha_{\CATA}$};

    \path (A1_0) edge [->,bend right=20]node [auto] {$\scriptstyle{\operatorname{v}_{\CATA}}$} (A0_1);
    \path (A1_0) edge [->,bend left=20]node [auto,swap]
      {$\scriptstyle{\operatorname{v}_{\CATA}}$} (A2_1);
    \path (A0_1) edge [->,bend right=20]node [auto] {$\scriptstyle{f^1_{\CATA}}$} (A1_2);
    \path (A2_1) edge [->,bend left=20]node [auto,swap] {$\scriptstyle{f^2_{\CATA}}$} (A1_2);
   \end{tikzpicture}
   \]
  and an invertible $2$-morphism
  
  \[
  \begin{tikzpicture}[xscale=1.8,yscale=-0.6]
    \node (B0_0) at (-1, 0) {}; 
    \node (B1_1) at (5, 0) {}; 
    
    \node (A0_1) at (2, 0) {$\functor{F}_0(A'_{\CATA})$};
    \node (A1_0) at (0, 1) {$A'_{\CATB}$};
    \node (A1_2) at (4, 1) {$\functor{F}_0(A_{\CATA})$,};
    \node (A2_1) at (2, 2) {$A_{\CATB}$};
    
    \node (A1_1) at (2, 1) {$\Downarrow\,\sigma_{\CATB}$};

    \path (A1_0) edge [->,bend right=20]node [auto]
      {$\scriptstyle{\operatorname{z}_{\CATB}}$} (A0_1);
    \path (A1_0) edge [->,bend left=20]node [auto,swap]
      {$\scriptstyle{\operatorname{z}'_{\CATB}}$} (A2_1);
    \path (A0_1) edge [->,bend right=20]node [auto] {$\scriptstyle{\functor{F}_1
      (\operatorname{v}_{\CATA})}$} (A1_2);
    \path (A2_1) edge [->,bend left=20]node [auto,swap]
      {$\scriptstyle{\operatorname{v}_{\CATB}}$} (A1_2);
   \end{tikzpicture}
   \]
  such that $\alpha_{\CATB}\ast i_{\operatorname{z}'_{\CATB}}$
  coincides with the following composition:
  
  \begin{equation}\label{eq-13}
  \begin{tikzpicture}[xscale=3.5,yscale=-1.8]
    \node (A0_1) at (1.5, 0.5) {$A_{\CATB}$};
    \node (A0_2) at (1, 1) {$\functor{F}_0(A_{\CATA})$};
    \node (A1_0) at (2.2, 0.75) {$\Downarrow\,\thetab{\functor{F}_1(f^1_{\CATA})}
      {\operatorname{v}_{\CATB}}{\operatorname{z}'_{\CATB}}$};
    \node (A1_1) at (0.35, 1.3) {$\Downarrow\,\sigma_{\CATB}^{-1}$};
    \node (A1_3) at (1.5, 1.35) {$\Downarrow\,\thetaa{\functor{F}_1(f^1_{\CATA})}
      {\functor{F}_1(\operatorname{v}_{\CATA})}{\operatorname{z}_{\CATB}}$};
    \node (A2_0) at (0, 2) {$A'_{\CATB}$};
    \node (A2_1) at (1, 2) {$\functor{F}_0(A'_{\CATA})$};
    \node (A2_2) at (2, 2) {$\Downarrow\,\psi^{\functor{F}}_{f^2_{\CATA},\operatorname{v}_{\CATA}}
      \odot\functor{F}_2(\alpha_{\CATA})\odot\left(\psi^{\functor{F}}_{f^1_{\CATA},
      \operatorname{v}_{\CATA}}\right)^{-1}$};
    \node (A2_3) at (3, 2) {$\functor{F}_0(B_{\CATA})$};
    \node (A3_0) at (2.2, 3.25) {$\Downarrow\,\thetaa{\functor{F}_1(f^2_{\CATA})}
      {\operatorname{v}_{\CATB}}{\operatorname{z}'_{\CATB}}$};
    \node (A3_1) at (0.35, 2.7) {$\Downarrow\,\sigma_{\CATB}$};
    \node (A3_3) at (1.5, 2.65) {$\Downarrow\,\thetab{\functor{F}_1(f^2_{\CATA})}
      {\functor{F}_1(\operatorname{v}_{\CATA})}{\operatorname{z}_{\CATB}}$};
    \node (A4_1) at (1.5, 3.5) {$A_{\CATB}$};
    \node (A4_2) at (1, 3) {$\functor{F}_0(A_{\CATA})$};
    
    \node (B1_1) at (0.8, 0.38) {$\scriptstyle{\operatorname{z}'_{\CATB}}$};
    \node (B2_2) at (0.8, 3.62) {$\scriptstyle{\operatorname{z}'_{\CATB}}$};
    \node (B3_3) at (2.2, 0.38) {$\scriptstyle{\functor{F}_1(f^1_{\CATA})
      \circ\operatorname{v}_{\CATB}}$};
    \node (B4_4) at (2.2, 3.62) {$\scriptstyle{\functor{F}_1(f^2_{\CATA})
      \circ\operatorname{v}_{\CATB}}$};
    \node (B5_5) at (1.5, 3.12) {$\scriptstyle{\functor{F}_1(f^2_{\CATA})}$};
    \node (B6_6) at (1.5, 0.88) {$\scriptstyle{\functor{F}_1(f^1_{\CATA})}$};
    \node (B7_7) at (0.5, 0.88) {$\scriptstyle{\operatorname{v}_{\CATB}
      \circ\operatorname{z}'_{\CATB}}$};
    \node (B8_8) at (0.5, 3.12) {$\scriptstyle{\operatorname{v}_{\CATB}
      \circ\operatorname{z}'_{\CATB}}$};
    \node (C1_1) at (2.4, 1.48) {$\scriptstyle{\functor{F}_1(f^1_{\CATA})
      \circ\functor{F}_1(\operatorname{v}_{\CATA})}$};
    \node (C2_2) at (2.4, 2.52) {$\scriptstyle{\functor{F}_1(f^2_{\CATA})
      \circ\functor{F}_1(\operatorname{v}_{\CATA})}$};

    \draw [->,rounded corners] (A2_0) to (0.2, 3.5) to (A4_1);
    \draw [->,rounded corners] (A2_0) to (0.2, 0.5) to (A0_1);
    \draw [->,rounded corners] (A0_1) to (2.8, 0.5) to (A2_3);
    \draw [->,rounded corners] (A4_1) to (2.8, 3.5) to (A2_3);
    \draw [->,rounded corners] (A0_2) to (2.8, 1) to (A2_3);
    \draw [->,rounded corners] (A4_2) to (2.8, 3) to (A2_3);
    \draw [->,rounded corners] (A2_0) to (0.2, 1) to (A0_2);
    \draw [->,rounded corners] (A2_0) to (0.2, 3) to (A4_2);
    \draw [->,rounded corners] (A2_1) to (1.2, 1.6) to (2.8, 1.6) to (A2_3);
    \draw [->,rounded corners] (A2_1) to (1.2, 2.4) to (2.8, 2.4) to (A2_3);
    
    \path (A2_0) edge [->]node [auto] {$\scriptstyle{\functor{F}_1(\operatorname{v}_{\CATA})
      \circ\operatorname{z}_{\CATB}}$} (A4_2);
    \path (A2_0) edge [->]node [auto] {$\scriptstyle{\operatorname{z}_{\CATB}}$} (A2_1);
    \path (A2_0) edge [->]node [auto,swap] {$\scriptstyle{\functor{F}_1
      (\operatorname{v}_{\CATA})\circ\operatorname{z}_{\CATB}}$} (A0_2);
  \end{tikzpicture}
  \end{equation}
\end{enumerate}
\emphatic{(}here the $2$-morphisms of the form $\theta_{\bullet}$ are the associators of
$\CATB$\emphatic{)}.
\end{theo}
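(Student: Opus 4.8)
The plan is to use the last part of Theorem~\ref{theo-01}: by the equivalence (a)$\Leftrightarrow$(b) stated there, $\functor{G}$ is an equivalence of bicategories if and only if the associated pseudofunctor $\widetilde{\functor{G}}:\CATA[\SETWAinv]\to\CATB[\SETWBsatinv]$ is. So it suffices to decide when $\widetilde{\functor{G}}$ is an equivalence, and the whole point of working with $\widetilde{\functor{G}}$ rather than $\functor{G}$ is that its action on objects, $1$-morphisms and $2$-morphisms is given by the \emph{explicit} formulas of Theorem~\ref{theo-01} (in particular $\widetilde{\functor{G}}_0=\functor{F}_0$ and~\eqref{eq-07}). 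I would then invoke the standard characterization of biequivalences used throughout \cite{T3} and \cite{T4}: a pseudofunctor is an equivalence of bicategories if and only if it is essentially surjective on objects (ES), essentially full on $1$-morphisms (EF: every $1$-morphism between objects of the image is isomorphic to the image of a $1$-morphism), and locally fully faithful (FF: it induces a bijection on $2$-morphisms between any two parallel $1$-morphisms). The proof then consists in translating (ES), (EF), (FF) for $\widetilde{\functor{G}}$ into conditions on $\functor{F}$, matching (A1) with (ES), the pair (A2),(A3) with (EF), and the pair (A4),(A5) with (FF); assembling the three equivalences gives $\functor{G}$ equivalence $\Leftrightarrow$ $\widetilde{\functor{G}}$ equivalence $\Leftrightarrow$ (ES)$\wedge$(EF)$\wedge$(FF) $\Leftrightarrow$ (A1)$\wedge\cdots\wedge$(A5).

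For (ES)$\Leftrightarrow$(A1) I would use that in $\CATB[\SETWBsatinv]$ two objects are internally equivalent exactly when they are joined by a span both of whose legs lie in $\SETWBsat$: the span~\eqref{eq-31} (with legs in $\SETWB\subseteq\SETWBsat$ and in $\SETWBsat$) then exhibits an internal equivalence between $A_{\CATB}$ and $\functor{F}_0(A_{\CATA})=\widetilde{\functor{G}}_0(A_{\CATA})$, which is (ES); conversely any internal equivalence $A_{\CATB}\simeq\functor{F}_0(A_{\CATA})$ produces such a span, and a saturation lemma of \cite{T4} lets me arrange the leg landing in $\functor{F}_0(A_{\CATA})$ to lie in $\SETWB$, recovering the precise form of (A1). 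For (EF)$\Leftrightarrow$(A2),(A3) I would start from a generic $1$-morphism $(A_{\CATB},\operatorname{w},f):\functor{F}_0(A^1_{\CATA})\to\functor{F}_0(A^2_{\CATA})$ of $\CATB[\SETWBsatinv]$, with denominator $\operatorname{w}\in\SETWBsat$ and $f$ arbitrary. After first replacing $\operatorname{w}$ by a morphism of $\SETWB$ (precomposing the apex with a suitable morphism, using the saturation lemmas of \cite{T4} and the closure of $\SETWB$ under composition), I would apply (A3) to the numerator $f$ to factor it, up to the invertible cell $\alpha_{\CATB}$ and after restriction along some $\operatorname{v}^1_{\CATB}\in\SETWB$, through $\functor{F}_1(f_{\CATA})$ for a morphism $f_{\CATA}$ of $\CATA$; this leaves a span \emph{between images} whose left leg now lies in $\SETWB$ and whose right leg lies in $\SETWBsat$, which is exactly the input of (A2). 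Its output $A^3_{\CATA}$, $\operatorname{w}^1_{\CATA}\in\SETWA$, $\operatorname{w}^2_{\CATA}\in\SETWAsat$ and invertible cells $\gamma^1_{\CATB},\gamma^2_{\CATB}$ then assemble, together with $\alpha_{\CATB}$ and the associators, into an isomorphism in $\CATB[\SETWBsatinv]$ between $(A_{\CATB},\operatorname{w},f)$ and $\widetilde{\functor{G}}_1(A^3_{\CATA},\operatorname{w}^1_{\CATA},f_{\CATA}\circ\operatorname{w}^2_{\CATA})$; the decisive point is that (A2) \emph{guarantees} the lifted denominator $\operatorname{w}^1_{\CATA}$ to lie in $\SETWA$, which (A3) alone cannot. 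The converse directions read (A2) and (A3) off as the instances of (EF) for the two special shapes of $1$-morphisms occurring in their hypotheses.

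For (FF)$\Leftrightarrow$(A4),(A5) I would use~\eqref{eq-07} together with the description of the $2$-morphisms of a bicategory of fractions as equivalence classes under whiskering by denominators. Faithfulness of $\widetilde{\functor{G}}_2$ means precisely that whenever two $2$-cells of $\CATA$ become equal after applying $\functor{F}_2$ and whiskering by a morphism of $\SETWB$, they are already equal after whiskering by a morphism of $\SETWA$ — this is (A4) (the passage from $\SETWBsat$ to $\SETWB$ again via a saturation lemma). Fullness of $\widetilde{\functor{G}}_2$ means that every $2$-morphism of $\CATB[\SETWBsatinv]$ between images is attained; after the usual reductions to a common apex and common denominator (legitimate thanks to (A4)) this amounts exactly to the production of the $2$-cell $\alpha_{\CATA}$ of $\CATA$ and of the comparison $\sigma_{\CATB}$ subject to the pasting identity~\eqref{eq-13}, i.e. to (A5). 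Conversely (A4) and (A5) give injectivity and surjectivity of $\widetilde{\functor{G}}_2$ on each Hom-set, hence (FF).

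I expect the main obstacle to be the essential-fullness step and the fullness half of local full faithfulness: in both one must build a \emph{single} invertible $2$-morphism of $\CATB[\SETWBsatinv]$ out of the local data ($\alpha_{\CATB}$ and the $\gamma^i_{\CATB}$, respectively $\sigma_{\CATB}$), while tracking every associator $\theta_{\bullet}$ of $\CATB$ and every associator $\psi^{\functor{F}}_{\bullet}$ of $\functor{F}$ and, at the same time, keeping each leg in the correct class ($\SETWB$ versus $\SETWBsat$). Verifying the large pasting identity~\eqref{eq-13} and the compatibility of the assembled isomorphism with the equivalence relation defining the $2$-morphisms of the bicategory of fractions is where essentially all the computation lives, and it is precisely what the technical lemmas of \cite{T3} and \cite{T4} are designed to control.
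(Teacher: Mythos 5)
Your overall route is exactly the paper's: reduce to $\widetilde{\functor{G}}$ via the equivalence (a)$\Leftrightarrow$(b) at the end of Theorem~\ref{theo-01}, then show that the weak-equivalence conditions (\hyperref[X1]{X1}), (\hyperref[X2a]{X2a}), (\hyperref[X2b]{X2b}), (\hyperref[X2c]{X2c}) for $\widetilde{\functor{G}}$ hold if and only if (\hyperref[A1]{A1})--(\hyperref[A5]{A5}) hold for $\functor{F}$. However, your decomposition into three \emph{independent} equivalences --- (ES)$\Leftrightarrow$(A1), (EF)$\Leftrightarrow$(A2)$\wedge$(A3), (FF)$\Leftrightarrow$(A4)$\wedge$(A5) --- is not achievable, and some of its steps would fail if pursued literally. (A2) does not follow from essential fullness alone: (\hyperref[X2a]{X2a}) produces a morphism $\underline{f}_{\CATA}$ of $\CATA\left[\SETWAinv\right]$ whose image is isomorphic to the given span, but to conclude that $\underline{f}_{\CATA}$ has the required shape (left leg in $\SETWA$, right leg in $\SETWAsat$) one must show it is an internal equivalence, and lifting its quasi-inverse and the unit/counit $2$-cells uses (\hyperref[X2b]{X2b}) and (\hyperref[X2c]{X2c}) as well; this is why Lemma~\ref{lem-07} assumes all three. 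Likewise (A3) cannot be read off from (\hyperref[X2a]{X2a}) alone, since the source $A_{\CATB}$ of $f_{\CATB}$ is an arbitrary object of $\CATB$ rather than an image; Lemma~\ref{lem-08} needs (\hyperref[X1]{X1}) first.

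The most serious issue is the fullness half of your (FF) step. Given an arbitrary $2$-morphism of $\CATB\left[\SETWBsatinv\right]$ between images, its representing span has an arbitrary apex $A_{\CATB}$ with arbitrary legs in $\CATB$, and the reduction of that apex to one of the form $\functor{F}_0(A^3_{\CATA})$ is precisely an application of (\hyperref[A2]{A2}), not of (\hyperref[A4]{A4}) as you claim: (A4) only cancels whiskered $2$-cells and cannot move an apex into the image of $\functor{F}$. Without (A2) one cannot even bring the representative into a form where (\hyperref[A5]{A5}) applies, which is why Lemma~\ref{lem-12} assumes (A2), (A4) and (A5) together. A further point you omit: a representative of a $2$-morphism in a bicategory of fractions must have an \emph{invertible} first component, and arranging this for the lifted cell requires the preliminary Lemma~\ref{lem-03}, which itself uses (A4) and (A5). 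None of this invalidates your strategy --- the theorem only needs the equivalence of the two conjunctions, so cross-use of conditions is harmless --- but the clean three-way matching must be abandoned in favour of the entangled implications the paper actually proves.
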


In this way we have established an useful tool for checking if any $2$ bicategories
of fractions $\CATA\left[\SETWAinv\right]$ and $\CATB\left[\SETWBinv\right]$ are equivalent:
it suffices to define a pseudofunctor $\functor{F}:\CATA\rightarrow\CATB$, such that $\functor{F}_1
(\SETWA)\subseteq\SETWBsat$ and such that conditions (\hyperref[A1]{A1}) -- (\hyperref[A5]{A5})
hold.\\

We are going to apply explicitly Theorem~\ref{theo-02} in our next paper~\cite{T7}, where $\CATB$ will be the
$2$-category of proper, effective, differentiable \'etale groupoids and $\SETWB$ will be the class of
all Morita equivalences between such objects. The role of the bicategory $\CATA$ will be played by a
$2$-category $\RedAtl$ whose objects will be reduced orbifold atlases; the class $\SETWA$ will be the
class of all morphisms that are ``refinements'' of reduced orbifold atlases
(see~\cite[Definition~6.1]{T7}).\\

In the second part of this paper we are going to consider the following result about bicategories of
fractions (a direct consequence of~\cite[Theorem~21]{Pr}):

\begin{theo}\label{theo-03}
\cite{Pr} Let us fix any pair $(\CATA,\SETWA)$ satisfying conditions \emphatic{(BF)},
any bicategory $\CATB$ and any pseudofunctor $\functor{F}:\CATA\rightarrow\CATB$. Then the following
facts are equivalent:

\begin{enumerate}[\emphatic{(}1\emphatic{)}]
 \item $\functor{F}$ sends each morphism of $\SETWA$ to an internal equivalence of $\CATB$;
 \item there is a pseudofunctor $\overline{\functor{G}}:\CATA\left[\SETWAinv\right]\rightarrow
  \CATB$ and a pseudonatural equivalence of pseudofunctors $\overline{\kappa}:\functor{F}\Rightarrow
  \overline{\functor{G}}\circ\functor{U}_{\SETWA}$, whose associated pseudofunctor $\CATA\rightarrow
  \operatorname{Cyl}(\CATB)$ sends each morphism of $\SETWA$ to an internal equivalence.
\end{enumerate}
\end{theo}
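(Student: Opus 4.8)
The plan is to deduce Theorem~\ref{theo-03} directly from the universal property of the bicategory of fractions, i.e.\ from \cite[Theorem~21]{Pr}. That result states two things that I will use: first, that $\functor{U}_{\SETWA}$ sends every morphism of $\SETWA$ to an internal equivalence of $\CATA\left[\SETWAinv\right]$; second, that for every bicategory $\CATB$ precomposition with $\functor{U}_{\SETWA}$ yields an equivalence of bicategories from $\operatorname{Hom}\big(\CATA[\SETWAinv],\CATB\big)$ onto the full sub-bicategory $\operatorname{Hom}_{\SETWA}(\CATA,\CATB)$ of those pseudofunctors $\CATA\rightarrow\CATB$ that send each morphism of $\SETWA$ to an internal equivalence. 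Besides this, I will invoke only two standard facts: every pseudofunctor preserves internal equivalences; and a morphism of $\operatorname{Cyl}(\CATB)$ whose structure $2$-morphism is invertible is an internal equivalence of $\operatorname{Cyl}(\CATB)$ if and only if both of its components are internal equivalences of $\CATB$. I will also use that the two projections $s,t:\operatorname{Cyl}(\CATB)\rightrightarrows\CATB$ are pseudofunctors and that, by construction of $\mu_{\overline{\kappa}}$, they satisfy $s\circ\mu_{\overline{\kappa}}=\functor{F}$ and $t\circ\mu_{\overline{\kappa}}=\overline{\functor{G}}\circ\functor{U}_{\SETWA}$.

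For the implication \emphatic{(2)} $\Rightarrow$ \emphatic{(1)} I would argue as follows. Since $\mu_{\overline{\kappa}}$ sends each $\operatorname{w}\in\SETWA$ to an internal equivalence of $\operatorname{Cyl}(\CATB)$ and $s$ is a pseudofunctor, the morphism $\functor{F}_1(\operatorname{w})=s\big(\mu_{\overline{\kappa}}(\operatorname{w})\big)$ is an internal equivalence of $\CATB$. This direction is immediate and uses nothing beyond preservation of internal equivalences by pseudofunctors.

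The implication \emphatic{(1)} $\Rightarrow$ \emphatic{(2)} is the substantial one, and it is where \cite[Theorem~21]{Pr} enters. Assuming \emphatic{(1)}, the pseudofunctor $\functor{F}$ belongs to $\operatorname{Hom}_{\SETWA}(\CATA,\CATB)$, so by essential surjectivity of the equivalence $(-\circ\functor{U}_{\SETWA})$ there exist a pseudofunctor $\overline{\functor{G}}:\CATA[\SETWAinv]\rightarrow\CATB$ and an internal equivalence, in $\operatorname{Hom}_{\SETWA}(\CATA,\CATB)$, between $\overline{\functor{G}}\circ\functor{U}_{\SETWA}$ and $\functor{F}$; read in the appropriate direction (internal equivalences being symmetric), this is precisely a pseudonatural equivalence $\overline{\kappa}:\functor{F}\Rightarrow\overline{\functor{G}}\circ\functor{U}_{\SETWA}$. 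It remains to verify the extra clause on $\mu_{\overline{\kappa}}$. Fix any $\operatorname{w}\in\SETWA$: the structure $2$-morphism of $\mu_{\overline{\kappa}}(\operatorname{w})$ is the naturality $2$-cell of $\overline{\kappa}$ at $\operatorname{w}$, which is invertible since $\overline{\kappa}$ is a pseudonatural transformation; its first component is $\functor{F}_1(\operatorname{w})$, an internal equivalence by \emphatic{(1)}; and its second component is the image under $\overline{\functor{G}}$ of $\functor{U}_{\SETWA}(\operatorname{w})$, which is an internal equivalence because $\functor{U}_{\SETWA}(\operatorname{w})$ is one (by \cite[Theorem~21]{Pr}) and $\overline{\functor{G}}$ preserves internal equivalences. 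By the second standard fact above, $\mu_{\overline{\kappa}}(\operatorname{w})$ is therefore an internal equivalence of $\operatorname{Cyl}(\CATB)$, which completes \emphatic{(2)}.

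I expect the only real obstacle to be bookkeeping rather than conceptual. One must spell out carefully the identification between pseudonatural equivalences and internal equivalences in the pseudofunctor bicategory $\operatorname{Hom}_{\SETWA}(\CATA,\CATB)$, together with the exact behaviour of the two projections of $\operatorname{Cyl}(\CATB)$ on $\mu_{\overline{\kappa}}$, so that the two quoted standard facts apply verbatim; all of this is available from the construction of $\operatorname{Cyl}(\CATB)$ in \cite{B} and from the preliminary lemmas of \cite{T3} and \cite{T4}. No genuinely new argument beyond \cite[Theorem~21]{Pr} seems to be required.
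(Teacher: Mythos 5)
Your proposal is correct and takes essentially the same route as the paper: the paper offers no independent proof of Theorem~\ref{theo-03}, stating it as a direct consequence of the universal property \cite[Theorem~21]{Pr}, and your argument is precisely the fleshing-out of that citation — essential surjectivity of precomposition with $\functor{U}_{\SETWA}$ to produce $(\overline{\functor{G}},\overline{\kappa})$ for (1)$\Rightarrow$(2), the standard characterization of internal equivalences of $\operatorname{Cyl}(\CATB)$ (invertible structure cell plus equivalence components) to verify the clause on $\mu_{\overline{\kappa}}$, and the projection pseudofunctor argument for (2)$\Rightarrow$(1). No gap or divergence to report.
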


As a consequence of Theorem~\ref{theo-02}, we are able to improve Theorem~\ref{theo-03} as follows.

\begin{theo}\label{theo-04}
Let us fix any pair $(\CATA,\SETWA)$ satisfying conditions \emphatic{(BF)},
any bicategory $\CATB$ and any pseudofunctor
$\functor{F}:\CATA\rightarrow\CATB$, such that $\functor{F}$ sends each morphism
of $\SETWA$ to an internal equivalence of $\CATB$. Let us also fix any pair
$(\overline{\functor{G}},\overline{\kappa})$ as in \emphatic{Theorem~\ref{theo-03}(2)}. Then
$\overline{\functor{G}}:\CATA\left[\SETWAinv\right]\rightarrow\CATB$
is an equivalence of bicategories if and only if $\functor{F}$
satisfies the the following $5$ conditions.

\begin{enumerate}[\emphatic{(}{B}1\emphatic{)}]\label{B}
 \item\label{B1} Given any object $A_{\CATB}$, there are an object $A_{\CATA}$ and an internal
  equivalence $e_{\CATB}:\functor{F}_0(A_{\CATA})\rightarrow A_{\CATB}$ \emphatic{(}i.e.\ 
  $\functor{F}_0$ is surjective up to internal equivalences\emphatic{)}.

 \item\label{B2} Let us fix any pair of objects $A^1_{\CATA},A^2_{\CATA}$ and any internal equivalence
  $e_{\CATB}:\functor{F}_0(A^1_{\CATA})\rightarrow\functor{F}_0(A^2_{\CATA})$. Then
  there are an object $A^3_{\CATA}$, a pair of morphisms $\operatorname{w}^1_{\CATA}:A^3_{\CATA}
  \rightarrow A^1_{\CATA}$ in $\SETWA$ and $\operatorname{w}^2_{\CATA}:A^3_{\CATA}\rightarrow
  A^2_{\CATA}$ in $\SETWAsat$, an internal equivalence $e'_{\CATB}:\functor{F}_0(A^1_{\CATA})
  \rightarrow\functor{F}_0(A^3_{\CATA})$ and a pair of invertible $2$-morphisms as follows:
  
  \[
  \begin{tikzpicture}[xscale=2.8,yscale=-2.0]
    \node (A0_3) at (3.5, 0.8) {$\functor{F}_0(A^2_{\CATA})$};
    \node (A1_0) at (0.5, 1) {$\functor{F}_0(A^1_{\CATA})$};
    \node (A1_2) at (2, 1) {$\functor{F}_0(A^3_{\CATA})$};
    \node (A2_3) at (3.5, 1.2) {$\functor{F}_0(A^1_{\CATA})$.};

    \node (A0_2) at (1.85, 0.65) {$\Downarrow\,\delta^2_{\CATB}$};
    \node (A2_2) at (1.85, 1.35) {$\Downarrow\,\delta^1_{\CATB}$};
    
    \path (A1_0) edge [->]node [auto] {$\scriptstyle{e'_{\CATB}}$} (A1_2);
    \path (A1_0) edge [->, bend right=30]node [auto] {$\scriptstyle{e_{\CATB}}$} (A0_3);
    \path (A1_2) edge [->]node [auto] {$\scriptstyle{\functor{F}_1
      (\operatorname{w}^2_{\CATA})}$} (A0_3);
    \path (A1_0) edge [->, bend left=30]node [auto,swap]
      {$\scriptstyle{\id_{\functor{F}_0(A^1_{\CATA})}}$} (A2_3);
    \path (A1_2) edge [->]node [auto,swap] {$\scriptstyle{\functor{F}_1
      (\operatorname{w}^1_{\CATA})}$} (A2_3);
  \end{tikzpicture}
  \]

 \item\label{B3} Let us fix any pair of objects $B_{\CATA}$, $A_{\CATB}$ and any morphism $f_{\CATB}:
  A_{\CATB}\rightarrow\functor{F}_0(B_{\CATA})$. Then there are an object $A_{\CATA}$, a morphism
  $f_{\CATA}:A_{\CATA}\rightarrow B_{\CATA}$, an internal equivalence $e_{\CATB}:A_{\CATB}\rightarrow
  \functor{F}_0(A_{\CATA})$ and an invertible $2$-morphism $\alpha_{\CATB}:f_{\CATB}\Rightarrow
  \functor{F}_1(f_{\CATA})\circ e_{\CATB}$.

 \item\label{B4} Let us fix any pair of objects $A_{\CATA},B_{\CATA}$, any pair of morphisms
  $f^1_{\CATA},f^2_{\CATA}:A_{\CATA}\rightarrow B_{\CATA}$ and any pair of $2$-morphisms
  $\gamma^1_{\CATA},\gamma^2_{\CATA}:f^1_{\CATA}\Rightarrow f^2_{\CATA}$, such that
  that $\functor{F}_2(\gamma^1_{\CATA})=\functor{F}_2(\gamma^2_{\CATA})$. Then there are an object
  $A'_{\CATA}$ and a morphism $\operatorname{z}_{\CATA}:A'_{\CATA}\rightarrow A_{\CATA}$ in
  $\SETWA$, such that $\gamma^1_{\CATA}\ast i_{\operatorname{z}_{\CATA}}=\gamma^2_{\CATA}\ast
  i_{\operatorname{z}_{\CATA}}$.
 
 \item\label{B5} Let us fix any pair of objects $A_{\CATA},B_{\CATA}$, any pair of morphisms
  $f^1_{\CATA},f^2_{\CATA}:A_{\CATA}\rightarrow B_{\CATA}$ and any $2$-morphism $\alpha_{\CATB}:
  \functor{F}_1(f^1_{\CATA})\Rightarrow\functor{F}_1(f^2_{\CATA})$. Then there are an object
  $A'_{\CATA}$, a morphism $\operatorname{v}_{\CATA}:A'_{\CATA}\rightarrow A_{\CATA}$ in $\SETWA$ and
  a $2$-morphism $\alpha_{\CATA}:f^1_{\CATA}\circ\operatorname{v}_{\CATA}\Rightarrow f^2_{\CATA}\circ
  \operatorname{v}_{\CATA}$, such that
  
  \[\alpha_{\CATB}\ast i_{\functor{F}_1(\operatorname{v}_{\CATA})}=\psi^{\functor{F}}_{f^2_{\CATA},
  \operatorname{v}_{\CATA}}\odot\functor{F}_2(\alpha_{\CATA})\odot\Big(
  \psi^{\functor{F}}_{f^1_{\CATA},\operatorname{v}_{\CATA}}\Big)^{-1}.\]
\end{enumerate}
\end{theo}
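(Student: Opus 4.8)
The strategy is to derive this theorem from Theorem~\ref{theo-02} by specialising the second pair to $(\CATB,\SETWBequiv)$, where $\SETWBequiv$ denotes the class of all internal equivalences of $\CATB$. The first step is to record that $(\CATB,\SETWBequiv)$ satisfies conditions (BF), that $\SETWBequiv$ is already right saturated (so that $\SETWBsat=\SETWBequiv$), and that the universal pseudofunctor $\functor{U}_{\SETWBequiv}:\CATB\rightarrow\CATB[\SETWBequivinv]$ is itself an equivalence of bicategories; these facts are contained in~\cite{T3,T4} and follow from~\cite[Theorem~21]{Pr}, since every morphism of $\SETWBequiv$ is already internally invertible in $\CATB$, so the identity of $\CATB$ factors through $\functor{U}_{\SETWBequiv}$ up to pseudonatural equivalence. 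Because $\functor{F}$ sends each morphism of $\SETWA$ to an internal equivalence, we have $\functor{F}_1(\SETWA)\subseteq\SETWBequiv=\SETWBsat$, so condition (i) of Theorem~\ref{theo-01} holds for $(\CATB,\SETWBequiv)$ and we obtain a pair $(\functor{G},\kappa)$ as in Theorem~\ref{theo-01}(iv), with $\functor{G}:\CATA[\SETWAinv]\rightarrow\CATB[\SETWBequivinv]$.

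Next I would compare $\functor{G}$ with the given $\overline{\functor{G}}$. Whiskering $\overline{\kappa}$ on the left by $\functor{U}_{\SETWBequiv}$ exhibits both $\functor{G}\circ\functor{U}_{\SETWA}$ and $\functor{U}_{\SETWBequiv}\circ\overline{\functor{G}}\circ\functor{U}_{\SETWA}$ as pseudonaturally equivalent to $\functor{U}_{\SETWBequiv}\circ\functor{F}$; by the universal property of the bicategory of fractions~\cite[Theorem~21]{Pr} (precomposition with $\functor{U}_{\SETWA}$ yields an equivalence between pseudofunctors out of $\CATA[\SETWAinv]$ and pseudofunctors out of $\CATA$ that invert $\SETWA$), this forces $\functor{G}\simeq\functor{U}_{\SETWBequiv}\circ\overline{\functor{G}}$. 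As $\functor{U}_{\SETWBequiv}$ is an equivalence, $\functor{G}$ is an equivalence of bicategories if and only if $\overline{\functor{G}}$ is. Applying Theorem~\ref{theo-02} to $(\CATA,\SETWA)$, $(\CATB,\SETWBequiv)$, $\functor{F}$ and $(\functor{G},\kappa)$, it then remains to prove that conditions (A1)--(A5), read off for the choice $\SETWB=\SETWBequiv$, are equivalent to conditions (B1)--(B5).

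For this last part I would treat the five conditions one at a time, using throughout that $\SETWB=\SETWBsat=\SETWBequiv$ is exactly the class of internal equivalences, that internal equivalences are closed under composition and quasi-inverses and satisfy $2$-out-of-$3$, and that whiskering by an internal equivalence is an equivalence of the relevant hom-categories (in particular fully faithful on $2$-morphisms). For (A1)--(A3) the translation is structural: every $\operatorname{w}^i_{\CATB}$, $\operatorname{v}^i_{\CATB}$ and $\operatorname{z}^1_{\CATB}$ appearing there is now an internal equivalence, and one checks that the companion legs $\operatorname{z}^2_{\CATB}$, $\operatorname{v}^2_{\CATB}$ are forced to be equivalences by $2$-out-of-$3$ applied to the invertible $2$-cells together with the fact that $\functor{F}_1(\operatorname{w}^i_{\CATA})$, $\functor{F}_1(\operatorname{v}_{\CATA})$ lie in $\SETWBequiv$; thus the displayed zigzags collapse---by composing with quasi-inverses and folding the invertible $2$-cells---into the single internal equivalence $e_{\CATB}$ (resp.\ $e'_{\CATB}$) and the single invertible $2$-morphism of (B1)--(B3), and conversely one recovers the (A)-data by taking one leg of the zigzag to be an identity. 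For the equivalence of (A4) and (B4) one notes that, since $\operatorname{z}_{\CATB}$ ranges over internal equivalences, whiskering by $\operatorname{z}_{\CATB}$ is faithful, so $\functor{F}_2(\gamma^1_{\CATA})\ast i_{\operatorname{z}_{\CATB}}=\functor{F}_2(\gamma^2_{\CATA})\ast i_{\operatorname{z}_{\CATB}}$ is equivalent to $\functor{F}_2(\gamma^1_{\CATA})=\functor{F}_2(\gamma^2_{\CATA})$ (using $\operatorname{z}_{\CATB}=\id$ for one direction), while the conclusions coincide verbatim.

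The main obstacle is the equivalence of (A5) and (B5). Here I would use that precomposition with the internal equivalence $\operatorname{v}_{\CATB}$ is a fully faithful functor on the relevant hom-categories, so that the bigon $\alpha_{\CATB}$ of (A5), between $\functor{F}_1(f^1_{\CATA})\circ\operatorname{v}_{\CATB}$ and $\functor{F}_1(f^2_{\CATA})\circ\operatorname{v}_{\CATB}$, corresponds bijectively to a $2$-morphism $\functor{F}_1(f^1_{\CATA})\Rightarrow\functor{F}_1(f^2_{\CATA})$, which is precisely the input of (B5); in this way all the auxiliary $\CATB$-side data ($A_{\CATB}$, $\operatorname{v}_{\CATB}$) disappears. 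Since $\operatorname{v}_{\CATB}$, $\operatorname{z}_{\CATB}$ and $\functor{F}_1(\operatorname{v}_{\CATA})$ are all internal equivalences, the datum $(\operatorname{z}'_{\CATB},\sigma_{\CATB})$ is determined up to canonical isomorphism (one may take $\operatorname{z}_{\CATB}$ to be an identity and $\operatorname{z}'_{\CATB}$ a quasi-inverse composite, with $\sigma_{\CATB}$ the corresponding structural $2$-cell), and I expect the elaborate pasting~\eqref{eq-13} to reduce, after whiskering by a quasi-inverse of $\operatorname{v}_{\CATB}$ and repeated use of the coherence axioms of $\CATB$, exactly to the single whiskered identity $\alpha_{\CATB}\ast i_{\functor{F}_1(\operatorname{v}_{\CATA})}=\psi^{\functor{F}}_{f^2_{\CATA},\operatorname{v}_{\CATA}}\odot\functor{F}_2(\alpha_{\CATA})\odot(\psi^{\functor{F}}_{f^1_{\CATA},\operatorname{v}_{\CATA}})^{-1}$ of (B5). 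Carefully matching the two formulations---keeping track of every associator $\theta_{\bullet}$ and of the interplay between $\sigma_{\CATB}$ and $\sigma_{\CATB}^{-1}$---is the delicate computation on which the whole reduction rests; the remaining bookkeeping is routine.
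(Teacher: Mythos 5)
Your proof has the same architecture as the paper's: reduce Theorem~\ref{theo-04} to Theorem~\ref{theo-02} by localizing $\CATB$ at a class of morphisms whose localization leaves $\CATB$ unchanged up to equivalence, and then translate conditions (A1)--(A5) for that class into (B1)--(B5). The genuine difference is the class you choose. You take $\SETWB:=\SETWBequiv$ (all internal equivalences, already right saturated), whereas the paper takes $\SETWB:=\SETWBmin$ (quasi-units, whose right saturation is $\SETWBequiv$) and proves Proposition~\ref{prop-02}, i.e.\ the equivalence of (A$i$) for $\SETWBmin$ with (B$i$). Because of your choice you cannot cite Proposition~\ref{prop-02} and must prove its $\SETWBequiv$-analogue; the mechanisms you name for this are the right ones (two-out-of-three for internal equivalences for (A1)--(A3), faithfulness of whiskering by an internal equivalence for (A4)--(A5)). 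The trade-off is real: quasi-units give, for each $\operatorname{z}_{\CATB}\in\SETWBmin$, an explicit invertible $2$-cell $\xi_{\CATB}:\operatorname{z}_{\CATB}\Rightarrow\id$, so the paper's matching is an elementary (if long) coherence computation, while your choice collapses (A1)--(A4) faster but makes everything lean on the hom-category equivalences induced by internal equivalences. Two smaller remarks: your comparison of $\functor{G}$ with $\functor{U}_{\SETWBequiv}\circ\overline{\functor{G}}$ via the universal property is heavier than needed --- as in the paper you can simply define the pair to be $(\functor{U}_{\SETWBequiv}\circ\overline{\functor{G}},\,\thetaa{\functor{U}_{\SETWBequiv}}{\overline{\functor{G}}}{\functor{U}_{\SETWA}}\odot(i_{\functor{U}_{\SETWBequiv}}\ast\overline{\kappa}))$ and check it satisfies Theorem~\ref{theo-01}(iv) directly; and your justification that $\functor{U}_{\SETWBequiv}$ is an equivalence needs the uniqueness half of~\cite[Theorem~21]{Pr} (or the results of~\cite{T4} on saturation), not merely the fact that $\id_{\CATB}$ factors through it.

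The one place where your text is not yet a proof is exactly the place you flag: the equivalence of (A5) for $\SETWBequiv$ with (B5) is asserted (``I expect the pasting \eqref{eq-13} to reduce\dots'') rather than carried out, and your description slightly conflates the two directions --- you may only \emph{choose} $\operatorname{z}_{\CATB}=\id$ when building (A5)-data out of (B5); in the converse direction (A5) hands you an arbitrary $\operatorname{z}_{\CATB}\in\SETWBequiv$ which you must then eliminate. The computation does go through: for (B5)$\Rightarrow$(A5) take $\operatorname{z}_{\CATB}:=\id$, let $\operatorname{z}'_{\CATB}$ be the evident quasi-inverse composite, and apply the interchange law to the horizontal composite of $\overline{\alpha}_{\CATB}$ with $\sigma_{\CATB}$; for (A5)$\Rightarrow$(B5), after feeding in $\operatorname{v}_{\CATB}:=\id$ (with $\alpha_{\CATB}$ corrected by unitors), the pasting identity rewritten by interchange says that $(\alpha_{\CATB}\ast i_{\functor{F}_1(\operatorname{v}_{\CATA})})\ast i_{\operatorname{z}_{\CATB}}$ and $(\psi^{\functor{F}}_{f^2_{\CATA},\operatorname{v}_{\CATA}}\odot\functor{F}_2(\alpha_{\CATA})\odot(\psi^{\functor{F}}_{f^1_{\CATA},\operatorname{v}_{\CATA}})^{-1})\ast i_{\operatorname{z}_{\CATB}}$ agree up to conjugation by $\sigma_{\CATB}$ and associators, and faithfulness of $-\ast i_{\operatorname{z}_{\CATB}}$ then cancels $\operatorname{z}_{\CATB}$. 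So your route is viable, but this step (the analogue of the second half of the paper's Proposition~\ref{prop-02}) must actually be written out for the argument to be complete.
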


Therefore, Theorem~\ref{theo-04} fixes the incorrect statement of~\cite[Proposition~24]{Pr}: such
a Proposition was giving necessary and sufficient conditions such that
$\overline{\functor{G}}$ as above is an equivalence of bicategories, but it turned out that such
conditions were only sufficient but not necessary (we refer to the Appendix for more details about this).\\

In particular, Theorem~\ref{theo-04} implies the following result.

\begin{cor}\label{cor-01}
Given any pair $(\CATA,\SETWA)$ satisfying conditions \emphatic{(BF)}
and any bicategory $\CATB$, the following facts are equivalent:

\begin{enumerate}[\emphatic{(}a\emphatic{)}]
 \item there is an equivalence of bicategories $\CATA\left [\SETWAinv\right]\rightarrow\CATB$;
 \item there is a pseudofunctor $\functor{F}:\CATA\rightarrow\CATB$ such that
  \begin{itemize}
  \item $\functor{F}$ sends each morphism of $\SETWA$ to an internal equivalence of $\CATB$;
  \item $\functor{F}$ satisfies conditions
   \emphatic{(\hyperref[B1]{B1})} -- \emphatic{(\hyperref[B5]{B5})}.
  \end{itemize}
\end{enumerate}
\end{cor}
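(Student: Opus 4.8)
The plan is to derive Corollary~\ref{cor-01} as a direct consequence of Theorem~\ref{theo-04}, treating it essentially as a packaging of that theorem into a cleaner equivalence statement. The implication $(b)\Rightarrow(a)$ is immediate: if $\functor{F}$ sends each morphism of $\SETWA$ to an internal equivalence and satisfies (\hyperref[B1]{B1})--(\hyperref[B5]{B5}), then by Theorem~\ref{theo-03} there exist a pseudofunctor $\overline{\functor{G}}$ and a pseudonatural equivalence $\overline{\kappa}$ as in part~(2) of that theorem, and by Theorem~\ref{theo-04} this $\overline{\functor{G}}:\CATA[\SETWAinv]\rightarrow\CATB$ is an equivalence of bicategories. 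Hence an equivalence exists, which is exactly statement~(a).

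For the converse $(a)\Rightarrow(b)$, I would start from an equivalence of bicategories $\functor{H}:\CATA[\SETWAinv]\rightarrow\CATB$ and produce the required pseudofunctor $\functor{F}:\CATA\rightarrow\CATB$ by precomposing with the universal pseudofunctor, namely setting $\functor{F}:=\functor{H}\circ\functor{U}_{\SETWA}$. By~\cite[Theorem~21]{Pr} the pseudofunctor $\functor{U}_{\SETWA}$ sends each morphism of $\SETWA$ to an internal equivalence of $\CATA[\SETWAinv]$; since $\functor{H}$ is an equivalence of bicategories it preserves internal equivalences, so $\functor{F}$ sends each morphism of $\SETWA$ to an internal equivalence of $\CATB$. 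This establishes the first bullet of~(b) and puts us in the hypotheses of Theorem~\ref{theo-03}, so we obtain a pair $(\overline{\functor{G}},\overline{\kappa})$ as in part~(2).

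It then remains to verify that this $\functor{F}$ satisfies conditions (\hyperref[B1]{B1})--(\hyperref[B5]{B5}). The cleanest route is again through Theorem~\ref{theo-04}, used in the reverse direction: I would argue that the pseudofunctor $\overline{\functor{G}}$ produced from $\functor{F}=\functor{H}\circ\functor{U}_{\SETWA}$ is itself an equivalence of bicategories, and then invoke the ``only if'' direction of Theorem~\ref{theo-04} to conclude that (\hyperref[B1]{B1})--(\hyperref[B5]{B5}) all hold. To see that $\overline{\functor{G}}$ is an equivalence, I would use the pseudonatural equivalence $\overline{\kappa}:\functor{F}\Rightarrow\overline{\functor{G}}\circ\functor{U}_{\SETWA}$, i.e.\ $\functor{H}\circ\functor{U}_{\SETWA}\Rightarrow\overline{\functor{G}}\circ\functor{U}_{\SETWA}$, together with the universal property of $\functor{U}_{\SETWA}$: since both $\functor{H}$ and $\overline{\functor{G}}$ are pseudofunctors out of $\CATA[\SETWAinv]$ that become pseudonaturally equivalent after precomposition with the universal $\functor{U}_{\SETWA}$, the universal property forces $\overline{\functor{G}}$ and $\functor{H}$ to be equivalent as pseudofunctors, and hence $\overline{\functor{G}}$ inherits from $\functor{H}$ the property of being an equivalence of bicategories.

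The main obstacle I anticipate is the last step, namely justifying that $\overline{\functor{G}}\simeq\functor{H}$ from the equivalence $\overline{\functor{G}}\circ\functor{U}_{\SETWA}\simeq\functor{H}\circ\functor{U}_{\SETWA}$. This requires the precise form of the universal property of the bicategory of fractions as a biequivalence (or at least an essential surjectivity/full faithfulness) of the pseudofunctor category $\operatorname{Hom}(\CATA[\SETWAinv],\CATB)\rightarrow\operatorname{Hom}_{\SETWA}(\CATA,\CATB)$ induced by $-\circ\functor{U}_{\SETWA}$, where the target consists of those pseudofunctors inverting $\SETWA$; this is exactly the content of~\cite[Theorem~21]{Pr}. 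Care is needed to ensure that ``equivalent pseudofunctors'' is the right notion and that being an equivalence of bicategories is invariant under pseudonatural equivalence, which is standard but should be cited. Once this transport-of-equivalence principle is in place, everything else is formal bookkeeping, and the corollary follows.
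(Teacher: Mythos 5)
Your proposal is correct, and your $(b)\Rightarrow(a)$ direction coincides exactly with the paper's. For $(a)\Rightarrow(b)$, however, you take a genuinely longer route than the paper does. Both arguments start by setting $\functor{F}:=\functor{H}\circ\functor{U}_{\SETWA}$, but the paper then simply observes that the pair $\left(\functor{H},\,i_{\functor{H}\circ\functor{U}_{\SETWA}}\right)$ is itself already a pair as in Theorem~\ref{theo-03}(2) — the identity pseudonatural equivalence trivially satisfies the cylinder condition once $\functor{F}$ sends each morphism of $\SETWA$ to an internal equivalence — so the ``only if'' direction of Theorem~\ref{theo-04} applies verbatim to $\functor{H}$ and yields (\hyperref[B1]{B1})--(\hyperref[B5]{B5}) immediately. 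You instead invoke Theorem~\ref{theo-03} to produce some a priori unrelated pair $(\overline{\functor{G}},\overline{\kappa})$ and must then transport the equivalence property from $\functor{H}$ to $\overline{\functor{G}}$, which forces you to call on the full biequivalence form of the universal property in~\cite[Theorem~21]{Pr}, on the fact that biequivalences reflect internal equivalences, and on the invariance of the biequivalence property under pseudonatural equivalence. These facts are standard (and available here, since the paper assumes the axiom of choice throughout), so your argument does close; but note that it re-derives in an ad hoc way something Theorem~\ref{theo-04} already encodes: since conditions (\hyperref[B1]{B1})--(\hyperref[B5]{B5}) depend only on $\functor{F}$ and not on the chosen pair, as soon as one admissible pair has its $\overline{\functor{G}}$ an equivalence, every admissible pair does. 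The paper's choice of the identity $2$-cell short-circuits all of this; your route buys nothing extra and costs several additional (if standard) lemmas, so you should at least state and cite them precisely if you keep it.
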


Conditions (\hyperref[A]{A}) and (\hyperref[B]{B}) simplify considerably in the case when $\CATA$
and $\CATB$ are both $1$-categories (considered as trivial bicategories). In that case, all the
$2$-morphisms in those conditions are trivial, thus all such conditions are much shorter to
write; moreover, conditions (\hyperref[A4]{A4}) and (\hyperref[B4]{B4}) are automatically satisfied.\\

In all this paper we are going to use the axiom of choice, that we will assume from now on without
further mention. The reason for this is twofold. First of all, the axiom of choice is used heavily
in~\cite{Pr} in order to construct bicategories of fractions. In~\cite[Corollary~0.6]{T3} we proved
that under some restrictive hypothesis the axiom of choice is not necessary, but in the general case
we need it in order to consider any of the bicategories of fractions mentioned above.
Secondly, even in the cases when the axiom of choice is not necessary in order to construct the
bicategories $\CATA\left[\SETWAinv\right]$ and $\CATB\left[\SETWBinv\right]$, we will have to use
often Theorem~\ref{theo-01}. Such a result was proved in~\cite{T4} using several times the universal
property of any bicategory of fractions $\CATC\left[\SETWinv\right]$, as stated
in~\cite[Theorem~21]{Pr}; the proof of that result in~\cite{Pr} requires most of the time the axiom
of choice (since for each morphism $\operatorname{w}$ of $\SETW$ one
has to choose a quasi-inverse for $\functor{U}_{\SETW}(\operatorname{w})$ in 
$\CATC\left[\SETWinv\right]$), hence we have to assume the axiom of choice also in this paper.

\section{Notations}
Given any bicategory $\CATC$, we denote its objects by $A,B,\cdots$, its morphisms by $f,g,\cdots$
and its $2$-morphisms by $\alpha,\beta,\cdots$ (we will use $A_{\CATC},
f_{\CATC},\alpha_{\CATC},\cdots$ if we have to recall that they belong to $\CATC$ when we are using
more than one bicategory in the computations). Given any triple of morphisms $f:A\rightarrow B$,
$g:B\rightarrow C$, $h:C\rightarrow D$ in $\CATC$, we denote by $\thetaa{h}{g}{f}$ the associator
$h\circ(g\circ f)\Rightarrow(h\circ g)\circ f$ that is part of the structure of $\CATC$; we denote by
$\pi_f:f\circ\id_A\Rightarrow f$ and $\upsilon_f:\id_B\circ f\Rightarrow f$ the right and left unitors
for $\CATC$ relative to any morphism $f$ as above.
We denote any pseudofunctor from $\CATC$ to another bicategory $\CATD$ by $\functor{F}=
(\functor{F}_0,\functor{F}_1,\functor{F}_2,$ $\psi_{\bullet}^{\functor{F}},
\sigma_{\bullet}^{\functor{F}}):\CATC\rightarrow\CATD$. Here for each pair of morphisms $f,g$
as above, $\psi^{\functor{F}}_{g,f}$ is the associator from $\functor{F}_1(g\circ f)$ to
$\functor{F}_1(g)\circ\functor{F}_1(f)$ and for each object $A$, $\sigma^{\functor{F}}_A$ is the
unitor from $\functor{F}_1(\id_A)$ to $\id_{\functor{F}_0(A)}$.\\

For all the notations about axioms (BF1) -- (BF5) and the construction of bicategories of
fractions we refer either to the original reference~\cite{Pr} or to our previous paper~\cite{T3}.\\

We recall from~\cite[(1.33)]{St} that given any pair of bicategories $\CATC$ and $\CATD$, a
pseudofunctor $\functor{M}:\CATC\rightarrow\CATD$ is a \emph{weak equivalence of bicategories} (also
known as \emph{weak biequivalence}) if and only if the following conditions hold:

\begin{enumerate}[({X}1)]\label{X}
 \item\label{X1} for each object $A_{\CATD}$, there are an object $A_{\CATC}$ and an internal
  equivalence from $\functor{M}_0(A_{\CATC})$ to $A_{\CATD}$ in $\CATD$;
 \item\label{X2} for each pair of objects $A_{\CATC},B_{\CATC}$, the functor $\functor{M}(A_{\CATC},
  B_{\CATC})$ is an equivalence of categories, i.e.
  
  \begin{enumerate}[({X2}a)]
   \item\label{X2a} for each morphism $f_{\CATD}:\functor{M}_0(A_{\CATC})\rightarrow\functor{M}_0
    (B_{\CATC})$, there are a morphism $f_{\CATC}:A_{\CATC}\rightarrow B_{\CATC}$ and an invertible
    $2$-morphism $\alpha_{\CATD}:\functor{M}_1(f_{\CATC})\Rightarrow f_{\CATD}$;
   \item\label{X2b} for each pair of morphisms $f^1_{\CATC},f^2_{\CATC}:A_{\CATC}\rightarrow
    B_{\CATC}$ and for each pair of $2$-morphisms $\alpha^1_{\CATC},\alpha^2_{\CATC}:f^1_{\CATC}
    \Rightarrow f^2_{\CATC}$, if $\functor{M}_2(\alpha^1_{\CATC})=\functor{M}_2(\alpha^2_{\CATC})$,
    then $\alpha^1_{\CATC}=\alpha^2_{\CATC}$;
   \item\label{X2c} for each pair $f^1_{\CATC},f^2_{\CATC}$ as above and for each $2$-morphism
    $\alpha_{\CATD}:\functor{M}_1(f^1_{\CATC})\Rightarrow\functor{M}_1(f^2_{\CATC})$, there is a
    $2$-morphism $\alpha_{\CATC}:f^1_{\CATC}\Rightarrow f^2_{\CATC}$ such that $\functor{M}_2
    (\alpha_{\CATC})=\alpha_{\CATD}$.
  \end{enumerate}
\end{enumerate}

\emph{Since in all this paper we assume the axiom of choice, then each weak equivalence of
bicategories is a \emphatic{(}strong\emphatic{)} equivalence of bicategories} (also known as
biequivalence, see~\cite[\S~1]{PW}), i.e.\ it admits a quasi-inverse. Conversely, each strong
equivalence of bicategories is a weak equivalence. So in the present setup we will simply write
``equivalence of bicategories'' meaning weak, equivalently strong, equivalence.\\

In the next $2$ sections we will fix any set of data $(\CATA,\SETWA,\CATB,\SETWB,\functor{F})$ as
in Theorem~\ref{theo-01}(i) and we will prove that conditions (\hyperref[X]{X}) for the
pseudofunctor $\functor{M}:=\widetilde{\functor{G}}$ are equivalent to conditions (\hyperref[A]{A})
for the pseudofunctor $\functor{F}$. To be more precise, the chain of implications that we are going
to prove is the following:

\[
\begin{tikzpicture}[xscale=1.8,yscale=-1.2]
    \node (A0_2) at (2, 0) {(\hyperref[A3]{A3})};
    \node (A1_0) at (0, 1) {(\hyperref[X2a]{X2a})};
    \node (A2_2) at (2, 2) {(\hyperref[A2]{A2})};
    \node (A3_0) at (0, 3) {(\hyperref[X2c]{X2c})};
    \node (A4_2) at (2, 4) {(\hyperref[A5]{A5}).};
    \node (C2_1) at (-1, 3) {(\hyperref[X2b]{X2b})};
    \node (C2_2) at (-3, 3) {(\hyperref[A4]{A4})};
    \node (D2_1) at (-1, 1) {(\hyperref[X1]{X1})};
    \node (D2_2) at (-3, 1) {(\hyperref[A1]{A1})};
    
    \node (B0_1) at (1, -0.2) {Lemma~\ref{lem-08}};
    \node (B1_1) at (1, 0.8) {Lemma~\ref{lem-05}};
    \node (B2_1) at (1, 1.8) {Lemma~\ref{lem-07}};
    \node (B3_1) at (1, 2.8) {Lemma~\ref{lem-12}};
    \node (B4_1) at (1, 3.8) {Lemma~\ref{lem-06}};
    
    \node (E0_1) at (-2, 3.8) {Lemma~\ref{lem-09}};
    \node (E1_1) at (-2, 2.8) {Lemma~\ref{lem-10}};
    \node (E3_1) at (-2, 0.8) {Lemma~\ref{lem-01}};
    \node (E4_1) at (-2, -0.2) {Lemma~\ref{lem-11}};
        
    \thickarrowpersonalized{(C2_2) to (-3, 4) to (-1, 4) to (C2_1)}
    \thickarrowpersonalized{(C2_1) to (C2_2)}
    
    \thickarrowpersonalized{(D2_2) to (-3, 0) to (-1, 0) to (D2_1)}
    \thickarrowpersonalized{(D2_1) to (D2_2)}
    \thickarrowpersonalized{(D2_1) to (-0.5, 1) to (-0.5, 0) to (A0_2)}
    \thickarrowpersonalized{(-0.5, 0.6) to (-0.5, 0.5)}
    \thickarrowpersonalized{(-0.3, 0) to (-0.25, 0)}
    
    \thickarrowpersonalized{(A1_0) to (0, 0) to (A0_2)}
    \thickarrowpersonalized{(0, 0.5) to (0, 0.4)}
    \thickarrowpersonalized{(A3_0) to (0, 4) to (A4_2)}
    
    \thickarrowpersonalized{(A1_0) to (0, 2) to (A2_2)}
    \thickarrowpersonalized{(0, 1.5) to (0, 1.6)}
    \thickarrowpersonalized{(A3_0) to (0, 2) to (A2_2)}
    \thickarrowpersonalized{(0, 2.5) to (0, 2.4)}
    \thickarrowpersonalized{(C2_1) to (-1, 2) to (A2_2)}
    \thickarrowpersonalized{(-0.6, 2) to (-0.5, 2)}
    \thickarrowpersonalized{(-1, 2.5) to (-1, 2.4)}
    
    \thickarrowpersonalized{(A0_2) to (2, 1) to (A1_0)}
    \thickarrowpersonalized{(2, 0.5) to (2, 0.6)}
    \thickarrowpersonalized{(A2_2) to (2, 1) to (A1_0)}
    \thickarrowpersonalized{(2, 1.5) to (2, 1.4)}
  
    \thickarrowpersonalized{(A2_2) to (2, 3) to (A3_0)}
    \thickarrowpersonalized{(2, 2.5) to (2, 2.6)}
    \thickarrowpersonalized{(A4_2) to (2, 3) to (A3_0)}
    \thickarrowpersonalized{(2, 3.5) to (2, 3.4)}
    \draw[rounded corners,-] (C2_2) to (-3.4, 3) to (-3.4, 4.5) to (2.4, 4.5) to (2.4, 3) to (A3_0);
    \thickarrowpersonalized{(-3.4, 3.7) to (-3.4, 3.8)}
    \thickarrowpersonalized{(-0.6, 4.5) to (-0.5, 4.5)}
    \thickarrowpersonalized{(2.4, 3.8) to (2.4, 3.7)}
\end{tikzpicture}
\]

In the special case when $\functor{F}$ sends each morphism of $\SETWA$ to an internal equivalence
of $\CATB$ and $\SETWB$ is the class $\SETWBmin$ of quasi-units of $\CATB$
(see~\cite[Definition~3.3]{PP}), we will show in Proposition~\ref{prop-02} that each
condition of type (\hyperref[A]{A}) above
can be replaced by the analogous condition of type (\hyperref[B]{B}).

\section{Necessity of conditions (A1) -- (A5)}
In all this section and in the next one, \emph{we fix any $2$ pairs $(\CATA,\SETWA)$ and
$(\CATB,\SETWB)$ that satisfy conditions} (BF)
\emph{and any pseudofunctor $\functor{F}:\CATA\rightarrow\CATB$, such that $\functor{F}_1(\SETWA)$
$\subseteq\SETWBsat$}; we denote by $\widetilde{\functor{G}}:\CATA\Big[\SETWAinv\Big]\rightarrow
\CATB\Big[\SETWBsatinv\Big]$ the pseudofunctor mentioned in Theorem~\ref{theo-01}. In this section
we will prove that conditions (\hyperref[X]{X}) for $\widetilde{\functor{G}}$ imply
conditions (\hyperref[A]{A}) for $\functor{F}$.

\begin{lem}\label{lem-01}
If $\widetilde{\functor{G}}$ satisfies \emphatic{(\hyperref[X1]{X1})}, then $\functor{F}$
satisfies \emphatic{(\hyperref[A1]{A1})}.
\end{lem}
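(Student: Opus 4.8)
The plan is to extract from condition (\hyperref[X1]{X1}) an internal equivalence in $\CATB\big[\SETWBsatinv\big]$ and then to massage it into the cospan demanded by (\hyperref[A1]{A1}). First I would fix an object $A_{\CATB}$, which is also an object of $\CATB\big[\SETWBsatinv\big]$. Applying (\hyperref[X1]{X1}) to $\widetilde{\functor{G}}$ and recalling from Theorem~\ref{theo-01} that $\widetilde{\functor{G}}_0(A_{\CATA})=\functor{F}_0(A_{\CATA})$, I obtain an object $A_{\CATA}$ of $\CATA$ and an internal equivalence in $\CATB\big[\SETWBsatinv\big]$ from $\functor{F}_0(A_{\CATA})$ to $A_{\CATB}$. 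By the description of morphisms of a bicategory of fractions, this internal equivalence is a cospan
\[
\functor{F}_0(A_{\CATA})\xleftarrow{\ \operatorname{w}\ }A'_{\CATB}\xrightarrow{\ f\ }A_{\CATB},
\]
where $\operatorname{w}$ belongs to $\SETWBsat$ (the denominators used to build $\CATB\big[\SETWBsatinv\big]$) and $f$ is an arbitrary morphism of $\CATB$.

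My first step is to show $f\in\SETWBsat$. Since $\operatorname{w}\in\SETWBsat$, the universal pseudofunctor $\functor{U}_{\SETWBsat}$ sends it to an internal equivalence; precomposing the internal equivalence above with $\functor{U}_{\SETWBsat}(\operatorname{w})$ yields a morphism $2$-isomorphic to $\functor{U}_{\SETWBsat}(f)$, which is therefore itself an internal equivalence, being a composite of two internal equivalences. Invoking the characterization of the saturation as the class of morphisms that the localization turns into internal equivalences, together with the idempotency $(\SETWBsat)_{\sat}=\SETWBsat$ (both from~\cite{T4}), this forces $f\in\SETWBsat$.

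The second step repairs the left leg, which so far lies only in $\SETWBsat$, whereas (\hyperref[A1]{A1}) requires a denominator in $\SETWB$. Using the explicit definition of the saturation applied to $\operatorname{w}$, I would choose a morphism $\operatorname{w}'\colon A''_{\CATB}\to A'_{\CATB}$ with $\operatorname{w}\circ\operatorname{w}'\in\SETWB$, and set $\operatorname{w}^1_{\CATB}:=\operatorname{w}\circ\operatorname{w}'$ and $\operatorname{w}^2_{\CATB}:=f\circ\operatorname{w}'$, taking $A''_{\CATB}$ as the object $A'_{\CATB}$ of (\hyperref[A1]{A1}). Then $\operatorname{w}^1_{\CATB}\in\SETWB$ by construction, while for $\operatorname{w}^2_{\CATB}$ I would use the stability of $\SETWBsat$: from $\operatorname{w}\in\SETWBsat$ and $\operatorname{w}\circ\operatorname{w}'\in\SETWB\subseteq\SETWBsat$ the $2$-out-of-$3$ property of the saturation gives $\operatorname{w}'\in\SETWBsat$, and closure under composition then yields $f\circ\operatorname{w}'\in\SETWBsat$. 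This produces exactly the cospan required by (\hyperref[A1]{A1}).

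I expect the second step, rather than the first, to carry the real content: the passage from an internal equivalence in $\CATB\big[\SETWBsatinv\big]$ (whose right leg is automatically in the saturation) to a genuine $\SETWB$-denominator is where one must be careful, and it rests essentially on the stability properties of $\SETWBsat$ (idempotency, $2$-out-of-$3$, closure under composition) established in~\cite{T3,T4}. The first step is a routine application of the standard fact that a morphism of a bicategory of fractions is an internal equivalence precisely when its right leg lies in the saturation.
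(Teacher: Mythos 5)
Your proof is correct, but it follows a genuinely different route from the paper's. The paper does not manipulate the representative cospan inside $\CATB\left[\SETWBsatinv\right]$ at all: it transfers the internal equivalence produced by (\hyperref[X1]{X1}) into the other localization $\CATB\left[\SETWBinv\right]$, using the comparison pseudofunctor $\functor{H}:\CATB\left[\SETWBsatinv\right]\rightarrow\CATB\left[\SETWBinv\right]$ and the pseudonatural equivalence $\tau:\functor{U}_{\SETWB}\Rightarrow\functor{H}\circ\functor{U}_{\SETWBsat}$ of~\cite[Proposition~0.1]{T4} (composing $\functor{H}_1(\underline{e}_{\CATB})$ with $\tau_{\functor{F}_0(A_{\CATA})}$ and a quasi-inverse of $\tau_{A_{\CATB}}$), and then it quotes~\cite[Corollary~2.7]{T4} for the pair $(\CATB,\SETWB)$, which says directly that any internal equivalence in $\CATB\left[\SETWBinv\right]$ is a cospan with left leg in $\SETWB$ and right leg in $\SETWBsat$, i.e.\ exactly the data required by (\hyperref[A1]{A1}). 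You instead stay inside $\CATB\left[\SETWBsatinv\right]$: your first step amounts to re-proving \cite[Corollary~2.7]{T4} for the pair $(\CATB,\SETWBsat)$ (your argument via $(A'_{\CATB},\operatorname{w},f)\circ\functor{U}_{\SETWBsat}(\operatorname{w})\cong\functor{U}_{\SETWBsat}(f)$, plus the idempotency $\SETWBsatsat=\SETWBsat$ of~\cite[Proposition~2.11(i)]{T4}, is sound, and you could equally well have cited that corollary outright), while your second step repairs the left leg by hand: the definition of right saturation yields $\operatorname{w}'$ with $\operatorname{w}\circ\operatorname{w}'\in\SETWB$, and then \cite[Proposition~2.11(ii)]{T4} together with (BF2) for $(\CATB,\SETWBsat)$ keeps the right leg $f\circ\operatorname{w}'$ in $\SETWBsat$. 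What your route buys is independence from the change-of-localization machinery of~\cite[Proposition~0.1]{T4}, at the price of an explicit precomposition argument; notably, this precomposition trick for trading a $\SETWBsat$-leg for a genuine $\SETWB$-leg is exactly the device the paper itself uses later in Lemmas~\ref{lem-07} and~\ref{lem-08}, so your proof is entirely consonant with the paper's methods, just deployed one lemma earlier.
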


\begin{proof}
By (\hyperref[X1]{X1}), given any object
$A_{\CATB}$ there are an object $A_{\CATA}$ and an internal equivalence $\underline{e}_{\CATB}$ from
$\widetilde{\functor{G}}_0(A_{\CATA})=\functor{F}_0(A_{\CATA})$ to $A_{\CATB}$ in $\CATB[
\SETWBsatinv]$.
By~\cite[Proposition~0.1]{T4} applied to $(\CATB,\SETWB)$, there are a pseudofunctor
$\functor{H}:\CATB
[\SETWBsatinv]\rightarrow\CATB[\SETWBinv]$ and a pseudonatural equivalence
of pseudofunctors $\tau:\functor{U}_{\SETWB}\Rightarrow\functor{H}\circ
\functor{U}_{\SETWBsat}$. Since $\underline{e}_{\CATB}$ is an internal equivalence, then 
there is an internal equivalence in $\CATB\left[\SETWBinv\right]$

\begin{equation}\label{eq-23}
\functor{H}_1(\underline{e}_{\CATB}):\,\,\functor{H}_0\circ\functor{F}_0(A_{\CATA})
\longrightarrow\functor{H}_0(A_{\CATB}).
\end{equation}

Moreover, since $\tau$ is a pseudonatural equivalence of pseudofunctors, then there are
internal equivalences

\begin{gather}
\nonumber \tau_{\functor{F}_0(A_{\CATA})}:\,\functor{F}_0(A_{\CATA})=\functor{U}_{\SETWB,0}\circ
 \functor{F}_0(A_{\CATA})\longrightarrow \\
\label{eq-25} \phantom{\Big(}\longrightarrow\functor{H}_0\circ\functor{U}_{\SETWBsat,0}\circ
 \functor{F}_0(A_{\CATA})=\functor{H}_0\circ\functor{F}_0(A_{\CATA})
\end{gather}
and

\begin{equation}\label{eq-10}
\tau_{A_\CATB}:\,A_{\CATB}=\functor{U}_{\SETWBsat,0}(A_{\CATB})\longrightarrow\functor{H}_0
\circ\functor{U}_{\SETWBsat,0}(A_{\CATB})=\functor{H}_0(A_{\CATB}).
\end{equation}

Composing \eqref{eq-25}, \eqref{eq-23} and any quasi-inverse for \eqref{eq-10}, we get an
internal equivalence from $\functor{F}_0(A_{\CATA})$ to $A_{\CATB}$ in
$\CATB\left[\SETWBinv\right]$; then it suffices to apply~\cite[Corollary~2.7]{T4} for
$(\CATB,\SETWB)$.
\end{proof}

\begin{lem}\label{lem-07}
If $\widetilde{\functor{G}}$ satisfies \emphatic{(\hyperref[X2a]{X2a})},
\emphatic{(\hyperref[X2b]{X2b})} and \emphatic{(\hyperref[X2c]{X2c})}, then
$\functor{F}$ satisfies \emphatic{(\hyperref[A2]{A2})}.
\end{lem}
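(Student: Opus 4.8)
The plan is to read (A2) as the assertion that the hom-functor of $\widetilde{\functor{G}}$ on the pair $(A^1_{\CATA},A^2_{\CATA})$ hits, up to invertible $2$-cell, the equivalence determined by the given span. First I would observe that the span in the hypothesis represents a morphism $f_{\CATB}$ from $\functor{F}_0(A^1_{\CATA})=\widetilde{\functor{G}}_0(A^1_{\CATA})$ to $\functor{F}_0(A^2_{\CATA})=\widetilde{\functor{G}}_0(A^2_{\CATA})$ in $\CATB[\SETWBsatinv]$, since its left leg $\operatorname{w}^1_{\CATB}$ lies in $\SETWB\subseteq\SETWBsat$. Moreover $f_{\CATB}$ is an internal equivalence: both of its legs lie in $\SETWBsat$ and, as $\SETWBsat$ is saturated, the forward leg $\operatorname{w}^2_{\CATB}$ lies in $\SETWBsatsat=\SETWBsat$, so the characterization of internal equivalences in a bicategory of fractions (see~\cite{T4}) applies. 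Applying (X2a) to $(A^1_{\CATA},A^2_{\CATA})$ then yields a morphism $f_{\CATA}=(A^3_{\CATA},\operatorname{w}^1_{\CATA},\operatorname{w}^2_{\CATA})$ of $\CATA[\SETWAinv]$, with $\operatorname{w}^1_{\CATA}\in\SETWA$, together with an invertible $2$-morphism $\alpha_{\CATB}\colon\widetilde{\functor{G}}_1(f_{\CATA})\Rightarrow f_{\CATB}$. This already produces $A^3_{\CATA}$ and the two morphisms $\operatorname{w}^1_{\CATA},\operatorname{w}^2_{\CATA}$ of the claim.

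Next I would unwind $\alpha_{\CATB}$. Since $\widetilde{\functor{G}}_1(f_{\CATA})=(\functor{F}_0(A^3_{\CATA}),\functor{F}_1(\operatorname{w}^1_{\CATA}),\functor{F}_1(\operatorname{w}^2_{\CATA}))$ by Theorem~\ref{theo-01}, a representative of $\alpha_{\CATB}$ is precisely a set of data consisting of an object $A'_{\CATB}$, morphisms $\operatorname{z}^2_{\CATB}\colon A'_{\CATB}\to\functor{F}_0(A^3_{\CATA})$ and $\operatorname{z}^1_{\CATB}\colon A'_{\CATB}\to A_{\CATB}$ with $\functor{F}_1(\operatorname{w}^1_{\CATA})\circ\operatorname{z}^2_{\CATB}\in\SETWBsat$, and two $2$-morphisms $\gamma^1_{\CATB},\gamma^2_{\CATB}$ filling the two triangles of the diagram in (A2). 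The component $\gamma^1_{\CATB}$ (on the distinguished legs) is invertible by the very definition of $2$-cells of $\CATB[\SETWBsatinv]$, and $\gamma^2_{\CATB}$ is invertible because $\alpha_{\CATB}$ is (a $2$-cell of a bicategory of fractions is invertible exactly when its component on the non-distinguished legs is, see~\cite{T3}).

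Two points then remain. To get $\operatorname{w}^2_{\CATA}\in\SETWAsat$ I would prove that $f_{\CATA}$ is itself an internal equivalence of $\CATA[\SETWAinv]$ and apply the characterization of~\cite{T4} once more. Here I use that (X2a)--(X2c) amount exactly to saying that $\widetilde{\functor{G}}(A^1_{\CATA},A^2_{\CATA})$ is an equivalence of categories, and that a pseudofunctor which is a local equivalence reflects internal equivalences: a quasi-inverse of $\widetilde{\functor{G}}_1(f_{\CATA})\cong f_{\CATB}$ is lifted through (X2a), the associated invertible unit and counit $2$-cells are lifted through (X2c), and (X2b) guarantees that the lifts are mutually inverse. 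As $f_{\CATB}$ is an internal equivalence, so is $f_{\CATA}$, whence $\operatorname{w}^2_{\CATA}\in\SETWAsat$. For the last requirement $\operatorname{z}^1_{\CATB}\in\SETWB$, invertibility of $\gamma^1_{\CATB}$ gives $\operatorname{w}^1_{\CATB}\circ\operatorname{z}^1_{\CATB}\in\SETWBsat$, so from $\operatorname{w}^1_{\CATB}\in\SETWB$ and the $2$-out-of-$3$ property of $\SETWBsat$ (see~\cite{T4}) I get $\operatorname{z}^1_{\CATB}\in\SETWBsat$; by definition of the saturation there is then a morphism $\operatorname{g}$ with $\operatorname{z}^1_{\CATB}\circ\operatorname{g}\in\SETWB$. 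Precomposing the representative of $\alpha_{\CATB}$ with $\operatorname{g}$ produces an equivalent representative whose leg towards $A_{\CATB}$ is $\operatorname{z}^1_{\CATB}\circ\operatorname{g}\in\SETWB$; the defining membership condition survives because, via $\gamma^1_{\CATB}$, it is isomorphic to $\operatorname{w}^1_{\CATB}\circ(\operatorname{z}^1_{\CATB}\circ\operatorname{g})$, a composite of two morphisms of $\SETWB$ and hence an element of $\SETWBsat$. Renaming this new representative gives the data $(A'_{\CATB},\operatorname{z}^1_{\CATB},\operatorname{z}^2_{\CATB},\gamma^1_{\CATB},\gamma^2_{\CATB})$ required by (A2).

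I expect the main obstacle to be the reflection-of-internal-equivalences step: one must transport a full set of quasi-inverse data for $\widetilde{\functor{G}}_1(f_{\CATA})$ back across $\widetilde{\functor{G}}$, carefully composing with its associators $\psi^{\functor{F}}_{\bullet}$ and unitors before invoking fullness and faithfulness, and only then feed the resulting internal equivalence $f_{\CATA}$ into the saturation characterization to land $\operatorname{w}^2_{\CATA}$ in $\SETWAsat$. The remaining manipulations are routine bookkeeping with representatives of $2$-cells and with the composition-closure and $2$-out-of-$3$ properties of the saturated class.
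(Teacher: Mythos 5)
Your proposal is correct and follows essentially the same route as the paper's proof: recognize the given span as an internal equivalence in $\CATB\left[\SETWBsatinv\right]$ (via saturation and the characterization of internal equivalences in bicategories of fractions from~\cite{T4}), lift it through (\hyperref[X2a]{X2a}), reflect internal equivalences back to $\CATA\left[\SETWAinv\right]$ using (\hyperref[X2a]{X2a})--(\hyperref[X2c]{X2c}) to force $\operatorname{w}^2_{\CATA}\in\SETWAsat$, unwind the comparison $2$-cell into a representative with invertible components (\cite[Proposition~0.8(ii)]{T3}), and finally use \cite[Proposition~2.11(ii)]{T4} plus the definition of right saturation to replace $\operatorname{z}^1_{\CATB}$ by a composite lying in $\SETWB$. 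The only differences are cosmetic (you whisker $\alpha_{\CATB}$ rather than its inverse, and you unwind the $2$-cell before, rather than after, the reflection step), so nothing substantive is missing.
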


\begin{proof}
Let us fix any set of data as in \eqref{eq-08} with $\operatorname{w}^1_{\CATB}$ in $\SETWB$
and $\operatorname{w}^2_{\CATB}\in\SETWBsat$. By (\hyperref[X2a]{X2a}) for $\functor{M}:=
\widetilde{\functor{G}}$, there are a morphism
$\underline{f}_{\CATA}:A^1_{\CATA}\rightarrow A^2_{\CATA}$ in $\CATA\left[\SETWAinv\right]$ and an
invertible $2$-morphism in $\CATB\left[\SETWBsatinv\right]$

\[\Gamma_{\CATB}:\,\widetilde{\functor{G}}_1\left(\underline{f}_{\CATA}\right)
\Longrightarrow\Big(A_{\CATB},\operatorname{w}^1_{\CATB},\operatorname{w}^2_{\CATB}\Big).\]

Since $\SETWB\subseteq\SETWBsat$ and $\SETWBsatsat=\SETWBsatsat$ (see~\cite[Remark~2.3 and
Proposition~2.11(i)]{T4}), then by~\cite[Corollary~2.7]{T4} for $(\CATB,\SETWBsat)$ we have
that the target of $\Gamma_{\CATB}$ is an internal equivalence in $\CATB\left[\SETWBsatinv
\right]$. Since $\Gamma_{\CATB}$ is an invertible $2$-morphism, then we conclude that
also $\widetilde{\functor{G}}_1
(\underline{f}_{\CATA}):\functor{F}_0(A^1_{\CATA})\rightarrow\functor{F}_0(A^2_{\CATA})$
is an internal equivalence. So there are an internal equivalence
$\underline{g}_{\CATB}:\functor{F}_0(A^2_{\CATA})\rightarrow\functor{F}_0(A^1_{\CATA})$ and invertible
$2$-morphisms in $\CATB\left[\SETWBsatinv\right]$ as follows

\[\Delta_{\CATB}:\,\,\id_{\functor{F}_0(A^1_{\CATA})}\Longrightarrow\underline{g}_{\CATB}\circ
\widetilde{\functor{G}}_1\left(\underline{f}_{\CATA}\right)\quad\quad\textrm{and}\quad\quad
\Xi_{\CATB}:\,\,\widetilde{\functor{G}}_1\left(\underline{f}_{\CATA}\right)\circ
\underline{g}_{\CATB}\Longrightarrow\id_{\functor{F}_0(A^2_{\CATA})}\]
(here the identity $\id_{\functor{F}_0(A^1_{\CATA})}$ belongs to $\CATB\left[\SETWBsatinv\right]$, so
it is given by the triple $(\functor{F}_0(A^1_{\CATA}),$ $\id_{\functor{F}_0(A^1_{\CATA})},
\id_{\functor{F}_0(A^1_{\CATA})})$, and analogously for $\id_{\functor{F}_0(A^2_{\CATA})}$).
By (\hyperref[X2a]{X2a}) for $\underline{g}_{\CATB}$, there are a morphism
$\underline{g}_{\CATA}:A^2_{\CATA}\rightarrow
A^1_{\CATA}$ in $\CATA\left[\SETWAinv\right]$ and an invertible $2$-morphism $\Omega_{\CATB}:
\widetilde{\functor{G}}_1(\underline{g}_{\CATA})\Rightarrow\underline{g}_{\CATB}$
in $\CATB\left[\SETWBsatinv\right]$. Let us denote by

\[\Psi^{\widetilde{\functor{G}}}_{\underline{g}_{\CATA},\underline{f}_{\CATA}}:\,
\widetilde{\functor{G}}_1\left(\underline{g}_{\CATA}\circ\underline{f}_{\CATA}\right)\Longrightarrow
\widetilde{\functor{G}}_1\left(\underline{g}_{\CATA}\right)\circ\widetilde{\functor{G}}_1
\left(\underline{f}_{\CATA}\right)\]
the associator of $\widetilde{\functor{G}}$ for the pair $(\underline{g}_{\CATA},
\underline{f}_{\CATA})$ and by

\[\Sigma_{A^1_{\CATA}}^{\widetilde{\functor{G}}}:\,\,\widetilde{\functor{G}}_1\left(\id_{A^1_{\CATA}}
\right)\Longrightarrow\id_{\widetilde{\functor{G}}_0(A^1_{\CATA})}=\id_{\functor{F}_0(A^1_{\CATA})}\]
the unitor of $\widetilde{\functor{G}}$ for $A^1_{\CATA}$. Then it makes sense to consider the
invertible $2$-morphism

\[\widetilde{\Delta}_{\CATB}:=\left(\Psi_{\underline{g}_{\CATA},
\underline{f}_{\CATA}}^{\widetilde{\functor{G}}}\right)^{-1}\odot\Big(
\Omega_{\CATB}^{-1}\ast i_{\widetilde{\functor{G}}_1(\underline{f}_{\CATA})}\Big)\odot
\Delta_{\CATB}\odot\Sigma_{A^1_{\CATA}}^{\widetilde{\functor{G}}}:\,\,\widetilde{\functor{G}}_1
\left(\id_{A^1_{\CATA}}\right)\Longrightarrow\widetilde{\functor{G}}_1\left(\underline{g}_{\CATA}
\circ\underline{f}_{\CATA}\right).\]

By (\hyperref[X2b]{X2b}) and (\hyperref[X2c]{X2c}) for $\functor{M}:=\widetilde{\functor{G}}$,
there is a unique invertible $2$-morphism $\Delta_{\CATA}:
\id_{A^1_{\CATA}}\Rightarrow\underline{g}_{\CATA}\circ\underline{f}_{\CATA}$ in $\CATA\left[\SETWAinv
\right]$, such that
$\widetilde{\functor{G}}_2(\Delta_{\CATA})=\widetilde{\Delta}_{\CATB}$. Analogously, there is
an invertible $2$-morphism $\Xi_{\CATA}:\underline{f}_{\CATA}\circ\underline{g}_{\CATA}\Rightarrow
\id_{A^2_{\CATA}}$. This proves that $\underline{f}_{\CATA}$ is an internal equivalence in $\CATA
\left[\SETWAinv\right]$. By~\cite[Corollary~2.7]{T4} for $(\CATA,\SETWA)$, we get that necessarily
$\underline{f}_{\CATA}$ has the following form

\[
\begin{tikzpicture}[xscale=2.4,yscale=-1.2]
    \node (A0_0) at (-0.2, 0) {$\underline{f}_{\CATA}=\Big(A^1_{\CATA}$};
    \node (A0_1) at (1, 0) {$A^3_{\CATA}$};
    \node (A0_2) at (2, 0) {$A^2_{\CATA}\Big)$,};
    
    \path (A0_1) edge [->]node [auto,swap] {$\scriptstyle{\operatorname{w}^1_{\CATA}}$} (A0_0);
    \path (A0_1) edge [->]node [auto] {$\scriptstyle{\operatorname{w}^2_{\CATA}}$} (A0_2);
\end{tikzpicture}
\]
with $\operatorname{w}^1_{\CATA}$ in $\SETWA$ and $\operatorname{w}^2_{\CATA}$ in $\SETWAsat$. Now
we use the description of $\widetilde{\functor{G}}_1$ in Theorem~\ref{theo-01}
and~\cite[Proposition~0.8(ii)]{T3} for $(\CATB,\SETWBsat)$; so we have that $\Gamma^{-1}_{\CATB}$ is 
represented by a set of data as in the internal part of the following diagram:

\[
\begin{tikzpicture}[xscale=1.8,yscale=-0.8]
    \node (A0_2) at (2, 0) {$A_{\CATB}$};
    \node (A2_2) at (2, 2) {$\overline{A}_{\CATB}$};
    \node (A2_0) at (0, 2) {$\functor{F}_0(A^1_{\CATA})$};
    \node (A2_4) at (4, 2) {$\functor{F}_0(A^2_{\CATA})$,};
    \node (A4_2) at (2, 4) {$\functor{F}_0(A^3_{\CATA})$};
    
    \node (A2_3) at (2.8, 2) {$\Downarrow\,\eta^2_{\CATB}$};
    \node (A2_1) at (1.2, 2) {$\Downarrow\,\eta^1_{\CATB}$};
    
    \path (A4_2) edge [->]node [auto,swap]
      {$\scriptstyle{\functor{F}_1(\operatorname{w}^2_{\CATA})}$} (A2_4);
    \path (A0_2) edge [->]node [auto] {$\scriptstyle{\operatorname{w}^2_{\CATB}}$} (A2_4);
    \path (A2_2) edge [->]node [auto,swap] {$\scriptstyle{\operatorname{u}^1_{\CATB}}$} (A0_2);
    \path (A2_2) edge [->]node [auto] {$\scriptstyle{\operatorname{u}^2_{\CATB}}$} (A4_2);
    \path (A4_2) edge [->]node [auto]
      {$\scriptstyle{\functor{F}_1(\operatorname{w}^1_{\CATA})}$} (A2_0);
    \path (A0_2) edge [->]node [auto,swap] {$\scriptstyle{\operatorname{w}^1_{\CATB}}$} (A2_0);
\end{tikzpicture}
\]
such that $\operatorname{w}^1_{\CATB}\circ\operatorname{u}^1_{\CATB}$ belongs to $\SETWBsat$ and 
both $\eta^1_{\CATB}$ and $\eta^2_{\CATB}$ are invertible. Since $\operatorname{w}^1_{\CATB}$ belongs
to $\SETWB\subseteq\SETWBsat$, then by~\cite[Proposition~2.11(ii)]{T4} we get that
$\operatorname{u}^1_{\CATB}$ belongs to $\SETWBsat$. By definition of $\SETWBsat$, there are
an object $A'_{\CATB}$ and a morphism $\operatorname{t}_{\CATB}:A'_{\CATB}\rightarrow
\overline{A}_{\CATB}$, such that $\operatorname{u}^1_{\CATB}\circ\operatorname{t}_{\CATB}$ belongs to
$\SETWB$. Then in order to conclude that (\hyperref[A2]{A2}) holds, it suffices to define for each $m=1,2$
$\operatorname{z}^m_{\CATB}:=\operatorname{u}^m_{\CATB}\circ\operatorname{t}_{\CATB}$ and

\[\gamma^m_{\CATB}:=\thetab{\functor{F}_1(\operatorname{w}^m_{\CATA})}{\operatorname{u}^2_{\CATB}}
{\operatorname{t}_{\CATB}}\odot\Big(\eta^m_{\CATB}\ast i_{\operatorname{t}_{\CATB}}\Big)\odot
\thetaa{\operatorname{w}^m_{\CATB}}{\operatorname{u}^1_{\CATB}}{\operatorname{t}_{\CATB}}:\,\,
\operatorname{w}^m_{\CATB}\circ\operatorname{z}^1_{\CATB}\Longrightarrow\functor{F}_1
(\operatorname{w}^m_{\CATA})\circ\operatorname{z}^2_{\CATB}.\]
\end{proof}

\begin{lem}\label{lem-08}
If $\widetilde{\functor{G}}$ satisfies \emphatic{(\hyperref[X1]{X1})} and
\emphatic{(\hyperref[X2a]{X2a})}, then $\functor{F}$ satisfies
\emphatic{(\hyperref[A3]{A3})}.
\end{lem}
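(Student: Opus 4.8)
The plan is to obtain the data required by (\hyperref[A3]{A3}) by transporting $f_{\CATB}$ into $\CATB[\SETWBsatinv]$, lifting it back along $\widetilde{\functor{G}}$ via (\hyperref[X2a]{X2a}), and then reading off the span and the $2$-cell from the resulting invertible $2$-morphism. First I would apply (\hyperref[X1]{X1}) to the object $A_{\CATB}$: this gives an object $A^0_{\CATA}$ of $\CATA$ and an internal equivalence $\underline{e}_{\CATB}:\functor{F}_0(A^0_{\CATA})\rightarrow A_{\CATB}$ in $\CATB[\SETWBsatinv]$. By~\cite[Corollary~2.7]{T4} for $(\CATB,\SETWBsat)$ (using $\SETWBsatsat=\SETWBsat$, see~\cite[Proposition~2.11(i)]{T4}), $\underline{e}_{\CATB}$ is represented by a triple $(E_{\CATB},\operatorname{e}^1_{\CATB},\operatorname{e}^2_{\CATB})$ with both $\operatorname{e}^1_{\CATB}:E_{\CATB}\rightarrow\functor{F}_0(A^0_{\CATA})$ and $\operatorname{e}^2_{\CATB}:E_{\CATB}\rightarrow A_{\CATB}$ in $\SETWBsat$. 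The composite $\functor{U}_{\SETWBsat,1}(f_{\CATB})\circ\underline{e}_{\CATB}$ is then a morphism $\widetilde{\functor{G}}_0(A^0_{\CATA})\rightarrow\widetilde{\functor{G}}_0(B_{\CATA})$ in $\CATB[\SETWBsatinv]$; since the first leg of $\functor{U}_{\SETWBsat,1}(f_{\CATB})$ is an identity, one checks that it is canonically isomorphic, via an invertible $2$-morphism $\Lambda_{\CATB}$ coming from the composition structure of $\CATB[\SETWBsatinv]$, to the triple $(E_{\CATB},\operatorname{e}^1_{\CATB},f_{\CATB}\circ\operatorname{e}^2_{\CATB})$.

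Next I would apply (\hyperref[X2a]{X2a}) for $\functor{M}:=\widetilde{\functor{G}}$ to this composite, obtaining a morphism $\underline{f}_{\CATA}:A^0_{\CATA}\rightarrow B_{\CATA}$ in $\CATA[\SETWAinv]$ and an invertible $2$-morphism $\Gamma_{\CATB}:\widetilde{\functor{G}}_1(\underline{f}_{\CATA})\Rightarrow\functor{U}_{\SETWBsat,1}(f_{\CATB})\circ\underline{e}_{\CATB}$. Writing $\underline{f}_{\CATA}=(A_{\CATA},\operatorname{z}_{\CATA},f_{\CATA})$ with $\operatorname{z}_{\CATA}:A_{\CATA}\rightarrow A^0_{\CATA}$ in $\SETWA$ and $f_{\CATA}:A_{\CATA}\rightarrow B_{\CATA}$, the description of $\widetilde{\functor{G}}_1$ in Theorem~\ref{theo-01} gives $\widetilde{\functor{G}}_1(\underline{f}_{\CATA})=(\functor{F}_0(A_{\CATA}),\functor{F}_1(\operatorname{z}_{\CATA}),\functor{F}_1(f_{\CATA}))$, where $\functor{F}_1(\operatorname{z}_{\CATA})\in\SETWBsat$ since $\functor{F}_1(\SETWA)\subseteq\SETWBsat$; the object $A_{\CATA}$ and the morphism $f_{\CATA}$ are those required by (\hyperref[A3]{A3}). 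Composing $\Gamma_{\CATB}$ with $\Lambda_{\CATB}$ and reading off the resulting invertible $2$-morphism $\widetilde{\functor{G}}_1(\underline{f}_{\CATA})\Rightarrow(E_{\CATB},\operatorname{e}^1_{\CATB},f_{\CATB}\circ\operatorname{e}^2_{\CATB})$ by means of~\cite[Proposition~0.8(ii)]{T3} for $(\CATB,\SETWBsat)$, I obtain an object $\overline{A}_{\CATB}$, morphisms $\operatorname{u}^1_{\CATB}:\overline{A}_{\CATB}\rightarrow\functor{F}_0(A_{\CATA})$ and $\operatorname{u}^2_{\CATB}:\overline{A}_{\CATB}\rightarrow E_{\CATB}$, and invertible $2$-morphisms $\eta^1_{\CATB}:\functor{F}_1(\operatorname{z}_{\CATA})\circ\operatorname{u}^1_{\CATB}\Rightarrow\operatorname{e}^1_{\CATB}\circ\operatorname{u}^2_{\CATB}$ and $\eta^2_{\CATB}:\functor{F}_1(f_{\CATA})\circ\operatorname{u}^1_{\CATB}\Rightarrow(f_{\CATB}\circ\operatorname{e}^2_{\CATB})\circ\operatorname{u}^2_{\CATB}$, subject to $\functor{F}_1(\operatorname{z}_{\CATA})\circ\operatorname{u}^1_{\CATB}\in\SETWBsat$.

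To finish I would first derive the needed memberships. From $\functor{F}_1(\operatorname{z}_{\CATA})\in\SETWBsat$ and $\functor{F}_1(\operatorname{z}_{\CATA})\circ\operatorname{u}^1_{\CATB}\in\SETWBsat$, Proposition~2.11(ii) of~\cite{T4} yields $\operatorname{u}^1_{\CATB}\in\SETWBsat$; since $\eta^1_{\CATB}$ is invertible, $\operatorname{e}^1_{\CATB}\circ\operatorname{u}^2_{\CATB}\in\SETWBsat$ as well, whence $\operatorname{u}^2_{\CATB}\in\SETWBsat$ by the same proposition, using $\operatorname{e}^1_{\CATB}\in\SETWBsat$, and therefore $\operatorname{e}^2_{\CATB}\circ\operatorname{u}^2_{\CATB}\in\SETWBsat$. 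The only genuine point is that (\hyperref[A3]{A3}) demands the leg landing in $A_{\CATB}$ to lie in $\SETWB$, not merely in $\SETWBsat$; I resolve this exactly as at the end of Lemma~\ref{lem-07}: by the very definition of the saturation there are an object $A'_{\CATB}$ and a morphism $\operatorname{t}_{\CATB}:A'_{\CATB}\rightarrow\overline{A}_{\CATB}$ such that $(\operatorname{e}^2_{\CATB}\circ\operatorname{u}^2_{\CATB})\circ\operatorname{t}_{\CATB}\in\SETWB$, and then $\operatorname{t}_{\CATB}\in\SETWBsat$ again by Proposition~2.11(ii) of~\cite{T4}. Setting $\operatorname{v}^1_{\CATB}:=(\operatorname{e}^2_{\CATB}\circ\operatorname{u}^2_{\CATB})\circ\operatorname{t}_{\CATB}\in\SETWB$ and $\operatorname{v}^2_{\CATB}:=\operatorname{u}^1_{\CATB}\circ\operatorname{t}_{\CATB}\in\SETWBsat$, and defining $\alpha_{\CATB}$ by whiskering $\eta^2_{\CATB}$ on the right with $\operatorname{t}_{\CATB}$ and inserting the associators $\theta_{\bullet}$ of $\CATB$ to rebracket both sides, produces an invertible $2$-morphism $\alpha_{\CATB}:\functor{F}_1(f_{\CATA})\circ\operatorname{v}^2_{\CATB}\Rightarrow f_{\CATB}\circ\operatorname{v}^1_{\CATB}$, which is precisely the data required by (\hyperref[A3]{A3}).

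The main obstacle I anticipate is bookkeeping rather than conceptual: computing the composite $\functor{U}_{\SETWBsat,1}(f_{\CATB})\circ\underline{e}_{\CATB}$ together with its structural isomorphism $\Lambda_{\CATB}$ explicitly in Pronk's construction, and then assembling $\alpha_{\CATB}$ out of $\eta^2_{\CATB}$ with the correct string of associators and unitors. The one step requiring real care is the upgrade of $\operatorname{v}^1_{\CATB}$ from $\SETWBsat$ to $\SETWB$ via the saturation, which reflects the asymmetry between $\SETWB$ and $\SETWBsat$ built into the statement of (\hyperref[A3]{A3}).
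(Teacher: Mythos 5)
Your proposal is correct, and its engine is the same as the paper's: lift a fraction encoding $f_{\CATB}$ through $\widetilde{\functor{G}}$ via (\hyperref[X2a]{X2a}), unpack the resulting invertible $2$-morphism with~\cite[Proposition~0.8(ii)]{T3} for $(\CATB,\SETWBsat)$, and then repair the memberships with~\cite[Proposition~2.11(ii)]{T4}, (BF2), (BF5) and the definition of right saturation, exactly as the paper does. The differences are in the setup. The paper first invokes Lemma~\ref{lem-01} (i.e.\ (\hyperref[X1]{X1})~$\Rightarrow$~(\hyperref[A1]{A1})), which hands it a span $\functor{F}_0(\overline{A}_{\CATA})\leftarrow\overline{A}_{\CATB}\rightarrow A_{\CATB}$ in $\CATB$ whose backward leg $\operatorname{w}^1_{\CATB}$ already lies in $\SETWB$, and then applies (\hyperref[X2a]{X2a}) directly to the fraction $(\overline{A}_{\CATB},\operatorname{w}^1_{\CATB},f_{\CATB}\circ\operatorname{w}^2_{\CATB})$, so no composition inside $\CATB\left[\SETWBsatinv\right]$ is ever computed. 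You instead stay inside $\CATB\left[\SETWBsatinv\right]$: you represent the internal equivalence from (\hyperref[X1]{X1}) via~\cite[Corollary~2.7]{T4} for the saturated pair (both legs only in $\SETWBsat$), form the composite $\functor{U}_{\SETWBsat,1}(f_{\CATB})\circ\underline{e}_{\CATB}$, and pay for this with the canonical identification $\Lambda_{\CATB}$ --- a true but bookkeeping-heavy step that you could skip entirely by applying (\hyperref[X2a]{X2a}) directly to the triple $(E_{\CATB},\operatorname{e}^1_{\CATB},f_{\CATB}\circ\operatorname{e}^2_{\CATB})$, which is a perfectly good morphism from $\widetilde{\functor{G}}_0(A^0_{\CATA})$ to $\widetilde{\functor{G}}_0(B_{\CATA})$ whether or not it is exhibited as a composite. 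The fact that your backward leg is only in $\SETWBsat$ rather than in $\SETWB$ costs nothing: the final upgrade to $\SETWB$ goes through the saturation definition and~\cite[Proposition~2.11(ii)]{T4} in either version, and your chain of membership deductions ($\operatorname{u}^1_{\CATB}$, then $\operatorname{e}^1_{\CATB}\circ\operatorname{u}^2_{\CATB}$, then $\operatorname{u}^2_{\CATB}$, then $\operatorname{e}^2_{\CATB}\circ\operatorname{u}^2_{\CATB}$, then $\operatorname{t}_{\CATB}$) is sound. So what your route buys is independence from Lemma~\ref{lem-01}; what the paper's route buys is the elimination of any computation of composition in the bicategory of fractions.
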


\begin{proof}
Let us fix any pair of objects $B_{\CATA},A_{\CATB}$ and any morphism $f_{\CATB}:A_{\CATB}\rightarrow
\functor{F}_0(B_{\CATA})$. By (\hyperref[X1]{X1}) and Lemma~\ref{lem-01}, (\hyperref[A1]{A1}) holds
for $\functor{F}$, so there are a pair of objects
$\overline{A}_{\CATA},\overline{A}_{\CATB}$ and a pair of morphism $\operatorname{w}^1_{\CATB}$ in
$\SETWB$ and $\operatorname{w}^2_{\CATB}$ in $\SETWBsat$ as follows

\[
\begin{tikzpicture}[xscale=2.4,yscale=-1.2]
    \node (A0_0) at (-0.1, 0) {$\functor{F}_0(\overline{A}_{\CATA})$};
    \node (A0_1) at (1, 0) {$\overline{A}_{\CATB}$};
    \node (A0_2) at (2, 0) {$A_{\CATB}$.};
    
    \path (A0_1) edge [->]node [auto,swap] {$\scriptstyle{\operatorname{w}^1_{\CATB}}$} (A0_0);
    \path (A0_1) edge [->]node [auto] {$\scriptstyle{\operatorname{w}^2_{\CATB}}$} (A0_2);
\end{tikzpicture}
\]

Let us consider the morphism in $\CATB\left[\SETWBsatinv\right]$ defined as follows

\[
\begin{tikzpicture}[xscale=2.6,yscale=-1.2]
    \node (A0_0) at (-0.2, 0) {$\underline{f}_{\CATB}:=\Big(\functor{F}_0(\overline{A}_{\CATA})$};
    \node (A0_1) at (1, 0) {$\overline{A}_{\CATB}$};
    \node (A0_2) at (2, 0) {$\functor{F}_0(B_{\CATA})\Big)$.};
    
    \path (A0_1) edge [->]node [auto,swap] {$\scriptstyle{\operatorname{w}^1_{\CATB}}$} (A0_0);
    \path (A0_1) edge [->]node [auto]
      {$\scriptstyle{f_{\CATB}\circ\operatorname{w}^2_{\CATB}}$} (A0_2);
\end{tikzpicture}
\]

By condition (\hyperref[X2a]{X2a}), there are a morphism $\underline{f}_{\CATA}:\overline{A}_{\CATA}
\rightarrow B_{\CATA}$ in $\CATA \left[\SETWAinv\right]$ and an invertible
$2$-morphism

\[\Gamma_{\CATB}:\,\,\widetilde{\functor{G}}_1\left(\underline{f}_{\CATA}\right)\Longrightarrow
\underline{f}_{\CATB}\]
in $\CATB\left[\SETWBsatinv\right]$. By the description of bicategories of fractions
in~\cite[\S~2.2]{Pr},
$\underline{f}_{\CATA}$ is given by a triple $(A_{\CATA},\operatorname{w}_{\CATA},f_{\CATA})$
as follows

\[
\begin{tikzpicture}[xscale=2.4,yscale=-1.2]
    \node (A0_0) at (-0.2, 0) {$\underline{f}_{\CATA}:=\Big(\overline{A}_{\CATA}$};
    \node (A0_1) at (1, 0) {$A_{\CATA}$};
    \node (A0_2) at (2, 0) {$B_{\CATA}\Big)$};
    
    \path (A0_1) edge [->]node [auto,swap] {$\scriptstyle{\operatorname{w}_{\CATA}}$} (A0_0);
    \path (A0_1) edge [->]node [auto] {$\scriptstyle{f_{\CATA}}$} (A0_2);
\end{tikzpicture}
\]
with $\operatorname{w}_{\CATA}$ in $\SETWA$, so by the description of $\widetilde{\functor{G}}_1$ in
Theorem~\ref{theo-01}, the source of $\Gamma_{\CATB}$ is the triple $(\functor{F}_0(A_{\CATA}),
\functor{F}_1(\operatorname{w}_{\CATA}),\functor{F}_1(f_{\CATA}))$. By~\cite[Proposition~0.8(ii)]{T3}
for the pair $(\CATC,\SETW):=(\CATB,\SETWB)$, $\Gamma_{\CATB}^{-1}$ is represented by a set
of data in $\CATB$ as follows

\[
\begin{tikzpicture}[xscale=1.8,yscale=-0.8]
    \node (A0_2) at (2, 0) {$\overline{A}_{\CATB}$};
    \node (A2_2) at (2, 2) {$\widetilde{A}_{\CATB}$};
    \node (A2_0) at (0, 2) {$\functor{F}_0(\overline{A}_{\CATA})$};
    \node (A2_4) at (4, 2) {$\functor{F}_0(B_{\CATA})$,};
    \node (A4_2) at (2, 4) {$\functor{F}_0(A_{\CATA})$};
    
    \node (A2_3) at (2.8, 2) {$\Downarrow\,\gamma^2_{\CATB}$};
    \node (A2_1) at (1.2, 2) {$\Downarrow\,\gamma^1_{\CATB}$};
    
    \path (A4_2) edge [->]node [auto,swap] {$\scriptstyle{\functor{F}_1(f_{\CATA})}$} (A2_4);
    \path (A0_2) edge [->]node [auto]
      {$\scriptstyle{f_{\CATB}\circ\operatorname{w}^2_{\CATB}}$} (A2_4);
    \path (A2_2) edge [->]node [auto,swap] {$\scriptstyle{\operatorname{z}^1_{\CATB}}$} (A0_2);
    \path (A2_2) edge [->]node [auto] {$\scriptstyle{\operatorname{z}^2_{\CATB}}$} (A4_2);
    \path (A4_2) edge [->]node [auto]
      {$\scriptstyle{\functor{F}_1(\operatorname{w}_{\CATA})}$} (A2_0);
    \path (A0_2) edge [->]node [auto,swap] {$\scriptstyle{\operatorname{w}^1_{\CATB}}$} (A2_0);
\end{tikzpicture}
\]
such that $\operatorname{w}^1_{\CATB}\circ\operatorname{z}^1_{\CATB}$ belong to $\SETWBsat$ and both
$\gamma^1_{\CATB}$ and $\gamma^2_{\CATB}$ are invertible. Since $\operatorname{w}^1_{\CATB}$ belongs
to $\SETWB\subseteq\SETWBsat$, then by~\cite[Proposition~2.11(ii)]{T4} we get that
$\operatorname{z}^1_{\CATB}$ belongs to $\SETWBsat$. Therefore, by axiom (BF2)
for $(\CATB,\SETWBsat)$ (see~\cite[Lemma~2.8]{T4}),
$\operatorname{w}^2_{\CATB}\circ\operatorname{z}^1_{\CATB}$
belongs to $\SETWBsat$. So by definition of $\SETWBsat$ there are an object $A'_{\CATB}$ and a morphism
$\operatorname{t}_{\CATB}:A'_{\CATB}\rightarrow\widetilde{A}_{\CATB}$, such that
$\operatorname{v}^1_{\CATB}:=(\operatorname{w}^2_{\CATB}\circ\operatorname{z}^1_{\CATB})\circ
\operatorname{t}_{\CATB}$ belongs to $\SETWB$. Since $\operatorname{w}^2_{\CATB}\circ
\operatorname{z}^1_{\CATB}$ belong to $\SETWBsat$, then by~\cite[Proposition~2.11(ii)]{T4} we get that
$\operatorname{t}_{\CATB}$ belongs to $\SETWBsat$.\\

Using (BF5) for $(\CATB,\SETWBsat)$ (see again~\cite[Lemma~2.8]{T4})
and $(\gamma^1_{\CATB})^{-1}$, we get that $\functor{F}_1(\operatorname{w}_{\CATA})\circ
\operatorname{z}^2_{\CATB}$ belongs to $\SETWBsat$. In all this section we are assuming that
$\functor{F}_1(\SETWA)\subseteq\SETWBsat$, so $\functor{F}_1(\operatorname{w}_{\CATA})$ belongs
to $\SETWBsat$. Hence by~\cite[Proposition~2.11(ii)]{T4} we get that
$\operatorname{z}^2_{\CATB}$ belongs to $\SETWBsat$. So by (BF2) also the morphism
$\operatorname{v}^2_{\CATB}:=\operatorname{z}^2_{\CATB}\circ\operatorname{t}_{\CATB}$ belongs to
$\SETWBsat$. Then we set

\begin{gather*}
\alpha_{\CATB}:=\thetab{\functor{F}_1(f_{\CATA})}{\operatorname{z}^2_{\CATB}}
 {\operatorname{t}_{\CATB}}\odot\Big(\gamma_{\CATB}^2\ast i_{\operatorname{t}_{\CATB}}\Big)\odot
 \Big(\thetaa{f_{\CATB}}{\operatorname{w}^2_{\CATB}}{\operatorname{z}^1_{\CATB}}
 \ast i_{\operatorname{t}_{\CATB}}\Big)\odot\thetaa{f_{\CATB}}{\operatorname{w}^2_{\CATB}\circ
 \operatorname{z}^1_{\CATB}}{\operatorname{t}_{\CATB}}:\\
\phantom{\Big(} f_{\CATB}\circ\operatorname{v}^1_{\CATB}\Longrightarrow\functor{F}_1(f_{\CATA})\circ
 \operatorname{v}^2_{\CATB}.
\end{gather*}

This suffices to conclude that (\hyperref[A3]{A3}) holds for $\functor{F}$.
\end{proof}

\begin{lem}\label{lem-10}
If $\widetilde{\functor{G}}$ satisfies \emphatic{(\hyperref[X2b]{X2b})}, then $\functor{F}$
satisfies \emphatic{(\hyperref[A4]{A4})}.
\end{lem}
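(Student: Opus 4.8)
The plan is to transport the two $2$-morphisms $\gamma^1_{\CATA},\gamma^2_{\CATA}$ into the bicategory of fractions, compare their images under $\widetilde{\functor{G}}$, and then invoke the faithfulness hypothesis (\hyperref[X2b]{X2b}). Concretely, I first form the $2$-morphisms $\underline{\gamma}^k_{\CATA}:=\functor{U}_{\SETWA,2}(\gamma^k_{\CATA})$ for $k=1,2$. Since $\gamma^k_{\CATA}:f^1_{\CATA}\Rightarrow f^2_{\CATA}$, each $\underline{\gamma}^k_{\CATA}$ is a $2$-morphism in $\CATA[\SETWAinv]$ from $\functor{U}_{\SETWA}(f^1_{\CATA})=(A_{\CATA},\id_{A_{\CATA}},f^1_{\CATA})$ to $\functor{U}_{\SETWA}(f^2_{\CATA})=(A_{\CATA},\id_{A_{\CATA}},f^2_{\CATA})$, represented (up to unitors) by a class of the form $[A_{\CATA},\id_{A_{\CATA}},\id_{A_{\CATA}},\ast,\gamma^k_{\CATA}\text{-part}]$ in which only the last entry depends on $k$. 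In particular $\underline{\gamma}^1_{\CATA}$ and $\underline{\gamma}^2_{\CATA}$ share the same source and target, so (\hyperref[X2b]{X2b}) will be applicable to them.

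The key step is to show that $\widetilde{\functor{G}}_2(\underline{\gamma}^1_{\CATA})=\widetilde{\functor{G}}_2(\underline{\gamma}^2_{\CATA})$. Using the explicit description of $\widetilde{\functor{G}}_2$ from \eqref{eq-07}, both images are classes of $2$-morphisms in $\CATB[\SETWBsatinv]$ with identical entries except in the component built from $\functor{F}_2(\gamma^k_{\CATA})$, conjugated by the associators $\psi^{\functor{F}}_{\bullet}$ and by the unitors coming from $\functor{U}_{\SETWA}$, which are the same for $k=1,2$. To conclude that these two classes coincide, I exhibit a common refinement for the equivalence relation on $2$-cells of $\CATB[\SETWBsatinv]$: since $\operatorname{z}_{\CATB}\in\SETWB\subseteq\SETWBsat$ (see \cite[Remark~2.3]{T4}) and, as $\functor{F}_1(\id_{A_{\CATA}})\circ\operatorname{z}_{\CATB}\cong\operatorname{z}_{\CATB}$ via $\sigma^{\functor{F}}_{A_{\CATA}}$, this composite also lies in $\SETWBsat$, the morphism $\operatorname{z}_{\CATB}$ is an admissible refinement morphism. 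After whiskering the two representatives by $\operatorname{z}_{\CATB}$, the associators and unitors cancel and the required equality reduces exactly to the hypothesis $\functor{F}_2(\gamma^1_{\CATA})\ast i_{\operatorname{z}_{\CATB}}=\functor{F}_2(\gamma^2_{\CATA})\ast i_{\operatorname{z}_{\CATB}}$. Hence the two classes are equal.

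With $\widetilde{\functor{G}}_2(\underline{\gamma}^1_{\CATA})=\widetilde{\functor{G}}_2(\underline{\gamma}^2_{\CATA})$ established, condition (\hyperref[X2b]{X2b}) for $\functor{M}:=\widetilde{\functor{G}}$ yields $\underline{\gamma}^1_{\CATA}=\underline{\gamma}^2_{\CATA}$, i.e.\ $\functor{U}_{\SETWA,2}(\gamma^1_{\CATA})=\functor{U}_{\SETWA,2}(\gamma^2_{\CATA})$. It then remains to unwind this equality of $2$-morphism classes in $\CATA[\SETWAinv]$: by the equivalence relation defining such $2$-cells (see \cite{Pr} or \cite{T3}), there are an object $A'_{\CATA}$ and a morphism $\operatorname{z}_{\CATA}:A'_{\CATA}\rightarrow A_{\CATA}$ with $\id_{A_{\CATA}}\circ\operatorname{z}_{\CATA}\in\SETWA$ (whence $\operatorname{z}_{\CATA}\in\SETWA$ up to the left unitor, which does not affect the whiskered identity below) such that the two $\gamma^k_{\CATA}$-components agree after whiskering by $\operatorname{z}_{\CATA}$; cancelling unitors gives $\gamma^1_{\CATA}\ast i_{\operatorname{z}_{\CATA}}=\gamma^2_{\CATA}\ast i_{\operatorname{z}_{\CATA}}$, which is precisely (\hyperref[A4]{A4}). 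I expect the main obstacle to be the bookkeeping in the middle step: correctly identifying $\operatorname{z}_{\CATB}$ as a refinement morphism for the saturated calculus of fractions, and checking that all the structural $2$-cells (the associators $\psi^{\functor{F}}_{\bullet}$, the unitor $\sigma^{\functor{F}}_{A_{\CATA}}$ and the unitors of $\CATB$) cancel so that the equality of the two $\widetilde{\functor{G}}_2$-images genuinely reduces to the given whiskered identity.
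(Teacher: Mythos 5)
Your proof is correct and follows essentially the same route as the paper's: the paper likewise takes the images of $\gamma^1_{\CATA},\gamma^2_{\CATA}$ under $\functor{U}_{\SETWA}$ (written there explicitly as $\Gamma^m_{\CATA}=[A_{\CATA},\id_{A_{\CATA}},\id_{A_{\CATA}},i,\gamma^m_{\CATA}\ast i_{\id_{A_{\CATA}}}]$), uses \eqref{eq-07} and a change of representative along $\operatorname{z}_{\CATB}$ to get $\widetilde{\functor{G}}_2(\Gamma^1_{\CATA})=\widetilde{\functor{G}}_2(\Gamma^2_{\CATA})$ from the hypothesis, applies (\hyperref[X2b]{X2b}), and then extracts $\operatorname{z}_{\CATA}\in\SETWA$ via \cite[Proposition~0.7]{T3}, which is exactly the clean packaging of the ``unwind the equivalence relation'' step you perform by hand at the end.
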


\begin{proof}
Let us fix any set of data $(A_{\CATA},B_{\CATA},f^1_{\CATA},f^2_{\CATA},\gamma^1_{\CATA},
\gamma^2_{\CATA},A'_{\CATB},\operatorname{z}_{\CATB})$ as in (\hyperref[A4]{A4}) and let us suppose
that $\functor{F}_2(\gamma^1_{\CATA})\ast i_{\operatorname{z}_{\CATB}}=\functor{F}_2(\gamma^2_{\CATA})
\ast i_{\operatorname{z}_{\CATB}}$. For each $m=1,2$, we consider the $2$-morphism in $\CATA\left[
\SETWAinv\right]$

\begin{gather*}
\Gamma^m_{\CATA}:=\Big[A_{\CATA},\id_{A_{\CATA}},\id_{A_{\CATA}},
 i_{\id_{A_{\CATA}}\circ\id_{A_{\CATA}}},\gamma^m_{\CATA}\ast i_{\id_{A_{\CATA}}}\Big]: \\
\Big(A_{\CATA},\id_{A_{\CATA}},f^1_{\CATA}\Big)\Longrightarrow\Big(A_{\CATA},\id_{A_{\CATA}},
 f^2_{\CATA}\Big).
\end{gather*}

By the description of $\widetilde{\functor{G}}_2$ in Theorem~\ref{theo-01} and the description
of $2$-morphisms in a bicategory of fractions, see~\cite[\S~2.3]{Pr}, for each $m=1,2$ we have

\begin{gather*}
\widetilde{\functor{G}}_2(\Gamma^m_{\CATA})=\Big[\functor{F}_0(A_{\CATA}),\functor{F}_1
 (\id_{A_{\CATA}}),\functor{F}_1(\id_{A_{\CATA}}), \\
\psi^{\functor{F}}_{\id_{A_{\CATA}},\id_{A_{\CATA}}}\odot\functor{F}_2\Big(i_{\id_{A_{\CATA}}\circ
 \id_{A_{\CATA}}}\Big)\odot\Big(\psi^{\functor{F}}_{\id_{A_{\CATA}},\id_{A_{\CATA}}}\Big)^{-1},\\
\psi^{\functor{F}}_{f^2_{\CATA},\id_{A_{\CATA}}}\odot\functor{F}_2\Big(\gamma^m_{\CATA}\ast
 i_{\id_{A_{\CATA}}}\Big)\odot\Big(\psi^{\functor{F}}_{f^1_{\CATA},\id_{A_{\CATA}}}\Big)^{-1}\Big]= \\
=\Big[\functor{F}_0(A_{\CATA}),\functor{F}_1(\id_{A_{\CATA}}),\functor{F}_1(\id_{A_{\CATA}}),
 i_{\functor{F}_1(\id_{A_{\CATA}})\circ\functor{F}_1(\id_{A_{\CATA}})},\functor{F}_2
 (\gamma^m_{\CATA})\ast i_{\functor{F}_1(\id_{A_{\CATA}})}\Big]= \\
=\Big[A'_{\CATB},\operatorname{z}_{\CATB},\operatorname{z}_{\CATB},i_{\functor{F}_1(\id_{A_{\CATA}})
 \circ\operatorname{z}_{\CATB}},\functor{F}_2(\gamma^m_{\CATA})\ast i_{\operatorname{z}_{\CATB}}\Big].
\end{gather*}

Since $\functor{F}_2(\gamma^1_{\CATA})\ast i_{\operatorname{z}_{\CATB}}=\functor{F}_2
(\gamma^2_{\CATA})\ast i_{\operatorname{z}_{\CATB}}$, then we conclude that
$\widetilde{\functor{G}}_2(\Gamma^1_{\CATA})=\widetilde{\functor{G}}_2(\Gamma^2_{\CATA})$.
By (\hyperref[X2b]{X2b}) for $\widetilde{\functor{G}}$, we get that $\Gamma^1_{\CATA}=
\Gamma^2_{\CATA}$. Then by~\cite[Proposition~0.7]{T3} for the pair $(\CATA,\SETWA)$, there are an
object $A'_{\CATA}$ and a morphism $\operatorname{z}_{\CATA}:A'_{\CATA}\rightarrow A_{\CATA}$ in
$\SETWA$, such that

\[\Big(\gamma^1\ast i_{\id_{A_{\CATA}}}\Big)\ast i_{\operatorname{z}_{\CATA}}=
\Big(\gamma^2\ast i_{\id_{A_{\CATA}}}\Big)\ast i_{\operatorname{z}_{\CATA}}.\]

Then the claim follows immediately.
\end{proof}

\begin{lem}\label{lem-06}
If $\widetilde{\functor{G}}$ satisfies \emphatic{(\hyperref[X2c]{X2c})}, then
$\functor{F}$ satisfies \emphatic{(\hyperref[A5]{A5})}.
\end{lem}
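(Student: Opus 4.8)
The plan is to feed condition (\hyperref[X2c]{X2c}) for $\widetilde{\functor{G}}$ with the two obvious morphisms $\underline{f}^m_{\CATA}:=(A_{\CATA},\id_{A_{\CATA}},f^m_{\CATA}):A_{\CATA}\rightarrow B_{\CATA}$ in $\CATA\left[\SETWAinv\right]$ (for $m=1,2$) and with a $2$-morphism built directly out of the given datum $\alpha_{\CATB}$. By the description of $\widetilde{\functor{G}}_1$ in Theorem~\ref{theo-01} we have $\widetilde{\functor{G}}_1(\underline{f}^m_{\CATA})=(\functor{F}_0(A_{\CATA}),\functor{F}_1(\id_{A_{\CATA}}),\functor{F}_1(f^m_{\CATA}))$. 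Since $\id_{A_{\CATA}}\in\SETWA$, $\operatorname{v}_{\CATB}\in\SETWB\subseteq\SETWBsat$ and $\functor{F}_1(\id_{A_{\CATA}})\in\SETWBsat$, axiom (BF2) for $(\CATB,\SETWBsat)$ (see~\cite[Lemma~2.8]{T4}) gives $\functor{F}_1(\id_{A_{\CATA}})\circ\operatorname{v}_{\CATB}\in\SETWBsat$, so it makes sense to consider the $2$-morphism of $\CATB\left[\SETWBsatinv\right]$
\[\Xi_{\CATB}:=\Big[A_{\CATB},\operatorname{v}_{\CATB},\operatorname{v}_{\CATB},i_{\functor{F}_1(\id_{A_{\CATA}})\circ\operatorname{v}_{\CATB}},\alpha_{\CATB}\Big]:\,\,\widetilde{\functor{G}}_1(\underline{f}^1_{\CATA})\Longrightarrow\widetilde{\functor{G}}_1(\underline{f}^2_{\CATA}),\]
whose numerator part is exactly the given $\alpha_{\CATB}:\functor{F}_1(f^1_{\CATA})\circ\operatorname{v}_{\CATB}\Rightarrow\functor{F}_1(f^2_{\CATA})\circ\operatorname{v}_{\CATB}$. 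By (\hyperref[X2c]{X2c}) there is then a $2$-morphism $\Gamma_{\CATA}:\underline{f}^1_{\CATA}\Rightarrow\underline{f}^2_{\CATA}$ in $\CATA\left[\SETWAinv\right]$ with $\widetilde{\functor{G}}_2(\Gamma_{\CATA})=\Xi_{\CATB}$.

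Next I would put $\Gamma_{\CATA}$ in a convenient normal form. Writing $\Gamma_{\CATA}=[A'_{\CATA},\operatorname{v}^1_{\CATA},\operatorname{v}^2_{\CATA},\rho_{\CATA},\beta_{\CATA}]$ and using the invertibility of its $\alpha$-part $\rho_{\CATA}$ together with the equivalence relation on $2$-cells recalled in~\cite[\S~2.3]{Pr}, I may replace $\Gamma_{\CATA}$ by an equivalent representative with equal apex legs $\operatorname{v}^1_{\CATA}=\operatorname{v}^2_{\CATA}=:\operatorname{v}_{\CATA}$ and $\alpha$-part equal to $i_{\id_{A_{\CATA}}\circ\operatorname{v}_{\CATA}}$; the numerator part then becomes a $2$-morphism $\alpha_{\CATA}:f^1_{\CATA}\circ\operatorname{v}_{\CATA}\Rightarrow f^2_{\CATA}\circ\operatorname{v}_{\CATA}$, and $\operatorname{v}_{\CATA}\in\SETWA$ because $\id_{A_{\CATA}}\circ\operatorname{v}_{\CATA}\in\SETWA$ and $\SETWA$ is stable under invertible $2$-morphisms by (BF). With this normal form, formula~\eqref{eq-07} for $\widetilde{\functor{G}}_2$ yields the representative
\[\widetilde{\functor{G}}_2(\Gamma_{\CATA})=\Big[\functor{F}_0(A'_{\CATA}),\functor{F}_1(\operatorname{v}_{\CATA}),\functor{F}_1(\operatorname{v}_{\CATA}),i_{\functor{F}_1(\id_{A_{\CATA}})\circ\functor{F}_1(\operatorname{v}_{\CATA})},\psi^{\functor{F}}_{f^2_{\CATA},\operatorname{v}_{\CATA}}\odot\functor{F}_2(\alpha_{\CATA})\odot\big(\psi^{\functor{F}}_{f^1_{\CATA},\operatorname{v}_{\CATA}}\big)^{-1}\Big],\]
the $\alpha$-part being an identity because $\functor{F}_2(i_{\id_{A_{\CATA}}\circ\operatorname{v}_{\CATA}})$ cancels the two copies of $\psi^{\functor{F}}_{\id_{A_{\CATA}},\operatorname{v}_{\CATA}}$.

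Then I would compare this representative with $\Xi_{\CATB}=[A_{\CATB},\operatorname{v}_{\CATB},\operatorname{v}_{\CATB},i,\alpha_{\CATB}]$ by means of the criterion for equality of two $2$-morphisms of a bicategory of fractions~\cite[\S~2.3]{Pr} (in the bookkeeping form of~\cite[Proposition~0.8]{T3}) for the pair $(\CATB,\SETWBsat)$. This produces an object $A'_{\CATB}$, a pair of morphisms $\operatorname{z}_{\CATB}:A'_{\CATB}\rightarrow\functor{F}_0(A'_{\CATA})$ and $\operatorname{z}'_{\CATB}:A'_{\CATB}\rightarrow A_{\CATB}$, and an invertible $2$-morphism $\sigma_{\CATB}:\functor{F}_1(\operatorname{v}_{\CATA})\circ\operatorname{z}_{\CATB}\Rightarrow\operatorname{v}_{\CATB}\circ\operatorname{z}'_{\CATB}$ with $\functor{F}_1(\operatorname{v}_{\CATA})\circ\operatorname{z}_{\CATB}\in\SETWBsat$, such that the two $\alpha$-parts match through $\sigma_{\CATB}$ (automatic, both being identities) and, crucially, the two numerator parts agree after whiskering with $\operatorname{z}_{\CATB}$ and $\operatorname{z}'_{\CATB}$ and conjugating by $\sigma_{\CATB}^{\pm 1}$. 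Inserting the associators $\theta_{\bullet}$ of $\CATB$ needed to reassociate the whiskered composites, this last compatibility is exactly the identity~\eqref{eq-13} required by (\hyperref[A5]{A5}), with the same $\sigma_{\CATB}$ and $\sigma_{\CATB}^{-1}$ appearing in the bottom and top halves of the diagram.

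Finally I would arrange that $\operatorname{z}_{\CATB}$ lies in $\SETWB$ rather than merely in $\SETWBsat$, exactly as in Lemmas~\ref{lem-07} and~\ref{lem-08}: since $\functor{F}_1(\operatorname{v}_{\CATA})\in\SETWBsat$ (because $\operatorname{v}_{\CATA}\in\SETWA$) and $\functor{F}_1(\operatorname{v}_{\CATA})\circ\operatorname{z}_{\CATB}\in\SETWBsat$, \cite[Proposition~2.11(ii)]{T4} gives $\operatorname{z}_{\CATB}\in\SETWBsat$, and then the definition of $\SETWBsat$ furnishes a morphism $\operatorname{t}_{\CATB}$ with $\operatorname{z}_{\CATB}\circ\operatorname{t}_{\CATB}\in\SETWB$; I would replace $\operatorname{z}_{\CATB},\operatorname{z}'_{\CATB},\sigma_{\CATB}$ by their whiskerings with $\operatorname{t}_{\CATB}$ and re-derive~\eqref{eq-13} by whiskering the previous identity with $i_{\operatorname{t}_{\CATB}}$. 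The main obstacle is precisely the penultimate step, namely transcribing the abstract equality $\widetilde{\functor{G}}_2(\Gamma_{\CATA})=\Xi_{\CATB}$ into the explicit pasting~\eqref{eq-13}: one has to track simultaneously all the associators $\theta_{\bullet}$ of $\CATB$, the associators $\psi^{\functor{F}}_{\bullet}$ of $\functor{F}$, and the orientation of $\sigma_{\CATB}$ against $\sigma_{\CATB}^{-1}$; everything else reduces to the routine manipulations already developed in~\cite{T3} and~\cite{T4}.
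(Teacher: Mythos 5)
Your proposal is correct and follows essentially the same route as the paper's proof: both feed (\hyperref[X2c]{X2c}) with the morphisms $(A_{\CATA},\id_{A_{\CATA}},f^m_{\CATA})$ and the class $\big[A_{\CATB},\operatorname{v}_{\CATB},\operatorname{v}_{\CATB},i_{\functor{F}_1(\id_{A_{\CATA}})\circ\operatorname{v}_{\CATB}},\alpha_{\CATB}\big]$, normalize the resulting $\Gamma_{\CATA}$ to a representative $\big[A'_{\CATA},\operatorname{v}_{\CATA},\operatorname{v}_{\CATA},i_{\id_{A_{\CATA}}\circ\operatorname{v}_{\CATA}},\alpha_{\CATA}\big]$ (the paper cites~\cite[Lemma~6.1]{T3} where you manipulate the equivalence relation by hand), and then unpack the equality $\widetilde{\functor{G}}_2(\Gamma_{\CATA})=\Gamma_{\CATB}$ into the data of (\hyperref[A5]{A5}), ending with the same saturation argument to bring $\operatorname{z}_{\CATB}$ into $\SETWB$. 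The only organizational difference is that the paper first aligns the legs of the two classes using (BF3) (constructing $\rho_{\CATB}$, which then becomes $\sigma_{\CATB}$) so that it can apply~\cite[Proposition~0.7]{T3} in its same-legs form, whereas you invoke the general equality criterion for representatives with different legs directly; your route is fine, but it additionally requires checking that the two comparison cells it produces (one for each pair of legs) can be taken to be a single $\sigma_{\CATB}$ --- a point your remark that the $\alpha$-parts ``match automatically'' glosses over, though it does follow since whiskering by $\functor{F}_1(\id_{A_{\CATA}})\cong\id_{\functor{F}_0(A_{\CATA})}$ is invertible.
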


\begin{proof}
Let us fix any set of data $(A_{\CATA},B_{\CATA},A_{\CATB},f^1_{\CATA},f^2_{\CATA},
\operatorname{v}_{\CATB},\alpha_{\CATB})$ as in (\hyperref[A5]{A5}). Then let us consider the
$2$-morphism in $\CATB\left[\SETWBsatinv\right]$

\begin{gather}
\nonumber \Gamma_{\CATB}:=\Big[A_{\CATB},\operatorname{v}_{\CATB},\operatorname{v}_{\CATB},
 i_{\functor{F}_1(\id_{A_{\CATA}})\circ\operatorname{v}_{\CATB}},\alpha_{\CATB}\Big]:\,
 \Big(\functor{F}_0(A_{\CATA}),\functor{F}_1(\id_{A_{\CATA}}),\functor{F}_1(f^1_{\CATA})\Big)
 \Longrightarrow \\
\label{eq-14} \Longrightarrow\Big(\functor{F}_0(A_{\CATA}),\functor{F}_1(\id_{A_{\CATA}}),
 \functor{F}_1(f^2_{\CATA})\Big).
\end{gather}

We can interpret $\Gamma_{\CATB}$ as a $2$-morphism from $\widetilde{\functor{G}}_1(A_{\CATA},
\id_{A_{\CATA}},f^1_{\CATA})$ to $\widetilde{\functor{G}}_1(A_{\CATA},\id_{A_{\CATA}},f^2_{\CATA})$,
so by (\hyperref[X2c]{X2c}) there is a $2$-morphism

\[\Gamma_{\CATA}:\Big(A_{\CATA},\id_{A_{\CATA}},f^1_{\CATA}\Big)\Longrightarrow\Big(A_{\CATA},
\id_{A_{\CATA}},f^2_{\CATA}\Big)\]
in $\CATA\left[\SETWAinv\right]$, such that $\widetilde{\functor{G}}_2(\Gamma_{\CATA})=
\Gamma_{\CATB}$. We apply~\cite[Lemma~6.1]{T3} for $(\CATA,\SETWA)$,
$\alpha:=i_{\id_{A_{\CATA}}\circ\id_{A_{\CATA}}}$ and $\Gamma:=\Gamma_{\CATA}$; then there are
an object $A'_{\CATA}$, a
morphism $\operatorname{v}_{\CATA}:A'_{\CATA}\rightarrow A_{\CATA}$ in $\SETWA$ and a $2$-morphism
$\gamma_{\CATA}:f^1_{\CATA}\circ(\id_{A_{\CATA}}\circ\operatorname{v}_{\CATA})\Rightarrow
f^2_{\CATA}\circ(\id_{A_{\CATA}}\circ\operatorname{v}_{\CATA})$, such that $\Gamma_{\CATA}$ is
represented by the data in the following diagram

\begin{equation}\label{eq-12}
\begin{tikzpicture}[xscale=2.6,yscale=-0.8]
    \node (A0_2) at (2, 0) {$A_{\CATA}$};
    \node (A2_2) at (2, 2) {$A'_{\CATA}$};
    \node (A2_0) at (0, 2) {$A_{\CATA}$};
    \node (A2_4) at (4, 2) {$B_{\CATA}$,};
    \node (A4_2) at (2, 4) {$A_{\CATA}$};
    
    \node (A2_3) at (2.8, 2) {$\Downarrow\,\gamma_{\CATA}$};
    \node (A2_1) at (1.2, 2) {$\Downarrow\,\varepsilon_{\CATA}$};
    
    \path (A4_2) edge [->]node [auto,swap] {$\scriptstyle{f^2_{\CATA}}$} (A2_4);
    \path (A0_2) edge [->]node [auto] {$\scriptstyle{f^1_{\CATA}}$} (A2_4);
    \path (A2_2) edge [->]node [auto,swap] {$\scriptstyle{\id_{A_{\CATA}}
      \circ\operatorname{v}_{\CATA}}$} (A0_2);
    \path (A2_2) edge [->]node [auto] {$\scriptstyle{\id_{A_{\CATA}}
      \circ\operatorname{v}_{\CATA}}$} (A4_2);
    \path (A4_2) edge [->]node [auto] {$\scriptstyle{\id_{A_{\CATA}}}$} (A2_0);
    \path (A0_2) edge [->]node [auto,swap] {$\scriptstyle{\id_{A_{\CATA}}}$} (A2_0);
\end{tikzpicture}
\end{equation}
where

\[\varepsilon_{\CATA}:=\thetab{\id_{A_{\CATA}}}{\id_{A_{\CATA}}}
{\operatorname{v}_{\CATA}}\odot\Big(i_{\id_{A_{\CATA}}\circ\id_{A_{\CATA}}}\ast
i_{\operatorname{v}_{\CATA}}\Big)\odot\thetaa{\id_{A_{\CATA}}}{\id_{A_{\CATA}}}
{\operatorname{v}_{\CATA}}=i_{(\id_{A_{\CATA}}\circ\id_{A_{\CATA}})\circ\operatorname{v}_{\CATA}}.\]

If we denote by $\upsilon_{\operatorname{v}_{\CATA}}$ the unitor $\id_{A_{\CATA}}\circ
\operatorname{v}_{\CATA}\Rightarrow\operatorname{v}_{\CATA}$, then we can define

\[\alpha_{\CATA}:=\Big(i_{f^2_{\CATA}}\ast\upsilon_{\operatorname{v}_{\CATA}}\Big)\odot
\gamma_{\CATA}\odot\Big(i_{f^1_{\CATA}}\ast\upsilon^{-1}_{\operatorname{v}_{\CATA}}\Big):\,\,
f^1_{\CATA}\circ\operatorname{v}_{\CATA}\Longrightarrow f^2_{\CATA}\circ\operatorname{v}_{\CATA}\]
and we get easily that

\[\Gamma_{\CATA}=\Big[A'_{\CATA},\operatorname{v}_{\CATA},\operatorname{v}_{\CATA},
i_{\id_{A_{\CATA}}\circ\operatorname{v}_{\CATA}},\alpha_{\CATA}\Big].\]

So by \eqref{eq-07} we have

\begin{gather}
\nonumber \widetilde{\functor{G}}_2(\Gamma_{\CATA})=\Big[\functor{F}_0(A'_{\CATA}),\functor{F}_1
 (\operatorname{v}_{\CATA}),\functor{F}_1(\operatorname{v}_{\CATA}), \\
\nonumber\psi^{\functor{F}}_{\id_{A_{\CATA}},\operatorname{v}_{\CATA}}\odot\functor{F}_2
 (i_{\id_{A_{\CATA}}\circ\operatorname{v}_{\CATA}})\odot\Big(\psi^{\functor{F}}_{\id_{A_{\CATA}},
 \operatorname{v}_{\CATA}}\Big)^{-1}, \\
\nonumber \psi^{\functor{F}}_{f^2_{\CATA},\operatorname{v}_{\CATA}}\odot\functor{F}_2
 (\alpha_{\CATA})\odot\Big(\psi^{\functor{F}}_{f^1_{\CATA},\operatorname{v}_{\CATA}}\Big)^{-1}
 \Big]= \\
\nonumber =\Big[\functor{F}_0(A'_{\CATA}),\functor{F}_1(\operatorname{v}_{\CATA}),
 \functor{F}_1(\operatorname{v}_{\CATA}),i_{\functor{F}_1(\id_{A_{\CATA}})\circ\functor{F}_1
 (\operatorname{v}_{\CATA})}, \\
\label{eq-15}\psi^{\functor{F}}_{f^2_{\CATA},\operatorname{v}_{\CATA}}\odot\functor{F}_2
 (\alpha_{\CATA})\odot\Big(\psi^{\functor{F}}_{f^1_{\CATA},\operatorname{v}_{\CATA}}\Big)^{-1}
 \Big].
\end{gather}

Now by (BF3) for $(\CATB,\SETWB)$ there is a set of data as in the upper part
of the following diagram

\[
\begin{tikzpicture}[xscale=-2.6,yscale=-0.8]
    \node (A0_4) at (4, 0) {$\widetilde{A}_{\CATB}$};
    \node (A2_2) at (3, 2) {$A_{\CATB}$,};
    \node (A2_4) at (4, 2) {$\functor{F}_0(A_{\CATA})$};
    \node (A2_6) at (5, 2) {$\functor{F}_0(A'_{\CATA})$};

    \node (A1_4) at (4, 1.4) {$\Rightarrow$};
    \node (B1_4) at (4, 1) {$\rho_{\CATB}$};
    
    \path (A2_6) edge [->]node [auto,swap] {$\scriptstyle{\functor{F}_1
      (\operatorname{v}_{\CATA})}$} (A2_4);
    \path (A0_4) edge [->]node [auto] {$\scriptstyle{\operatorname{s}_{\CATB}}$} (A2_2);
    \path (A2_2) edge [->]node [auto] {$\scriptstyle{\operatorname{v}_{\CATB}}$} (A2_4);
    \path (A0_4) edge [->]node [auto,swap] {$\scriptstyle{\operatorname{r}_{\CATB}}$} (A2_6);
\end{tikzpicture}
\]
such that $\operatorname{r}_{\CATB}$ belongs to $\SETWB$ and $\rho_{\CATB}$ is invertible.
Then using \eqref{eq-15} and the description of $2$-morphisms in~\cite[\S~2.3]{Pr}, we get

\begin{gather}
\nonumber \widetilde{\functor{G}}_2(\Gamma_{\CATA})=\Big[\widetilde{A}_{\CATB},
 \operatorname{v}_{\CATB}\circ\operatorname{s}_{\CATB},\operatorname{v}_{\CATB}\circ
 \operatorname{s}_{\CATB},i_{\functor{F}_1(\id_{A_{\CATA}})\circ(\operatorname{v}_{\CATB}\circ
 \operatorname{s}_{\CATB})}, \\
\nonumber \Big(i_{\functor{F}_1(f^2_{\CATA})}\ast\rho_{\CATB}\Big)\odot\thetab{\functor{F}_1
 (f^2_{\CATA})}{\functor{F}_1(\operatorname{v}_{\CATA})}{\operatorname{r}_{\CATB}}\odot\Big(
 \psi^{\functor{F}}_{f^2_{\CATA},\operatorname{v}_{\CATA}}\ast i_{\operatorname{r}_{\CATB}}\Big)
 \odot\Big(\functor{F}_2(\alpha_{\CATA})\ast i_{\operatorname{r}_{\CATB}}\Big)\odot \\
\label{eq-16} \odot\Big(\psi^{\functor{F}}_{f^1_{\CATA},\operatorname{v}_{\CATA}}\ast
 i_{\operatorname{r}_{\CATB}}\Big)^{-1}\odot\thetaa{\functor{F}_1(f^1_{\CATA})}{\functor{F}_1
 (\operatorname{v}_{\CATA})}{\operatorname{r}_{\CATB}}\odot\Big(i_{\functor{F}_1(f^1_{\CATA})}\ast
 \rho_{\CATB}^{-1}\Big)\Big].
\end{gather}

Moreover, from \eqref{eq-14} we get

\begin{gather}
\nonumber \Gamma_{\CATB}=\Big[\widetilde{A}_{\CATB},\operatorname{v}_{\CATB}\circ
 \operatorname{s}_{\CATB},\operatorname{v}_{\CATB}\circ\operatorname{s}_{\CATB},i_{\functor{F}_1
 (\id_{A_{\CATA}})\circ(\operatorname{v}_{\CATB}\circ\operatorname{s}_{\CATB})}, \\
\label{eq-17} \thetab{\functor{F}_1(f^2_{\CATA})}{\operatorname{v}_{\CATB}}
 {\operatorname{s}_{\CATB}}\odot\Big(\alpha_{\CATB}\ast i_{\operatorname{s}_{\CATB}}\Big)
 \odot\thetaa{\functor{F}_1(f^1_{\CATA})}{\operatorname{v}_{\CATB}}{\operatorname{s}_{\CATB}}\Big].
\end{gather}

Since $\widetilde{\functor{G}}_2(\Gamma_{\CATA})=\Gamma_{\CATB}$, then
using~\cite[Proposition~0.7]{T3}
for $(\CATB,\SETWBsat)$ together with \eqref{eq-16} and \eqref{eq-17}, we get that there are
an object $A'_{\CATB}$ and a morphism $\operatorname{t}_{\CATB}:A'_{\CATB}\rightarrow
\widetilde{A}_{\CATB}$ in $\SETWBsat$, such that

\begin{gather}
\nonumber \Big(\thetab{\functor{F}_1(f^2_{\CATA})}{\operatorname{v}_{\CATB}}
 {\operatorname{s}_{\CATB}}\odot\Big(\alpha_{\CATB}\ast i_{\operatorname{s}_{\CATB}}\Big)
 \odot\thetaa{\functor{F}_1(f^1_{\CATA})}{\operatorname{v}_{\CATB}}
 {\operatorname{s}_{\CATB}}\Big)\ast i_{\operatorname{t}_{\CATB}}= \\
\nonumber =\Big(\Big(i_{\functor{F}_1(f^2_{\CATA})}\ast\rho_{\CATB}\Big)\odot
 \thetab{\functor{F}_1(f^2_{\CATA})}{\functor{F}_1(\operatorname{v}_{\CATA})}
 {\operatorname{r}_{\CATB}}\odot\Big(\psi^{\functor{F}}_{f^2_{\CATA},\operatorname{v}_{\CATA}}
 \ast i_{\operatorname{r}_{\CATB}}\Big)\odot\Big(\functor{F}_2(\alpha_{\CATA})\ast
 i_{\operatorname{r}_{\CATB}}\Big)\odot \\
\label{eq-18} \odot\Big(\psi^{\functor{F}}_{f^1_{\CATA},\operatorname{v}_{\CATA}}\ast
 i_{\operatorname{r}_{\CATB}}\Big)^{-1}\odot\thetaa{\functor{F}_1(f^1_{\CATA})}{\functor{F}_1
 (\operatorname{v}_{\CATA})}{\operatorname{r}_{\CATB}}\odot\Big(i_{\functor{F}_1(f^1_{\CATA})}\ast
 \rho_{\CATB}^{-1}\Big)\Big)\ast i_{\operatorname{t}_{\CATB}}.
\end{gather}

By definition of $\SETWBsat$, without loss of generality we can assume that
$\operatorname{t}_{\CATB}$ belongs to $\SETWB$ (and not only to $\SETWBsat$) and that it still
verifies \eqref{eq-18}. Then we define a morphism $\operatorname{z}_{\CATB}:=\operatorname{r}_{\CATB}
\circ\operatorname{t}_{\CATB}:A'_{\CATB}\rightarrow\functor{F}_0(A'_{\CATA})$ (this morphism belongs
to $\SETWB$ by axiom (BF2))
and a morphism $\operatorname{z}'_{\CATB}:=
\operatorname{s}_{\CATB}\circ\operatorname{t}_{\CATB}:A'_{\CATB}\rightarrow A_{\CATB}$. Moreover,
we define an invertible $2$-morphism

\[\sigma_{\CATB}:=\thetab{\operatorname{v}_{\CATB}}{\operatorname{s}_{\CATB}}
{\operatorname{t}_{\CATB}}\odot\Big(\rho_{\CATB}\ast i_{\operatorname{t}_{\CATB}}\Big)\odot
\thetaa{\functor{F}_1(\operatorname{v}_{\CATA})}{\operatorname{r}_{\CATB}}
{\operatorname{t}_{\CATB}}:\,\,\functor{F}_1(\operatorname{v}_{\CATA})\circ\operatorname{z}_{\CATB}
\Longrightarrow\operatorname{v}_{\CATB}\circ\operatorname{z}'_{\CATB}.\]

Then identity \eqref{eq-18} implies that $\alpha_{\CATB}\ast i_{\operatorname{z}'_{\CATB}}$
coincides with the composition \eqref{eq-13}, so (\hyperref[A5]{A5}) holds.
\end{proof}

\section{Sufficiency of conditions (A1) -- (A5)}
In this section we assume all the hypothesis and notations on $\CATA,\SETWA,\CATB,\SETWB$ and
$\functor{F}$ of the previous section and
we prove that conditions (\hyperref[A]{A}) for $\functor{F}$ imply conditions (\hyperref[X]{X})
for $\widetilde{\functor{G}}$.

\begin{lem}\label{lem-11}
If $\functor{F}$ satisfies \emphatic{(\hyperref[A1]{A1})}, then $\widetilde{\functor{G}}$
satisfies \emphatic{(\hyperref[X1]{X1})}.
\end{lem}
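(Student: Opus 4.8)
The plan is to read off the required internal equivalence directly from the span furnished by (\hyperref[A1]{A1}), the only subtlety being that the target bicategory of fractions $\CATB\left[\SETWBsatinv\right]$ is built from the class $\SETWBsat$, not from $\SETWB$, and that the saturation of $\SETWBsat$ is again $\SETWBsat$.

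First I would fix an arbitrary object $A_{\CATB}$, which is the same datum as an object of $\CATB\left[\SETWBsatinv\right]$. Applying (\hyperref[A1]{A1}) to $\functor{F}$, I obtain an object $A_{\CATA}$, an object $A'_{\CATB}$, a morphism $\operatorname{w}^1_{\CATB}$ in $\SETWB$ and a morphism $\operatorname{w}^2_{\CATB}$ in $\SETWBsat$, forming the span $\functor{F}_0(A_{\CATA})\xleftarrow{\operatorname{w}^1_{\CATB}}A'_{\CATB}\xrightarrow{\operatorname{w}^2_{\CATB}}A_{\CATB}$. Since $\SETWB\subseteq\SETWBsat$, the leg $\operatorname{w}^1_{\CATB}$ lies in the class $\SETWBsat$ over which $\CATB\left[\SETWBsatinv\right]$ is formed, so the triple $\underline{e}_{\CATB}:=(A'_{\CATB},\operatorname{w}^1_{\CATB},\operatorname{w}^2_{\CATB})$ is a legitimate morphism of $\CATB\left[\SETWBsatinv\right]$ from $\functor{F}_0(A_{\CATA})$ to $A_{\CATB}$. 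Because $\widetilde{\functor{G}}_0(A_{\CATA})=\functor{F}_0(A_{\CATA})$ by the explicit description of $\widetilde{\functor{G}}$ in Theorem~\ref{theo-01}, this is a morphism from $\widetilde{\functor{G}}_0(A_{\CATA})$ to $A_{\CATB}$, exactly as (\hyperref[X1]{X1}) demands.

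It then remains to check that $\underline{e}_{\CATB}$ is an internal equivalence. For this I would invoke \cite[Corollary~2.7]{T4} for the pair $(\CATB,\SETWBsat)$, by which a morphism $(A',\operatorname{w},f)$ of $\CATB\left[\SETWBsatinv\right]$ is an internal equivalence if and only if its right leg $f$ belongs to the saturation of $\SETWBsat$, namely $\SETWBsatsat$. By idempotency of the saturation (\cite[Remark~2.3 and Proposition~2.11(i)]{T4}) we have $\SETWBsatsat=\SETWBsat$, and the right leg $\operatorname{w}^2_{\CATB}$ of $\underline{e}_{\CATB}$ lies in $\SETWBsat$ by construction; hence $\underline{e}_{\CATB}$ is an internal equivalence and (\hyperref[X1]{X1}) holds.

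I do not expect a genuine obstacle here: the argument is essentially the converse of the reasoning used to prove Lemma~\ref{lem-01}. The only point requiring care is the bookkeeping, namely remembering that $\CATB\left[\SETWBsatinv\right]$ uses the larger class $\SETWBsat$, so that both legs of the span automatically meet the conditions of \cite[Corollary~2.7]{T4} once one notes $\SETWB\subseteq\SETWBsat=\SETWBsatsat$.
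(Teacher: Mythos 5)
Your proof is correct and follows essentially the same route as the paper's: the paper's (one-sentence) argument likewise applies \cite[Corollary~2.7]{T4} to the pair $(\CATB,\SETWBsat)$, together with the idempotency of saturation from \cite[Proposition~2.11(i)]{T4} and the inclusion $\SETWB\subseteq\SETWBsat$, to conclude that the span \eqref{eq-31} from (\hyperref[A1]{A1}) is an internal equivalence in $\CATB\left[\SETWBsatinv\right]$ from $\widetilde{\functor{G}}_0(A_{\CATA})=\functor{F}_0(A_{\CATA})$ to $A_{\CATB}$. You have merely spelled out the bookkeeping that the paper leaves implicit.
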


\begin{proof}
Using~\cite[Corollary~2.7]{T4} for $(\CATB,\SETWBsat)$, \cite[Proposition~2.11(i)]{T4} and the fact
that $\SETWB\subseteq\SETWBsat$, we get that the data of \eqref{eq-31} give an internal equivalence in
$\CATB\left[\SETWBsatinv\right]$ from $\functor{F}_0(A_{\CATA})=\widetilde{\functor{G}}_0(A_{\CATA})$
to $A_{\CATB}$, so condition (\hyperref[X1]{X1}) holds for $\widetilde{\functor{G}}$.
\end{proof}

\begin{lem}\label{lem-05}
If $\functor{F}$ satisfies \emphatic{(\hyperref[A2]{A2})} and
\emphatic{(\hyperref[A3]{A3})}, then $\widetilde{\functor{G}}$ satisfies
\emphatic{(\hyperref[X2a]{X2a})}.
\end{lem}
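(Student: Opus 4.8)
The plan is to read (X2a) for $\widetilde{\functor{G}}$ off the liftings provided by (A3) and (A2). Fix objects $A^1_{\CATA},A^2_{\CATA}$ and an arbitrary morphism $\underline{f}_{\CATB}\colon\functor{F}_0(A^1_{\CATA})\rightarrow\functor{F}_0(A^2_{\CATA})$ in $\CATB\big[\SETWBsatinv\big]$; by the description of bicategories of fractions in~\cite[\S~2.2]{Pr} it is represented by a triple $(A_{\CATB},\operatorname{w}_{\CATB},f_{\CATB})$ with $\operatorname{w}_{\CATB}\colon A_{\CATB}\rightarrow\functor{F}_0(A^1_{\CATA})$ in $\SETWBsat$ and $f_{\CATB}\colon A_{\CATB}\rightarrow\functor{F}_0(A^2_{\CATA})$. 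First I would reduce to the case $\operatorname{w}_{\CATB}\in\SETWB$: by definition of $\SETWBsat$ there is a morphism $\operatorname{p}$ with $\operatorname{w}_{\CATB}\circ\operatorname{p}\in\SETWB$, and \cite[Proposition~2.11(ii)]{T4} then gives $\operatorname{p}\in\SETWBsat$, so precomposition with $\operatorname{p}$ yields an isomorphic fraction in $\CATB\big[\SETWBsatinv\big]$ whose left leg lies in $\SETWB$. As (X2a) only asks for \emph{some} invertible $2$-morphism and isomorphisms compose, it suffices to treat this case.

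Next I would apply (A3) to $f_{\CATB}$ (taking $B_{\CATA}:=A^2_{\CATA}$), obtaining an object $A_{\CATA}$, a morphism $f_{\CATA}\colon A_{\CATA}\rightarrow A^2_{\CATA}$ and data $(A'_{\CATB},\operatorname{v}^1_{\CATB},\operatorname{v}^2_{\CATB},\alpha_{\CATB})$ with $\operatorname{v}^1_{\CATB}\in\SETWB$, $\operatorname{v}^2_{\CATB}\in\SETWBsat$ and $\alpha_{\CATB}\colon f_{\CATB}\circ\operatorname{v}^1_{\CATB}\Rightarrow\functor{F}_1(f_{\CATA})\circ\operatorname{v}^2_{\CATB}$ invertible. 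The span with left leg $\operatorname{w}_{\CATB}\circ\operatorname{v}^1_{\CATB}\colon A'_{\CATB}\rightarrow\functor{F}_0(A^1_{\CATA})$ and right leg $\operatorname{v}^2_{\CATB}\colon A'_{\CATB}\rightarrow\functor{F}_0(A_{\CATA})$ then has its left leg in $\SETWB$ (by (BF2), since both factors do) and its right leg in $\SETWBsat$, which is exactly an admissible input for (A2). Feeding it to (A2) produces an object $A^3_{\CATA}$, morphisms $\operatorname{w}^1_{\CATA}\colon A^3_{\CATA}\rightarrow A^1_{\CATA}$ in $\SETWA$ and $\operatorname{w}^2_{\CATA}\colon A^3_{\CATA}\rightarrow A_{\CATA}$ in $\SETWAsat$, and data $(A''_{\CATB},\operatorname{z}^1_{\CATB},\operatorname{z}^2_{\CATB},\gamma^1_{\CATB},\gamma^2_{\CATB})$ with $\operatorname{z}^1_{\CATB}\in\SETWB$ and $\gamma^1_{\CATB},\gamma^2_{\CATB}$ invertible. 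I would then set the candidate lift to be $\underline{f}_{\CATA}:=(A^3_{\CATA},\operatorname{w}^1_{\CATA},f_{\CATA}\circ\operatorname{w}^2_{\CATA})\colon A^1_{\CATA}\rightarrow A^2_{\CATA}$, so that by the description of $\widetilde{\functor{G}}_1$ in Theorem~\ref{theo-01} one has $\widetilde{\functor{G}}_1(\underline{f}_{\CATA})=(\functor{F}_0(A^3_{\CATA}),\functor{F}_1(\operatorname{w}^1_{\CATA}),\functor{F}_1(f_{\CATA}\circ\operatorname{w}^2_{\CATA}))$.

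It then remains to produce an invertible $2$-morphism $\widetilde{\functor{G}}_1(\underline{f}_{\CATA})\Rightarrow\underline{f}_{\CATB}$, which I would represent by $[A''_{\CATB},\operatorname{z}^2_{\CATB},\operatorname{v}^1_{\CATB}\circ\operatorname{z}^1_{\CATB},\alpha,\beta]$. Here $\alpha\colon\functor{F}_1(\operatorname{w}^1_{\CATA})\circ\operatorname{z}^2_{\CATB}\Rightarrow\operatorname{w}_{\CATB}\circ(\operatorname{v}^1_{\CATB}\circ\operatorname{z}^1_{\CATB})$ is obtained from $(\gamma^1_{\CATB})^{-1}$ together with a suitable associator of $\CATB$, while $\beta\colon\functor{F}_1(f_{\CATA}\circ\operatorname{w}^2_{\CATA})\circ\operatorname{z}^2_{\CATB}\Rightarrow f_{\CATB}\circ(\operatorname{v}^1_{\CATB}\circ\operatorname{z}^1_{\CATB})$ is assembled by whiskering and vertically composing $\psi^{\functor{F}}_{f_{\CATA},\operatorname{w}^2_{\CATA}}$, $(\gamma^2_{\CATB})^{-1}$, $\alpha_{\CATB}^{-1}$ and suitable associators of $\CATB$. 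The routine verifications are that this is an admissible $2$-morphism of $\CATB\big[\SETWBsatinv\big]$ — the target $\operatorname{w}$-leg $\operatorname{w}_{\CATB}\circ(\operatorname{v}^1_{\CATB}\circ\operatorname{z}^1_{\CATB})$ lies in $\SETWB\subseteq\SETWBsat$ by (BF2), and since $\alpha$ is invertible so does $\functor{F}_1(\operatorname{w}^1_{\CATA})\circ\operatorname{z}^2_{\CATB}$ — and that it is invertible, which holds because $\beta$ is a vertical composite of invertible $2$-morphisms. The main obstacle, and essentially the only delicate point, is the saturation bookkeeping: justifying the reduction to $\operatorname{w}_{\CATB}\in\SETWB$ and keeping track of membership in $\SETWB$ versus $\SETWBsat$ at each composite, so that (A2) is applicable and the final $2$-cell is legitimate. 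The remaining content is a diagram chase with the associators of $\CATB$ and $\functor{F}$.
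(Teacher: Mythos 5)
Your proof is correct and follows essentially the same route as the paper's: unwind the saturation of the left leg $\operatorname{w}_{\CATB}$, apply (A3) to the composite of $f_{\CATB}$ with the resulting morphism, feed the span produced by (A3) into (A2), take the lift $(A^3_{\CATA},\operatorname{w}^1_{\CATA},f_{\CATA}\circ\operatorname{w}^2_{\CATA})$, and assemble the invertible $2$-cell from $\gamma^1_{\CATB},\gamma^2_{\CATB},\alpha_{\CATB}$, the associators of $\CATB$ and $\psi^{\functor{F}}_{f_{\CATA},\operatorname{w}^2_{\CATA}}$. The only cosmetic difference is that you first replace $\underline{f}_{\CATB}$ by an isomorphic fraction whose left leg lies in $\SETWB$, whereas the paper keeps the factor $\operatorname{w}'_{\CATB}$ and builds the final $2$-morphism directly toward the original $\underline{f}_{\CATB}$; the content is identical.
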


\begin{proof}
Let us fix any pair of objects $A_{\CATA},B_{\CATA}$ and any morphism

\[
\begin{tikzpicture}[xscale=2.4,yscale=-1.2]
    \node (A0_0) at (-0.25, 0) {$\underline{f}_{\CATB}:=\Big(\functor{F}_0(A_{\CATA})$};
    \node (A0_1) at (1, 0) {$A_{\CATB}$};
    \node (A0_2) at (2, 0) {$\functor{F}_0(B_{\CATA})\Big)$};
    
    \path (A0_1) edge [->]node [auto,swap] {$\scriptstyle{\operatorname{w}_{\CATB}}$} (A0_0);
    \path (A0_1) edge [->]node [auto] {$\scriptstyle{f_{\CATB}}$} (A0_2);
\end{tikzpicture}
\]
in $\CATB\left[\SETWBsatinv\right]$. We need to prove that there are a morphism
$\underline{f}_{\CATA}:A_{\CATA}\rightarrow B_{\CATA}$ in $\CATA\left[\SETWAinv\right]$ and an
invertible $2$-morphism $\Gamma_{\CATB}:\widetilde{\functor{G}}_1(\underline{f}_{\CATA})\Rightarrow
\underline{f}_{\CATB}$ in $\CATB\left[\SETWBsatinv\right]$.\\

By definition of morphisms in $\CATB\left[\SETWBsatinv\right]$, the morphism
$\operatorname{w}_{\CATB}$ belongs to $\SETWBsat$, so by definition of right saturated
there are an object $\widetilde{A}_{\CATB}$ and a morphism $\operatorname{w}'_{\CATB}:
\widetilde{A}_{\CATB}\rightarrow A_{\CATB}$, such that $\operatorname{w}_{\CATB}\circ
\operatorname{w}'_{\CATB}$ belongs to $\SETWB$. By (\hyperref[A3]{A3}) applied to $f_{\CATB}\circ
\operatorname{w}'_{\CATB}$, there are an object $A'_{\CATA}$, a morphism $f_{\CATA}:A'_{\CATA}
\rightarrow B_{\CATA}$ and data $(A'_{\CATB},\operatorname{v}^1_{\CATB},\operatorname{v}^2_{\CATB},
\alpha_{\CATB})$ as follows

\[
\begin{tikzpicture}[xscale=1.8,yscale=-0.6]
    \node (A0_2) at (2, 0) {$\widetilde{A}_{\CATB}$};
    \node (A2_2) at (0, 1) {$A'_{\CATB}$};
    \node (A2_4) at (4, 1) {$\functor{F}_0(B_{\CATA})$,};
    \node (A4_2) at (2, 2) {$\functor{F}_0(A'_{\CATA})$};
    
    \node (A2_3) at (2, 1) {$\Downarrow\,\alpha_{\CATB}$};
    
    \path (A4_2) edge [->,bend left=15]node [auto,swap]
      {$\scriptstyle{\functor{F}_1(f_{\CATA})}$} (A2_4);
    \path (A0_2) edge [->,bend right=15]node [auto]
      {$\scriptstyle{f_{\CATB}\circ\operatorname{w}'_{\CATB}}$} (A2_4);
    \path (A2_2) edge [->,bend right=15]node [auto]
      {$\scriptstyle{\operatorname{v}^1_{\CATB}}$} (A0_2);
    \path (A2_2) edge [->,bend left=15]node [auto,swap]
      {$\scriptstyle{\operatorname{v}^2_{\CATB}}$} (A4_2);
\end{tikzpicture}
\]
with $\operatorname{v}^1_{\CATB}$ in $\SETWB$, $\operatorname{v}^2_{\CATB}$ in $\SETWBsat$ and
$\alpha_{\CATB}$ invertible. By (BF2) for $(\CATB,\SETWB)$, we have that
$(\operatorname{w}_{\CATB}\circ\operatorname{w}'_{\CATB})\circ\operatorname{v}^1_{\CATB}$ belongs to
$\SETWB$. So we can apply (\hyperref[A2]{A2}) to the set of data

\[
\begin{tikzpicture}[xscale=3.4,yscale=-1.2]
    \node (A0_0) at (0, 0) {$\functor{F}_0(A_{\CATA})$};
    \node (A0_1) at (1, 0) {$A'_{\CATB}$};
    \node (A0_2) at (2, 0) {$\functor{F}_0(A'_{\CATA})$.};
    
    \path (A0_1) edge [->]node [auto,swap] {$\scriptstyle{(\operatorname{w}_{\CATB}
      \circ\operatorname{w}'_{\CATB})\circ\operatorname{v}^1_{\CATB}}$} (A0_0);
    \path (A0_1) edge [->]node [auto] {$\scriptstyle{\operatorname{v}^2_{\CATB}}$} (A0_2);
\end{tikzpicture}
\]

Therefore there are an object $A^3_{\CATA}$, a pair of morphisms $\operatorname{w}_{\CATA}$ in
$\SETWA$ and $\operatorname{w}'_{\CATA}$ in $\SETWAsat$ as follows

\[
\begin{tikzpicture}[xscale=2.4,yscale=-1.2]
    \node (A0_0) at (0, 0) {$A_{\CATA}$};
    \node (A0_1) at (1, 0) {$A^3_{\CATA}$};
    \node (A0_2) at (2, 0) {$A'_{\CATA}$};
    
    \path (A0_1) edge [->]node [auto,swap] {$\scriptstyle{\operatorname{w}_{\CATA}}$} (A0_0);
    \path (A0_1) edge [->]node [auto] {$\scriptstyle{\operatorname{w}'_{\CATA}}$} (A0_2);
\end{tikzpicture}
\]
and a set of data in $\CATB$ as in the internal part of the following diagram

\[
\begin{tikzpicture}[xscale=2.6,yscale=-0.8]
    \node (A0_2) at (2, 0) {$A'_{\CATB}$};
    \node (A2_2) at (2, 2) {$\overline{A}_{\CATB}$};
    \node (A2_0) at (0, 2) {$\functor{F}_0(A_{\CATA})$};
    \node (A2_4) at (4, 2) {$\functor{F}_0(A'_{\CATA})$,};
    \node (A4_2) at (2, 4) {$\functor{F}_0(A^3_{\CATA})$};

    \node (A2_3) at (2.8, 2) {$\Downarrow\,\gamma^2_{\CATB}$};
    \node (A2_1) at (1.2, 2) {$\Downarrow\,\gamma^1_{\CATB}$};
    
    \path (A4_2) edge [->]node [auto,swap]
      {$\scriptstyle{\functor{F}_1(\operatorname{w}'_{\CATA})}$} (A2_4);
    \path (A0_2) edge [->]node [auto] {$\scriptstyle{\operatorname{v}^2_{\CATB}}$} (A2_4);
    \path (A2_2) edge [->]node [auto,swap] {$\scriptstyle{\operatorname{z}^1_{\CATB}}$} (A0_2);
    \path (A2_2) edge [->]node [auto] {$\scriptstyle{\operatorname{z}^2_{\CATB}}$} (A4_2);
    \path (A4_2) edge [->]node [auto]
      {$\scriptstyle{\functor{F}_1(\operatorname{w}_{\CATA})}$} (A2_0);
    \path (A0_2) edge [->]node [auto,swap] {$\scriptstyle{(\operatorname{w}_{\CATB}
      \circ\operatorname{w}'_{\CATB})\circ\operatorname{v}^1_{\CATB}}$} (A2_0);
\end{tikzpicture}
\]
with $\operatorname{z}^1_{\CATB}$ in $\SETWB$ and both $\gamma^1_{\CATB}$ and $\gamma^2_{\CATB}$
invertible. Then we define a pair of invertible $2$-morphisms in $\CATB$

\begin{gather*}
\rho^1_{\CATB}:=\thetab{\operatorname{w}_{\CATB}}{\operatorname{w}'_{\CATB}}
 {\operatorname{v}^1_{\CATB}\circ\operatorname{z}^1_{\CATB}}\odot\thetab{\operatorname{w}_{\CATB}
 \circ\operatorname{w}'_{\CATB}}{\operatorname{v}^1_{\CATB}}{\operatorname{z}^1_{\CATB}}\odot
 \left(\gamma^1_{\CATB}\right)^{-1}: \\ 
\phantom{\Big(}\functor{F}_1(\operatorname{w}_{\CATA})\circ\operatorname{z}^2_{\CATB}\Longrightarrow
 \operatorname{w}_{\CATB}\circ(\operatorname{w}'_{\CATB}\circ(\operatorname{v}^1_{\CATB}\circ
 \operatorname{z}^1_{\CATB}))
\end{gather*}
and

\begin{gather*}
\rho^2_{\CATB}:=\thetab{f_{\CATB}}{\operatorname{w}'_{\CATB}}{\operatorname{v}^1_{\CATB}\circ
 \operatorname{z}^1_{\CATB}}\odot\thetab{f_{\CATB}\circ\operatorname{w}'_{\CATB}}
 {\operatorname{v}^1_{\CATB}}{\operatorname{z}^1_{\CATB}}\odot\Big(\alpha_{\CATB}^{-1}\ast
 i_{\operatorname{z}^1_{\CATB}}\Big)\odot \\
\odot\,\thetaa{\functor{F}_1(f_{\CATA})}{\operatorname{v}^2_{\CATB}}{\operatorname{z}^1_{\CATB}}\odot
 \Big(i_{\functor{F}_1(f_{\CATA})}\ast\left(\gamma^2_{\CATB}\right)^{-1}\Big)\odot
 \thetab{\functor{F}_1(f_{\CATA})}{\functor{F}_1(\operatorname{w}'_{\CATA})}
 {\operatorname{z}^2_{\CATB}}\odot\Big(\psi^{\functor{F}}_{f_{\CATA},\operatorname{w}'_{\CATA}}
 \ast i_{\operatorname{z}^2_{\CATB}}\Big): \\
\phantom{\Big(}\functor{F}_1(f_{\CATA}\circ\operatorname{w}'_{\CATA})\circ\operatorname{z}^2_{\CATB}
 \Longrightarrow f_{\CATB}\circ(\operatorname{w}'_{\CATB}\circ(\operatorname{v}^1_{\CATB}\circ
 \operatorname{z}^1_{\CATB})).
\end{gather*}

Then the following $2$-morphism is invertible in $\CATB\left[\SETWBsatinv\right]$

\begin{gather*}
\Gamma_{\CATB}:=\Big[\overline{A}_{\CATB},\operatorname{z}^2_{\CATB},\operatorname{w}'_{\CATB}\circ
 (\operatorname{v}^1_{\CATB}\circ\operatorname{z}^1_{\CATB}),\rho^1_{\CATB},\rho^2_{\CATB}
 \Big]: \\
\widetilde{\functor{G}}_1\Big(A^3_{\CATA},\operatorname{w}_{\CATA},f_{\CATA}\circ
 \operatorname{w}'_{\CATA}\Big)=\Big(\functor{F}_0(A^3_{\CATA}),\functor{F}_1
 (\operatorname{w}_{\CATA}),\functor{F}_1(f_{\CATA}\circ\operatorname{w}'_{\CATA})\Big)
 \Longrightarrow \underline{f}_{\CATB}.
\end{gather*}

This suffices to conclude that (\hyperref[X2a]{X2a}) holds for $\widetilde{\functor{G}}$.
\end{proof}

\begin{lem}\label{lem-09}
Let us suppose that $\functor{F}$ satisfies \emphatic{(\hyperref[A4]{A4})}. Let us fix any
pair of objects $A_{\CATA},
B_{\CATA}$ and any pair of morphisms $(A^m_{\CATA},\operatorname{w}^m_{\CATA},f^m_{\CATA}):A_{\CATA}
\rightarrow B_{\CATA}$ for $m=1,2$ in $\CATA\left[\SETWAinv\right]$. Moreover, let us fix any pair of
$2$-morphisms

\[\Gamma_{\CATA}^m:\Big(A^1_{\CATA},\operatorname{w}^1_{\CATA},f^1_{\CATA}\Big)\Longrightarrow
\Big(A^2_{\CATA},\operatorname{w}^2_{\CATA},f^2_{\CATA}\Big)\quad\textrm{for}\quad m=1,2\]
in $\CATA\left[\SETWAinv\right]$ and let us suppose that $\widetilde{\functor{G}}_2
(\Gamma_{\CATA}^1)=\widetilde{\functor{G}}_2(\Gamma_{\CATA}^2)$. Then $\Gamma_{\CATA}^1=
\Gamma_{\CATA}^2$, i.e.\ $\widetilde{\functor{G}}$ satisfies condition
\emphatic{(\hyperref[X2b]{X2b})}.
\end{lem}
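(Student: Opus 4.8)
The plan is to use the explicit formula \eqref{eq-07} for $\widetilde{\functor{G}}_2$ in order to transport the hypothesis $\widetilde{\functor{G}}_2(\Gamma^1_{\CATA})=\widetilde{\functor{G}}_2(\Gamma^2_{\CATA})$ down to two identities of $2$-morphisms in $\CATB$ that can be fed into \hyperref[A4]{(A4)}. First I would bring both $2$-morphisms onto a common frame: using the description of $2$-morphisms of $\CATA\left[\SETWAinv\right]$ (see~\cite[\S~2.3]{Pr} and~\cite[Proposition~0.7]{T3}), after passing to a common refinement of the two domains I may assume that both are represented as
\[\Gamma^m_{\CATA}=\Big[A^3_{\CATA},\operatorname{v}^1_{\CATA},\operatorname{v}^2_{\CATA},\alpha^m_{\CATA},\beta^m_{\CATA}\Big]\quad\textrm{for }m=1,2,\]
with the \emph{same} pair $\operatorname{v}^1_{\CATA}:A^3_{\CATA}\rightarrow A^1_{\CATA}$ and $\operatorname{v}^2_{\CATA}:A^3_{\CATA}\rightarrow A^2_{\CATA}$, so that $\alpha^1_{\CATA},\alpha^2_{\CATA}:\operatorname{w}^1_{\CATA}\circ\operatorname{v}^1_{\CATA}\Rightarrow\operatorname{w}^2_{\CATA}\circ\operatorname{v}^2_{\CATA}$ and $\beta^1_{\CATA},\beta^2_{\CATA}:f^1_{\CATA}\circ\operatorname{v}^1_{\CATA}\Rightarrow f^2_{\CATA}\circ\operatorname{v}^2_{\CATA}$ are parallel pairs of $2$-morphisms of $\CATA$. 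Crucially, I would \emph{not} assume $\alpha^1_{\CATA}=\alpha^2_{\CATA}$, since the frame can be made common but the $\operatorname{w}$-components need not be.

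Next I would compute the two images by \eqref{eq-07}. As $\operatorname{v}^1_{\CATA},\operatorname{v}^2_{\CATA}$ are common, the resulting representatives in $\CATB\left[\SETWBsatinv\right]$ share the first three entries $\functor{F}_0(A^3_{\CATA}),\functor{F}_1(\operatorname{v}^1_{\CATA}),\functor{F}_1(\operatorname{v}^2_{\CATA})$ and differ only in the last two, which for $m=1,2$ read $\psi^{\functor{F}}_{\operatorname{w}^2_{\CATA},\operatorname{v}^2_{\CATA}}\odot\functor{F}_2(\alpha^m_{\CATA})\odot(\psi^{\functor{F}}_{\operatorname{w}^1_{\CATA},\operatorname{v}^1_{\CATA}})^{-1}$ and $\psi^{\functor{F}}_{f^2_{\CATA},\operatorname{v}^2_{\CATA}}\odot\functor{F}_2(\beta^m_{\CATA})\odot(\psi^{\functor{F}}_{f^1_{\CATA},\operatorname{v}^1_{\CATA}})^{-1}$. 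Applying the characterization of equality of $2$-morphisms on a common frame (\cite[Proposition~0.7]{T3} for $(\CATB,\SETWBsat)$) to the equality of these two images, I obtain a morphism $\operatorname{z}_{\CATB}:A'_{\CATB}\rightarrow\functor{F}_0(A^3_{\CATA})$ in $\SETWBsat$ whose whiskering forces both the fourth and the fifth entries to agree; precomposing $\operatorname{z}_{\CATB}$ with a suitable morphism (exactly as in the proof of Lemma~\ref{lem-06}) I may assume $\operatorname{z}_{\CATB}\in\SETWB$. Since the associators $\psi^{\functor{F}}_{\bullet}$ are invertible and independent of $m$, cancelling them through the interchange law turns these into $\functor{F}_2(\alpha^1_{\CATA})\ast i_{\operatorname{z}_{\CATB}}=\functor{F}_2(\alpha^2_{\CATA})\ast i_{\operatorname{z}_{\CATB}}$ and $\functor{F}_2(\beta^1_{\CATA})\ast i_{\operatorname{z}_{\CATB}}=\functor{F}_2(\beta^2_{\CATA})\ast i_{\operatorname{z}_{\CATB}}$.

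Now I would invoke \hyperref[A4]{(A4)} \emph{twice}, with the one morphism $\operatorname{z}_{\CATB}$: once for the parallel pair $\operatorname{w}^1_{\CATA}\circ\operatorname{v}^1_{\CATA},\operatorname{w}^2_{\CATA}\circ\operatorname{v}^2_{\CATA}:A^3_{\CATA}\rightarrow A_{\CATA}$ with $\gamma^m_{\CATA}:=\alpha^m_{\CATA}$, and once for $f^1_{\CATA}\circ\operatorname{v}^1_{\CATA},f^2_{\CATA}\circ\operatorname{v}^2_{\CATA}:A^3_{\CATA}\rightarrow B_{\CATA}$ with $\gamma^m_{\CATA}:=\beta^m_{\CATA}$. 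This yields morphisms $\operatorname{z}^{\alpha}_{\CATA},\operatorname{z}^{\beta}_{\CATA}$ in $\SETWA$ with target $A^3_{\CATA}$ such that $\alpha^1_{\CATA}\ast i_{\operatorname{z}^{\alpha}_{\CATA}}=\alpha^2_{\CATA}\ast i_{\operatorname{z}^{\alpha}_{\CATA}}$ and $\beta^1_{\CATA}\ast i_{\operatorname{z}^{\beta}_{\CATA}}=\beta^2_{\CATA}\ast i_{\operatorname{z}^{\beta}_{\CATA}}$. Building with (BF) a common refinement $\operatorname{z}_{\CATA}\in\SETWA$ of $\operatorname{z}^{\alpha}_{\CATA}$ and $\operatorname{z}^{\beta}_{\CATA}$, both whiskered identities hold simultaneously for $\operatorname{z}_{\CATA}$; the sufficiency direction of~\cite[Proposition~0.7]{T3} for $(\CATA,\SETWA)$ then gives $\Gamma^1_{\CATA}=\Gamma^2_{\CATA}$, which is precisely \hyperref[X2b]{(X2b)}.

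The main obstacle I anticipate is the reduction to a common frame \emph{together with} the realization that the $\operatorname{w}$-components $\alpha^1_{\CATA},\alpha^2_{\CATA}$ need not coincide: this is why \hyperref[A4]{(A4)} has to be used twice — once on the $\operatorname{w}$-part and once on the $f$-part — and why the two outputs must be amalgamated over a single refinement before concluding. The remaining work, namely the tracking of the associators $\psi^{\functor{F}}_{\bullet}$ and $\theta_{\bullet}$ across \eqref{eq-07} during the cancellation, and the passage from $\SETWBsat$ to $\SETWB$ for $\operatorname{z}_{\CATB}$, is delicate but routine.
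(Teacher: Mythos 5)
Your proposal follows the same skeleton as the paper's proof (transport the hypothesis through \eqref{eq-07}, extract a whiskered identity in $\CATB$, feed it into (\hyperref[A4]{A4}), and conclude by the sufficiency direction of~\cite[Proposition~0.7]{T3}), but it has a genuine gap at the normalization step, and the step as you state it is not merely unjustified but false. Your guiding premise --- that ``the frame can be made common but the $\operatorname{w}$-components need not be'' --- is exactly backwards: the existence part of~\cite[Proposition~0.7]{T3} lets you represent $\Gamma^1_{\CATA}$ and $\Gamma^2_{\CATA}$ as $[A^3_{\CATA},\operatorname{v}^1_{\CATA},\operatorname{v}^2_{\CATA},\alpha_{\CATA},\gamma^m_{\CATA}]$ with the \emph{same} invertible $\alpha_{\CATA}$, and this is precisely the normalization on which the paper's proof rests. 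Because you refuse it, you are forced to apply the characterization part of~\cite[Proposition~0.7]{T3} for $(\CATB,\SETWBsat)$ to two representatives sharing only their first three entries, claiming that equality of the classes yields one $\operatorname{z}_{\CATB}$ whose whiskering equalizes \emph{both} the fourth and the fifth entries. That proposition asserts no such thing: its ``only if'' direction presupposes that the two representatives already have the same fourth entry, and then it controls only the fifth.

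The stronger statement you need is false in general. Equality of $2$-morphisms in a bicategory of fractions is witnessed by data $(A^4,u,u',\zeta^1,\zeta^2)$ in which $\zeta^1,\zeta^2$ are invertible intertwining $2$-cells, and these can absorb a discrepancy between the two $\alpha$-components that no whiskering can remove. For instance, let $\CATC$ be the $2$-category with two objects $X,Y$, with $\operatorname{w}:X\rightarrow Y$ the only nonidentity morphism, with $2$-cells $\{i_{\id_X},\delta\}\cong\mathbb{Z}/2$ on $\id_X$ and $\{i_{\operatorname{w}},\epsilon\}\cong\mathbb{Z}/2$ on $\operatorname{w}$ (only identities on $\id_Y$), and with horizontal composition determined by $i_{\operatorname{w}}\ast\delta=\epsilon$; set $\SETW:=\{\id_X,\id_Y,\operatorname{w}\}$, which satisfies conditions (BF). Taking $u=u'=\id_X$, $\zeta^1=\delta$ and $\zeta^2=i_{\id_X}$ shows that
\[\Big[X,\id_X,\id_X,i_{\operatorname{w}},i_{\operatorname{w}}\Big]=\Big[X,\id_X,\id_X,\epsilon,\epsilon\Big]\]
as $2$-morphisms $(X,\operatorname{w},\operatorname{w})\Rightarrow(X,\operatorname{w},\operatorname{w})$ in $\CATC\left[\SETWinv\right]$, yet the only morphism into $X$ is $\id_X$ and $i_{\operatorname{w}}\ast i_{\id_X}\neq\epsilon\ast i_{\id_X}$, so no whiskering ever equalizes the two fourth components. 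Hence the pair of identities you intend to feed into (\hyperref[A4]{A4}) need not exist, and the double application of (\hyperref[A4]{A4}) together with the (BF)-amalgamation cannot be carried out in this form. The repair is the paper's route: normalize the $\alpha$-components to be literally equal at the outset via~\cite[Proposition~0.7]{T3} for $(\CATA,\SETWA)$; then the images under \eqref{eq-07} share their first four entries, the characterization for $(\CATB,\SETWBsat)$ applies exactly as stated to the fifth entries, and a single application of (\hyperref[A4]{A4}) finishes the proof.
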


\begin{proof}
By~\cite[Proposition~0.7]{T3} for the pair $(\CATA,\SETWA)$, there are an object
$A^3_{\CATA}$, a pair of morphisms $\operatorname{v}^1_{\CATA}:A^3_{\CATA}\rightarrow A^1_{\CATA}$
and $\operatorname{v}^2_{\CATA}:A^3_{\CATA}\rightarrow A^2_{\CATA}$, an
invertible $2$-morphism $\alpha_{\CATA}:\operatorname{w}^1_{\CATA}\circ\operatorname{v}^1_{\CATA}
\Rightarrow\operatorname{w}^2_{\CATA}\circ\operatorname{v}^2_{\CATA}$ and a pair of $2$-morphisms
$\gamma^m_{\CATA}:f^1_{\CATA}\circ\operatorname{v}^1_{\CATA}\Rightarrow f^2_{\CATA}\circ
\operatorname{v}^2_{\CATA}$ for $m=1,2$, such that

\begin{equation}\label{eq-05}
\Gamma_{\CATA}^m=\Big[A^3_{\CATA},\operatorname{v}^1_{\CATA},\operatorname{v}^2_{\CATA},
\alpha_{\CATA},\gamma^m_{\CATA}\Big]\quad\textrm{for}\quad m=1,2.
\end{equation}

Then we have the following identity in $\CATB\left[\SETWBsatinv\right]$:

\begin{gather*}
\Big[\functor{F}_0(A^3_{\CATA}),\functor{F}_1(\operatorname{v}^1_{\CATA}),\functor{F}_1
 (\operatorname{v}^2_{\CATA}),\psi^{\functor{F}}_{\operatorname{w}^2_{\CATA},
 \operatorname{v}^2_{\CATA}}\odot\functor{F}_2(\alpha_{\CATA})\odot\Big(
 \psi^{\functor{F}}_{\operatorname{w}^1_{\CATA},
 \operatorname{v}^1_{\CATA}}\Big)^{-1}, \\
\psi^{\functor{F}}_{f^2_{\CATA},\operatorname{v}^2_{\CATA}}\odot\functor{F}_2(\gamma^1_{\CATA})\odot
 \Big(\psi^{\functor{F}}_{f^1_{\CATA},\operatorname{v}^1_{\CATA}}\Big)^{-1}\Big]
 \stackrel{\eqref{eq-07}}{=}\widetilde{\functor{G}}_2(\Gamma^1_{\CATA})=\widetilde{\functor{G}}_2
 (\Gamma^2_{\CATA})\stackrel{\eqref{eq-07}}{=} \\
=\Big[\functor{F}_0(A^3_{\CATA}),\functor{F}_1(\operatorname{v}^1_{\CATA}),\functor{F}_1
 (\operatorname{v}^2_{\CATA}),\psi^{\functor{F}}_{\operatorname{w}^2_{\CATA},
 \operatorname{v}^2_{\CATA}}\odot\functor{F}_2(\alpha_{\CATA})\odot\Big(
 \psi^{\functor{F}}_{\operatorname{w}^1_{\CATA},
 \operatorname{v}^1_{\CATA}}\Big)^{-1}, \\
\psi^{\functor{F}}_{f^2_{\CATA},\operatorname{v}^2_{\CATA}}\odot\functor{F}_2(\gamma^2_{\CATA})\odot
 \Big(\psi^{\functor{F}}_{f^1_{\CATA},\operatorname{v}^1_{\CATA}}\Big)^{-1}\Big].
\end{gather*}

Then by~\cite[Proposition~0.7]{T3} for $(\CATB,\SETWBsat)$, there are an object $A^1_{\CATB}$ and a
morphism $\operatorname{z}_{\CATB}^1:A^1_{\CATB}\rightarrow\functor{F}_0(A^3_{\CATA})$ in
$\SETWBsat$, such that

\begin{gather}
\nonumber \Big(\psi^{\functor{F}}_{f^2_{\CATA},\operatorname{v}^2_{\CATA}}\odot\functor{F}_2
 (\gamma^1_{\CATA})\odot\Big(\psi^{\functor{F}}_{f^1_{\CATA},\operatorname{v}^1_{\CATA}}\Big)^{-1}
 \Big)\ast i_{\operatorname{z}^1_{\CATB}}= \\
\label{eq-19} =\Big(\psi^{\functor{F}}_{f^2_{\CATA},\operatorname{v}^2_{\CATA}}\odot\functor{F}_2
 (\gamma^2_{\CATA})\odot\Big(\psi^{\functor{F}}_{f^1_{\CATA},\operatorname{v}^1_{\CATA}}\Big)^{-1}
 \Big)\ast i_{\operatorname{z}^1_{\CATB}}.
\end{gather}

By definition of $\SETWBsat$, there are an object $A^2_{\CATB}$ and a morphism
$\operatorname{z}_{\CATB}^2:A^2_{\CATB}\rightarrow A^1_{\CATB}$, such that
$\operatorname{z}^1_{\CATB}\circ\operatorname{z}^2_{\CATB}$ belongs to $\SETWB$. Then from
\eqref{eq-19} we get easily

\[\functor{F}_2(\gamma^1_{\CATA})\ast i_{\operatorname{z}^1_{\CATB}\circ\operatorname{z}^2_{\CATB}}=
\functor{F}_2(\gamma^2_{\CATA})\ast i_{\operatorname{z}^1_{\CATB}\circ\operatorname{z}^2_{\CATB}}.\]

By (\hyperref[A4]{A4}) there are an object $A^4_{\CATA}$ and a morphism
$\operatorname{z}_{\CATA}:A^4_{\CATA}\rightarrow A^3_{\CATA}$ in $\SETWA$, such that
$\gamma^1_{\CATA}\ast i_{\operatorname{z}_{\CATA}}=\gamma^2_{\CATA}\ast
i_{\operatorname{z}_{\CATA}}$; using \eqref{eq-05}, this implies easily that $\Gamma^1_{\CATA}=
\Gamma^2_{\CATA}$.
\end{proof}

In order to prove that conditions (\hyperref[A]{A}) imply also condition (X2c), we need firstly to
prove a preliminary lemma as follows:

\begin{lem}\label{lem-03}
Let us suppose that \emphatic{(\hyperref[A4]{A4})} and \emphatic{(\hyperref[A5]{A5})} hold. Let
us fix any set of data $(A_{\CATA},B_{\CATA},A_{\CATB},f^1_{\CATA},f^2_{\CATA},
\operatorname{v}_{\CATB},
\alpha_{\CATB})$ as in \emphatic{(\hyperref[A5]{A5})} and let us suppose that $\alpha_{\CATB}$
is \emph{invertible}. Then there is a set of data $(A'_{\CATA},A'_{\CATB},\operatorname{v}_{\CATA},
\operatorname{z}_{\CATB},\operatorname{z}'_{\CATB},\alpha_{\CATA},\sigma_{\CATB})$ satisfying all
the conditions of \emphatic{(\hyperref[A5]{A5})} and such that $\alpha_{\CATA}$ is \emph{invertible}.
\end{lem}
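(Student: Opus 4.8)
The plan is to prove the statement as the $2$-categorical analogue of the fact that a fully faithful functor reflects isomorphisms, using \emphatic{(\hyperref[A5]{A5})} as a ``fullness'' statement and \emphatic{(\hyperref[A4]{A4})} as a ``faithfulness'' one. First I would apply \emphatic{(\hyperref[A5]{A5})} to the given data $(A_{\CATA},B_{\CATA},A_{\CATB},f^1_{\CATA},f^2_{\CATA},\operatorname{v}_{\CATB},\alpha_{\CATB})$, obtaining data $(\operatorname{v}_{\CATA},\operatorname{z}_{\CATB},\operatorname{z}'_{\CATB},\alpha_{\CATA},\sigma_{\CATB})$ with $\alpha_{\CATA}\colon f^1_{\CATA}\circ\operatorname{v}_{\CATA}\Rightarrow f^2_{\CATA}\circ\operatorname{v}_{\CATA}$ and the compatibility \eqref{eq-13}. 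Since $\alpha_{\CATB}$ is invertible, the $2$-morphism $\alpha_{\CATB}^{-1}\colon\functor{F}_1(f^2_{\CATA})\circ\operatorname{v}_{\CATB}\Rightarrow\functor{F}_1(f^1_{\CATA})\circ\operatorname{v}_{\CATB}$ is again an admissible input for \emphatic{(\hyperref[A5]{A5})}, with the roles of $f^1_{\CATA}$ and $f^2_{\CATA}$ exchanged; applying it a second time yields data $(\operatorname{u}_{\CATA},\operatorname{y}_{\CATB},\operatorname{y}'_{\CATB},\beta_{\CATA},\tau_{\CATB})$ with $\beta_{\CATA}\colon f^2_{\CATA}\circ\operatorname{u}_{\CATA}\Rightarrow f^1_{\CATA}\circ\operatorname{u}_{\CATA}$ and the analogous compatibility. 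The goal is then to show that, after restriction along a suitable morphism of $\SETWA$, the $2$-morphism $\alpha_{\CATA}$ becomes invertible, with inverse induced by $\beta_{\CATA}$.

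Next I would bring $\alpha_{\CATA}$ and $\beta_{\CATA}$ onto a common base. Both $\operatorname{v}_{\CATA}$ and $\operatorname{u}_{\CATA}$ lie in $\SETWA$ and have target $A_{\CATA}$, so using the calculus of fractions for $(\CATA,\SETWA)$ (axioms (BF3) and (BF2)) I can find an object $C_{\CATA}$ and a morphism $\operatorname{w}_{\CATA}\colon C_{\CATA}\to A_{\CATA}$ in $\SETWA$, together with invertible $2$-morphisms factoring $\operatorname{w}_{\CATA}$ through both $\operatorname{v}_{\CATA}$ and $\operatorname{u}_{\CATA}$ (say $\operatorname{w}_{\CATA}\cong\operatorname{v}_{\CATA}\circ p\cong\operatorname{u}_{\CATA}\circ q$). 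Whiskering $\alpha_{\CATA}$ by $p$ and $\beta_{\CATA}$ by $q$ and transporting along these invertible $2$-cells produces $2$-morphisms $\overline{\alpha},\overline{\beta}\colon f^1_{\CATA}\circ\operatorname{w}_{\CATA}\rightleftarrows f^2_{\CATA}\circ\operatorname{w}_{\CATA}$ running in opposite directions, so that the composites $\overline{\beta}\odot\overline{\alpha}$ and $\overline{\alpha}\odot\overline{\beta}$ make sense.

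The heart of the argument, and the step I expect to be the main obstacle, is to show that $\functor{F}_2(\overline{\beta}\odot\overline{\alpha})$ and $i_{f^1_{\CATA}\circ\operatorname{w}_{\CATA}}$ agree after whiskering by a single morphism of $\SETWB$ (and symmetrically for the other order). Here I would first align the $\CATB$-side data of the two applications: since $\sigma_{\CATB}$ relates $\functor{F}_1(\operatorname{v}_{\CATA})\circ\operatorname{z}_{\CATB}$ to $\operatorname{v}_{\CATB}\circ\operatorname{z}'_{\CATB}$ and $\tau_{\CATB}$ relates $\functor{F}_1(\operatorname{u}_{\CATA})\circ\operatorname{y}_{\CATB}$ to $\operatorname{v}_{\CATB}\circ\operatorname{y}'_{\CATB}$, a further common refinement in $(\CATB,\SETWB)$ lets me whisker everything to a single morphism over which both $\sigma_{\CATB}$ and $\tau_{\CATB}$ mediate the same transition along $\operatorname{v}_{\CATB}$. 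On that morphism the two instances of \eqref{eq-13} express $\functor{F}_2(\overline{\alpha})$ and $\functor{F}_2(\overline{\beta})$ (each suitably whiskered) as conjugates, by the invertible associators $\theta_{\bullet}$, the invertible $\psi^{\functor{F}}_{\bullet}$ and the $\sigma$'s, of $\alpha_{\CATB}$ and of $\alpha_{\CATB}^{-1}$ respectively. Pasting the two diagrams, all the associators and the conjugating $2$-cells cancel and what survives is $\alpha_{\CATB}^{-1}\odot\alpha_{\CATB}=i$, which is exactly the desired equality of whiskered $\functor{F}$-images. The delicate point is the bookkeeping of the coherence $2$-cells during this cancellation, which I expect to be a long but routine diagram chase.

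Finally I would invoke \emphatic{(\hyperref[A4]{A4})}: from the equality of whiskered $\functor{F}$-images just established it provides an object $\widetilde{A}_{\CATA}$ and a morphism $\operatorname{z}_{\CATA}\colon\widetilde{A}_{\CATA}\to C_{\CATA}$ in $\SETWA$ with $(\overline{\beta}\odot\overline{\alpha})\ast i_{\operatorname{z}_{\CATA}}=i_{f^1_{\CATA}\circ\operatorname{w}_{\CATA}}\ast i_{\operatorname{z}_{\CATA}}$, and likewise for $\overline{\alpha}\odot\overline{\beta}$; hence $\overline{\alpha}\ast i_{\operatorname{z}_{\CATA}}$ is invertible, with inverse $\overline{\beta}\ast i_{\operatorname{z}_{\CATA}}$. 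I then set $\widetilde{\operatorname{v}}_{\CATA}:=\operatorname{w}_{\CATA}\circ\operatorname{z}_{\CATA}$, which lies in $\SETWA$ by (BF2), and take $\widetilde{\alpha}_{\CATA}$ to be $\overline{\alpha}\ast i_{\operatorname{z}_{\CATA}}$ read off modulo unitors and associators, so that $\widetilde{\alpha}_{\CATA}$ is invertible. It remains to produce $\widetilde{\operatorname{z}}_{\CATB}$, $\widetilde{\operatorname{z}}'_{\CATB}$ and $\widetilde{\sigma}_{\CATB}$ and to re-check \eqref{eq-13}; this is automatic, since the compatibility \eqref{eq-13} for the first application is stable under whiskering the base morphism $\operatorname{v}_{\CATA}$ by $p\circ\operatorname{z}_{\CATA}$, the new coherence data being obtained from $\operatorname{z}_{\CATB},\operatorname{z}'_{\CATB},\sigma_{\CATB}$ by whiskering and reassociating (and, if needed, by precomposing with one more morphism of $\SETWB$ to keep $\widetilde{\operatorname{z}}_{\CATB}$ in $\SETWB$). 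This yields a set of data satisfying all the conditions of \emphatic{(\hyperref[A5]{A5})} with $\alpha_{\CATA}$ invertible, as required.
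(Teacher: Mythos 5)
Your overall skeleton --- apply (\hyperref[A5]{A5}) to $\alpha_{\CATB}$ and to $\alpha_{\CATB}^{-1}$, compose the two outputs, and use (\hyperref[A4]{A4}) to make the composite an identity after restriction --- is the same germ as the paper's proof, but the step you yourself describe as ``a long but routine diagram chase'' is a genuine gap, not a routine one. The two applications of (\hyperref[A5]{A5}) come with completely unrelated comparison data: the first produces $\sigma_{\CATB}\colon\functor{F}_1(\operatorname{v}_{\CATA})\circ\operatorname{z}_{\CATB}\Rightarrow\operatorname{v}_{\CATB}\circ\operatorname{z}'_{\CATB}$, the second produces $\tau_{\CATB}\colon\functor{F}_1(\operatorname{u}_{\CATA})\circ\operatorname{y}_{\CATB}\Rightarrow\operatorname{v}_{\CATB}\circ\operatorname{y}'_{\CATB}$, and (\hyperref[A5]{A5}) gives you no control whatsoever to make these compatible. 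When you paste the two instances of \eqref{eq-13} over a common refinement, the cell sitting \emph{between} the whiskering of $\alpha_{\CATB}^{-1}$ and the whiskering of $\alpha_{\CATB}$ has the form $i_{\functor{F}_1(f^2_{\CATA})}\ast g$, where $g$ is the mismatch of the two conjugators (one built from $\sigma_{\CATB}$, the other from $\tau_{\CATB}$): an essentially arbitrary invertible $2$-morphism between morphisms with target $\functor{F}_0(A_{\CATA})$, which does \emph{not} factor as $i_{\operatorname{v}_{\CATB}}\ast\rho$ for any $\rho$. Since $\alpha_{\CATB}$ is a general $2$-morphism $\functor{F}_1(f^1_{\CATA})\circ\operatorname{v}_{\CATB}\Rightarrow\functor{F}_1(f^2_{\CATA})\circ\operatorname{v}_{\CATB}$, and not a whiskering $\alpha'\ast i_{\operatorname{v}_{\CATB}}$ of some $\alpha'\colon\functor{F}_1(f^1_{\CATA})\Rightarrow\functor{F}_1(f^2_{\CATA})$, there is no interchange law that moves $i_{\functor{F}_1(f^2_{\CATA})}\ast g$ across $\alpha_{\CATB}\ast i$: the two cells cut the composite at different objects ($\functor{F}_0(A_{\CATA})$ versus $A_{\CATB}$). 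Hence the middle does not collapse to $\alpha_{\CATB}^{-1}\odot\alpha_{\CATB}=i$, the hypothesis of (\hyperref[A4]{A4}) is never established, and in fact it can genuinely fail for the $\overline{\beta}$ you constructed: in the simplest case ($S_1=S_2$ and $f^1_{\CATA}=f^2_{\CATA}$) the required identity is precisely the commutation of the two automorphisms $\alpha_{\CATB}\ast i_{S_1}$ and $i_{\functor{F}_1(f^1_{\CATA})}\ast g$ of one and the same morphism, which is false for generic choices (e.g.\ already in $\CATB=$ the $2$-category of categories).

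This is exactly why the paper never composes the two (\hyperref[A5]{A5})-outputs inside $\CATB$. It instead encodes $\alpha_{\CATB}$ as the $2$-morphism $\Gamma_{\CATB}=\left[A_{\CATB},\operatorname{v}_{\CATB},\operatorname{v}_{\CATB},i,\alpha_{\CATB}\right]$ of $\CATB\left[\SETWBsatinv\right]$ and the first (\hyperref[A5]{A5})-output as a $2$-morphism $\Gamma_{\CATA}$ of $\CATA\left[\SETWAinv\right]$ with $\widetilde{\functor{G}}_2(\Gamma_{\CATA})=\Gamma_{\CATB}$; the equivalence relation defining $2$-morphisms in a bicategory of fractions is precisely what absorbs the incompatible conjugation data that blocks your computation. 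At the level of classes, $\Gamma_{\CATB}^{-1}\odot\Gamma_{\CATB}=i$ holds trivially; faithfulness of $\widetilde{\functor{G}}_2$ (Lemma~\ref{lem-09}, which is where (\hyperref[A4]{A4}) enters) transfers this to $\Gamma'_{\CATA}\odot\Gamma_{\CATA}=i$ in $\CATA\left[\SETWAinv\right]$, so $\Gamma_{\CATA}$ is invertible there; and then the paper invokes \cite[Proposition~0.8(iii)]{T3}, which converts invertibility of the \emph{class} $\Gamma_{\CATA}$ into invertibility in $\CATA$ of the representing $2$-morphism after whiskering by some $\operatorname{p}_{\CATA}$ in $\SETWA$. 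That proposition is the tool your argument is implicitly trying to re-derive by hand, and without it (or some substitute that handles the mismatch $g$) your proof does not close. The remaining parts of your plan --- the (BF2)/(BF3) refinements and the final re-packaging of the $\CATB$-side data $(\operatorname{z}_{\CATB},\operatorname{z}'_{\CATB},\sigma_{\CATB})$ --- do match the paper's concluding step and are fine.
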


\begin{proof}
Using (\hyperref[A5]{A5}), there are a pair of objects $\widetilde{A}'_{\CATA}$,
$\widetilde{A}'_{\CATB}$, a triple of morphisms $\widetilde{\operatorname{v}}_{\CATA}:
\widetilde{A}'_{\CATA}\rightarrow A_{\CATA}$ in $\SETWA$, $\widetilde{\operatorname{z}}_{\CATB}:
\widetilde{A}'_{\CATB}\rightarrow\functor{F}_0(\widetilde{A}'_{\CATA})$ in $\SETWB$ and
$\widetilde{\operatorname{z}}'_{\CATB}:\widetilde{A}'_{\CATB}\rightarrow A_{\CATB}$, a $2$-morphism

\[
\begin{tikzpicture}[xscale=1.8,yscale=-0.6]
    \node (B0_0) at (-1, 0) {}; 
    \node (B1_1) at (5, 0) {}; 
    
    \node (A0_1) at (2, 0) {$A_{\CATA}$};
    \node (A1_0) at (0, 1) {$\widetilde{A}'_{\CATA}$};
    \node (A1_2) at (4, 1) {$B_{\CATA}$};
    \node (A2_1) at (2, 2) {$A_{\CATA}$};
    
    \node (A1_1) at (2, 1) {$\Downarrow\,\widetilde{\alpha}_{\CATA}$};

    \path (A1_0) edge [->,bend right=20]node [auto]
      {$\scriptstyle{\widetilde{\operatorname{v}}_{\CATA}}$} (A0_1);
    \path (A1_0) edge [->,bend left=20]node [auto,swap]
      {$\scriptstyle{\widetilde{\operatorname{v}}_{\CATA}}$} (A2_1);
    \path (A0_1) edge [->,bend right=20]node [auto] {$\scriptstyle{f^1_{\CATA}}$} (A1_2);
    \path (A2_1) edge [->,bend left=20]node [auto,swap] {$\scriptstyle{f^2_{\CATA}}$} (A1_2);
\end{tikzpicture}
\]
and an invertible $2$-morphism
  
\[
\begin{tikzpicture}[xscale=1.8,yscale=-0.6]
    \node (B0_0) at (-1, 0) {}; 
    \node (B1_1) at (5, 0) {}; 
    
    \node (A0_1) at (2, 0) {$\functor{F}_0(\widetilde{A}'_{\CATA})$};
    \node (A1_0) at (0, 1) {$\widetilde{A}'_{\CATB}$};
    \node (A1_2) at (4, 1) {$\functor{F}_0(A_{\CATA})$,};
    \node (A2_1) at (2, 2) {$A_{\CATB}$};
    
    \node (A1_1) at (2, 1) {$\Downarrow\,\widetilde{\sigma}_{\CATB}$};

    \path (A1_0) edge [->,bend right=20]node [auto]
      {$\scriptstyle{\widetilde{\operatorname{z}}_{\CATB}}$} (A0_1);
    \path (A1_0) edge [->,bend left=20]node [auto,swap]
      {$\scriptstyle{\widetilde{\operatorname{z}}'_{\CATB}}$} (A2_1);
    \path (A0_1) edge [->,bend right=20]node [auto] {$\scriptstyle{\functor{F}_1
      (\widetilde{\operatorname{v}}_{\CATA})}$} (A1_2);
    \path (A2_1) edge [->,bend left=20]node [auto,swap]
      {$\scriptstyle{\operatorname{v}_{\CATB}}$} (A1_2);
\end{tikzpicture}
\]
such that $\alpha_{\CATB}\ast i_{\widetilde{\operatorname{z}}'_{\CATB}}$ coincides with the following
composition:

\begin{equation}\label{eq-20}
\begin{tikzpicture}[xscale=3.5,yscale=-1.8]
    \node (A2_0) at (0, 2) {$\widetilde{A}'_{\CATB}$};
    \node (A2_1) at (1, 2) {$\functor{F}_0(\widetilde{A}'_{\CATA})$};
    \node (A4_1) at (1.5, 3.5) {$A_{\CATB}$};
    \node (A4_2) at (1, 3) {$\functor{F}_0(A_{\CATA})$};
    \node (A0_1) at (1.5, 0.5) {$A_{\CATB}$};
    \node (A0_2) at (1, 1) {$\functor{F}_0(A_{\CATA})$};
    \node (A2_3) at (3, 2) {$\functor{F}_0(B_{\CATA})$.};
        
    \node (A1_0) at (2.2, 0.75) {$\Downarrow\,\thetab{\functor{F}_1(f^1_{\CATA})}
      {\operatorname{v}_{\CATB}}{\widetilde{\operatorname{z}}'_{\CATB}}$};
    \node (A1_1) at (0.35, 1.3) {$\Downarrow\,\widetilde{\sigma}_{\CATB}^{-1}$};
    \node (A1_3) at (1.5, 1.35) {$\Downarrow\,\thetaa{\functor{F}_1(f^1_{\CATA})}
      {\functor{F}_1(\widetilde{\operatorname{v}}_{\CATA})}{\widetilde{\operatorname{z}}_{\CATB}}$};
    \node (A2_2) at (2, 2) {$\Downarrow\,\psi^{\functor{F}}_{f^2_{\CATA},
      \widetilde{\operatorname{v}}_{\CATA}}\odot\functor{F}_2(\widetilde{\alpha}_{\CATA})\odot
      \left(\psi^{\functor{F}}_{f^1_{\CATA},\widetilde{\operatorname{v}}_{\CATA}}\right)^{-1}$};
    \node (A3_0) at (2.2, 3.25) {$\Downarrow\,\thetaa{\functor{F}_1(f^2_{\CATA})}
      {\operatorname{v}_{\CATB}}{\widetilde{\operatorname{z}}'_{\CATB}}$};
    \node (A3_1) at (0.35, 2.7) {$\Downarrow\,\widetilde{\sigma}_{\CATB}$};
    \node (A3_3) at (1.5, 2.65) {$\Downarrow\,\thetab{\functor{F}_1(f^2_{\CATA})}
      {\functor{F}_1(\widetilde{\operatorname{v}}_{\CATA})}{\widetilde{\operatorname{z}}_{\CATB}}$};
    
    \node (B1_1) at (0.8, 0.38) {$\scriptstyle{\widetilde{\operatorname{z}}'_{\CATB}}$};
    \node (B2_2) at (0.8, 3.62) {$\scriptstyle{\widetilde{\operatorname{z}}'_{\CATB}}$};
    \node (B3_3) at (2.2, 0.38) {$\scriptstyle{\functor{F}_1(f^1_{\CATA})
      \circ\operatorname{v}_{\CATB}}$};
    \node (B4_4) at (2.2, 3.62) {$\scriptstyle{\functor{F}_1(f^2_{\CATA})
      \circ\operatorname{v}_{\CATB}}$};
    \node (B5_5) at (1.5, 3.12) {$\scriptstyle{\functor{F}_1(f^2_{\CATA})}$};
    \node (B6_6) at (1.5, 0.88) {$\scriptstyle{\functor{F}_1(f^1_{\CATA})}$};
    \node (B7_7) at (0.5, 0.88) {$\scriptstyle{\operatorname{v}_{\CATB}
      \circ\widetilde{\operatorname{z}}'_{\CATB}}$};
    \node (B8_8) at (0.5, 3.12) {$\scriptstyle{\operatorname{v}_{\CATB}
      \circ\widetilde{\operatorname{z}}'_{\CATB}}$};
    \node (C1_1) at (2.4, 1.48) {$\scriptstyle{\functor{F}_1(f^1_{\CATA})
      \circ\functor{F}_1(\widetilde{\operatorname{v}}_{\CATA})}$};
    \node (C2_2) at (2.4, 2.52) {$\scriptstyle{\functor{F}_1(f^2_{\CATA})
      \circ\functor{F}_1(\widetilde{\operatorname{v}}_{\CATA})}$};

    \draw [->,rounded corners] (A2_0) to (0.2, 3.5) to (A4_1);
    \draw [->,rounded corners] (A2_0) to (0.2, 0.5) to (A0_1);
    \draw [->,rounded corners] (A0_1) to (2.8, 0.5) to (A2_3);
    \draw [->,rounded corners] (A4_1) to (2.8, 3.5) to (A2_3);
    \draw [->,rounded corners] (A0_2) to (2.8, 1) to (A2_3);
    \draw [->,rounded corners] (A4_2) to (2.8, 3) to (A2_3);
    \draw [->,rounded corners] (A2_0) to (0.2, 1) to (A0_2);
    \draw [->,rounded corners] (A2_0) to (0.2, 3) to (A4_2);
    \draw [->,rounded corners] (A2_1) to (1.2, 1.6) to (2.8, 1.6) to (A2_3);
    \draw [->,rounded corners] (A2_1) to (1.2, 2.4) to (2.8, 2.4) to (A2_3);
    
    \path (A2_0) edge [->]node [auto] {$\scriptstyle{\functor{F}_1
      (\widetilde{\operatorname{v}}_{\CATA})
      \circ\widetilde{\operatorname{z}}_{\CATB}}$} (A4_2);
    \path (A2_0) edge [->]node [auto] {$\scriptstyle{\widetilde{\operatorname{z}}_{\CATB}}$} (A2_1);
    \path (A2_0) edge [->]node [auto,swap] {$\scriptstyle{\functor{F}_1
      (\widetilde{\operatorname{v}}_{\CATA})\circ\widetilde{\operatorname{z}}_{\CATB}}$} (A0_2);
\end{tikzpicture}
\end{equation}

Now we define a $2$-morphism $\Gamma_{\CATA}:(A_{\CATA},\id_{A_{\CATA}},f^1_{\CATA})\Rightarrow
(A_{\CATA},\id_{A_{\CATA}},f^2_{\CATA})$ as the $2$-morphism represented by the following diagram:

\[
\begin{tikzpicture}[xscale=2.6,yscale=-0.8]
    \node (A0_2) at (2, 0) {$A_{\CATA}$};
    \node (A2_2) at (2, 2) {$\widetilde{A}'_{\CATA}$};
    \node (A2_0) at (0, 2) {$A_{\CATA}$};
    \node (A2_4) at (4, 2) {$B_{\CATA}$.};
    \node (A4_2) at (2, 4) {$A_{\CATA}$};
    
    \node (A2_3) at (2.8, 2) {$\Downarrow\,\widetilde{\alpha}_{\CATA}$};
    \node (A2_1) at (1.2, 2) {$\Downarrow\,i_{\id_{A_{\CATA}}
      \circ\widetilde{\operatorname{v}}_{\CATA}}$};
    
    \path (A4_2) edge [->]node [auto,swap] {$\scriptstyle{f^2_{\CATA}}$} (A2_4);
    \path (A0_2) edge [->]node [auto] {$\scriptstyle{f^1_{\CATA}}$} (A2_4);
    \path (A2_2) edge [->]node [auto,swap]
     {$\scriptstyle{\widetilde{\operatorname{v}}_{\CATA}}$} (A0_2);
    \path (A2_2) edge [->]node [auto] {$\scriptstyle{\widetilde{\operatorname{v}}_{\CATA}}$} (A4_2);
    \path (A4_2) edge [->]node [auto] {$\scriptstyle{\id_{A_{\CATA}}}$} (A2_0);
    \path (A0_2) edge [->]node [auto,swap] {$\scriptstyle{\id_{A_{\CATA}}}$} (A2_0);
\end{tikzpicture}
\]

Using \eqref{eq-07}, we get that $\widetilde{\functor{G}}_2(\Gamma_{\CATA})$ coincides with the class

\[\left[\functor{F}_0(\widetilde{A}'_{\CATA}),\functor{F}_1
(\widetilde{\operatorname{v}}_{\CATA}),\functor{F}_1(\widetilde{\operatorname{v}}_{\CATA}),
i_{\functor{F}_1(\id_{A_{\CATA}})\circ\functor{F}_1(\widetilde{\operatorname{v}}_{\CATA})},
\psi^{\functor{F}}_{f^2_{\CATA},\widetilde{\operatorname{v}}_{\CATA}}\odot\functor{F}_2
(\widetilde{\alpha}_{\CATA})\odot\Big(\psi^{\functor{F}}_{f^1_{\CATA},
\widetilde{\operatorname{v}}_{\CATA}}\Big)^{-1}\right].\]

Now we consider the $2$-morphism in $\CATB\left[\SETWBsatinv\right]$ defined as follows

\begin{gather*}
\Gamma_{\CATB}:=\Big[A_{\CATB},\operatorname{v}_{\CATB},\operatorname{v}_{\CATB},
 i_{\functor{F}_1(\id_{A_{\CATA}})\circ\operatorname{v}_{\CATB}},\alpha_{\CATB}\Big]:\,
 \Big(\functor{F}_0(A_{\CATA}),\functor{F}_1(\id_{A_{\CATA}}),\functor{F}_1(f^1_{\CATA})\Big)
 \Longrightarrow \\
\Longrightarrow\Big(\functor{F}_0(A_{\CATA}),\functor{F}_1(\id_{A_{\CATA}}),
 \functor{F}_1(f^2_{\CATA})\Big);
\end{gather*}
using the definition of $2$-morphism in a bicategory of fractions (see~\cite[\S~2.3]{Pr}) together
with the following diagram

\[
\begin{tikzpicture}[xscale=2.4,yscale=-0.8]
    \node (A0_1) at (1, 0) {$\functor{F}_0(A_{\CATA})$};
    \node (A2_0) at (0, 2) {$A_{\CATB}$};
    \node (A2_1) at (1, 2) {$\widetilde{A}'_{\CATB}$};
    \node (A2_2) at (2, 2) {$\functor{F}_0(\widetilde{A}'_{\CATA})$};
    \node (A4_1) at (1, 4) {$\functor{F}_0(A_{\CATA})$};

    \node (A1_1) at (1, 1.3) {$\widetilde{\sigma}_{\CATB}^{-1}$};
    \node (B1_1) at (1, 0.8) {$\Rightarrow$};
    \node (A3_1) at (1, 2.6) {$\widetilde{\sigma}_{\CATB}$};
    \node (B3_1) at (1, 3.1) {$\Leftarrow$};
    
    \path (A2_1) edge [->]node [auto,swap]
      {$\scriptstyle{\widetilde{\operatorname{z}}'_{\CATB}}$} (A2_0);
    \path (A2_0) edge [->]node [auto,swap] {$\scriptstyle{\operatorname{v}_{\CATB}}$} (A4_1);
    \path (A2_2) edge [->]node [auto] {$\scriptstyle{\functor{F}_1
      (\widetilde{\operatorname{v}}_{\CATA})}$} (A4_1);
    \path (A2_0) edge [->]node [auto] {$\scriptstyle{\operatorname{v}_{\CATB}}$} (A0_1);
    \path (A2_2) edge [->]node [auto,swap] {$\scriptstyle{\functor{F}_1
      (\widetilde{\operatorname{v}}_{\CATA})}$} (A0_1);
    \path (A2_1) edge [->]node [auto] {$\scriptstyle{\widetilde{\operatorname{z}}_{\CATB}}$} (A2_2);
\end{tikzpicture}
\]
and \eqref{eq-20}, we get easily that $\widetilde{\functor{G}}_2(\Gamma_{\CATA})=
\Gamma_{\CATB}$.\\

Since $\alpha_{\CATB}$ is invertible by hypothesis, then it makes sense to consider the
inverse for $\Gamma_{\CATB}$, defined as follows:

\[\Gamma_{\CATB}^{-1}:=\Big[A_{\CATB},\operatorname{v}_{\CATB},\operatorname{v}_{\CATB},
 i_{\functor{F}_1(\id_{A_{\CATA}})\circ\operatorname{v}_{\CATB}},\alpha_{\CATB}^{-1}\Big].\]

Using again (\hyperref[A5]{A5}) on the set of data $(A_{\CATA},B_{\CATA},A_{\CATB},
f^2_{\CATA},f^1_{\CATA},
\operatorname{v}_{\CATB},\alpha_{\CATB}^{-1})$ and proceeding as above, we get a $2$-morphism
$\Gamma'_{\CATA}:(A_{\CATA},\id_{\CATA},f^2_{\CATA})\Rightarrow(A_{\CATA},\id_{\CATA},f^1_{\CATA})$
in $\CATA\left[\SETWAinv\right]$, such that $\widetilde{\functor{G}}_2(\Gamma'_{\CATA})=
\Gamma^{-1}_{\CATB}$. Then 

\[\widetilde{\functor{G}}_2(\Gamma'_{\CATA}\odot\Gamma_{\CATA})=\Gamma^{-1}_{\CATB}\odot
\Gamma_{\CATB}=i_{\widetilde{\functor{G}}_1(A_{\CATA},\id_{A_{\CATA}},f^1_{\CATA})}
\stackrel{\eqref{eq-07}}{=}
\widetilde{\functor{G}}_2\left(i_{(A_{\CATA},\id_{A_{\CATA}},f^1_{\CATA})}\right).\]

Since we are assuming condition (\hyperref[A4]{A4}), then by Lemma~\ref{lem-09} we conclude that
$\Gamma'_{\CATA}\odot\Gamma_{\CATA}=i_{(A_{\CATA},\id_{A_{\CATA}},f^1_{\CATA})}$. Analogously, we
get that $\Gamma_{\CATA}\odot\Gamma'_{\CATA}=i_{(A_{\CATA},\id_{A_{\CATA}},f^2_{\CATA})}$. This
proves that $\Gamma_{\CATA}$ is invertible. So by~\cite[Proposition~0.8(iii)]{T3} for
$(\CATA,\SETWA)$, there are an object
$A'_{\CATA}$ and a morphism $\operatorname{p}_{\CATA}:A'_{\CATA}\rightarrow\widetilde{A}'_{\CATA}$ in
$\SETWA$, such that $\widetilde{\alpha}_{\CATA}\ast i_{\operatorname{p}_{\CATA}}$ is invertible. 
Now we use axiom (BF3) for $(\CATB,\SETWB)$ in order to get data as in the upper
part of the following diagram

\[
\begin{tikzpicture}[xscale=3.3,yscale=-0.8]
    \node (A0_1) at (1, 0) {$A'_{\CATB}$};
    \node (A1_0) at (0, 2) {$\functor{F}_0(A'_{\CATA})$};
    \node (A1_2) at (2, 2) {$\widetilde{A}'_{\CATB}$,};
    \node (A2_1) at (1, 2) {$\functor{F}_0(\widetilde{A}'_{\CATA})$};
    
    \node (A1_1) at (1, 1) {$\mu_{\CATB}$};
    \node (B1_1) at (1, 1.4) {$\Rightarrow$};
    
    \path (A1_2) edge [->]node [auto]
      {$\scriptstyle{\widetilde{\operatorname{z}}_{\CATB}}$} (A2_1);
    \path (A0_1) edge [->]node [auto] {$\scriptstyle{\operatorname{q}_{\CATB}}$} (A1_2);
    \path (A1_0) edge [->]node [auto,swap] {$\scriptstyle{\functor{F}_1
      (\operatorname{p}_{\CATA})}$} (A2_1);
    \path (A0_1) edge [->]node [auto,swap]
      {$\scriptstyle{\operatorname{z}_{\CATB}}$} (A1_0);
\end{tikzpicture}
\]
with $\operatorname{z}_{\CATB}$ in $\SETWB$ and $\mu_{\CATB}$ invertible. Then we define
a morphism $\operatorname{v}_{\CATA}:=\widetilde{\operatorname{v}}_{\CATA}\circ
\operatorname{p}_{\CATA}:A'_{\CATA}\rightarrow A_{\CATA}$ (this morphism belongs to $\SETWA$ by
(BF2)) and a morphism $\operatorname{z}'_{\CATB}:=\widetilde{\operatorname{z}}'_{\CATB}\circ
\operatorname{q}_{\CATB}: A'_{\CATB}\rightarrow A_{\CATB}$. Moreover, we define a $2$-morphism

\[\alpha_{\CATA}:=\thetab{f^2_{\CATA}}{\widetilde{\operatorname{v}}_{\CATA}}{\operatorname{p}_{\CATA}}
\odot\Big(\widetilde{\alpha}_{\CATA}\ast i_{\operatorname{p}_{\CATA}}\Big)\odot
\thetaa{f^1_{\CATA}}{\widetilde{\operatorname{v}}_{\CATA}}{\operatorname{p}_{\CATA}}:\,\,
f^1_{\CATA}\circ\operatorname{v}_{\CATA}\Longrightarrow f^2_{\CATA}\circ\operatorname{v}_{\CATA}.\]

Such a $2$-morphism is invertible because $\widetilde{\alpha}_{\CATA}\ast 
i_{\operatorname{p}_{\CATA}}$
is invertible by construction. In addition, we define an invertible
$2$-morphism $\sigma_{\CATB}:\functor{F}_1(\operatorname{v}_{\CATA})\circ\operatorname{z}_{\CATB}
\Rightarrow\operatorname{v}_{\CATB}\circ\operatorname{z}'_{\CATB}$ as the following composition:

\[
\begin{tikzpicture}[xscale=3.9,yscale=-1.8]
    \node (A3_2) at (1.3, 3) {$\widetilde{A}'_{\CATB}$};
    \node (A4_2) at (1.8, 3.6) {$A_{\CATB}$};
    \node (A0_2) at (1.3, 0.9) {$\functor{F}_0(A'_{\CATA})$};
    \node (A2_0) at (0, 2) {$A'_{\CATB}$};
    \node (A2_1) at (1.3, 2) {$\functor{F}_0(\widetilde{A}'_{\CATA})$};
    \node (A2_3) at (3, 2) {$\functor{F}_0(A_{\CATA})$.};

    \node (A1_1) at (0.8, 1.4) {$\Downarrow\,\thetab{\functor{F}_1
      (\widetilde{\operatorname{v}}_{\CATA})}{\functor{F}_1(\operatorname{p}_{\CATA})}
      {\operatorname{z}_{\CATB}}$};
    \node (A1_2) at (2.2, 1.5) {$\Downarrow\,
      \psi^{\functor{F}}_{\widetilde{\operatorname{v}}_{\CATA},\operatorname{p}_{\CATA}}$};
    \node (A2_2) at (1.2, 2.5) {$\Downarrow\,\thetaa{\functor{F}_1
      (\widetilde{\operatorname{v}}_{\CATA})}
      {\widetilde{\operatorname{z}}_{\CATB}}{\operatorname{q}_{\CATB}}$};
    \node (A3_1) at (1.7, 3.25) {$\Downarrow\,\thetab{\operatorname{v}_{\CATB}}
      {\widetilde{\operatorname{z}}'_{\CATB}}{\operatorname{q}_{\CATB}}$};
    \node (B1_1) at (2.15, 2.55) {$\Downarrow\,\widetilde{\sigma}_{\CATB}$};
    \node (A1_0) at (0.65, 2) {$\Downarrow\,\mu_{\CATB}$};
    
    \node (C1_1) at (1.55, 1.6) {$\scriptstyle{\functor{F}_1
      (\widetilde{\operatorname{v}}_{\CATA})\circ\functor{F}_1(\operatorname{p}_{\CATA})}$};
    \node (C2_2) at (1.8, 1.88) {$\scriptstyle{\functor{F}_1
      (\widetilde{\operatorname{v}}_{\CATA})}$};
    
    \path (A2_0) edge [->,bend right=35]node [auto]
      {$\scriptstyle{\operatorname{z}_{\CATB}}$} (A0_2);
    \path (A2_0) edge [->,bend left=30]node [auto,swap] {$\scriptstyle{\operatorname{z}'_{\CATB}=
      \widetilde{\operatorname{z}}'_{\CATB}\circ\operatorname{q}_{\CATB}}$} (A4_2);
    
    \path (A0_2) edge [->,bend right=18]node [auto] {$\scriptstyle{\functor{F}_1
      (\operatorname{v}_{\CATA})=\functor{F}_1(\widetilde{\operatorname{v}}_{\CATA}\circ
      \operatorname{p}_{\CATA})}$} (A2_3);
    \path (A0_2) edge [->,bend left=18]node [auto,swap] {} (A2_3);
    \path (A3_2) edge [->,bend right=15]node [auto] {$\scriptstyle{\functor{F}_1
      (\widetilde{\operatorname{v}}_{\CATA})\circ\widetilde{\operatorname{z}}_{\CATB}}$} (A2_3);
    \path (A3_2) edge [->,bend left=20]node [auto,swap] {$\scriptstyle{\operatorname{v}_{\CATB}
      \circ\widetilde{\operatorname{z}}'_{\CATB}}$} (A2_3);
    \path (A2_0) edge [->,bend left=10]node [auto,swap]
      {$\scriptstyle{\operatorname{q}_{\CATB}}$} (A3_2);
    \path (A4_2) edge [->,bend left=30	]node [auto,swap]
      {$\scriptstyle{\operatorname{v}_{\CATB}}$} (A2_3);
    \path (A2_1) edge [->]node [auto] {} (A2_3);
    \path (A2_0) edge [->, bend right=20]node [auto] {$\scriptstyle{\functor{F}_1
      (\operatorname{p}_{\CATA})\circ\operatorname{z}_{\CATB}}$} (A2_1);
    \path (A2_0) edge [->,bend left=20]node [auto,swap]
      {$\scriptstyle{\widetilde{\operatorname{z}}_{\CATB}\circ\operatorname{q}_{\CATB}}$} (A2_1);
\end{tikzpicture}
\]

Now we recall that $\alpha_{\CATB}\ast i_{\widetilde{\operatorname{z}}'_{\CATB}}$ coincides with
diagram \eqref{eq-20}. Then it is not difficult to prove that the set of data $(A'_{\CATA},
A'_{\CATB},\operatorname{v}_{\CATA},\operatorname{z}_{\CATB},\operatorname{z}'_{\CATB},\alpha_{\CATA},
\sigma_{\CATB})$ satisfies the claim: basically, one has to consider the composition of
\eqref{eq-20} with $i_{\operatorname{q}_{\CATB}}$; then one has to insert in the center of such a
diagram $\mu_{\CATB}$ and $\mu^{-1}_{\CATB}$, together with all the necessary associators of $\CATB$.
\end{proof}

\begin{lem}\label{lem-12}
If $\functor{F}$ satisfies \emphatic{(\hyperref[A2]{A2})}, \emphatic{(\hyperref[A4]{A4})} and
\emphatic{(\hyperref[A5]{A5})}, then $\widetilde{\functor{G}}$ satisfies
\emphatic{(\hyperref[X2c]{X2c})}.
\end{lem}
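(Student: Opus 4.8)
The plan is to lift a given $2$-morphism $\Gamma_{\CATB}:\widetilde{\functor{G}}_1(A^1_{\CATA},\operatorname{w}^1_{\CATA},f^1_{\CATA})\Rightarrow\widetilde{\functor{G}}_1(A^2_{\CATA},\operatorname{w}^2_{\CATA},f^2_{\CATA})$ in $\CATB[\SETWBsatinv]$ by choosing a convenient representative and then lifting its ``denominator part'' and its ``numerator part'' to $\CATA$ separately. First I would fix a representative $(\overline{A}_{\CATB},\operatorname{u}^1_{\CATB},\operatorname{u}^2_{\CATB},\eta^1_{\CATB},\eta^2_{\CATB})$, where $\eta^1_{\CATB}$ is invertible and relates $\functor{F}_1(\operatorname{w}^1_{\CATA})\circ\operatorname{u}^1_{\CATB}$ with $\functor{F}_1(\operatorname{w}^2_{\CATA})\circ\operatorname{u}^2_{\CATB}$, while $\eta^2_{\CATB}$ relates the analogous composites built from $f^1_{\CATA},f^2_{\CATA}$. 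Using $\functor{F}_1(\SETWA)\subseteq\SETWBsat$ together with~\cite[Proposition~2.11(ii)]{T4} and axioms (BF2), (BF5) for $(\CATB,\SETWBsat)$, I would check that both $\operatorname{u}^1_{\CATB}$ and $\operatorname{u}^2_{\CATB}$ already lie in $\SETWBsat$; then, precomposing with a suitable morphism exactly as in the proofs of Lemma~\ref{lem-07} and Lemma~\ref{lem-08}, I would reduce to the case where $\operatorname{u}^1_{\CATB}$ lies in $\SETWB$.

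Next I would apply condition (\hyperref[A2]{A2}) to the cospan $\functor{F}_0(A^1_{\CATA})\xleftarrow{\operatorname{u}^1_{\CATB}}\overline{A}_{\CATB}\xrightarrow{\operatorname{u}^2_{\CATB}}\functor{F}_0(A^2_{\CATA})$, now with $\operatorname{u}^1_{\CATB}\in\SETWB$ and $\operatorname{u}^2_{\CATB}\in\SETWBsat$. This provides a common refinement $A^3_{\CATA}$ together with $\operatorname{v}^1_{\CATA}\in\SETWA$, $\operatorname{v}^2_{\CATA}\in\SETWAsat$ and invertible $2$-cells $\gamma^1_{\CATB},\gamma^2_{\CATB}$ in $\CATB$ (over a morphism $\operatorname{z}^1_{\CATB}\in\SETWB$) converting $\operatorname{u}^m_{\CATB}$ into $\functor{F}_1(\operatorname{v}^m_{\CATA})$ up to the map $\operatorname{z}^2_{\CATB}$. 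Pasting $\gamma^1_{\CATB}$, $\eta^1_{\CATB}$ and $(\gamma^2_{\CATB})^{-1}$ (inserting the associators $\theta_{\bullet}$ of $\CATB$ and the associators $\psi^{\functor{F}}_{\bullet}$) yields an invertible $2$-cell between $\functor{F}_1(\operatorname{w}^1_{\CATA}\circ\operatorname{v}^1_{\CATA})$ and $\functor{F}_1(\operatorname{w}^2_{\CATA}\circ\operatorname{v}^2_{\CATA})$, whiskered by $\operatorname{z}^2_{\CATB}$, and similarly $\eta^2_{\CATB}$ yields a (not necessarily invertible) whiskered $2$-cell between $\functor{F}_1(f^1_{\CATA}\circ\operatorname{v}^1_{\CATA})$ and $\functor{F}_1(f^2_{\CATA}\circ\operatorname{v}^2_{\CATA})$. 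Since $\operatorname{z}^2_{\CATB}\in\SETWBsat$, after precomposing with a morphism that turns it into an element of $\SETWB$ both $2$-cells have exactly the shape demanded by condition (\hyperref[A5]{A5}).

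Now I would lift. Applying Lemma~\ref{lem-03} (the invertible incarnation of (\hyperref[A5]{A5})) to the denominator $2$-cell produces an \emph{invertible} $\alpha_{\CATA}:(\operatorname{w}^1_{\CATA}\circ\operatorname{v}^1_{\CATA})\circ\operatorname{p}_{\CATA}\Rightarrow(\operatorname{w}^2_{\CATA}\circ\operatorname{v}^2_{\CATA})\circ\operatorname{p}_{\CATA}$ for some $\operatorname{p}_{\CATA}\in\SETWA$, while applying (\hyperref[A5]{A5}) to the numerator $2$-cell produces a $2$-morphism $\gamma_{\CATA}$ of the same form built from $f^1_{\CATA},f^2_{\CATA}$. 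Passing to a common refinement of the two bases (via (BF3) for $(\CATA,\SETWA)$) and whiskering, I may assume $\alpha_{\CATA}$ and $\gamma_{\CATA}$ both live over one morphism $\operatorname{v}_{\CATA}\in\SETWA$ with $\alpha_{\CATA}$ still invertible. Writing $A'_{\CATA}$ for the source of $\operatorname{v}_{\CATA}$, I then set $\Gamma_{\CATA}:=\big[A'_{\CATA},\operatorname{v}^1_{\CATA}\circ\operatorname{v}_{\CATA},\operatorname{v}^2_{\CATA}\circ\operatorname{v}_{\CATA},\alpha_{\CATA},\gamma_{\CATA}\big]$, which is a legitimate $2$-morphism $(A^1_{\CATA},\operatorname{w}^1_{\CATA},f^1_{\CATA})\Rightarrow(A^2_{\CATA},\operatorname{w}^2_{\CATA},f^2_{\CATA})$ in $\CATA[\SETWAinv]$ by~\cite[Proposition~0.7]{T3} (note $\operatorname{w}^1_{\CATA}\circ\operatorname{v}^1_{\CATA}\circ\operatorname{v}_{\CATA}\in\SETWA$ by (BF2) and $\alpha_{\CATA}$ is the required invertible denominator cell).

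Finally I would verify that $\widetilde{\functor{G}}_2(\Gamma_{\CATA})=\Gamma_{\CATB}$. By the description~\eqref{eq-07} of $\widetilde{\functor{G}}_2$, the left-hand side is represented by data assembled from $\functor{F}_1(\operatorname{v}^m_{\CATA}\circ\operatorname{v}_{\CATA})$ and from $\functor{F}_2(\alpha_{\CATA}),\functor{F}_2(\gamma_{\CATA})$ conjugated by the $\psi^{\functor{F}}_{\bullet}$; the compatibility $2$-cell $\sigma_{\CATB}$ furnished by (\hyperref[A5]{A5}) via~\eqref{eq-13} is precisely what identifies this representative with $(\overline{A}_{\CATB},\operatorname{u}^1_{\CATB},\operatorname{u}^2_{\CATB},\eta^1_{\CATB},\eta^2_{\CATB})$ once one passes to a common refinement downstairs. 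I expect the main obstacle to be exactly this last identification: one must produce a single morphism in $\SETWBsat$ over which the two competing representing $2$-cells coincide, and doing so requires a long diagram chase that pastes together the $(A2)$-cells $\gamma^1_{\CATB},\gamma^2_{\CATB}$, the two instances of $\sigma_{\CATB}$ coming from the denominator and the numerator applications of (\hyperref[A5]{A5}), and all the intervening associators of $\CATB$ and of $\functor{F}$. The coherence that makes the two separately lifted pieces fit over the common base $A'_{\CATA}$ is where condition (\hyperref[A4]{A4}) enters, through Lemma~\ref{lem-03} and hence Lemma~\ref{lem-09}.
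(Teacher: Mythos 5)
Your skeleton matches the paper's up to a point: reduce the representative of $\Gamma_{\CATB}$ so that its first leg lies in $\SETWB$, apply (\hyperref[A2]{A2}) to replace both legs by $\functor{F}$-images of morphisms of $\CATA$, and then lift the invertible denominator cell through Lemma~\ref{lem-03} (whence (\hyperref[A4]{A4})) and the numerator cell through (\hyperref[A5]{A5}). The genuine gap is that you perform these two lifts \emph{in parallel} and try to merge them afterwards. Each application of (\hyperref[A5]{A5}) (or of Lemma~\ref{lem-03}) returns, besides a $2$-cell of $\CATA$ over its own refinement $\operatorname{p}_{\CATA}$ (resp.\ $\operatorname{q}_{\CATA}$) in $\SETWA$, its own connecting data in $\CATB$: a morphism $\operatorname{z}'_{\CATB}$ (resp.\ $\operatorname{t}'_{\CATB}$) into the base $A'_{\CATB}$ and an invertible cell $\sigma_{\CATB}$ (resp.\ $\tau_{\CATB}$), and the identity of whiskered $2$-cells guaranteed by the condition holds only relative to \emph{that} data. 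But by~\cite[\S~2.3]{Pr}, the equality $\widetilde{\functor{G}}_2(\Gamma_{\CATA})=\Gamma_{\CATB}$ you must prove requires one single common refinement together with one single pair of invertible comparison cells for which the denominator equation \emph{and} the numerator equation hold \emph{simultaneously}. Your construction hands you each equation with its own, unrelated, refinement and comparison cells. Passing to a common refinement in $\CATA$ via (BF3) only arranges that $\alpha_{\CATA}$ and $\gamma_{\CATA}$ have a common source upstairs; downstairs you still have two maps into $A'_{\CATB}$ (one through $\operatorname{z}'_{\CATB}$, one through $\operatorname{t}'_{\CATB}$) that need not be comparable at all, and rewriting the numerator equation in the frame of $\sigma_{\CATB}$ introduces a discrepancy $2$-cell that has no reason to commute with $\functor{F}_2(\gamma_{\CATA})$. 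Neither (\hyperref[A4]{A4}) nor Lemma~\ref{lem-09} repairs this: they give injectivity-type statements about $2$-cells of $\CATA$, not the simultaneous compatibility in $\CATB$ that the equivalence relation on representatives demands. So the step you defer as ``a long diagram chase'' is not a chase at all as your data stand; it is the missing heart of the proof.

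The paper avoids the problem by making the two lifts \emph{sequential}. First (\hyperref[A5]{A5}) is applied to the numerator; its output $(\operatorname{u}_{\CATA},\operatorname{z}_{\CATB},\operatorname{z}'_{\CATB},\alpha_{\CATA},\sigma_{\CATB})$ is used to rewrite $\Gamma_{\CATB}$ over a base $\operatorname{z}_{\CATB}\in\SETWB$ landing in $\functor{F}_0$ of an object of $\CATA$, so that the numerator component becomes a whiskering by $i_{\operatorname{z}_{\CATB}}$ of an $\functor{F}_2$-image conjugated by associators (this is \eqref{eq-02}). Only then is the denominator cell, already transported along $\sigma_{\CATB}$, lifted by Lemma~\ref{lem-03}; the comparison cell $\nu_{\CATB}$ produced by this second lift transports the already-lifted numerator component for free by the interchange law (computation \eqref{eq-04}), precisely because that component is pulled back from $\functor{F}_0$ of an object of $\CATA$, and the second lift's refinement $\operatorname{z}_{\CATA}\in\SETWA$ is what places both lifted cells over one object of $\CATA$, yielding $\Gamma_{\CATA}=[\widetilde{A}_{\CATA},\operatorname{v}^1_{\CATA}\circ\operatorname{u}_{\CATA}\circ\operatorname{z}_{\CATA},\operatorname{v}^2_{\CATA}\circ\operatorname{u}_{\CATA}\circ\operatorname{z}_{\CATA},\gamma_{\CATA},\alpha_{\CATA}\ast i_{\operatorname{z}_{\CATA}}]$ with $\widetilde{\functor{G}}_2(\Gamma_{\CATA})=\Gamma_{\CATB}$ via \eqref{eq-07}. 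You should restructure your argument in this nested fashion; the parallel version cannot be completed as written.
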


\begin{proof}
\emph{For simplicity of exposition, we are giving this proof only the special case when both
$\CATA$ and $\CATB$ are $2$-categories instead of bicategories}. The interested reader can
easily fill out the details for the general case.\\

Let us fix any pair of objects $A_{\CATA},B_{\CATA}$ and any pair of morphisms $(A^m_{\CATA},
\operatorname{w}^m_{\CATA}$, $f^m_{\CATA}):A_{\CATA}\rightarrow B_{\CATA}$ in $\CATA\left[\SETWAinv
\right]$ for $m=1,2$. Moreover, let us fix any $2$-morphism

\[\Gamma_{\CATB}:\,\,\widetilde{\functor{G}}_1\Big(A^1_{\CATA},\operatorname{w}^1_{\CATA},f^1_{\CATA}
\Big)\Longrightarrow\widetilde{\functor{G}}_1\Big(A^2_{\CATA},\operatorname{w}^2_{\CATA},f^2_{\CATA}
\Big)\]
in $\CATB\left[\SETWBsatinv\right]$. We need to prove that there is a $2$-morphism $\Gamma_{\CATA}:
(A^1_{\CATA},\operatorname{w}^1_{\CATA},f^1_{\CATA})$ $\Rightarrow(A^2_{\CATA},
\operatorname{w}^2_{\CATA},f^2_{\CATA})$ in $\CATA\left[\SETWAinv\right]$, such that
$\widetilde{\functor{G}}_2(\Gamma_{\CATA})=\Gamma_{\CATB}$.\\

By construction of $\CATB\left[\SETWBsatinv\right]$, $\Gamma_{\CATB}$ is
represented by a set of data as follows

\begin{equation}\label{eq-26}
\begin{tikzpicture}[xscale=2.6,yscale=-0.8]
    \node (A0_2) at (2, 0) {$\functor{F}_0(A^1_{\CATA})$};
    \node (A2_2) at (2, 2) {$A_{\CATB}$};
    \node (A2_0) at (0, 2) {$\functor{F}_0(A_{\CATA})$};
    \node (A2_4) at (4, 2) {$\functor{F}_0(B_{\CATA})$,};
    \node (A4_2) at (2, 4) {$\functor{F}_0(A^2_{\CATA})$};
    
    \node (A2_3) at (2.8, 2) {$\Downarrow\,\alpha_{\CATB}$};
    \node (A2_1) at (1.2, 2) {$\Downarrow\,\gamma_{\CATB}$};
    
    \path (A4_2) edge [->]node [auto,swap] {$\scriptstyle{\functor{F}_1(f^2_{\CATA})}$} (A2_4);
    \path (A0_2) edge [->]node [auto] {$\scriptstyle{\functor{F}_1(f^1_{\CATA})}$} (A2_4);
    \path (A2_2) edge [->]node [auto,swap] {$\scriptstyle{\operatorname{v}^1_{\CATB}}$} (A0_2);
    \path (A2_2) edge [->]node [auto] {$\scriptstyle{\operatorname{v}^2_{\CATB}}$} (A4_2);
    \path (A4_2) edge [->]node [auto]
      {$\scriptstyle{\functor{F}_1(\operatorname{w}^2_{\CATA})}$} (A2_0);
    \path (A0_2) edge [->]node [auto,swap]
      {$\scriptstyle{\functor{F}_1(\operatorname{w}^1_{\CATA})}$} (A2_0);
\end{tikzpicture}
\end{equation}
such that $\functor{F}_1(\operatorname{w}^1_{\CATA})\circ\operatorname{v}^1_{\CATB}$ belongs
to $\SETWBsat$ and $\gamma_{\CATB}$ is invertible. We recall that in all this section
we are assuming that $\functor{F}_1(\SETWA)\subseteq\SETWBsat$, so
$\functor{F}_1(\operatorname{w}^m_{\CATA})$ belongs to $\SETWBsat$ for each $m=1,2$. Therefore
using~\cite[Proposition~2.11(ii)]{T4}, $\operatorname{v}^1_{\CATB}$ belongs to $\SETWBsat$. Hence,
using the definition of right saturation there is
no loss of generality in assuming that the data in \eqref{eq-26} are such that
$\operatorname{v}^1_{\CATB}$
belongs to $\SETWB$ and $\gamma_{\CATB}$ is invertible. Moreover, using (BF5) for $(\CATB,\SETWB)$
and $\gamma_{\CATB}^{-1}$, we get that $\functor{F}_1(\operatorname{w}^2_{\CATA})\circ
\operatorname{v}^2_{\CATB}$ belongs to $\SETWBsat$, hence also $\operatorname{v}^2_{\CATB}$ belongs to
$\SETWBsat$ by~\cite[Proposition~2.11(ii)]{T4}. If we apply (\hyperref[A2]{A2}) to the set of data

\[
\begin{tikzpicture}[xscale=2.4,yscale=-1.2]
    \node (A0_0) at (0, 0) {$\functor{F}_0(A^1_{\CATA})$};
    \node (A0_1) at (1, 0) {$A_{\CATB}$};
    \node (A0_2) at (2, 0) {$\functor{F}_0(A^2_{\CATA})$,};
    
    \path (A0_1) edge [->]node [auto,swap] {$\scriptstyle{\operatorname{v}^1_{\CATB}}$} (A0_0);
    \path (A0_1) edge [->]node [auto] {$\scriptstyle{\operatorname{v}^2_{\CATB}}$} (A0_2);
\end{tikzpicture} 
\]
we get an object $A^3_{\CATA}$, a pair of morphisms $\operatorname{v}^1_{\CATA}$ in
$\SETWA$ and $\operatorname{v}^2_{\CATA}$ in $\SETWAsat$ as follows

\[
\begin{tikzpicture}[xscale=2.4,yscale=-1.2]
    \node (A0_0) at (0, 0) {$A^1_{\CATA}$};
    \node (A0_1) at (1, 0) {$A^3_{\CATA}$};
    \node (A0_2) at (2, 0) {$A^2_{\CATA}$};
    
    \path (A0_1) edge [->]node [auto,swap] {$\scriptstyle{\operatorname{v}^1_{\CATA}}$} (A0_0);
    \path (A0_1) edge [->]node [auto] {$\scriptstyle{\operatorname{v}^2_{\CATA}}$} (A0_2);
\end{tikzpicture} 
\]
and a set of data in $\CATB$ as in the internal part of the following diagram

\[
\begin{tikzpicture}[xscale=2.6,yscale=-0.8]
    \node (A0_2) at (2, 0) {$A_{\CATB}$};
    \node (A2_2) at (2, 2) {$\overline{A}''_{\CATB}$};
    \node (A2_0) at (0, 2) {$\functor{F}_0(A^1_{\CATA})$};
    \node (A2_4) at (4, 2) {$\functor{F}_0(A^2_{\CATA})$,};
    \node (A4_2) at (2, 4) {$\functor{F}_0(A^3_{\CATA})$};
    
    \node (A2_3) at (2.8, 2) {$\Downarrow\,\phi^2_{\CATB}$};
    \node (A2_1) at (1.2, 2) {$\Downarrow\,\phi^1_{\CATB}$};
    
    \path (A4_2) edge [->]node [auto,swap]
      {$\scriptstyle{\functor{F}_1(\operatorname{v}^2_{\CATA})}$} (A2_4);
    \path (A0_2) edge [->]node [auto] {$\scriptstyle{\operatorname{v}^2_{\CATB}}$} (A2_4);
    \path (A2_2) edge [->]node [auto,swap]
      {$\scriptstyle{\operatorname{u}'_{\CATB}}$} (A0_2);
    \path (A2_2) edge [->]node [auto]
      {$\scriptstyle{\operatorname{u}_{\CATB}}$} (A4_2);
    \path (A4_2) edge [->]node [auto]
      {$\scriptstyle{\functor{F}_1(\operatorname{v}^1_{\CATA})}$} (A2_0);
    \path (A0_2) edge [->]node [auto,swap] {$\scriptstyle{\operatorname{v}^1_{\CATB}}$} (A2_0);
\end{tikzpicture}
\]
such that $\operatorname{u}'_{\CATB}$ belongs to $\SETWB$ and both
$\phi^1_{\CATB}$ and $\phi^2_{\CATB}$ are invertible. Since
$\phi^1_{\CATB}$ is invertible, then by (BF2) and (BF5)
for $(\CATB,\SETWB)$ we get that $\functor{F}_1(\operatorname{v}^1_{\CATA})\circ
\operatorname{u}_{\CATB}$ belongs to $\SETWB\subseteq\SETWBsat$. Moreover, $\functor{F}_1
(\operatorname{v}^1_{\CATA})$ belongs to $\SETWBsat$ because $\functor{F}_1(\SETWA)\subseteq
\SETWBsat$ by hypothesis. So again by~\cite[Proposition~2.11(ii)]{T4} we
get that $\operatorname{u}_{\CATB}$ belongs to $\SETWBsat$. So by definition of right
saturation there are an object
$A''_{\CATB}$ and a morphism $\operatorname{r}_{\CATB}:A''_{\CATB}\rightarrow\overline{A}''_{\CATB}$,
such that $\operatorname{u}_{\CATB}\circ
\operatorname{r}_{\CATB}$ belongs to $\SETWB$. Then we set:

\begin{gather*}
\gamma'_{\CATB}:=\Big(i_{\functor{F}_1(\operatorname{w}^2_{\CATA})}\ast
 \phi^2_{\CATB}\ast i_{\operatorname{r}_{\CATB}}
 \Big)\odot\Big(\gamma_{\CATB}\ast i_{\operatorname{u}'_{\CATB}\circ
 \operatorname{r}_{\CATB}}\Big)\odot\Big(
 i_{\functor{F}_1(\operatorname{w}^1_{\CATA})}\ast\left(\phi^1_{\CATB}\right)^{-1}
 \ast i_{\operatorname{r}_{\CATB}}\Big): \\
\phantom{\Big(}\functor{F}_1(\operatorname{w}^1_{\CATA})\circ\functor{F}_1
 (\operatorname{v}^1_{\CATA})\circ\operatorname{u}_{\CATB}\circ\operatorname{r}_{\CATB}
 \Longrightarrow\functor{F}_1
 (\operatorname{w}^2_{\CATA})\circ\functor{F}_1(\operatorname{v}^2_{\CATA})\circ
 \operatorname{u}_{\CATB}\circ\operatorname{r}_{\CATB}
\end{gather*}
and

\begin{gather*}
\alpha'_{\CATB}:=\Big(i_{\functor{F}_1(f^2_{\CATA})}\ast\phi^2_{\CATB}
 \ast i_{\operatorname{r}_{\CATB}}\Big)\odot\Big(\alpha_{\CATB}\ast
 i_{\operatorname{u}'_{\CATB}\circ\operatorname{r}_{\CATB}}\Big)\odot\Big(i_{\functor{F}_1
 (f^1_{\CATA})}\ast\left(\phi^1_{\CATB}\right)^{-1}\ast i_{\operatorname{r}_{\CATB}}
 \Big): \\
\phantom{\Big(}\functor{F}_1(f^1_{\CATA})\circ\functor{F}_1(\operatorname{v}^1_{\CATA})\circ
 \operatorname{u}_{\CATB}\circ\operatorname{r}_{\CATB}\Longrightarrow\functor{F}_1
 (f^2_{\CATA})\circ\functor{F}_1(\operatorname{v}^2_{\CATA})\circ\operatorname{u}_{\CATB}
 \circ\operatorname{r}_{\CATB}.
\end{gather*}

Then if we use the following diagram

\[
\begin{tikzpicture}[xscale=3.9,yscale=-0.8]
    \node (A0_4) at (4, 0) {$\functor{F}_0(A^1_{\CATA})$};
    \node (A2_2) at (3, 2) {$A''_{\CATB}$};
    \node (A2_4) at (4, 2) {$A''_{\CATB}$};
    \node (A2_6) at (5, 2) {$A_{\CATB}$};
    \node (A4_4) at (4, 4) {$\functor{F}_0(A^2_{\CATA})$};
    
    \node (A1_4) at (4, 0.7) {$\Rightarrow$};
    \node (B1_4) at (4, 1.3) {$\left(\phi^1_{\CATB}\right)^{-1}\ast
      i_{\operatorname{r}_{\CATB}}$};
    \node (A3_4) at (4, 3.3) {$\Leftarrow$};
    \node (B3_4) at (4, 2.7) {$\phi^2_{\CATB}\ast i_{\operatorname{r}_{\CATB}}$};
    
    \path (A2_2) edge [->]node [auto,swap] {$\scriptstyle{\functor{F}_1
      (\operatorname{v}^2_{\CATA})\circ\operatorname{u}_{\CATB}
      \circ\operatorname{r}_{\CATB}}$} (A4_4);
    \path (A2_4) edge [->]node [auto] {$\scriptstyle{\operatorname{u}'_{\CATB}
      \circ\operatorname{r}_{\CATB}}$} (A2_6);
    \path (A2_6) edge [->]node [auto] {$\scriptstyle{\operatorname{v}^2_{\CATB}}$} (A4_4);
    \path (A2_2) edge [->]node [auto] {$\scriptstyle{\functor{F}_1
      (\operatorname{v}^1_{\CATA})\circ\operatorname{u}_{\CATB}
      \circ\operatorname{r}_{\CATB}}$} (A0_4);
    \path (A2_4) edge [->]node [auto,swap] {$\scriptstyle{\id_{A''_{\CATB}}}$} (A2_2);
    \path (A2_6) edge [->]node [auto,swap] {$\scriptstyle{\operatorname{v}^1_{\CATB}}$} (A0_4);
\end{tikzpicture}
\]
together with \eqref{eq-26}, we get easily that

\begin{equation}\label{eq-24}
\Gamma_{\CATB}=\Big[A''_{\CATB},\functor{F}_1(\operatorname{v}^1_{\CATA})\circ
\operatorname{u}_{\CATB}\circ\operatorname{r}_{\CATB},\functor{F}_1
(\operatorname{v}^2_{\CATA})\circ\operatorname{u}_{\CATB}\circ\operatorname{r}_{\CATB},
\gamma'_{\CATB},\alpha'_{\CATB}\Big].
\end{equation}

Now we define

\begin{gather}
\nonumber \widetilde{\alpha}_{\CATB}:=\Big(\psi^{\functor{F}}_{f^2_{\CATA},
 \operatorname{v}^2_{\CATA}}\ast
 i_{\operatorname{u}_{\CATB}\circ\operatorname{r}_{\CATB}}\Big)^{-1}
 \odot\,\alpha'_{\CATB}\odot\Big(\psi^{\functor{F}}_{f^1_{\CATA},\operatorname{v}^1_{\CATA}}
 \ast i_{\operatorname{u}_{\CATB}\circ\operatorname{r}_{\CATB}}\Big): \\
\label{eq-01} \phantom{\Big(}\functor{F}_1(f^1_{\CATA}\circ\operatorname{v}^1_{\CATA})
 \circ\operatorname{u}_{\CATB}\circ\operatorname{r}_{\CATB}\Longrightarrow
 \functor{F}_1(f^2_{\CATA}\circ\operatorname{v}^2_{\CATA})\circ\operatorname{u}_{\CATB}
 \circ\operatorname{r}_{\CATB}.
\end{gather}

Then we apply (\hyperref[A5]{A5}) to the set of data

\[A^3_{\CATA},\quad B_{\CATA},\quad A''_{\CATB},\quad f^1_{\CATA}\circ\operatorname{v}^1_{\CATA},
\quad f^2_{\CATA}\circ\operatorname{v}^2_{\CATA},\quad\operatorname{u}_{\CATB}
\circ\operatorname{r}_{\CATB},\quad\widetilde{\alpha}_{\CATB},\]
so there are a pair of objects $A'_{\CATA},A'_{\CATB}$, a triple of morphisms
$\operatorname{u}_{\CATA}:A'_{\CATA}\rightarrow A^3_{\CATA}$ in $\SETWA$, $\operatorname{z}_{\CATB}:
A'_{\CATB}\rightarrow\functor{F}_0(A'_{\CATA})$ in $\SETWB$ and $\operatorname{z}'_{\CATB}:A'_{\CATB}
\rightarrow A''_{\CATB}$, a $2$-morphism

\[
\begin{tikzpicture}[xscale=1.8,yscale=-0.6]
    \node (B0_0) at (-1, 0) {}; 
    \node (B1_1) at (5, 0) {}; 
    
    \node (A0_1) at (2, 0) {$A^3_{\CATA}$};
    \node (A1_0) at (0, 1) {$A'_{\CATA}$};
    \node (A1_2) at (4, 1) {$B_{\CATA}$};
    \node (A2_1) at (2, 2) {$A^3_{\CATA}$};

    \node (A1_1) at (2, 1) {$\Downarrow\,\alpha_{\CATA}$};
    
    \path (A1_0) edge [->,bend right=20]node [auto] {$\scriptstyle{\operatorname{u}_{\CATA}}$} (A0_1);
    \path (A1_0) edge [->,bend left=20]node [auto,swap]
      {$\scriptstyle{\operatorname{u}_{\CATA}}$} (A2_1);
    \path (A0_1) edge [->,bend right=20]node [auto] {$\scriptstyle{f^1_{\CATA}
      \circ\operatorname{v}^1_{\CATA}}$} (A1_2);
    \path (A2_1) edge [->,bend left=20]node [auto,swap] {$\scriptstyle{f^2_{\CATA}
      \circ\operatorname{v}^2_{\CATA}}$} (A1_2);
\end{tikzpicture}
\]
and an invertible $2$-morphism
  
\[
\begin{tikzpicture}[xscale=1.8,yscale=-0.6]
    \node (B0_0) at (-1, 0) {}; 
    \node (B1_1) at (5, 0) {}; 
    
    \node (A0_1) at (2, 0) {$\functor{F}_0(A'_{\CATA})$};
    \node (A1_0) at (0, 1) {$A'_{\CATB}$};
    \node (A1_2) at (4, 1) {$\functor{F}_0(A^3_{\CATA})$,};
    \node (A2_1) at (2, 2) {$A^{\prime\prime}_{\CATB}$};
    
    \node (A1_1) at (2, 1) {$\Downarrow\,\sigma_{\CATB}$};

    \path (A1_0) edge [->,bend right=20]node [auto]
      {$\scriptstyle{\operatorname{z}_{\CATB}}$} (A0_1);
    \path (A1_0) edge [->,bend left=20]node [auto,swap]
      {$\scriptstyle{\operatorname{z}'_{\CATB}}$} (A2_1);
    \path (A0_1) edge [->,bend right=20]node [auto] {$\scriptstyle{\functor{F}_1
      (\operatorname{u}_{\CATA})}$} (A1_2);
    \path (A2_1) edge [->,bend left=20]node [auto,swap]
      {$\scriptstyle{\operatorname{u}_{\CATB}\circ\operatorname{r}_{\CATB}}$} (A1_2);
\end{tikzpicture}
\]
such that:

\begin{gather}
\nonumber \widetilde{\alpha}_{\CATB}\ast i_{\operatorname{z}'_{\CATB}}=
 \Big(i_{\functor{F}_1(f^2_{\CATA}\circ
 \operatorname{v}^2_{\CATA})}\ast\sigma_{\CATB}\Big)\odot
 \Big(\psi^{\functor{F}}_{f^2_{\CATA}\circ\operatorname{v}^2_{\CATA},
 \operatorname{u}_{\CATA}}\ast i_{\operatorname{z}_{\CATB}}\Big)\odot\Big(\functor{F}_2
 (\alpha_{\CATA})\ast i_{\operatorname{z}_{\CATB}}\Big)\odot\\
\label{eq-22} \odot\Big(\psi^{\functor{F}}_{f^1_{\CATA}
 \circ\operatorname{v}^1_{\CATA},\operatorname{u}_{\CATA}}\ast i_{\operatorname{z}_{\CATB}}
 \Big)^{-1}\odot\Big(i_{\functor{F}_1
 (f^1_{\CATA}\circ\operatorname{v}^1_{\CATA})}\ast\sigma_{\CATB}^{-1}\Big).
\end{gather}

Using the interchange law on $\CATB$, we get the following identity:

\begin{gather}
\nonumber \alpha'_{\CATB}\ast i_{\operatorname{z}'_{\CATB}}\stackrel{\eqref{eq-01}}{=} \\
\nonumber \stackrel{\eqref{eq-01}}{=}\Big(\psi^{\functor{F}}_{f^2_{\CATA},
  \operatorname{v}^2_{\CATA}}\ast i_{\operatorname{u}_{\CATB}
  \circ\operatorname{r}_{\CATB}\circ\operatorname{z}'_{\CATB}}\Big)\odot\Big(
  \widetilde{\alpha}_{\CATB}\ast i_{\operatorname{z}'_{\CATB}}\Big)\odot\Big(
  \psi^{\functor{F}}_{f^1_{\CATA},\operatorname{v}^1_{\CATA}}\ast i_{\operatorname{u}_{\CATB}
  \circ\operatorname{r}_{\CATB}\circ\operatorname{z}'_{\CATB}}\Big)^{-1}\stackrel{\eqref{eq-22}}{=}\\
\nonumber \stackrel{\eqref{eq-22}}{=}\Big(\psi^{\functor{F}}_{f^2_{\CATA},
 \operatorname{v}^2_{\CATA}}\ast
 i_{\operatorname{u}_{\CATB}\circ\operatorname{r}_{\CATB}\circ\operatorname{z}'_{\CATB}}\Big)\odot
 \Big(i_{\functor{F}_1(f^2_{\CATA}\circ\operatorname{v}^2_{\CATA})}\ast\sigma_{\CATB}\Big)\odot \\
\nonumber \odot\Big(\psi^{\functor{F}}_{f^2_{\CATA}\circ\operatorname{v}^2_{\CATA},
 \operatorname{u}_{\CATA}}\ast i_{\operatorname{z}_{\CATB}}\Big)\odot\Big(\functor{F}_2
 (\alpha_{\CATA})\ast i_{\operatorname{z}_{\CATB}}\Big)\odot\Big(\psi^{\functor{F}}_{f^1_{\CATA}
 \circ\operatorname{v}^1_{\CATA},\operatorname{u}_{\CATA}}\ast i_{\operatorname{z}_{\CATB}}
 \Big)^{-1}\odot \\
\nonumber \odot\Big(i_{\functor{F}_1
 (f^1_{\CATA}\circ\operatorname{v}^1_{\CATA})}\ast\sigma_{\CATB}^{-1}\Big)\odot\Big(
 \psi^{\functor{F}}_{f^1_{\CATA},\operatorname{v}^1_{\CATA}}\ast i_{\operatorname{u}_{\CATB}\circ
 \operatorname{r}_{\CATB}\circ\operatorname{z}'_{\CATB}}\Big)^{-1}= \\
\nonumber =\Big(i_{\functor{F}_1(f^2_{\CATA})\circ\functor{F}_1(\operatorname{v}^2_{\CATA})}\ast
 \sigma_{\CATB}\Big)\odot\Big(\psi^{\functor{F}}_{f^2_{\CATA},\operatorname{v}^2_{\CATA}}\ast
 i_{\functor{F}_1(\operatorname{u}_{\CATA})\circ\operatorname{z}_{\CATB}}\Big)\odot \\
\nonumber \odot\Big(\psi^{\functor{F}}_{f^2_{\CATA}\circ\operatorname{v}^2_{\CATA},
  \operatorname{u}_{\CATA}}\ast i_{\operatorname{z}_{\CATB}}\Big)\odot\Big(\functor{F}_2
  (\alpha_{\CATA})\ast i_{\operatorname{z}_{\CATB}}\Big)\odot\Big(\psi^{\functor{F}}_{f^1_{\CATA}
  \circ\operatorname{v}^1_{\CATA},\operatorname{u}_{\CATA}}\ast i_{\operatorname{z}_{\CATB}}
  \Big)^{-1}\odot \\
\label{eq-21} \odot\Big(\psi^{\functor{F}}_{f^1_{\CATA},\operatorname{v}^1_{\CATA}}\ast
 i_{\functor{F}_1(\operatorname{u}_{\CATA})\circ\operatorname{z}_{\CATB}}\Big)^{-1}\odot
 \Big(i_{\functor{F}_1(f^1_{\CATA})\circ\functor{F}_1(\operatorname{v}^1_{\CATA})}\ast
 \sigma_{\CATB}^{-1}\Big). 
\end{gather} 

Since $\functor{F}$ is a pseudofunctor and since we are assuming for simplicity that
$\CATA$ and $\CATB$ are $2$-categories, then for each $m=1,2$ we have

\[\Big(\psi^{\functor{F}}_{f^m_{\CATA},\operatorname{v}^m_{\CATA}}
\ast i_{\functor{F}_1(\operatorname{u}_{\CATA})}\Big)\odot\Big(\psi^{\functor{F}}_{f^m_{\CATA}\circ
\operatorname{v}^m_{\CATA},\operatorname{u}_{\CATA}}\Big)=\Big(i_{\functor{F}_1(f^m_{\CATA})}\ast
\psi^{\functor{F}}_{\operatorname{v}^m_{\CATA},\operatorname{u}_{\CATA}}\Big)\odot\Big(
\psi^{\functor{F}}_{f^m_{\CATA},\operatorname{v}^m_{\CATA}\circ\operatorname{u}_{\CATA}}\Big).\]

So by replacing in \eqref{eq-21} we get

\begin{gather}
\alpha'_{\CATB}\ast i_{\operatorname{z}'_{\CATB}}=\Big\{i_{\functor{F}_1(f^2_{\CATA})}\ast\Big[
 \Big(i_{\functor{F}_1(\operatorname{v}^2_{\CATA})}\ast\sigma_{\CATB}\Big)\odot
 \Big(\psi^{\functor{F}}_{\operatorname{v}^2_{\CATA},
 \operatorname{u}_{\CATA}}\ast i_{\operatorname{z}_{\CATB}}\Big)\Big]\Big\}
 \odot \\
\nonumber \odot\Big\{\Big[\psi^{\functor{F}}_{f^2_{\CATA},\operatorname{v}^2_{\CATA}
 \circ\operatorname{u}_{\CATA}}\odot\functor{F}_2(\alpha_{\CATA})\odot
 \Big(\psi^{\functor{F}}_{f^1_{\CATA},\operatorname{v}^1_{\CATA}
 \circ\operatorname{u}_{\CATA}}\Big)^{-1}\Big]\ast
 i_{\operatorname{z}_{\CATB}}\Big\}\odot \\
\label{eq-30} \odot\Big\{i_{\functor{F}_1(f^1_{\CATA})}\ast\Big[\Big(
 \psi^{\functor{F}}_{\operatorname{v}^1_{\CATA},\operatorname{u}_{\CATA}}\ast
 i_{\operatorname{z}_{\CATB}}\Big)^{-1}\odot\Big(i_{\functor{F}_1
 (\operatorname{v}^1_{\CATA})}\ast\sigma_{\CATB}^{-1}\Big)\Big]\Big\}.
\end{gather}

Now we define an invertible $2$-morphism

\begin{gather*}
\gamma''_{\CATB}:=\Big\{i_{\functor{F}_1(\operatorname{w}^2_{\CATA})}\ast\Big[
 \Big(\psi^{\functor{F}}_{\operatorname{v}^2_{\CATA},\operatorname{u}_{\CATA}}\ast
 i_{\operatorname{z}_{\CATB}}\Big)^{-1}\odot\Big(i_{\functor{F}_1(\operatorname{v}^2_{\CATA})}\ast
 \sigma_{\CATB}^{-1}\Big)\odot\Big(\phi^2_{\CATB}\ast
 i_{\operatorname{r}_{\CATB}\circ\operatorname{z}'_{\CATB}}\Big)\Big]\Big\}\odot \\
\odot\Big(\gamma_{\CATB}\ast
 i_{\operatorname{u}'_{\CATB}\circ\operatorname{r}_{\CATB}\circ\operatorname{z}'_{\CATB}}\Big)\odot \\
\odot\Big\{i_{\functor{F}_1(\operatorname{w}^1_{\CATA})}\ast\Big[\Big(\left(\phi^1_{\CATB}
 \right)^{-1}
 \ast i_{\operatorname{r}_{\CATB}\circ\operatorname{z}'_{\CATB}}\Big)\odot
 \Big(i_{\functor{F}_1(\operatorname{v}^1_{\CATA})}\ast\sigma_{\CATB}
 \Big)\odot\Big(\psi^{\functor{F}}_{\operatorname{v}^1_{\CATA},
 \operatorname{u}_{\CATA}}\ast i_{\operatorname{z}_{\CATB}}\Big)\Big]\Big\}: \\
\phantom{\Big(}\functor{F}_1(\operatorname{w}^1_{\CATA})\circ\functor{F}_1
 (\operatorname{v}^1_{\CATA}\circ\operatorname{u}_{\CATA})\circ\operatorname{z}_{\CATB}
 \Longrightarrow\functor{F}_1(\operatorname{w}^2_{\CATA})\circ\functor{F}_1
 (\operatorname{v}^2_{\CATA}\circ\operatorname{u}_{\CATA})\circ\operatorname{z}_{\CATB}.
\end{gather*}

Then by definition of $\gamma'_{\CATB}$ and $\gamma''_{\CATB}$ we have:

\begin{gather}
\nonumber \gamma'_{\CATB}\ast i_{\operatorname{z}'_{\CATB}}=
 \Big\{i_{\functor{F}_1(\operatorname{w}^2_{\CATA})}\ast\Big[\Big(
 i_{\functor{F}_1(\operatorname{v}^2_{\CATA})}\ast\sigma_{\CATB}\Big)\odot\Big(
 \psi^{\functor{F}}_{\operatorname{v}^2_{\CATA},\operatorname{u}_{\CATA}}
  \ast i_{\operatorname{z}_{\CATB}}\Big)\Big]\Big\}\odot \\
\label{eq-29} \odot\,\gamma''_{\CATB}\,\odot
  \Big\{i_{\functor{F}_1(\operatorname{w}^1_{\CATA})}\ast\Big[
 \Big(\psi^{\functor{F}}_{\operatorname{v}^1_{\CATA},\operatorname{u}_{\CATA}}\ast
 i_{\operatorname{z}_{\CATB}}\Big)^{-1}\odot\Big(i_{\functor{F}_1(\operatorname{v}^1_{\CATA})}
 \ast\sigma_{\CATB}^{-1}\Big)\Big]\Big\}.
\end{gather}

Then let us consider the following set of data

\[
\begin{tikzpicture}[xscale=-2.6,yscale=-0.8]
    \node (A0_4) at (4, 0) {$\functor{F}_0(A^1_{\CATA})$};
    \node (A2_2) at (3, 2) {$A'_{\CATB}$,};
    \node (A2_4) at (4, 2) {$A'_{\CATB}$};
    \node (A2_6) at (5, 2) {$A''_{\CATB}$};
    \node (A4_4) at (4, 4) {$\functor{F}_0(A^2_{\CATA})$};
        
    \node (A1_4) at (4, 0.8) {$\Rightarrow$};
    \node (B1_4) at (4, 1.3) {$\xi^1_{\CATB}$};
    \node (A3_4) at (4, 3.2) {$\Leftarrow$};
    \node (B3_4) at (4, 2.7) {$\xi^2_{\CATB}$};
    
    \path (A2_2) edge [->]node [auto] {$\scriptstyle{\functor{F}_1(\operatorname{v}^2_{\CATA}
      \circ\operatorname{u}_{\CATA})\circ\operatorname{z}_{\CATB}}$} (A4_4);
    \path (A2_4) edge [->]node [auto,swap] {$\scriptstyle{\operatorname{z}'_{\CATB}}$} (A2_6);
    \path (A2_6) edge [->]node [auto,swap] {$\scriptstyle{\functor{F}_1
      (\operatorname{v}^2_{\CATA})\circ\operatorname{u}_{\CATB}
      \circ\operatorname{r}_{\CATB}}$} (A4_4);
    \path (A2_2) edge [->]node [auto,swap] {$\scriptstyle{\functor{F}_1(\operatorname{v}^1_{\CATA}
      \circ\operatorname{u}_{\CATA})\circ\operatorname{z}_{\CATB}}$} (A0_4);
    \path (A2_4) edge [->]node [auto] {$\scriptstyle{\id_{A'_{\CATB}}}$} (A2_2);
    \path (A2_6) edge [->]node [auto] {$\scriptstyle{\functor{F}_1
      (\operatorname{v}^1_{\CATA})\circ\operatorname{u}_{\CATB}
      \circ\operatorname{r}_{\CATB}}$} (A0_4);
\end{tikzpicture}
\]
where

\[\xi^1_{\CATB}:=\Big(\psi^{\functor{F}}_{\operatorname{v}^1_{\CATA},
\operatorname{u}_{\CATA}}\ast i_{\operatorname{z}_{\CATB}}\Big)^{-1}\odot\Big(i_{\functor{F}_1
(\operatorname{v}^1_{\CATA})}\ast\sigma_{\CATB}^{-1}\Big)\]
and

\[\xi^2_{\CATB}:=\Big(i_{\functor{F}_1(\operatorname{v}^2_{\CATA})}\ast\sigma_{\CATB}
 \Big)\odot\Big(\psi^{\functor{F}}_{\operatorname{v}^2_{\CATA},
 \operatorname{u}_{\CATA}}\ast i_{\operatorname{z}_{\CATB}}\Big).
\]

Using such data with \eqref{eq-24}, \eqref{eq-30}, \eqref{eq-29} and the definition of
$2$-morphisms in~\cite[\S~2.3]{Pr}, we conclude that:

\begin{gather}
\nonumber \Gamma_{\CATB}=\Big[A'_{\CATB},\functor{F}_1(\operatorname{v}^1_{\CATA}\circ
 \operatorname{u}_{\CATA})\circ\operatorname{z}_{\CATB},\functor{F}_1(\operatorname{v}^2_{\CATA}\circ
 \operatorname{u}_{\CATA})\circ\operatorname{z}_{\CATB},\gamma''_{\CATB}, \\
\label{eq-02} \Big(\psi^{\functor{F}}_{f^2_{\CATA},\operatorname{v}^2_{\CATA}\circ
 \operatorname{u}_{\CATA}}\odot\functor{F}_2(\alpha_{\CATA})\odot\Big(\psi^{\functor{F}}_{f^1_{\CATA},
 \operatorname{v}^1_{\CATA}\circ\operatorname{u}_{\CATA}}\Big)^{-1}\Big)\ast
 i_{\operatorname{z}_{\CATB}}\Big].
\end{gather}

Now we consider the invertible $2$-morphism

\begin{gather*}
\widetilde{\gamma}_{\CATB}:=\Big(\psi^{\functor{F}}_{\operatorname{w}^2_{\CATA},
 \operatorname{v}^2_{\CATA}\circ\operatorname{u}_{\CATA}}\ast i_{\operatorname{z}_{\CATB}}
 \Big)^{-1}\odot\,\gamma''_{\CATB}\odot
 \Big(\psi^{\functor{F}}_{\operatorname{w}^1_{\CATA},\operatorname{v}^1_{\CATA}\circ
 \operatorname{u}_{\CATA}}\ast i_{\operatorname{z}_{\CATB}}\Big): \\
\phantom{\Big(}\functor{F}_1(\operatorname{w}^1_{\CATA}\circ\operatorname{v}^1_{\CATA}\circ
 \operatorname{u}_{\CATA})\circ\operatorname{z}_{\CATB}\Longrightarrow\functor{F}_1
 (\operatorname{w}^2_{\CATA}\circ\operatorname{v}^2_{\CATA}\circ\operatorname{u}_{\CATA})\circ
 \operatorname{z}_{\CATB}.
\end{gather*}

Since we are assuming (\hyperref[A4]{A4}) and (\hyperref[A5]{A5}), then we can apply
Lemma~\ref{lem-03} on the set of data:

\[A'_{\CATA},\quad A_{\CATA},\quad A'_{\CATB},\quad\operatorname{w}^1_{\CATA}\circ
\operatorname{v}^1_{\CATA}\circ\operatorname{u}_{\CATA},\quad\operatorname{w}^2_{\CATA}\circ
\operatorname{v}^2_{\CATA}\circ\operatorname{u}_{\CATA},\quad\operatorname{z}_{\CATB},\quad
\widetilde{\gamma}_{\CATB}.\]

Then there are a pair of objects $\widetilde{A}_{\CATA},\widetilde{A}_{\CATB}$, a triple of morphisms
$\operatorname{z}_{\CATA}:\widetilde{A}_{\CATA}\rightarrow A'_{\CATA}$ in $\SETWA$,
$\operatorname{t}_{\CATB}:\widetilde{A}_{\CATB}\rightarrow\functor{F}_0(\widetilde{A}_{\CATA})$
in $\SETWB$ and $\operatorname{t}'_{\CATB}:\widetilde{A}_{\CATB}\rightarrow A'_{\CATB}$, an
\emph{invertible} $2$-morphism

\[
\begin{tikzpicture}[xscale=1.8,yscale=-0.6]
    \node (B0_0) at (-1, 0) {}; 
    \node (B1_1) at (5, 0) {}; 

    \node (A0_1) at (2, 0) {$A'_{\CATA}$};
    \node (A1_0) at (0, 1) {$\widetilde{A}_{\CATA}$};
    \node (A1_2) at (4, 1) {$A_{\CATA}$};
    \node (A2_1) at (2, 2) {$A'_{\CATA}$};

    \node (A1_1) at (2, 1) {$\Downarrow\,\gamma_{\CATA}$};

    \path (A1_0) edge [->,bend right=20]node [auto] {$\scriptstyle{\operatorname{z}_{\CATA}}$} (A0_1);
    \path (A1_0) edge [->,bend left=20]node [auto,swap]
      {$\scriptstyle{\operatorname{z}_{\CATA}}$} (A2_1);
    \path (A0_1) edge [->,bend right=20]node [auto] {$\scriptstyle{\operatorname{w}^1_{\CATA}
      \circ\operatorname{v}^1_{\CATA}\circ\operatorname{u}_{\CATA}}$} (A1_2);
    \path (A2_1) edge [->,bend left=20]node [auto,swap] {$\scriptstyle{\operatorname{w}^2_{\CATA}
      \circ\operatorname{v}^2_{\CATA}\circ\operatorname{u}_{\CATA}}$} (A1_2);
\end{tikzpicture}
\]
and an invertible $2$-morphism
  
\[
\begin{tikzpicture}[xscale=1.8,yscale=-0.6]
    \node (B0_0) at (-1, 0) {}; 
    \node (B1_1) at (5, 0) {}; 
    
    \node (A0_1) at (2, 0) {$\functor{F}_0(\widetilde{A}_{\CATA})$};
    \node (A1_0) at (0, 1) {$\widetilde{A}_{\CATB}$};
    \node (A1_2) at (4, 1) {$\functor{F}_0(A'_{\CATA})$,};
    \node (A2_1) at (2, 2) {$A'_{\CATB}$};

    \node (A1_1) at (2, 1) {$\Downarrow\,\nu_{\CATB}$};
    
    \path (A1_0) edge [->,bend right=20]node [auto]
      {$\scriptstyle{\operatorname{t}_{\CATB}}$} (A0_1);
    \path (A1_0) edge [->,bend left=20]node [auto,swap]
      {$\scriptstyle{\operatorname{t}'_{\CATB}}$} (A2_1);
    \path (A0_1) edge [->,bend right=20]node [auto] {$\scriptstyle{\functor{F}_1
      (\operatorname{z}_{\CATA})}$} (A1_2);
    \path (A2_1) edge [->,bend left=20]node [auto,swap]
      {$\scriptstyle{\operatorname{z}_{\CATB}}$} (A1_2);
\end{tikzpicture}
\]
such that

\begin{gather}
\label{eq-27} \widetilde{\gamma}_{\CATB}\ast i_{\operatorname{t}'_{\CATB}}=\Big(i_{\functor{F}_1
 (\operatorname{w}^2_{\CATA}\circ\operatorname{v}^2_{\CATA}\circ\operatorname{u}_{\CATA})}\ast
 \nu_{\CATB}\Big)\odot\Big(\psi^{\functor{F}}_{\operatorname{w}^2_{\CATA}\circ
 \operatorname{v}^2_{\CATA}\circ\operatorname{u}_{\CATA},\operatorname{z}_{\CATA}}\ast
 i_{\operatorname{t}_{\CATB}}\Big)\odot \\
\nonumber \odot\Big(\functor{F}_2(\gamma_{\CATA})\ast i_{\operatorname{t}_{\CATB}}\Big)\odot
 \Big(\psi^{\functor{F}}_{\operatorname{w}^1_{\CATA}\circ\operatorname{v}^1_{\CATA}\circ
 \operatorname{u}_{\CATA},\operatorname{z}_{\CATA}}\ast i_{\operatorname{t}_{\CATB}}\Big)^{-1}\odot
 \Big(i_{\functor{F}_1(\operatorname{w}^1_{\CATA}\circ\operatorname{v}^1_{\CATA}\circ
 \operatorname{u}_{\CATA})}\ast\nu_{\CATB}^{-1}\Big).
\end{gather}

Now we replace in \eqref{eq-27} the definition of $\widetilde{\gamma}_{\CATB}$ and we do a series
of computations analogous to those leading from \eqref{eq-22} to \eqref{eq-30}. So we get that

\begin{gather}
\nonumber \gamma''_{\CATB}\ast i_{\operatorname{t}'_{\CATB}}= \\
\nonumber =\Big\{i_{\functor{F}_1(\operatorname{w}^2_{\CATA})}\ast\Big[\Big(i_{\functor{F}_1
 (\operatorname{v}^2_{\CATA}\circ
 \operatorname{u}_{\CATA})}\ast\nu_{\CATB}\Big)\odot\Big(
 \psi^{\functor{F}}_{\operatorname{v}^2_{\CATA}\circ
 \operatorname{u}_{\CATA},\operatorname{z}_{\CATA}}\ast i_{\operatorname{t}_{\CATB}}\Big)\Big]
 \Big\}\odot \\
\nonumber \odot\Big\{\Big[\psi^{\functor{F}}_{\operatorname{w}^2_{\CATA},\operatorname{v}^2_{\CATA}
 \circ\operatorname{u}_{\CATA}\circ\operatorname{z}_{\CATA}}\odot\functor{F}_2(
 \gamma_{\CATA})\odot\Big(\psi^{\functor{F}}_{\operatorname{w}^1_{\CATA},
 \operatorname{v}^1_{\CATA}\circ\operatorname{u}_{\CATA}\circ\operatorname{z}_{\CATA}}\Big)^{-1}
 \Big]\ast i_{\operatorname{t}_{\CATB}}\Big\}\odot \\
\label{eq-03} \odot\Big\{i_{\functor{F}_1(\operatorname{w}^1_{\CATA})}\ast\Big[\Big(
 \psi^{\functor{F}}_{\operatorname{v}^1_{\CATA}\circ\operatorname{u}_{\CATA},
 \operatorname{z}_{\CATA}}\ast i_{\operatorname{t}_{\CATB}}\Big)^{-1}\odot\Big(i_{\functor{F}_1
 (\operatorname{v}^1_{\CATA}\circ\operatorname{u}_{\CATA})}\ast\nu_{\CATB}^{-1}\Big)\Big]\Big\}.
\end{gather}

Moreover, using the interchange law on the $2$-category $\CATB$, we have:

\begin{gather}
\nonumber \Big(\psi^{\functor{F}}_{f^2_{\CATA},\operatorname{v}^2_{\CATA}\circ
 \operatorname{u}_{\CATA}}\odot\functor{F}_2(\alpha_{\CATA})\odot\Big(\psi^{\functor{F}}_{f^1_{\CATA},
 \operatorname{v}^1_{\CATA}\circ\operatorname{u}_{\CATA}}\Big)^{-1}\Big)
 \ast i_{\operatorname{z}_{\CATB}\circ\operatorname{t}'_{\CATB}}= \\
\nonumber =\Big\{i_{\functor{F}_1(f^2_{\CATA})}\ast\Big[\Big(i_{\functor{F}_1
 (\operatorname{v}^2_{\CATA}\circ\operatorname{u}_{\CATA})}\ast\nu_{\CATB}\Big)\odot
 \Big(\psi^{\functor{F}}_{\operatorname{v}^2_{\CATA}\circ
 \operatorname{u}_{\CATA},\operatorname{z}_{\CATA}}\ast i_{\operatorname{t}_{\CATB}}\Big)\Big]
 \Big\}\odot \\
\nonumber \odot\Big\{\Big[\psi^{\functor{F}}_{f^2_{\CATA},
 \operatorname{v}^2_{\CATA}\circ\operatorname{u}_{\CATA}\circ\operatorname{z}_{\CATA}}\odot
 \functor{F}_2(\alpha_{\CATA}
 \ast i_{\operatorname{z}_{\CATA}})\odot\Big(\psi^{\functor{F}}_{f^1_{\CATA},
 \operatorname{v}^1_{\CATA}\circ\operatorname{u}_{\CATA}\circ\operatorname{z}_{\CATA}}\Big)^{-1}
 \Big]\ast i_{\operatorname{t}_{\CATB}}\Big\}\odot \\
\label{eq-04} \odot\Big\{i_{\functor{F}_1(f^1_{\CATA})}\ast
 \Big[\Big(\psi^{\functor{F}}_{\operatorname{v}^1_{\CATA}\circ\operatorname{u}_{\CATA},
 \operatorname{z}_{\CATA}}\ast i_{\operatorname{t}_{\CATB}}\Big)^{-1}\odot\Big(i_{\functor{F}_1
 (\operatorname{v}^1_{\CATA}\circ\operatorname{u}_{\CATA})}\ast\nu_{\CATB}^{-1}\Big)\Big]\Big\}.
\end{gather}

Then we consider the following diagram

\[
\begin{tikzpicture}[xscale=2.8,yscale=-0.8]
    \node (A0_1) at (1, 0) {$\functor{F}_0(A^1_{\CATA})$};
    \node (A2_0) at (0, 2) {$A'_{\CATB}$};
    \node (A2_1) at (1, 2) {$\widetilde{A}_{\CATB}$};
    \node (A2_2) at (2, 2) {$\functor{F}_0(\widetilde{A}_{\CATA})$,};
    \node (A4_1) at (1, 4) {$\functor{F}_0(A^2_{\CATA})$};
    
    \node (A3_0) at (1, 3.2) {$\Leftarrow$};
    \node (A1_0) at (1, 0.8) {$\Rightarrow$};
    \node (A1_1) at (1, 1.3) {$\eta^1_{\CATB}$};
    \node (B1_1) at (1, 2.7) {$\eta^2_{\CATB}$};
    
    \path (A2_1) edge [->]node [auto,swap] {$\scriptstyle{\operatorname{t}'_{\CATB}}$} (A2_0);
    \path (A2_1) edge [->]node [auto] {$\scriptstyle{\operatorname{t}_{\CATB}}$} (A2_2);
    \path (A2_0) edge [->]node [auto] {$\scriptstyle{\functor{F}_1
      (\operatorname{v}^1_{\CATA}\circ\operatorname{u}_{\CATA})\circ
      \operatorname{z}_{\CATB}}$} (A0_1);
    \path (A2_2) edge [->]node [auto,swap] {$\scriptstyle{\functor{F}_1
      (\operatorname{v}^1_{\CATA}\circ\operatorname{u}_{\CATA}\circ
      \operatorname{z}_{\CATA})}$} (A0_1);
    \path (A2_2) edge [->]node [auto] {$\scriptstyle{\functor{F}_1
      (\operatorname{v}^2_{\CATA}\circ\operatorname{u}_{\CATA}\circ
      \operatorname{z}_{\CATA})}$} (A4_1);
    \path (A2_0) edge [->]node [auto,swap] {$\scriptstyle{\functor{F}_1
      (\operatorname{v}^2_{\CATA}\circ\operatorname{u}_{\CATA})\circ
      \operatorname{z}_{\CATB}}$} (A4_1);
\end{tikzpicture}
\]
where

\[\eta^1_{\CATB}:=\Big(\psi^{\functor{F}}_{\operatorname{v}^1_{\CATA}\circ
\operatorname{u}_{\CATA},\operatorname{z}_{\CATA}}\ast i_{\operatorname{t}_{\CATB}}\Big)^{-1}
\odot\Big(i_{\functor{F}_1(\operatorname{v}^1_{\CATA}\circ\operatorname{u}_{\CATA})}\ast
\nu^{-1}_{\CATB}\Big)\]
and

\[\eta^2_{\CATB}:=\Big(i_{\functor{F}_1
(\operatorname{v}^2_{\CATA}\circ\operatorname{u}_{\CATA})}\ast\nu_{\CATB}\Big)\odot\Big(
\psi^{\functor{F}}_{\operatorname{v}^2_{\CATA}\circ\operatorname{u}_{\CATA},
\operatorname{z}_{\CATA}}\ast i_{\operatorname{t}_{\CATB}}\Big).\]

Using \eqref{eq-02}, \eqref{eq-03}, \eqref{eq-04} and the previous diagram, we conclude that:

\begin{gather}
\nonumber \Gamma_{\CATB}=\Big[\functor{F}_0(\widetilde{A}_{\CATA}),
 \functor{F}_1(\operatorname{v}^1_{\CATA}\circ
 \operatorname{u}_{\CATA}\circ\operatorname{z}_{\CATA}),\functor{F}_1(\operatorname{v}^2_{\CATA}
 \circ\operatorname{u}_{\CATA}\circ\operatorname{z}_{\CATA}), \\
\nonumber \psi^{\functor{F}}_{\operatorname{w}^2_{\CATA},\operatorname{v}^2_{\CATA}\circ
 \operatorname{u}_{\CATA}\circ\operatorname{z}_{\CATA}}\odot 
 \functor{F}_2(\gamma_{\CATA})\odot\Big(\psi^{\functor{F}}_{\operatorname{w}^1_{\CATA},
 \operatorname{v}^1_{\CATA}\circ\operatorname{u}_{\CATA}\circ\operatorname{z}_{\CATA}}\Big)^{-1}, \\
\label{eq-09} \psi^{\functor{F}}_{f^2_{\CATA},\operatorname{v}^2_{\CATA}\circ\operatorname{u}_{\CATA}
 \circ\operatorname{z}_{\CATA}}\odot
 \functor{F}_2(\alpha_{\CATA}\ast i_{\operatorname{z}_{\CATA}})\odot
 \Big(\psi^{\functor{F}}_{f^1_{\CATA},\operatorname{v}^1_{\CATA}\circ\operatorname{u}_{\CATA}
 \circ\operatorname{z}_{\CATA}}\Big)^{-1}\Big].
\end{gather}

Then we define a $2$-morphism in $\CATA\left[\SETWAinv\right]$ as follows

\begin{gather*}
\Gamma_{\CATA}:=\Big[\widetilde{A}_{\CATA},\operatorname{v}^1_{\CATA}\circ
 \operatorname{u}_{\CATA}\circ\operatorname{z}_{\CATA},\operatorname{v}^2_{\CATA}\circ
 \operatorname{u}_{\CATA}\circ\operatorname{z}_{\CATA},\gamma_{\CATA},\alpha_{\CATA}\ast
 i_{\operatorname{z}_{\CATA}}\Big]: \\
 \Big(A^1_{\CATA},\operatorname{w}^1_{\CATA},f^1_{\CATA}\Big)\Longrightarrow
 \Big(A^2_{\CATA},\operatorname{w}^2_{\CATA},f^2_{\CATA}\Big).
\end{gather*}

Using \eqref{eq-09} and \eqref{eq-07}, we conclude that
$\Gamma_{\CATB}=\widetilde{\functor{G}}_2(\Gamma_{\CATA})$; this proves that
$\widetilde{\functor{G}}$ satisfies condition (\hyperref[X2c]{X2c}).
\end{proof}

Note that in the proof above the $2$-morphism $\Gamma_{\CATA}$ is well-defined because
$\gamma_{\CATA}$ is an invertible $2$-morphism thanks to Lemma~\ref{lem-03} (we recall that
by~\cite[\S~2.3]{Pr} the data defining a $2$-morphism in a bicategory of fractions must satisfy
such a technical condition). This explains why we needed to prove such a result before
Lemma~\ref{lem-12}.

\section{Proofs of the main results}
\begin{proof}[Proof of Theorem~\ref{theo-02}.]
Let us fix any pair $(\functor{G},\kappa)$ as in Theorem~\ref{theo-01}(iv). By that Theorem,
$\functor{G}:\CATA\left[\SETWAinv\right]\rightarrow\CATB\left[\SETWBinv\right]$ is an equivalence
of bicategories if and only if $\widetilde{\functor{G}}:\CATA\left[\SETWAinv\right]\rightarrow\CATB
\left[\SETWBsatinv
\right]$ is an equivalence of bicategories. Using Lemmas from~\ref{lem-01} to~\ref{lem-06}, if
$\widetilde{\functor{G}}$ is an equivalence of bicategories, then $\functor{F}$ satisfies conditions
(\hyperref[A1]{A1}) -- (\hyperref[A5]{A5}). Conversely, if $\functor{F}$ satisfies
such conditions, then $\widetilde{\functor{G}}$ is an equivalence of bicategories by
Lemmas from~\ref{lem-11} to~\ref{lem-12}. This is enough to conclude.
\end{proof}

In the remaining part of this section we are going to prove Theorem~\ref{theo-04} and
Corollary~\ref{cor-01}.\\

We recall (see~\cite[Definition~3.3]{PP}) that a \emph{quasi-unit} of any given bicategory $\CATB$
is any morphism of the form $f_{\CATB}:A_{\CATB}\rightarrow A_{\CATB}$, admitting an
invertible $2$-morphism to $\id_{A_{\CATB}}$. We denote by $\SETWBmin$ the class of quasi-units
of $\CATB$; it is easy to see that $(\CATB,\SETWBmin)$ satisfies
conditions (BF), so it makes sense to consider the associated bicategory of fractions.
Then we have:

\begin{prop}\label{prop-02}
Let us fix any pair $(\CATA,\SETWA)$ satisfying conditions \emphatic{(BF)}, and any pseudofunctor
$\functor{F}:\CATA\rightarrow\CATB$, such that $\functor{F}_1(\SETWA)\subseteq\SETWBequiv$.
Then for each $i=1,\cdots,5$ the following facts are equivalent:

\begin{itemize}
 \item $\functor{F}$ satisfies condition \emphatic{(\hyperref[A]{A$\,i$})} when $\SETWB:=\SETWBmin$;
 \item $\functor{F}$ satisfies condition \emphatic{(\hyperref[B]{B$\,i$})}.
\end{itemize}
\end{prop}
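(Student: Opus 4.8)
The plan is to reduce everything to two structural facts about the class $\SETWBmin$ of quasi-units. First, every morphism $\operatorname{w}_{\CATB}$ in $\SETWBmin$ is by definition an \emph{endomorphism} $X\to X$ equipped with an invertible $2$-morphism $\zeta_{\operatorname{w}_{\CATB}}:\operatorname{w}_{\CATB}\Rightarrow\id_X$; in particular, whenever a span or square occurring in conditions (A) has a leg in $\SETWB=\SETWBmin$, the source and target of that leg must coincide and the leg is canonically isomorphic to an identity. Second, the saturation satisfies $\SETWBsat=(\SETWBmin)_{\sat}=\SETWBequiv$: this is immediate from the definition of right saturation recalled in the Introduction, since a morphism admits the two one-sided inverses (up to invertible $2$-morphisms) demanded by that definition precisely when it is an internal equivalence of $\CATB$. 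Under the standing hypothesis $\functor{F}_1(\SETWA)\subseteq\SETWBequiv=\SETWBsat$, all of (\hyperref[A1]{A1}) -- (\hyperref[A5]{A5}) are therefore meaningful, and the legs of their diagrams lying in $\SETWBsat$ are exactly the internal equivalences appearing in (\hyperref[B1]{B1}) -- (\hyperref[B5]{B5}).

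For $i=1,3$ the argument is essentially a translation. In (A$\,i$) the $\SETWB$-leg $\operatorname{w}^1_{\CATB}$ (resp.\ $\operatorname{v}^1_{\CATB}$) is a quasi-unit, so it is an endomorphism isomorphic to an identity; its source and target collapse, and (in the case $i=3$) absorbing $\zeta$ into the accompanying invertible $2$-morphism and applying the unitors of $\CATB$ turns the span into a single internal equivalence, yielding exactly the data of (B$\,i$). Conversely, given the (B$\,i$) data one recovers (A$\,i$) by inserting the identity morphism (a quasi-unit, hence in $\SETWBmin$) as the $\SETWB$-leg and recognising the internal equivalence as lying in $\SETWBsat$ by the second fact above. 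Condition $i=2$ is handled identically in both directions: the two quasi-unit legs $\operatorname{w}^1_{\CATB}$ and $\operatorname{z}^1_{\CATB}$ both collapse to identities, so that the span-with-two-$2$-cells datum of (A2) reduces precisely to the pair of invertible $2$-morphisms $\delta^1_{\CATB},\delta^2_{\CATB}$ of (B2).

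For $i=4$ the reduction is at the level of $2$-morphisms. The hypothesis morphism $\operatorname{z}_{\CATB}\in\SETWB$ is a quasi-unit on $\functor{F}_0(A_{\CATA})$, so by the interchange law the equality $\functor{F}_2(\gamma^1_{\CATA})\ast i_{\operatorname{z}_{\CATB}}=\functor{F}_2(\gamma^2_{\CATA})\ast i_{\operatorname{z}_{\CATB}}$ is, after conjugating by the invertible whiskering $i_\bullet\ast\zeta_{\operatorname{z}_{\CATB}}$ and applying the left unitor, equivalent to $\functor{F}_2(\gamma^1_{\CATA})=\functor{F}_2(\gamma^2_{\CATA})$. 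This is exactly the hypothesis of (B4), and since the conclusions of (A4) and (B4) are verbatim identical, the two conditions coincide.

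The only genuinely laborious case is $i=5$, which is where I expect the main obstacle. After collapsing the quasi-units $\operatorname{v}_{\CATB}$ and $\operatorname{z}_{\CATB}$ to identities, the invertible $2$-morphism $\sigma_{\CATB}:\functor{F}_1(\operatorname{v}_{\CATA})\circ\operatorname{z}_{\CATB}\Rightarrow\operatorname{v}_{\CATB}\circ\operatorname{z}'_{\CATB}$ identifies $\operatorname{z}'_{\CATB}$ with $\functor{F}_1(\operatorname{v}_{\CATA})$ up to a canonical isomorphism, so that the auxiliary object $A'_{\CATB}$ and morphism $\operatorname{z}'_{\CATB}$ become redundant. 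The real work is to show that, under these identifications, the elaborate pasting diagram~\eqref{eq-13} degenerates to the single clean identity $\alpha_{\CATB}\ast i_{\functor{F}_1(\operatorname{v}_{\CATA})}=\psi^{\functor{F}}_{f^2_{\CATA},\operatorname{v}_{\CATA}}\odot\functor{F}_2(\alpha_{\CATA})\odot(\psi^{\functor{F}}_{f^1_{\CATA},\operatorname{v}_{\CATA}})^{-1}$ required by (B5): once every $\theta_{\bullet}$ and every occurrence of $\sigma_{\CATB}^{\pm 1}$ in~\eqref{eq-13} is rewritten via $\zeta_{\operatorname{v}_{\CATB}}$ and $\zeta_{\operatorname{z}_{\CATB}}$ in terms of the appropriate unitors, the associators of $\CATB$ cancel in pairs by coherence, leaving exactly the right-hand side of the (B5) equation. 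For the converse one inserts identities for $\operatorname{v}_{\CATB},\operatorname{z}_{\CATB},\operatorname{z}'_{\CATB}$ and takes $\sigma_{\CATB}$ to be the evident unitor, checking that~\eqref{eq-13} then reduces to the (B5) identity. This bookkeeping of associators and unitors is the principal technical burden; everything else is formal.
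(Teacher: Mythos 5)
Your overall strategy coincides with the paper's: identify the right saturation of $\SETWBmin$ with $\SETWBequiv$, use the quasi-unit $2$-cells to collapse every $\SETWB$-leg to an identity, and translate each condition (A$\,i$) into (B$\,i$) and back. For $i=1,3,4$ your translations are exactly those of the paper, and your outline for $i=5$ is the computation the paper performs, modulo one slip: in the direction from (B5) to (A5) you cannot ``insert identities'' for $\operatorname{v}_{\CATB}$ and $\operatorname{z}'_{\CATB}$ --- the morphism $\operatorname{v}_{\CATB}$ is part of the \emph{given} data and is only isomorphic to an identity, and $\operatorname{z}'_{\CATB}$ must be chosen to be $\functor{F}_1(\operatorname{v}_{\CATA})$, with $\sigma_{\CATB}$ built from the quasi-unit $2$-cell $\xi_{\CATB}$ of $\operatorname{v}_{\CATB}$ together with unitors, not from unitors alone.

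The genuine gap is in case $i=2$, in the direction from (A2) (for $\SETWB:=\SETWBmin$) to (B2). You assert that after collapsing the quasi-unit legs, the datum of (A2) ``reduces precisely'' to the datum of (B2). It does not: (B2) requires the morphism $e'_{\CATB}\colon\functor{F}_0(A^1_{\CATA})\rightarrow\functor{F}_0(A^3_{\CATA})$ to be an \emph{internal equivalence}, whereas in (A2) the corresponding leg $\operatorname{z}^2_{\CATB}$ is an arbitrary morphism --- unlike $\operatorname{w}^2_{\CATB}$ in (A1) or $\operatorname{v}^2_{\CATB}$ in (A3), it is \emph{not} required to lie in $\SETWBsat$, so your dictionary ``legs in $\SETWBsat$ are exactly the internal equivalences'' never applies to it. This is precisely where a non-formal argument, and the standing hypothesis $\functor{F}_1(\SETWA)\subseteq\SETWBequiv$, must enter: from the invertible $2$-cell $\gamma^1_{\CATB}$ together with (BF1), (BF2) and (BF5) for $(\CATB,\SETWBmin)$ one deduces that $\functor{F}_1(\operatorname{w}^1_{\CATA})\circ\operatorname{z}^2_{\CATB}$ is itself a quasi-unit, hence an internal equivalence; since $\functor{F}_1(\operatorname{w}^1_{\CATA})$ is an internal equivalence by hypothesis, the cancellation property of right-saturated classes (\cite[Proposition~2.11(ii)]{T4}, applied to $(\CATB,\SETWBmin)$, whose saturation is $\SETWBequiv$) gives that $\operatorname{z}^2_{\CATB}$ is an internal equivalence. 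Without this step your candidate $e'_{\CATB}$ has no reason to be an equivalence and (B2) is not established; with it, your translation goes through and agrees with the paper's proof.
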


\begin{proof}
We recall (see~\cite[Lemma~2.5(ii)]{T4}) that the right saturation of $\SETWBmin$ is the class
$\SETWBequiv$ of internal equivalences of $\CATB$.\\

Clearly (\hyperref[B1]{B1}) implies (\hyperref[A1]{A1}) when $\SETWB=\SETWBmin$:
it suffices to take $A'_{\CATB}:=
\functor{F}_0(A_{\CATA})$, $\operatorname{w}^1_{\CATB}:=\id_{\functor{F}_0(A_{\CATA})}$ and
$\operatorname{w}^2_{\CATB}:=e_{\CATB}$ (this makes sense since $\SETWBequiv$ is the right saturation
of $\SETWBmin$). Conversely, let us assume that (\hyperref[A1]{A1}) holds
for $\SETWBmin$;
since $\operatorname{w}^1_{\CATB}$ and $\operatorname{w}^2_{\CATB}$ belong to $\SETWBmin$ and
$\SETWBequiv$ respectively, then $A'_{\CATB}=\functor{F}_0(A_{\CATA})$ and
$\operatorname{w}^2_{\CATB}$ is an internal equivalence from $\functor{F}_0(A_{\CATA})$ to
$A_{\CATB}$, so (\hyperref[B1]{B1}) holds.\\

Let us suppose that (\hyperref[A2]{A2}) holds for $\SETWBmin$, let us fix any pair of
objects $A^1_{\CATA},
A^2_{\CATA}$ and any internal equivalence $e_{\CATB}:\functor{F}_0(A^1_{\CATA})\rightarrow
\functor{F}_0(A^2_{\CATA})$ (i.e.\ any element of the right saturated of $\SETWBmin$) and
let us apply (\hyperref[A2]{A2}) to the following set of data:

\[
\begin{tikzpicture}[xscale=3.0,yscale=-1.2]
    \node (A0_0) at (0, 0) {$\functor{F}_0(A^1_{\CATA})$};
    \node (A0_1) at (1, 0) {$\functor{F}_0(A^1_{\CATA})$};
    \node (A0_2) at (2, 0) {$\functor{F}_0(A^2_{\CATA})$.};
    
    \path (A0_1) edge [->]node [auto,swap] {$\scriptstyle{\id_{\functor{F}_0(A^1_{\CATA})}}$} (A0_0);
    \path (A0_1) edge [->]node [auto] {$\scriptstyle{e_{\CATB}}$} (A0_2);
\end{tikzpicture}
\]

Then there are an object $A^3_{\CATA}$, a pair of morphisms $\operatorname{w}^1_{\CATA}:A^3_{\CATA}
\rightarrow A^1_{\CATA}$ in $\SETWA$ and $\operatorname{w}^2_{\CATA}:A^3_{\CATA}\rightarrow 
A^2_{\CATA}$ in $\SETWAsat$ and a set of data in $\CATB$ as in the internal part of the following
diagram

\[
\begin{tikzpicture}[xscale=1.8,yscale=-0.8]
    \node (A0_2) at (2, 0) {$\functor{F}_0(A^1_{\CATA})$};
    \node (A2_2) at (2, 2) {$A'_{\CATB}$};
    \node (A2_0) at (0, 2) {$\functor{F}_0(A^1_{\CATA})$};
    \node (A2_4) at (4, 2) {$\functor{F}_0(A^2_{\CATA})$,};
    \node (A4_2) at (2, 4) {$\functor{F}_0(A^3_{\CATA})$};

    \node (A2_3) at (2.8, 2) {$\Downarrow\,\gamma^2_{\CATB}$};
    \node (A2_1) at (1.2, 2) {$\Downarrow\,\gamma^1_{\CATB}$};
    
    \path (A4_2) edge [->]node [auto,swap]
      {$\scriptstyle{\functor{F}_1(\operatorname{w}^2_{\CATA})}$} (A2_4);
    \path (A0_2) edge [->]node [auto] {$\scriptstyle{e_{\CATB}}$} (A2_4);
    \path (A2_2) edge [->]node [auto,swap] {$\scriptstyle{\operatorname{z}^1_{\CATB}}$} (A0_2);
    \path (A2_2) edge [->]node [auto] {$\scriptstyle{e'_{\CATB}}$} (A4_2);
    \path (A4_2) edge [->]node [auto]
      {$\scriptstyle{\functor{F}_1(\operatorname{w}^1_{\CATA})}$} (A2_0);
    \path (A0_2) edge [->]node [auto,swap] {$\scriptstyle{\id_{\functor{F}_0(A^1_{\CATA})}}$} (A2_0);
\end{tikzpicture}
\]
such that $\operatorname{z}^1_{\CATB}$ belongs to $\SETWBmin$ and both $\gamma^1_{\CATB}$ and
$\gamma^2_{\CATB}$ are invertible.  Since conditions (BF1), (BF2) and (BF5)
hold for $(\CATB,\SETWBmin)$, then the morphism
$\functor{F}_1(\operatorname{w}^1_{\CATA})\circ e'_{\CATB}$ belongs to $\SETWBmin\subseteq
\SETWBequiv$. Moreover, $\functor{F}_1(\operatorname{w}^1_{\CATA})$ belongs to $\SETWBequiv$ by
hypothesis.
So by~\cite[Proposition~2.11(ii)]{T4} for $(\CATB,\SETWBmin)$ we get that $e'_{\CATB}$ belongs to
$\SETW_{\CATB,\operatorname{min},\operatorname{sat}}=\SETWBequiv$, i.e.\ it is an internal equivalence
of $\CATB$.\\

Since $\operatorname{z}^1_{\CATB}$ belongs to $\SETWBmin$, then $A'_{\CATB}=\functor{F}_0
(A^1_{\CATA})$ and there is an invertible $2$-morphism $\xi_{\CATB}:\operatorname{z}^1_{\CATB}
\Rightarrow\id_{\functor{F}_0(A^1_{\CATA})}$. Then we define a pair of invertible $2$-morphisms:

\[\delta^1_{\CATB}:=\xi_{\CATB}\odot\upsilon_{\operatorname{z}^1_{\CATB}}\odot\Big(\gamma^1_{\CATB}
\Big)^{-1}:\,\,\functor{F}_1(\operatorname{w}^1_{\CATA})\circ e'_{\CATB}
\Longrightarrow\id_{\functor{F}_0(A^1_{\CATA})}\]
and

\[\delta^2_{\CATB}:=\gamma^2_{\CATB}\odot\Big(i_{e_{\CATB}}\ast\xi_{\CATB}^{-1}\Big)\odot
\pi_{e_{\CATB}}^{-1}:\,\,e_{\CATB}
\Longrightarrow\functor{F}_1(\operatorname{w}^2_{\CATA})\circ e'_{\CATB}.\]

This proves that (\hyperref[B2]{B2}) holds.\\

Conversely, let us suppose that (\hyperref[B2]{B2}) holds and let us fix any triple of objects
$A^1_{\CATA},A^2_{\CATA},A_{\CATB}$ and any pair of morphisms $\operatorname{w}^1_{\CATB}:
A_{\CATB}\rightarrow\functor{F}_0(A^1_{\CATA})$ in $\SETWBmin$ and $\operatorname{w}^2_{\CATB}:
A_{\CATB}\rightarrow\functor{F}_0(A^2_{\CATA})$ in $\SETWBequiv$. Since $\operatorname{w}^1_{\CATB}$
belongs to $\SETWBmin$, then $A_{\CATB}=\functor{F}_0(A^1_{\CATA})$ and there is an invertible
$2$-morphism $\xi_{\CATB}:\operatorname{w}^1_{\CATB}\Rightarrow\id_{\functor{F}_0(A^1_{\CATA})}$.
Now let us apply (\hyperref[B2]{B2}) to the internal equivalence $\operatorname{w}^2_{\CATB}:
\functor{F}_0(A^1_{\CATA})\rightarrow\functor{F}_0(A^2_{\CATA})$. Then there are an object
$A^3_{\CATA}$, a pair of morphisms $\operatorname{w}^1_{\CATA}:A^3_{\CATA}\rightarrow A^1_{\CATA}$ in 
$\SETWA$ and $\operatorname{w}^2_{\CATA}:A^3_{\CATA}\rightarrow A^2_{\CATA}$ in $\SETWAsat$, an 
internal equivalence
$e'_{\CATB}:\functor{F}_0(A^1_{\CATA})\rightarrow\functor{F}_0(A^3_{\CATA})$ and invertible
$2$-morphisms $\delta^1_{\CATB}:\functor{F}_1(\operatorname{w}^1_{\CATA})\circ e'_{\CATB}\Rightarrow
\id_{\functor{F}_0(A^1_{\CATA})}$ and $\delta^2_{\CATB}:\operatorname{w}^2_{\CATB}
\Rightarrow\functor{F}_1(\operatorname{w}^2_{\CATA})\circ e'_{\CATB}$. Then the following diagram
proves that (\hyperref[A2]{A2}) holds.

\[
\begin{tikzpicture}[xscale=2.8,yscale=-0.8]
    \node (A0_2) at (2, 0) {$\functor{F}_0(A^1_{\CATA})=A_{\CATB}$};
    \node (A2_2) at (2, 2) {$\functor{F}_0(A^1_{\CATA})$};
    \node (A2_0) at (0, 2) {$\functor{F}_0(A^1_{\CATA})$};
    \node (A2_4) at (4, 2) {$\functor{F}_0(A^2_{\CATA})$.};
    \node (A4_2) at (2, 4) {$\functor{F}_0(A^3_{\CATA})$};
    
    \node (A2_3) at (2.8, 2) {$\Downarrow\,\delta^2_{\CATB}\odot\pi_{\operatorname{w}^2_{\CATB}}$};
    \node (A2_1) at (1.1, 2) {$\Downarrow\,\Big(\delta^1_{\CATB}\Big)^{-1}\odot
      \xi_{\CATB}\odot\pi_{\operatorname{w}^1_{\CATB}}$};
    
    \path (A4_2) edge [->]node [auto,swap]
      {$\scriptstyle{\functor{F}_1(\operatorname{w}^2_{\CATA})}$} (A2_4);
    \path (A0_2) edge [->]node [auto] {$\scriptstyle{\operatorname{w}^2_{\CATB}}$} (A2_4);
    \path (A2_2) edge [->]node [auto,swap] {$\scriptstyle{\id_{\functor{F}_0(A^1_{\CATA})}}$} (A0_2);
    \path (A2_2) edge [->]node [auto] {$\scriptstyle{e'_{\CATB}}$} (A4_2);
    \path (A4_2) edge [->]node [auto]
      {$\scriptstyle{\functor{F}_1(\operatorname{w}^1_{\CATA})}$} (A2_0);
    \path (A0_2) edge [->]node [auto,swap] {$\scriptstyle{\operatorname{w}^1_{\CATB}}$} (A2_0);
\end{tikzpicture}
\]

Now let us suppose that (\hyperref[B3]{B3}) holds and let us fix any pair of objects $B_{\CATA},
A_{\CATB}$ and any morphism $f_{\CATB}:A_{\CATB}\rightarrow\functor{F}_0(B_{\CATA})$. Then there are
data $(A_{\CATA},f_{\CATA},e_{\CATB},\alpha_{\CATB})$ as in (\hyperref[B3]{B3}). So
(\hyperref[A3]{A3}) for $\SETWBmin$ is verified by the data $(A_{\CATA},f_{\CATA},A_{\CATB},
\id_{A_{\CATB}},e_{\CATB},\alpha_{\CATB}\odot\pi_{f_{\CATB}})$.\\

Conversely, let us suppose that (\hyperref[A3]{A3}) holds for $\SETWBmin$ and let us fix any set 
of data $B_{\CATA},
A_{\CATB}$ and $f_{\CATB}$ as before. Then there is a set of data $(A_{\CATA},f_{\CATA},A'_{\CATB},
\operatorname{v}^1_{\CATB},\operatorname{v}^2_{\CATB},\alpha_{\CATB})$ as
in (\hyperref[A3]{A3}). In particular,
$\operatorname{v}^1_{\CATB}$ belongs to $\SETWBmin$, hence $A'_{\CATB}=A_{\CATB}$ and there is an
invertible $2$-morphism $\xi_{\CATB}:\operatorname{v}^1_{\CATB}\Rightarrow\id_{A_{\CATB}}$;
moreover $\operatorname{v}^2_{\CATB}$ belongs to the saturation of $\SETWBmin$, i.e.\ it belongs
to $\SETWBequiv$. Then the
set of data $(A_{\CATA},f_{\CATA},\operatorname{v}^2_{\CATB},\alpha_{\CATB}\odot(i_{f_{\CATB}}
\ast\xi_{\CATB}^{-1})\odot\pi^{-1}_{f_{\CATB}})$ proves that (\hyperref[B3]{B3}) holds.\\

Clearly (\hyperref[A4]{A4}) for $\SETWBmin$ implies (\hyperref[B4]{B4}) in the special case when
$A'_{\CATB}=
\functor{F}_0(A_{\CATA})$ and $\operatorname{z}_{\CATB}=\id_{\functor{F}_0(A_{\CATA})}$
(since this morphism belongs to $\SETWBmin$). Conversely, let us assume (\hyperref[B4]{B4})
and let us fix any set of data $(A_{\CATA},f^1_{\CATA},f^2_{\CATA},\gamma^1_{\CATA},\gamma^2_{\CATA},
A'_{\CATB},\operatorname{z}_{\CATB})$ as in (\hyperref[A4]{A4}). In particular,
$\operatorname{z}_{\CATB}$ belongs to $\SETWBmin$, hence $A'_{\CATB}=\functor{F}_0
(A_{\CATA})$ and there is an invertible $2$-morphism $\xi_{\CATB}:\operatorname{z}_{\CATB}
\Rightarrow\id_{\functor{F}_0(A_{\CATA})}$. Since $\functor{F}_2(\gamma^1_{\CATA})\ast
i_{\operatorname{z}_{\CATB}}=\functor{F}_2(\gamma^2_{\CATA})\ast i_{\operatorname{z}_{\CATB}}$, then
using $\xi_{\CATB}$ we get that $\functor{F}_2(\gamma^1_{\CATA})=\functor{F}_2(\gamma^2_{\CATA})$. So
we can use (\hyperref[B4]{B4}) in order to conclude that (\hyperref[A4]{A4}) holds for $\SETWBmin$.\\

Now let us suppose that (\hyperref[B5]{B5}) holds and let us fix any set of data $(A_{\CATA},
B_{\CATA},A_{\CATB},f^1_{\CATA},$ $f^2_{\CATA},\operatorname{v}_{\CATB},\alpha_{\CATB})$ as in
(\hyperref[A5]{A5}) for $\SETWBmin$. In particular, $\operatorname{v}_{\CATB}$ belongs to
$\SETWBmin$, hence
$A_{\CATB}=\functor{F}_0(A_{\CATA})$ and there is an invertible $2$-morphism $\xi_{\CATB}:
\operatorname{v}_{\CATB}\Rightarrow\id_{\functor{F}_0(A_{\CATA})}$. Hence we can define a
$2$-morphism

\[\overline{\alpha}_{\CATB}:=\pi_{\functor{F}_1(f^2_{\CATA})}\odot\Big(i_{\functor{F}_1
(f^2_{\CATA})}\ast\xi_{\CATB}\Big)\odot\,\alpha_{\CATB}\,\odot\Big(i_{\functor{F}_1(f^1_{\CATA})}
\ast \xi_{\CATB}^{-1}\Big)\odot\pi^{-1}_{\functor{F}_1(f^1_{\CATA})}:\,\,\functor{F}_1
(f^1_{\CATA})\Rightarrow\functor{F}_1(f^2_{\CATA}).\]

Then by (\hyperref[B5]{B5}) for $\overline{\alpha}_{\CATB}$,
there are an object $A'_{\CATA}$, a morphism $\operatorname{v}_{\CATA}:
A'_{\CATA}\rightarrow A_{\CATA}$ in $\SETWA$, a $2$-morphism $\alpha_{\CATA}:f^1_{\CATA}\circ
\operatorname{v}_{\CATA}\Rightarrow f^2_{\CATA}\circ\operatorname{v}_{\CATA}$, such that

\[\overline{\alpha}_{\CATB}\ast i_{\functor{F}_1(\operatorname{v}_{\CATA})}=
\psi^{\functor{F}}_{f^2_{\CATA},\operatorname{v}_{\CATA}}\odot\functor{F}_2(\alpha_{\CATA})\odot
\Big(\psi^{\functor{F}}_{f^1_{\CATA},\operatorname{v}_{\CATA}}\Big)^{-1}.\]

Using the definition of $\overline{\alpha}_{\CATB}$ and the coherence axioms on the bicategory
$\CATB$, this implies that

\begin{gather*}
\alpha_{\CATB}\ast i_{\functor{F}_1(\operatorname{v}_{\CATA})}=\thetaa{\functor{F}_1(f^2_{\CATA})}
 {\operatorname{v}_{\CATB}}{\functor{F}_1(\operatorname{v}_{\CATA})}\odot\Big\{i_{\functor{F}_1
 (f^2_{\CATA})}\ast\Big[\Big(\xi_{\CATB}^{-1}\ast i_{\functor{F}_1(\operatorname{v}_{\CATA})}\Big)
 \odot \\
\odot\upsilon_{\functor{F}_1(\operatorname{v}_{\CATA})}^{-1}\odot\pi_{\functor{F}_1
 (\operatorname{v}_{\CATA})}\Big]\Big\}\odot\thetab{\functor{F}_1(f^2_{\CATA})}
 {\functor{F}_1(\operatorname{v}_{\CATA})}{\id_{\functor{F}_0(A'_{\CATA})}}\odot \\
\odot\Big\{\Big[\psi^{\functor{F}}_{f^2_{\CATA},\operatorname{v}_{\CATA}}\odot
 \functor{F}_2(\alpha_{\CATA})\odot\Big(\psi^{\functor{F}}_{f^1_{\CATA},\operatorname{v}_{\CATA}}
 \Big)^{-1}\Big]\ast i_{\id_{\functor{F}_0(A'_{\CATA})}}\Big\}
 \odot\thetaa{\functor{F}_1(f^1_{\CATA})}{\functor{F}_1(\operatorname{v}_{\CATA})}
 {\id_{\functor{F}_0(A'_{\CATA})}}\odot \\
\odot\Big\{i_{\functor{F}_1(f^1_{\CATA})}\ast\Big[\pi^{-1}_{\functor{F}_1
 (\operatorname{v}_{\CATA})}\odot\upsilon_{\functor{F}_1(\operatorname{v}_{\CATA})}
 \odot\Big(\xi_{\CATB}\ast i_{\functor{F}_1
 (\operatorname{v}_{\CATA})}\Big)\Big]\Big\}\odot\thetab{\functor{F}_1(f^1_{\CATA})}
 {\operatorname{v}_{\CATB}}{\functor{F}_1(\operatorname{v}_{\CATA})}.
\end{gather*}

Then (\hyperref[A5]{A5}) for $\SETWBmin$ is satisfied if we set $A'_{\CATB}:=\functor{F}_0
(A'_{\CATA})$,
$\operatorname{z}_{\CATB}:=\id_{\functor{F}_0(A'_{\CATA})}$, $\operatorname{z}'_{\CATB}:=
\functor{F}_1(\operatorname{v}_{\CATA})$ and

\[\sigma_{\CATB}:=\Big(\xi_{\CATB}^{-1}\ast i_{\functor{F}_1(\operatorname{v}_{\CATA})}\Big)
\odot\upsilon^{-1}_{\functor{F}_1(\operatorname{v}_{\CATA})}\odot\pi_{\functor{F}_1
(\operatorname{v}_{\CATA})}.\]

Conversely, let us suppose that (\hyperref[A5]{A5}) holds for $\SETWBmin$ and let us fix
any set of data
$(A_{\CATA},B_{\CATA},f^1_{\CATA},$ $f^2_{\CATA},\alpha_{\CATB})$ as in (\hyperref[B5]{B5}). Then let
us apply (\hyperref[A5]{A5}) in the case when we fix $A_{\CATB}:=\functor{F}_0(A_{\CATA})$,
$\operatorname{v}_{\CATB}:=\id_{\functor{F}_0(A_{\CATA})}$ and we replace $\alpha_{\CATB}$ with
$\pi_{\functor{F}_1(f^2_{\CATA})}^{-1}\odot\alpha_{\CATB}\odot\pi_{\functor{F}_1(f^1_{\CATA})}$.
Then there are a pair of objects
$A'_{\CATA},A'_{\CATB}$, a triple of morphisms $\operatorname{v}_{\CATA}:A'_{\CATA}\rightarrow
A_{\CATA}$ in $\SETWA$, $\operatorname{z}_{\CATB}:A'_{\CATB}\rightarrow\functor{F}_0(A'_{\CATA})$
in $\SETWBmin$ and $\operatorname{z}'_{\CATB}:A'_{\CATB}\rightarrow\functor{F}_0(A_{\CATA})$, a
$2$-morphism $\alpha_{\CATA}:f^1_{\CATA}\circ\operatorname{v}_{\CATA}\Rightarrow f^2_{\CATA}\circ
\operatorname{v}_{\CATA}$ and an invertible $2$-morphism $\sigma_{\CATB}:\functor{F}_1
(\operatorname{v}_{\CATA})\circ\operatorname{z}_{\CATB}\Rightarrow\id_{\functor{F}_0(A_{\CATA})}
\circ\operatorname{z}'_{\CATB}$, such that

\begin{gather}
\nonumber \Big(\pi^{-1}_{\functor{F}_1(f^2_{\CATA})}\odot\alpha_{\CATB}\odot\pi_{\functor{F}_1
 (f^1_{\CATA})}\Big)\ast i_{\operatorname{z}'_{\CATB}}=
 \thetaa{\functor{F}_1(f^2_{\CATA})}{\id_{\functor{F}_0(A_{\CATA})}}{\operatorname{z}'_{\CATB}}
 \odot\Big(i_{\functor{F}_1(f^2_{\CATA})}\ast\sigma_{\CATB}\Big)\odot \\
\nonumber \odot\,\thetab{\functor{F}_1(f^2_{\CATA})}{\functor{F}_1(\operatorname{v}_{\CATA})}
 {\operatorname{z}_{\CATB}}\odot\Big(\Big(\psi^{\functor{F}}_{f^2_{\CATA},\operatorname{v}_{\CATA}}
 \odot\functor{F}_2(\alpha_{\CATA})\odot\Big(\psi^{\functor{F}}_{f^1_{\CATA},\operatorname{v}_{\CATA}}
 \Big)^{-1}\Big)\ast i_{\operatorname{z}_{\CATB}}\Big)\odot \\
\label{eq-11} \odot\,\thetaa{\functor{F}_1(f^1_{\CATA})}
 {\functor{F}_1(\operatorname{v}_{\CATA})}{\operatorname{z}_{\CATB}}\odot
 \Big(i_{\functor{F}_1(f^1_{\CATA})}\ast\sigma_{\CATB}^{-1}\Big)\odot
 \thetab{\functor{F}_1(f^1_{\CATA})}{\id_{\functor{F}_0(A_{\CATA})}}{\operatorname{z}'_{\CATB}}.
\end{gather}
 
Since $\operatorname{z}_{\CATB}$ belongs to $\SETWBmin$, then $A'_{\CATB}=\functor{F}_0(A'_{\CATA})$
and there is an invertible $2$-morphism $\xi_{\CATB}:\operatorname{z}_{\CATB}\Rightarrow
\id_{\functor{F}_0(A'_{\CATA})}$. Using the coherence axioms for the bicategory $\CATB$ several times,
we have:

\begin{gather*}
\alpha_{\CATB}\ast i_{\functor{F}_1(\operatorname{v}_{\CATA})}= \\
= \Big(i_{\functor{F}_1(f^2_{\CATA})}\ast\Big(\pi_{\functor{F}_1(\operatorname{v}_{\CATA})}\odot
 \Big(i_{\functor{F}_1(\operatorname{v}_{\CATA})}\ast\xi_{\CATB}\Big)\Big)\Big)\odot
 \Big(i_{\functor{F}_1(f^2_{\CATA})}\ast\sigma^{-1}_{\CATB}\Big)\odot \\
\odot\Big(\alpha_{\CATB}\ast i_{\id_{\functor{F}_0(A_{\CATA})}\circ\operatorname{z}'_{\CATB}}
 \Big)\odot \\
\odot\Big(i_{\functor{F}_1(f^1_{\CATA})}\ast\sigma_{\CATB}\Big)\odot\Big(i_{\functor{F}_1
 (f^1_{\CATA})}\ast\Big(\Big(i_{\functor{F}_1(\operatorname{v}_{\CATA})}
 \ast\xi_{\CATB}^{-1}\Big)\odot\pi^{-1}_{\functor{F}_1(\operatorname{v}_{\CATA})}\Big)\Big)= \\
= \Big(i_{\functor{F}_1(f^2_{\CATA})}\ast\Big(\pi_{\functor{F}_1(\operatorname{v}_{\CATA})}\odot
 \Big(i_{\functor{F}_1(\operatorname{v}_{\CATA})}\ast\xi_{\CATB}\Big)\Big)\Big)\odot
 \Big(i_{\functor{F}_1(f^2_{\CATA})}\ast\sigma^{-1}_{\CATB}\Big)\odot \\
\odot\,\thetab{\functor{F}_1(f^2_{\CATA})}{\id_{\functor{F}_0(A_{\CATA})}}{\operatorname{z}'_{\CATB}}
 \odot\Big(\Big(\pi^{-1}_{\functor{F}_1(f^2_{\CATA})}\odot\alpha_{\CATB}\odot
 \pi_{\functor{F}_1(f^1_{\CATA})}\Big)\ast i_{\operatorname{z}'_{\CATB}}\Big)\odot
 \thetaa{\functor{F}_1(f^1_{\CATA})}{\id_{\functor{F}_0(A_{\CATA})}}{\operatorname{z}'_{\CATB}}
 \odot \\
\odot\Big(i_{\functor{F}_1(f^1_{\CATA})}\ast\sigma_{\CATB}\Big)\odot\Big(i_{\functor{F}_1
 (f^1_{\CATA})}\ast\Big(\Big(i_{\functor{F}_1(\operatorname{v}_{\CATA})}
 \ast\xi_{\CATB}^{-1}\Big)\odot\pi^{-1}_{\functor{F}_1(\operatorname{v}_{\CATA})}\Big)\Big)
 \stackrel{\eqref{eq-11}}{=} \\
\stackrel{\eqref{eq-11}}{=} \Big(i_{\functor{F}_1(f^2_{\CATA})}\ast\Big(\pi_{\functor{F}_1
 (\operatorname{v}_{\CATA})}\odot\Big(i_{\functor{F}_1(\operatorname{v}_{\CATA})}\ast\xi_{\CATB}\Big)
 \Big)\Big)\odot \\
\odot\,\thetab{\functor{F}_1(f^2_{\CATA})}{\functor{F}_1(\operatorname{v}_{\CATA})}
 {\operatorname{z}_{\CATB}}\odot\Big(\Big(\psi^{\functor{F}}_{f^2_{\CATA},
 \operatorname{v}_{\CATA}}\odot
 \functor{F}_2(\alpha_{\CATA})\odot\Big(\psi^{\functor{F}}_{f^1_{\CATA},\operatorname{v}_{\CATA}}
 \Big)^{-1}\Big)\ast i_{\operatorname{z}_{\CATB}}\Big)\odot \\
\odot\,\thetaa{\functor{F}_1(f^1_{\CATA})}{\functor{F}_1(\operatorname{v}_{\CATA})}
 {\operatorname{z}_{\CATB}}\odot\Big(i_{\functor{F}_1(f^1_{\CATA})}\ast\Big(\Big(
 i_{\functor{F}_1(\operatorname{v}_{\CATA})}\ast\xi_{\CATB}^{-1}\Big)\odot
 \pi^{-1}_{\functor{F}_1(\operatorname{v}_{\CATA})}\Big)\Big)= \\
=\Big(i_{\functor{F}_1(f^2_{\CATA})}\ast\pi_{\functor{F}_1(\operatorname{v}_{\CATA})}\Big)
 \odot\thetab{\functor{F}_1
 (f^1_{\CATA})}{\functor{F}_1(\operatorname{v}_{\CATA})}{\id_{\functor{F}_0(A_{\CATA})}}\odot
 \Big(\Big(\psi^{\functor{F}}_{f^2_{\CATA},\operatorname{v}_{\CATA}}\odot
 \functor{F}_2(\alpha_{\CATA})\,\odot \\
\odot\Big(\psi^{\functor{F}}_{f^1_{\CATA},
 \operatorname{v}_{\CATA}}\Big)^{-1}\Big)\ast i_{\id_{\functor{F}_0(A_{\CATA})}}\Big)\odot
 \thetaa{\functor{F}_1(f^1_{\CATA})}{\functor{F}_1(\operatorname{v}_{\CATA})}{\id_{\functor{F}_0
 (A_{\CATA})}}\odot\Big(
 i_{\functor{F}_1(f^1_{\CATA})}\ast\pi^{-1}_{\functor{F}_1(\operatorname{v}_{\CATA})}\Big)= \\\
=\psi^{\functor{F}}_{f^2_{\CATA},\operatorname{v}_{\CATA}}\odot\functor{F}_2
 (\alpha_{\CATA})\odot\Big(\psi^{\functor{F}}_{f^1_{\CATA},\operatorname{v}_{\CATA}}\Big)^{-1}.
\end{gather*}

So we have proved that (\hyperref[B5]{B5}) holds.
\end{proof}

Then we are ready to give the following:

\begin{proof}[Proof of Theorem~\ref{theo-04}.]
By~\cite[Lemma~2.5(iii)]{T4} for $\CATC:=\CATB$, the pseudofunctor

\[\functor{U}_{\SETWBmin}:\CATB\longrightarrow\CATB\left[\SETWBmininv\right]\]
is an equivalence of bicategories. Let us fix any pair $(\overline{\functor{G}},
\overline{\kappa})$ as in Theorem~\ref{theo-03}(2); then it makes sense to set

\[\functor{L}:=\functor{U}_{\SETWBmin}\circ\overline{\functor{G}}:\,\,\CATA\Big[
\SETWAinv\Big]\longrightarrow\CATB\Big[\SETWBmininv\Big]\]
and

\[\rho:=\thetaa{\functor{U}_{\SETWBmin}}{\overline{\functor{G}}}{\functor{U}_{\SETWA}}\odot
\Big(i_{\functor{U}_{\SETWBmin}}\ast\overline{\kappa}\Big):\,\,
\functor{U}_{\SETWBmin}\circ\functor{F}\Longrightarrow\functor{L}\circ\functor{U}_{\SETWA}.\]

By hypothesis the pseudofunctor $\CATA\rightarrow\operatorname{Cyl}(\CATB)$ associated to
$\overline{\kappa}$ sends each morphism of $\SETWA$ to an internal equivalence, hence so does
the pseudofunctor $\CATA\rightarrow\operatorname{Cyl}\left(\CATB\left[\SETWBmininv\right]\right)$
associated to $i_{\functor{U}_{\SETWBmin}}\ast\overline{\kappa}$, hence so does the
pseudofunctor associated to $\rho$. So the pair $(\functor{L},\rho)$ satisfies
Theorem~\ref{theo-01}(iv) when $\SETWB:=\SETWBmin$. Therefore, by Theorem~\ref{theo-02}, $\functor{L}$
is an equivalence of bicategories if and only if $\functor{F}$ satisfies conditions
(\hyperref[A1]{A1}) -- (\hyperref[A5]{A5}) when $\SETWB:=\SETWBmin$. So by Proposition~\ref{prop-02},
$\functor{L}$ is an equivalence of bicategories if and only if $\functor{F}$ satisfies conditions
(\hyperref[B1]{B1}) -- (\hyperref[B5]{B5}).\\

Moreover, since $\functor{U}_{\SETWBmin}$ is an equivalence of bicategories, then
$\overline{\functor{G}}$ is an equivalence of bicategories if and only if
$\functor{L}$ is an equivalence of bicategories, i.e.\ if and only if and only if 
$\functor{F}$ satisfies conditions (\hyperref[B1]{B1}) -- (\hyperref[B5]{B5}).
\end{proof}

Lastly, we are able to give also the following proof:

\begin{proof}[Proof of Corollary~\ref{cor-01}.]
Let us suppose that (a) holds, i.e.\ let us suppose that there is 
an equivalence of bicategories $\overline{\functor{G}}:\CATA
\left[\SETWAinv\right]\rightarrow\CATB$. Then we define $\functor{F}:=\overline{\functor{G}}\circ
\functor{U}_{\SETWA}:\CATA\rightarrow\CATB$. Since $\functor{U}_{\SETWA}$ sends each morphism
of $\SETWA$ to an internal equivalence, so does $\functor{F}$. Moreover, we set

\[\overline{\kappa}:=i_{\overline{\functor{G}}\circ\,\functor{U}_{\SETWA}}:\,\,\functor{F}
\Longrightarrow\overline{\functor{G}}\circ\functor{U}_{\SETWA}.\]

So the pair $(\overline{\functor{G}},
\overline{\kappa})$ satisfies the conditions of Theorem~\ref{theo-03}(2), hence by
Theorem~\ref{theo-04} we get that $\functor{F}$ satisfies conditions (\hyperref[B1]{B1}) --
(\hyperref[B5]{B5}). So we have proved that (a) implies (b).\\

Conversely, let us suppose that there is a pseudofunctor $\functor{F}:\CATA\rightarrow\CATB$, such
that $\functor{F}_1(\SETWA)\subseteq\SETWBequiv$. Therefore,
there are a pseudofunctor $\overline{\functor{G}}:\CATA\left[\SETWAinv\right]\rightarrow\CATB$ and
a pseudonatural equivalence $\overline{\kappa}$ as in Theorem~\ref{theo-03}(2).
If we further assume that $\functor{F}$ satisfies conditions (\hyperref[B1]{B1}) --
(\hyperref[B5]{B5}), then by Theorem~\ref{theo-04}
we conclude that $\overline{\functor{G}}$ is an equivalence of bicategories, so we have proved
that (b) implies (a).
\end{proof}

\section*{Appendix}
Let us fix any pseudofunctor $\functor{F}:\CATA\rightarrow\CATB$ sending each morphism of $\SETWA$
to an internal equivalence. As we mentioned in the introduction, Theorem~\ref{theo-04} fixed some
problems appearing in~\cite[Proposition~24]{Pr}. As Theorem~\ref{theo-04},
also~\cite[Proposition~24]{Pr} deals with necessary and sufficient conditions such that $\functor{F}$
induces an equivalence of bicategories from $\CATA\left[\SETWAinv\right]$
to $\CATB$. However, such a statement is partially incorrect. The existence a problem in such a
statement was
already mentioned in~\cite[Remark before Proposition~1.8.2]{R1} and~\cite[Remark after
Proposition~6.3]{R2}, but with no further details. For clarity of exposition, in the
next lines we describe where the problem lies exactly. We recall that~\cite[Proposition~24]{Pr}
states the following: having fixed any $\functor{F}:\CATA\rightarrow\CATB$ such that
$\functor{F}_1(\SETWA)\subseteq\SETWBequiv$, one should have an induced equivalence
from $\CATA\left[\SETWAinv\right]$ to $\CATB$ if and only if $\functor{F}$ satisfies the following
$3$ conditions.

\begin{enumerate}[({EF}1)]
 \item\label{EF1} $\functor{F}$ is essentially surjective, i.e.\ for each object $A_{\CATB}$ there are
  an object $A_{\CATA}$ and an \emph{isomorphism} $t_{\CATB}:\functor{F}_0(A_{\CATA})\rightarrow
  A_{\CATB}$.
 \item\label{EF2} For each pair of objects $A_{\CATA},B_{\CATA}$ and for each morphism $f_{\CATB}:
  \functor{F}_0(A_{\CATA})\rightarrow\functor{F}_0(B_{\CATA})$, there are an object $A'_{\CATA}$,
  a morphism $f_{\CATA}:A'_{\CATA}\rightarrow B_{\CATA}$, a morphism $\operatorname{w}_{\CATA}:
  A'_{\CATA}\rightarrow A_{\CATA}$ in $\SETWA$ and an invertible $2$-morphism $\alpha_{\CATB}:
  \functor{F}_1(f_{\CATA})\Rightarrow f_{\CATB}\circ\functor{F}_1(\operatorname{w}_{\CATA})$;.
 \item\label{EF3} For each pair of objects $A_{\CATA},B_{\CATA}$, for each pair of morphisms
  $f^1_{\CATA},f^2_{\CATA}:A_{\CATA}\rightarrow B_{\CATA}$ and for each $2$-morphism $\alpha_{\CATB}:
  \functor{F}_1(f^1_{\CATA})\Rightarrow\functor{F}_1(f^2_{\CATA})$, there is a \emph{unique}
  $2$-morphism $\alpha_{\CATA}:f^1_{\CATA}\Rightarrow f^2_{\CATA}$, such that $\functor{F}_2
  (\alpha_{\CATA})=\alpha_{\CATB}$.
\end{enumerate}

One can easily see that such conditions are \emph{too strong than necessary}: clearly
(\hyperref[EF1]{EF1}) must be relaxed by allowing not only isomorphisms but also
internal equivalences. But most importantly, (\hyperref[EF3]{EF3}) is definitely too much
restrictive. This can be seen easily with a toy example as follows: let us consider a $2$-category
$\CATC$ with only $2$ objects $A,B$, only a non-trivial morphism $\operatorname{v}:A\rightarrow B$
and only a non-trivial $2$-morphism $\gamma:\id_B\Rightarrow\id_B$ (together with all
the necessary identities
and $2$-identities). This clearly satisfies all the axioms of a $2$-category and we have $\gamma\neq
i_{\id_B}$ but $\gamma\ast i_{\operatorname{v}}=i_{\operatorname{v}}$, since there are
no non-trivial $2$-morphisms over $\operatorname{v}$. Then we set $\SETW:=
\{\operatorname{v},\id_A,\id_B\}$ and it is easy to prove that $(\CATC,\SETW)$ satisfies axioms
(BF). Then following the constructions in~\cite[\S~2.3 and~2.4]{Pr}, we have

\begin{gather*}
\functor{U}_{\SETW,2}(\gamma)=\Big[B,\id_B,\id_B,i_{\id_B},\gamma\Big]=\Big[A,\operatorname{v},
\operatorname{v},i_{\operatorname{v}},\gamma\ast i_{\operatorname{v}}\Big]= \\
=\Big[A,\operatorname{v},\operatorname{v},i_{\operatorname{v}},i_{\operatorname{v}}\Big]=
\Big[B,\id_B,\id_B,i_{\id_B},i_{\id_B}\Big]=\functor{U}_{\SETW,2}(i_{\id_B}).
\end{gather*}

So the pseudofunctor $\functor{F}:=\functor{U}_{\SETW}:\CATC\rightarrow\CATC\left[\SETWinv\right]$
does not satisfy the uniqueness condition of (\hyperref[EF3]{EF3}).
However, if we consider the pair $(\overline{\functor{G}},
\overline{\kappa})$ given by $\overline{\functor{G}}:=\id_{\CATC\left[\SETWinv\right]}$ and
$\overline{\kappa}:=i_{\functor{U}_{\SETW}}:\functor{U}_{\SETW}\Rightarrow\id_{\CATC\left[\SETWinv
\right]}\circ\,\functor{U}_{\SETW}$, then such a pair satisfies Theorem~\ref{theo-03}(2) and is such
that $\overline{\functor{G}}$ is an equivalence of bicategories. This proves that \emph{condition}
(\hyperref[EF3]{EF3}) \emph{is not necessary}, hence that~\cite[Proposition~24]{Pr} has some flaw.\\

Actually, conditions (\hyperref[EF1]{EF1}) -- (\hyperref[EF3]{EF3}) are \emph{sufficient} in
order to have that $\overline{\functor{G}}$ is an equivalence of bicategories.
We are going to prove this in the remaining part of this Appendix.

\begin{lem}\label{lem-13}
Let us fix any pair $(\CATA,\SETWA)$ satisfying conditions \emphatic{(BF)}, any
bicategory $\CATB$ and any pseudofunctor $\functor{F}:\CATA\rightarrow\CATB$, such that:

\begin{itemize}
 \item $\functor{F}$ sends each morphism of $\SETWA$ to an internal equivalence of $\CATB$;
 \item $\functor{F}$ satisfies conditions \emphatic{(\hyperref[EF2]{EF2})} and
 \emphatic{(\hyperref[EF3]{EF3})}.
\end{itemize}

Moreover, let us fix any morphism $f_{\CATA}:B_{\CATA}\rightarrow A_{\CATA}$, such that 
$\functor{F}_1(f_{\CATA})$ is an internal equivalence in $\CATB$. Then
there are an object $C_{\CATA}$ and a morphism $g_{\CATA}:C_{\CATA}\rightarrow
B_{\CATA}$, such that $f_{\CATA}\circ g_{\CATA}$ belongs to $\SETWA$ and $\functor{F}_1(g_{\CATA})$ is
an internal equivalence.
\end{lem}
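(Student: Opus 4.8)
The plan is to obtain $g_{\CATA}$ directly from condition (\hyperref[EF2]{EF2}) applied to a quasi-inverse of $\functor{F}_1(f_{\CATA})$, and then to verify the two claimed properties separately. Since $\functor{F}_1(f_{\CATA}):\functor{F}_0(B_{\CATA})\rightarrow\functor{F}_0(A_{\CATA})$ is an internal equivalence, I first choose a quasi-inverse $g_{\CATB}:\functor{F}_0(A_{\CATA})\rightarrow\functor{F}_0(B_{\CATA})$ together with the associated invertible $2$-morphisms; in particular let $\varepsilon_{\CATB}:\functor{F}_1(f_{\CATA})\circ g_{\CATB}\Rightarrow\id_{\functor{F}_0(A_{\CATA})}$ be the corresponding invertible $2$-morphism (replacing it by its inverse if necessary to get this direction). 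Applying (\hyperref[EF2]{EF2}) to the morphism $g_{\CATB}$ (with the two objects of (\hyperref[EF2]{EF2}) taken to be $A_{\CATA}$ and $B_{\CATA}$) yields an object $C_{\CATA}$, a morphism $g_{\CATA}:C_{\CATA}\rightarrow B_{\CATA}$, a morphism $\operatorname{w}_{\CATA}:C_{\CATA}\rightarrow A_{\CATA}$ in $\SETWA$ and an invertible $2$-morphism $\alpha_{\CATB}:\functor{F}_1(g_{\CATA})\Rightarrow g_{\CATB}\circ\functor{F}_1(\operatorname{w}_{\CATA})$. These $C_{\CATA}$ and $g_{\CATA}$ will be the data claimed by the lemma.

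Checking that $\functor{F}_1(g_{\CATA})$ is an internal equivalence is then immediate: since $\operatorname{w}_{\CATA}\in\SETWA$, the morphism $\functor{F}_1(\operatorname{w}_{\CATA})$ is an internal equivalence by hypothesis, and $g_{\CATB}$ is an internal equivalence by construction, so the composite $g_{\CATB}\circ\functor{F}_1(\operatorname{w}_{\CATA})$ is again an internal equivalence (internal equivalences are closed under composition). As $\alpha_{\CATB}$ is an invertible $2$-morphism with this composite as its target, its source $\functor{F}_1(g_{\CATA})$ is an internal equivalence as well.

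It remains to prove that $f_{\CATA}\circ g_{\CATA}$ belongs to $\SETWA$. Both $f_{\CATA}\circ g_{\CATA}$ and $\operatorname{w}_{\CATA}$ are morphisms $C_{\CATA}\rightarrow A_{\CATA}$, so it will suffice to exhibit an invertible $2$-morphism between them and then invoke the (BF) axiom closing $\SETWA$ under invertible $2$-morphisms, namely (BF5). On the $\CATB$-side I would first paste the invertible $2$-morphism
\[
\beta_{\CATB}:=\upsilon_{\functor{F}_1(\operatorname{w}_{\CATA})}\odot\Big(\varepsilon_{\CATB}\ast i_{\functor{F}_1(\operatorname{w}_{\CATA})}\Big)\odot\thetaa{\functor{F}_1(f_{\CATA})}{g_{\CATB}}{\functor{F}_1(\operatorname{w}_{\CATA})}\odot\Big(i_{\functor{F}_1(f_{\CATA})}\ast\alpha_{\CATB}\Big)\odot\psi^{\functor{F}}_{f_{\CATA},g_{\CATA}}
\]
from $\functor{F}_1(f_{\CATA}\circ g_{\CATA})$ to $\functor{F}_1(\operatorname{w}_{\CATA})$, which is invertible because every factor is. Then I would apply (\hyperref[EF3]{EF3}) to $\beta_{\CATB}$ to obtain a $2$-morphism $\beta_{\CATA}:f_{\CATA}\circ g_{\CATA}\Rightarrow\operatorname{w}_{\CATA}$ with $\functor{F}_2(\beta_{\CATA})=\beta_{\CATB}$.

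The step I expect to be the main obstacle is the \emph{invertibility} of $\beta_{\CATA}$: this does not follow formally from the mere fact that $\beta_{\CATB}$ is invertible, since (\hyperref[EF3]{EF3}) only lifts $2$-morphisms, and it is exactly here that the uniqueness half of (\hyperref[EF3]{EF3}) is needed. Applying (\hyperref[EF3]{EF3}) also to $\beta_{\CATB}^{-1}$ produces $\beta'_{\CATA}:\operatorname{w}_{\CATA}\Rightarrow f_{\CATA}\circ g_{\CATA}$ with $\functor{F}_2(\beta'_{\CATA})=\beta_{\CATB}^{-1}$; functoriality of $\functor{F}_2$ then gives $\functor{F}_2(\beta'_{\CATA}\odot\beta_{\CATA})=i_{\functor{F}_1(f_{\CATA}\circ g_{\CATA})}=\functor{F}_2(i_{f_{\CATA}\circ g_{\CATA}})$ and likewise $\functor{F}_2(\beta_{\CATA}\odot\beta'_{\CATA})=\functor{F}_2(i_{\operatorname{w}_{\CATA}})$, so the uniqueness clause of (\hyperref[EF3]{EF3}) forces $\beta'_{\CATA}\odot\beta_{\CATA}=i_{f_{\CATA}\circ g_{\CATA}}$ and $\beta_{\CATA}\odot\beta'_{\CATA}=i_{\operatorname{w}_{\CATA}}$, i.e.\ $\beta_{\CATA}$ is invertible. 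Finally, since $\operatorname{w}_{\CATA}\in\SETWA$ and $\beta_{\CATA}:f_{\CATA}\circ g_{\CATA}\Rightarrow\operatorname{w}_{\CATA}$ is invertible, (BF5) gives $f_{\CATA}\circ g_{\CATA}\in\SETWA$, which completes the argument.
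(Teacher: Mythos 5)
Your proof is correct, but it takes a genuinely shorter route than the paper's. The paper applies (EF2) \emph{twice}: first to the endomorphism $\functor{F}_1(f_{\CATA})\circ\overline{e}_{\CATB}$ of $\functor{F}_0(A_{\CATA})$ (where $\overline{e}_{\CATB}$ is a quasi-inverse of $\functor{F}_1(f_{\CATA})$), producing an auxiliary morphism $m_{\CATA}:A'_{\CATA}\rightarrow A_{\CATA}$ which is then shown to lie in $\SETWA$ by the same (EF3)-plus-(BF5) mechanism you use; and second to $\overline{e}_{\CATB}\circ\functor{F}_1(\operatorname{u}_{\CATA})$, producing $g_{\CATA}$ together with a morphism $\operatorname{z}_{\CATA}$ in $\SETWA$. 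The paper then lifts an invertible $2$-cell $\functor{F}_1(f_{\CATA}\circ g_{\CATA})\Rightarrow\functor{F}_1(m_{\CATA}\circ\operatorname{z}_{\CATA})$ and needs (BF2) to see that the target $m_{\CATA}\circ\operatorname{z}_{\CATA}$ is in $\SETWA$ before invoking (BF5); finally, it deduces that $\functor{F}_1(g_{\CATA})$ is an internal equivalence only \emph{indirectly}, from $f_{\CATA}\circ g_{\CATA}\in\SETWA$ and a two-out-of-three lemma for internal equivalences. By applying (EF2) directly to the quasi-inverse $g_{\CATB}$ itself, you arrange that the target of your lifted $2$-cell $\beta_{\CATA}$ is the single morphism $\operatorname{w}_{\CATA}$, which is in $\SETWA$ by the very statement of (EF2); this eliminates the auxiliary object $A'_{\CATA}$, the morphism $m_{\CATA}$, one application each of (EF2) and (EF3), and the use of (BF2), and it also lets you read off that $\functor{F}_1(g_{\CATA})$ is an internal equivalence directly from the (EF2) datum $\alpha_{\CATB}$ (isomorphic to the composite of two internal equivalences) rather than a posteriori. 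Your treatment of the one delicate point — transporting invertibility of $\beta_{\CATB}$ back to $\beta_{\CATA}$ via the uniqueness clause of (EF3) applied to both composites against the identity $2$-cells — is exactly the argument the paper compresses into ``again by (EF3) we get that $\eta_{\CATA}$ is invertible,'' so nothing is missing; your version simply does the same job with fewer moving parts.
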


\begin{proof}
Since $\functor{F}_1(f_{\CATA})$ is an internal equivalence, then there are an internal equivalence
$\overline{e}_{\CATB}:\functor{F}_0(A_{\CATA})\rightarrow\functor{F}_0(B_{\CATA})$ and an invertible
$2$-morphism $\xi_{\CATB}:\functor{F}_1(f_{\CATA})\circ\overline{e}_{\CATB}\Rightarrow
\id_{\functor{F}_0(A_{\CATA})}$. By applying (\hyperref[EF2]{EF2}) to $\functor{F}_1(f_{\CATA})\circ
\overline{e}_{\CATB}:\functor{F}_0(A_{\CATA})\rightarrow\functor{F}_0(A_{\CATA})$, we get
an object $A'_{\CATA}$, a morphism $m_{\CATA}:A'_{\CATA}\rightarrow A_{\CATA}$, a morphism
$\operatorname{u}_{\CATA}:A'_{\CATA}\rightarrow A_{\CATA}$ in $\SETWA$ and an invertible
$2$-morphism $\gamma_{\CATB}:\functor{F}_1(m_{\CATA})\Rightarrow(\functor{F}_1(f_{\CATA})\circ
\overline{e}_{\CATB})\circ\functor{F}_1(\operatorname{u}_{\CATA})$. Then we consider the following
invertible $2$-morphism

\[\eta_{\CATB}:=\upsilon_{\functor{F}_1(\operatorname{u}_{\CATA})}\odot\Big(\xi_{\CATB}
\ast i_{\functor{F}_1(\operatorname{u}_{\CATA})}\Big)\odot\gamma_{\CATB}:\,\,
\functor{F}_1(\operatorname{m}_{\CATA})\Longrightarrow\functor{F}_1(u_{\CATA}).\]

By (\hyperref[EF3]{EF3}), there is a unique $2$-morphism $\eta_{\CATA}:
\operatorname{m}_{\CATA}\Rightarrow u_{\CATA}$, such that $\functor{F}_2(\eta_{\CATA})=\eta_{\CATB}$.
Again by (\hyperref[EF3]{EF3}) we get that $\eta_{\CATA}$ is invertible since
$\eta_{\CATB}$ is so. Since $\operatorname{u}_{\CATA}$ belongs to $\SETWA$ by construction, then by
(BF5) for $(\CATA,\SETWA)$ we conclude that also $m_{\CATA}$ belongs to $\SETWA$.\\

Now we apply again (\hyperref[EF2]{EF2}) to the morphism $\overline{e}_{\CATB}\circ\functor{F}_1
(\operatorname{u}_{\CATA}):\functor{F}_0(A'_{\CATA})\rightarrow\functor{F}_0(B_{\CATA})$. So
there are an object $C_{\CATA}$, a morphism $g_{\CATA}:C_{\CATA}\rightarrow B_{\CATA}$, a morphism
$\operatorname{z}_{\CATA}:C_{\CATA}\rightarrow A'_{\CATA}$ in $\SETWA$ and an invertible
$2$-morphism $\delta_{\CATB}:\functor{F}_1(g_{\CATA})\Rightarrow(\overline{e}_{\CATB}\circ
\functor{F}_1(\operatorname{u}_{\CATA}))\circ\functor{F}_1(\operatorname{z}_{\CATA})$. Then we
consider the following invertible $2$-morphism:

\begin{gather*}
\sigma_{\CATB}:=\Big(\psi^{\functor{F}}_{\operatorname{m}_{\CATA},\operatorname{z}_{\CATA}}
 \Big)^{-1}\odot\Big(\gamma_{\CATB}^{-1}\ast i_{\functor{F}_1(\operatorname{z}_{\CATA})}\Big)\odot
 \Big(\thetaa{\functor{F}_1(f_{\CATA})}{\overline{e}_{\CATB}}{\functor{F}_1(\operatorname{u}_{\CATA})}
 \ast i_{\functor{F}_1(\operatorname{z}_{\CATA})}\Big)\odot \\
\odot\,\thetaa{\functor{F}_1(f_{\CATA})}{\overline{e}_{\CATB}\circ\functor{F}_1
 (\operatorname{u}_{\CATA})}{\functor{F}_1(\operatorname{z}_{\CATA})}\odot\Big(i_{\functor{F}_1
 (f_{\CATA})}\ast\delta_{\CATB}\Big)\odot\psi^{\functor{F}}_{f_{\CATA},g_{\CATA}}: \\
\functor{F}_1(f_{\CATA}\circ g_{\CATA})\Longrightarrow
 \functor{F}_1(m_{\CATA}\circ\operatorname{z}_{\CATA}).
\end{gather*}

By (\hyperref[EF3]{EF3}), there is a unique $2$-morphism $\sigma_{\CATA}:f_{\CATA}\circ
g_{\CATA}\Rightarrow m_{\CATA}\circ\operatorname{z}_{\CATA}$, such that $\functor{F}_2(\sigma_{\CATA})
=\sigma_{\CATB}$. Moreover, again by (\hyperref[EF3]{EF3}) we get that
$\sigma_{\CATA}$ is invertible. Now both $m_{\CATA}$ and $\operatorname{z}_{\CATA}$ belong to
$\SETWA$, hence by (BF2)
for $(\CATA,\SETWA)$, so does their composition. Then by (BF5) applied to $\sigma_{\CATA}$, also
the morphism $f_{\CATA}\circ g_{\CATA}$ belongs to $\SETWA$.\\

Since $f_{\CATA}\circ g_{\CATA}$ belongs to $\SETWA$, then by hypothesis we get that $\functor{F}_1
(f_{\CATA}\circ g_{\CATA})$ is an internal equivalence of $\CATB$. Therefore we get easily
that also $\functor{F}_1(f_{\CATA})\circ\functor{F}_1(g_{\CATA})$ is an internal equivalence. Since
$\functor{F}_1(f_{\CATA})$ is an internal equivalence by hypothesis, then
by~\cite[Lemma~1.2]{T4} we conclude that $\functor{F}_1(g_{\CATA})$ is an internal equivalence. 
\end{proof}

\begin{prop}\label{prop-01}
Let us fix any pair $(\CATA,\SETWA)$ satisfying conditions \emphatic{(BF)},
any bicategory $\CATB$ and any pseudofunctor $\functor{F}:\CATA\rightarrow\CATB$, such that:

\begin{itemize}
 \item $\functor{F}$ sends each morphism in $\SETWA$ to an internal equivalence of $\CATB$;
 \item $\functor{F}$ satisfies conditions \emphatic{(\hyperref[EF1]{EF1})},
   \emphatic{(\hyperref[EF2]{EF2})} and \emphatic{(\hyperref[EF3]{EF3})}.
\end{itemize}

Then $\functor{F}$ satisfies conditions \emphatic{(\hyperref[B1]{B1})} --
\emphatic{(\hyperref[B5]{B5})}, hence by Corollary~\ref{cor-01} there is an equivalence
of bicategories $\CATA\left[\SETWAinv\right]\rightarrow\CATB$. In other terms,
\cite[Proposition~24]{Pr} lists sufficient \emphatic{(}\emph{but not necessary}\emphatic{)}
conditions for having an induced equivalence from $\CATA\left[\SETWAinv\right]$ to $\CATB$.
\end{prop}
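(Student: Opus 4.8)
The plan is to establish conditions (\hyperref[B1]{B1}) -- (\hyperref[B5]{B5}) for $\functor{F}$ one by one and then to invoke Corollary~\ref{cor-01}. Three of them follow almost formally from (\hyperref[EF1]{EF1}) and (\hyperref[EF3]{EF3}). For (\hyperref[B1]{B1}), given an object $A_{\CATB}$ I would apply (\hyperref[EF1]{EF1}) to produce an object $A_{\CATA}$ together with an isomorphism $t_{\CATB}:\functor{F}_0(A_{\CATA})\rightarrow A_{\CATB}$; since every isomorphism is an internal equivalence, $e_{\CATB}:=t_{\CATB}$ does the job. For (\hyperref[B4]{B4}), the uniqueness clause of (\hyperref[EF3]{EF3}) applied to the equality $\functor{F}_2(\gamma^1_{\CATA})=\functor{F}_2(\gamma^2_{\CATA})$ forces $\gamma^1_{\CATA}=\gamma^2_{\CATA}$, so the condition holds trivially with $\operatorname{z}_{\CATA}:=\id_{A_{\CATA}}$, which lies in $\SETWA$ by (BF1). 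For (\hyperref[B5]{B5}), the existence clause of (\hyperref[EF3]{EF3}) yields a (unique) $2$-morphism $\alpha_{\CATA}:f^1_{\CATA}\Rightarrow f^2_{\CATA}$ with $\functor{F}_2(\alpha_{\CATA})=\alpha_{\CATB}$; I would take $\operatorname{v}_{\CATA}:=\id_{A_{\CATA}}$ and $\alpha'_{\CATA}:=\pi^{-1}_{f^2_{\CATA}}\odot\alpha_{\CATA}\odot\pi_{f^1_{\CATA}}$, after which the required identity collapses, via the coherence axiom relating $\functor{F}_2(\pi_{\bullet})$, $\psi^{\functor{F}}_{\bullet,\id}$ and $\sigma^{\functor{F}}_{\bullet}$ together with the interchange law, to $\alpha_{\CATB}\ast i_{\functor{F}_1(\id_{A_{\CATA}})}$. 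This last step is a routine manipulation of unitors, entirely analogous to the computations already carried out in the proof of Proposition~\ref{prop-02}.

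For (\hyperref[B3]{B3}), given $f_{\CATB}:A_{\CATB}\rightarrow\functor{F}_0(B_{\CATA})$ I would first use (\hyperref[EF1]{EF1}) to obtain an isomorphism $t_{\CATB}:\functor{F}_0(A^0_{\CATA})\rightarrow A_{\CATB}$, and then apply (\hyperref[EF2]{EF2}) to the composite $f_{\CATB}\circ t_{\CATB}:\functor{F}_0(A^0_{\CATA})\rightarrow\functor{F}_0(B_{\CATA})$. This produces an object $A_{\CATA}$, a morphism $f_{\CATA}:A_{\CATA}\rightarrow B_{\CATA}$, a morphism $\operatorname{w}_{\CATA}:A_{\CATA}\rightarrow A^0_{\CATA}$ in $\SETWA$ and an invertible $2$-morphism $\alpha_{\CATB}:\functor{F}_1(f_{\CATA})\Rightarrow(f_{\CATB}\circ t_{\CATB})\circ\functor{F}_1(\operatorname{w}_{\CATA})$. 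Since $\operatorname{w}_{\CATA}$ lies in $\SETWA$, the morphism $\functor{F}_1(\operatorname{w}_{\CATA})$ is an internal equivalence by hypothesis; choosing a quasi-inverse $\overline{e}_{\CATB}$ and setting $e_{\CATB}:=\overline{e}_{\CATB}\circ t_{\CATB}^{-1}$ gives an internal equivalence $A_{\CATB}\rightarrow\functor{F}_0(A_{\CATA})$. Pasting $\alpha_{\CATB}$ with the associators, with the invertible $2$-morphism $\functor{F}_1(\operatorname{w}_{\CATA})\circ\overline{e}_{\CATB}\Rightarrow\id$ and with $t_{\CATB}\circ t_{\CATB}^{-1}\Rightarrow\id$ then yields the desired invertible $2$-morphism $f_{\CATB}\Rightarrow\functor{F}_1(f_{\CATA})\circ e_{\CATB}$.

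The real work is (\hyperref[B2]{B2}), and the crucial ingredient is Lemma~\ref{lem-13}. Given an internal equivalence $e_{\CATB}:\functor{F}_0(A^1_{\CATA})\rightarrow\functor{F}_0(A^2_{\CATA})$, I would apply (\hyperref[EF2]{EF2}) to it to obtain an object $A^3_{\CATA}$, a morphism $f_{\CATA}:A^3_{\CATA}\rightarrow A^2_{\CATA}$, a morphism $\operatorname{w}^1_{\CATA}:A^3_{\CATA}\rightarrow A^1_{\CATA}$ in $\SETWA$ and an invertible $\alpha_{\CATB}:\functor{F}_1(f_{\CATA})\Rightarrow e_{\CATB}\circ\functor{F}_1(\operatorname{w}^1_{\CATA})$, and I would set $\operatorname{w}^2_{\CATA}:=f_{\CATA}$. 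Since $e_{\CATB}$ and $\functor{F}_1(\operatorname{w}^1_{\CATA})$ are internal equivalences, so is $\functor{F}_1(f_{\CATA})$. The main obstacle is precisely that condition (\hyperref[B2]{B2}) demands $\operatorname{w}^2_{\CATA}\in\SETWAsat$, which is strictly more than knowing $\functor{F}_1(\operatorname{w}^2_{\CATA})$ to be an internal equivalence; this is where Lemma~\ref{lem-13} must be used \emph{twice}. A first application to $f_{\CATA}$ gives a morphism $g_{\CATA}:C_{\CATA}\rightarrow A^3_{\CATA}$ with $f_{\CATA}\circ g_{\CATA}\in\SETWA$ and $\functor{F}_1(g_{\CATA})$ an internal equivalence; a second application to $g_{\CATA}$ gives a morphism $h_{\CATA}:D_{\CATA}\rightarrow C_{\CATA}$ with $g_{\CATA}\circ h_{\CATA}\in\SETWA$. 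By the very definition of the right saturation $\SETWAsat$, the two facts $f_{\CATA}\circ g_{\CATA}\in\SETWA$ and $g_{\CATA}\circ h_{\CATA}\in\SETWA$ give $f_{\CATA}=\operatorname{w}^2_{\CATA}\in\SETWAsat$. Finally I would take $e'_{\CATB}$ to be a quasi-inverse of $\functor{F}_1(\operatorname{w}^1_{\CATA})$, so that $\delta^1_{\CATB}:\functor{F}_1(\operatorname{w}^1_{\CATA})\circ e'_{\CATB}\Rightarrow\id_{\functor{F}_0(A^1_{\CATA})}$ is available, and build $\delta^2_{\CATB}$ by pasting $\alpha_{\CATB}\ast i_{e'_{\CATB}}$ with an associator, with $i_{e_{\CATB}}\ast\delta^1_{\CATB}$ and with a unitor. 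Once (\hyperref[B1]{B1}) -- (\hyperref[B5]{B5}) are in place, Corollary~\ref{cor-01} yields the asserted equivalence of bicategories and hence the final claim of the statement.
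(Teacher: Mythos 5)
Your proposal is correct and takes essentially the same approach as the paper's own proof: (\hyperref[B1]{B1}) directly from (\hyperref[EF1]{EF1}), (\hyperref[B4]{B4}) and (\hyperref[B5]{B5}) from the uniqueness and existence clauses of (\hyperref[EF3]{EF3}) with $\id_{A_{\CATA}}$ as the morphism in $\SETWA$, (\hyperref[B3]{B3}) via (\hyperref[EF1]{EF1}) followed by (\hyperref[EF2]{EF2}) applied to $f_{\CATB}\circ t_{\CATB}$ and a quasi-inverse of $\functor{F}_1(\operatorname{w}_{\CATA})$, and (\hyperref[B2]{B2}) by applying (\hyperref[EF2]{EF2}) to $e_{\CATB}$ and then using Lemma~\ref{lem-13} twice to conclude $\operatorname{w}^2_{\CATA}\in\SETWAsat$ from the definition of right saturation. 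The paper's proof is identical in structure, merely leaving the (\hyperref[B4]{B4})/(\hyperref[B5]{B5}) details as a ``direct consequence of (\hyperref[EF3]{EF3})'' where you spell them out.
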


\begin{proof}
Clearly (\hyperref[EF1]{EF1}) implies (\hyperref[B1]{B1}).\\

Let us prove (\hyperref[B2]{B2}), so let us fix any pair of objects $A^1_{\CATA},A^2_{\CATA}$
and any internal equivalence $e_{\CATB}:\functor{F}_0(A^1_{\CATA})\rightarrow\functor{F}_0
(A^2_{\CATA})$. Then by (\hyperref[EF2]{EF2}) applied to $e_{\CATB}$, there are an object
$A^3_{\CATA}$, a morphism $\operatorname{w}^1_{\CATA}:A^3_{\CATA}\rightarrow A^1_{\CATA}$ in
$\SETWA$, a morphism $\operatorname{w}^2_{\CATA}:A^3_{\CATA}\rightarrow A^2_{\CATA}$ and an
invertible $2$-morphism
$\alpha_{\CATB}:\functor{F}_1(\operatorname{w}^2_{\CATA})\Rightarrow e_{\CATB}\circ\functor{F}_1
(\operatorname{w}^1_{\CATA})$. Since $\functor{F}$ sends each morphism of $\SETWA$ to an
internal equivalence, then $\functor{F}_1(\operatorname{w}^1_{\CATA})$ is an internal equivalence.
Then by~\cite[Lemmas~1.1 and~1.2]{T4} we conclude that also $\functor{F}_1
(\operatorname{w}^2_{\CATA})$ is an internal equivalence in $\CATB$.
Then by Lemma~\ref{lem-13} applied to
$\operatorname{w}^2_{\CATA}$, there are an object $A^4_{\CATA}$ and a morphism
$\operatorname{v}_{\CATA}:A^4_{\CATA}\rightarrow A^3_{\CATA}$, such that $\operatorname{w}^2_{\CATA}
\circ\operatorname{v}_{\CATA}$ belongs to $\SETWA$ and $\functor{F}_1(\operatorname{v}_{\CATA})$ is
an internal equivalence. Again by Lemma~\ref{lem-13} applied to $\operatorname{v}_{\CATA}$,
there are an object $A^5_{\CATA}$ and a morphism
$\operatorname{z}_{\CATA}:A^5_{\CATA}\rightarrow A^4_{\CATA}$, such that $\operatorname{v}_{\CATA}
\circ\operatorname{z}_{\CATA}$ belongs to $\SETWA$. Using the definition of right saturated, this
proves that $\operatorname{w}^2_{\CATA}$ belongs to $\SETWAsat$.\\

Since $\functor{F}_1(\operatorname{w}^1_{\CATA})$ is an internal equivalence, then there are an
internal equivalence
$e'_{\CATB}:\functor{F}_0(A^1_{\CATA})\rightarrow\functor{F}_0(A^3_{\CATA})$ and an invertible
$2$-morphism $\delta^1_{\CATB}:\functor{F}_1(\operatorname{w}^1_{\CATA})\circ e'_{\CATB}\Rightarrow
\id_{\functor{F}_0(A^1_{\CATA})}$. Then we define an invertible $2$-morphism as follows:

\[\delta^2_{\CATB}:=\Big(\alpha_{\CATB}^{-1}\ast i_{e'_{\CATB}}\Big)\odot\thetaa{e_{\CATB}}
{\functor{F}_1(\operatorname{w}^1_{\CATA})}{e'_{\CATB}}\odot\Big(i_{e_{\CATB}}\ast
(\delta^1_{\CATB})^{-1}\Big)\odot\pi^{-1}_{e_{\CATB}}:\,\,
e_{\CATB}\Longrightarrow\functor{F}_1(\operatorname{w}^2_{\CATA})\circ e'_{\CATB}.\]

This proves that (\hyperref[B2]{B2}) holds.\\

Now let us prove that condition (\hyperref[B3]{B3}) is satisfied, so let us fix any pair of objects
$B_{\CATA},A_{\CATB}$ and any morphism $f_{\CATB}:A_{\CATB}\rightarrow\functor{F}_0(B_{\CATA})$. By
(\hyperref[EF1]{EF1}) there are an object $A_{\CATA}$ and an isomorphism $\operatorname{t}_{\CATB}:
\functor{F}_0
(A_{\CATA})\rightarrow A_{\CATB}$. Let us apply (\hyperref[EF2]{EF2}) to the morphism $f_{\CATB}
\circ t_{\CATB}:\functor{F}_0(A_{\CATA})\rightarrow\functor{F}_0(B_{\CATA})$. Then there are an object
$A'_{\CATA}$, a morphism $f_{\CATA}:A'_{\CATA}\rightarrow B_{\CATA}$, a morphism
$\operatorname{w}_{\CATA}:A'_{\CATA}\rightarrow A_{\CATA}$ in $\SETWA$ and an invertible $2$-morphism
$\delta_{\CATB}:\functor{F}_1(f_{\CATA})\Rightarrow(f_{\CATB}\circ t_{\CATB})\circ\functor{F}_1
(\operatorname{w}_{\CATA})$. Since $\functor{F}$ sends each morphism of $\SETWA$ to an internal
equivalence, then there are an internal equivalence $\operatorname{w}_{\CATB}:\functor{F}_0(A_{\CATA})
\rightarrow\functor{F}_0(A'_{\CATA})$ and an invertible $2$-morphism $\xi_{\CATB}:
\id_{\functor{F}_0(A_{\CATA})}\Rightarrow\functor{F}_1(\operatorname{w}_{\CATA})\circ
\operatorname{w}_{\CATB}$. Then we define an internal equivalence

\[e_{\CATB}:=\operatorname{w}_{\CATB}\circ\,\operatorname{t}^{-1}_{\CATB}:\,A_{\CATB}\longrightarrow
\functor{F}_0(A'_{\CATA})\]
and an invertible $2$-morphism as follows

\begin{gather*}
\alpha_{\CATB}:=\thetab{\functor{F}_1(f_{\CATA})}{\operatorname{w}_{\CATB}}
 {\operatorname{t}^{-1}_{\CATB}}\odot
 \Big(\Big(\delta_{\CATB}^{-1}\ast i_{\operatorname{w}_{\CATB}}\Big)\ast
 i_{\operatorname{t}^{-1}_{\CATB}}\Big)\odot
 \Big(\thetaa{f_{\CATB}\circ t_{\CATB}}{\functor{F}_1(\operatorname{w}_{\CATA})}
 {\operatorname{w}_{\CATB}}\ast i_{\operatorname{t}^{-1}_{\CATB}}\Big)\odot \\
\odot\Big(\Big(i_{f_{\CATB}\circ t_{\CATB}}\ast\xi_{\CATB}\Big)\ast i_{\operatorname{t}^{-1}_{\CATB}}
 \Big)\odot\Big(\pi^{-1}_{f_{\CATB}\circ t_{\CATB}}\ast i_{\operatorname{t}^{-1}_{\CATB}}\Big)\odot
 \thetaa{f_{\CATB}}{t_{\CATB}}{\operatorname{t}^{-1}_{\CATB}}\odot\pi^{-1}_{f_{\CATB}}: \\
f_{\CATB}\Longrightarrow\functor{F}_1(f_{\CATA})\circ e_{\CATB}.
\end{gather*}

This proves that (\hyperref[B3]{B3}) holds. The proof that (\hyperref[B4]{B4}) and
(\hyperref[B5]{B5}) hold is a direct consequence of (\hyperref[EF3]{EF3}).
\end{proof}

\begin{rem}
The already mentioned~\cite[Proposition~24]{Pr} is applied in~\cite{Pr} only for what concerns the
\emph{sufficiency} of conditions (\hyperref[EF1]{EF1}) -- (\hyperref[EF3]{EF3}). Hence even if such
conditions are only sufficient but not necessary, Proposition~\ref{prop-01} shows that the error
in~\cite[Proposition~24]{Pr} does not affect the rest of the computations in~\cite{Pr}.
\end{rem}

\begin{rem}
Let us consider the special case when $\CATB:=\CATA\left[\SETWAinv\right]$ and $\functor{F}:=
\functor{U}_{\SETWA}$.
In this case, one can consider the pair $(\overline{\functor{G}},\overline{\kappa})$ given by
$\overline{\functor{G}}:=\id_{\CATA\left[\SETWAinv\right]}$ and $\overline{\kappa}:=
i_{\functor{U}_{\SETWA}}:
\functor{U}_{\SETWA}\Rightarrow\overline{\functor{G}}\circ\functor{U}_{\SETWA}$. This pair
satisfies Theorem~\ref{theo-03}(2) and $\overline{\functor{G}}$ is an equivalence of bicategories, so
Theorem~\ref{theo-04} tells us that $\functor{U}_{\SETWA}$ satisfies conditions (\hyperref[B]{B}).
As a check for the correctness of Theorem~\ref{theo-04}, we want to verify by hand that result.\\

In this case, clearly (\hyperref[B1]{B1}) holds since
$\functor{U}_{\SETWA}$ is a bijection on objects. Now let us prove (\hyperref[B2]{B2}), so let us
fix any pair of objects $A^1_{\CATA},A^2_{\CATA}$ and any internal equivalence $e_{\CATB}$ from
$A^1_{\CATA}$ to $A^2_{\CATA}$
in $\CATA\left[\SETWAinv\right]$. By~\cite[Corollary~2.7]{T4} for $(\CATA,\SETWA)$,
$e_{\CATB}$ is necessarily of the form

\[
\begin{tikzpicture}[xscale=2.4,yscale=-1.2]
    \node (A0_0) at (-0.2, 0) {$e_{\CATB}=\Big(A^1_{\CATA}$};
    \node (A0_1) at (1, 0) {$A^3_{\CATA}$};
    \node (A0_2) at (2, 0) {$A^2_{\CATA}\Big)$,};
    
    \path (A0_1) edge [->]node [auto,swap] {$\scriptstyle{\operatorname{w}^1_{\CATA}}$} (A0_0);
    \path (A0_1) edge [->]node [auto] {$\scriptstyle{\operatorname{w}^2_{\CATA}}$} (A0_2);
\end{tikzpicture}
\]
with $\operatorname{w}^1_{\CATA}$ in $\SETWA$ and $\operatorname{w}^2_{\CATA}$ in $\SETWAsat$. Then
we define an internal equivalence $e'_{\CATB}:A^1_{\CATA}\rightarrow A^3_{\CATA}$ in $\CATB:=\CATA
\left[\SETWAinv\right]$ as follows:

\[
\begin{tikzpicture}[xscale=2.4,yscale=-1.2]
    \node (A0_0) at (-0.2, 0) {$e'_{\CATB}:=\Big(A^1_{\CATA}$};
    \node (A0_1) at (1, 0) {$A^3_{\CATA}$};
    \node (A0_2) at (2, 0) {$A^3_{\CATA}\Big)$.};
    
    \path (A0_1) edge [->]node [auto,swap] {$\scriptstyle{\operatorname{w}^1_{\CATA}}$} (A0_0);
    \path (A0_1) edge [->]node [auto] {$\scriptstyle{\id_{A^3_{\CATA}}}$} (A0_2);
\end{tikzpicture}
\]

Moreover, we define a pair of invertible $2$-morphisms in $\CATB=\CATA\left[\SETWAinv\right]$:

\begin{gather*}
\delta^1_{\CATB}:=\Big[A^3_{\CATA},\id_{A^3_{\CATA}},\operatorname{w}^1_{\CATA},
 \upsilon_{\operatorname{w}^1_{\CATA}}^{-1}\odot\pi_{\operatorname{w}^1_{\CATA}}\odot
 \pi_{\operatorname{w}^1_{\CATA}\circ\id_{A^3_{\CATA}}},
 \upsilon_{\operatorname{w}^1_{\CATA}}^{-1}\odot\pi_{\operatorname{w}^1_{\CATA}}\odot
 \pi_{\operatorname{w}^1_{\CATA}\circ\id_{A^3_{\CATA}}}\Big]: \\
\functor{U}_{\SETWA,1}(\operatorname{w}^1_{\CATA})\circ e'_{\CATB}=
 \Big(A^3_{\CATA},\operatorname{w}^1_{\CATA}\circ\id_{A^3_{\CATA}},\operatorname{w}^1_{\CATA}
 \circ\id_{A^3_{\CATA}}\Big)
 \Longrightarrow\Big(A^1_{\CATA},\id_{A^1_{\CATA}},\id_{A^1_{\CATA}}
 \Big)=\id_{A^1_{\CATA}}
\end{gather*}
and

\begin{gather*}
\delta^2_{\CATB}:=\Big[A^3_{\CATA},\id_{A^3_\CATA},\id_{A^3_{\CATA}},
 \pi^{-1}_{\operatorname{w}^1_{\CATA}\circ\id_{A^3_{\CATA}}},
 \pi^{-1}_{\operatorname{w}^2_{\CATA}\circ\id_{A^3_{\CATA}}}\Big]: \\
\,e_{\CATB}\Longrightarrow\Big(A^3_{\CATA},
 \operatorname{w}^1_{\CATA}\circ\id_{A^3_{\CATA}},\operatorname{w}^2_{\CATA}\circ\id_{A^3_{\CATA}}
 \Big)=\functor{U}_{\SETW,1}(\operatorname{w}^2_{\CATA})\circ e'_{\CATB}.
\end{gather*}

This suffices to prove that
(\hyperref[B2]{B2}) holds for $\functor{U}_{\SETWA}$.
Condition (\hyperref[B3]{B3}) is a direct condition of the definition of
morphisms in a bicategory of fractions.\\

In order to prove (\hyperref[B4]{B4}), it suffices to use~\cite[Proposition~0.7]{T3}
in the special case when
$(\CATC,\SETW):=(\CATA,\SETWA)$, $A^1=A^2=:A_{\CATA}$, $\operatorname{w}^1=\operatorname{w}^2:=
\id_{A_{\CATA}}$, $\alpha:=i_{\id_{A_{\CATA}}\circ\id_{A_{\CATA}}}$ and $\gamma^m:=
\gamma^m_{\CATA}\ast i_{\id_{A_{\CATA}}}$, so that $\Gamma^m=
\functor{U}_{\SETWA,2}(\gamma^m_{\CATA})$ for $m=1,2$.\\

Lastly, let us prove (\hyperref[B5]{B5}), so let us fix any pair of objects $A_{\CATA},B_{\CATA}$,
any pair of morphisms $f^1_{\CATA},f^2_{\CATA}:A_{\CATA}\rightarrow B_{\CATA}$ and any $2$-morphism
$\Gamma_{\CATA}$ from $(A_{\CATA},\id_{A_{\CATA}},f^1_{\CATA})=\functor{U}_{\SETWA,1}(f^1_{\CATA})$
to $(A_{\CATA},\id_{A_{\CATA}},
f^2_{\CATA})=\functor{U}_{\SETWA,1}(f^2_{\CATA})$ in $\CATA\left[\SETWAinv\right]$.
By~\cite[Lemma~6.1]{T4} applied to $\alpha:=i_{\id_{A_{\CATA}}\circ\id_{A_{\CATA}}}$, there are an
object $A'_{\CATA}$ and a morphism $\operatorname{v}_{\CATA}:A'_{\CATA}\rightarrow A_{\CATA}$ in
$\SETWA$ and a $2$-morphism $\gamma_{\CATA}:f^1_{\CATA}\circ\operatorname{v}_{\CATA}
\Rightarrow f^2_{\CATA}\circ\operatorname{v}_{\CATA}$, such that

\[\Gamma_{\CATA}=\Big[A'_{\CATA},\operatorname{v}_{\CATA},\operatorname{v}_{\CATA},
i_{\id_{A_{\CATA}}\circ\operatorname{v}_{\CATA}},\gamma_{\CATA}\Big].\]

Hence, using~\cite[pagg.~259--261]{Pr} and the description of $\functor{U}_{\SETWA}$ we have that

\[\Gamma_{\CATA}\ast i_{\functor{U}_{\SETWA,1}(\operatorname{v}_{\CATA})}=\Gamma_{\CATA}\ast
i_{\left(A'_{\CATA},\id_{A'_{\CATA}},\operatorname{v}_{\CATA}\right)}\]
is represented by the following diagram

\[
\begin{tikzpicture}[xscale=2.6,yscale=-0.8]
    \node (A0_2) at (2, 0) {$A'_{\CATA}$};
    \node (A2_0) at (0, 2) {$A'_{\CATA}$};
    \node (A2_2) at (2, 2) {$A'_{\CATA}$};
    \node (A2_4) at (4, 2) {$B_{\CATA}$.};
    \node (A4_2) at (2, 4) {$A'_{\CATA}$};

    \node (A2_1) at (1.15, 2) {$\Downarrow\,i_{(\id_{A'_{\CATA}}
      \circ\id_{A'_{\CATA}})\circ\id_{A'_{\CATA}}}$};
    \node (A2_3) at (2.85, 2) {$\Downarrow\,\gamma_{\CATA}\ast i_{\id_{A'_{\CATA}}}$};
    
    \path (A0_2) edge [->]node [auto,swap] {$\scriptstyle{\id_{A'_{\CATA}}
       \circ\id_{A'_{\CATA}}}$} (A2_0);
    \path (A0_2) edge [->]node [auto] {$\scriptstyle{f^1_{\CATA}
      \circ\operatorname{v}_{\CATA}}$} (A2_4);
    \path (A4_2) edge [->]node [auto,swap] {$\scriptstyle{f^2_{\CATA}
      \circ\operatorname{v}_{\CATA}}$} (A2_4);
    \path (A2_2) edge [->]node [auto,swap] {$\scriptstyle{\id_{A'_{\CATA}}}$} (A0_2);
    \path (A2_2) edge [->]node [auto] {$\scriptstyle{\id_{A'_{\CATA}}}$} (A4_2);
    \path (A4_2) edge [->]node [auto] {$\scriptstyle{\id_{A'_{\CATA}}\circ\id_{A'_{\CATA}}}$} (A2_0);
\end{tikzpicture}
\]

Moreover, for each $m=1,2$, a direct check proves that the associator
$\psi_{f^m_{\CATA},\operatorname{v}_{\CATA}}^{\functor{U}_{\SETWA}}$ for $\functor{U}_{\SETWA}$
is represented by the following diagram:

\[
\begin{tikzpicture}[xscale=2.6,yscale=-0.8]
    \node (A0_2) at (2, 0) {$A'_{\CATA}$};
    \node (A2_0) at (0, 2) {$A'_{\CATA}$};
    \node (A2_2) at (2, 2) {$A'_{\CATA}$};
    \node (A2_4) at (4, 2) {$B_{\CATA}$.};
    \node (A4_2) at (2, 4) {$A'_{\CATA}$};

    \node (A2_3) at (2.7, 2) {$\Downarrow\,i_{(f^m_{\CATA}\circ
      \operatorname{v}_{\CATA})\circ\id_{A'_{\CATA}}}$};
    \node (A2_1) at (1.2, 2) {$\Downarrow\,\pi^{-1}_{\id_{A'_{\CATA}}\circ\id_{A'_{\CATA}}}$};
    
    \path (A0_2) edge [->]node [auto,swap] {$\scriptstyle{\id_{A'_{\CATA}}}$} (A2_0);
    \path (A4_2) edge [->]node [auto] {$\scriptstyle{\id_{A'_{\CATA}}\circ\id_{A'_{\CATA}}}$} (A2_0);
    \path (A4_2) edge [->]node [auto,swap] {$\scriptstyle{f^m_{\CATA}
      \circ\operatorname{v}_{\CATA}}$} (A2_4);
    \path (A2_2) edge [->]node [auto,swap] {$\scriptstyle{\id_{A'_{\CATA}}}$} (A0_2);
    \path (A2_2) edge [->]node [auto] {$\scriptstyle{\id_{A'_{\CATA}}}$} (A4_2);
    \path (A0_2) edge [->]node [auto] {$\scriptstyle{f^m_{\CATA}
      \circ\operatorname{v}_{\CATA}}$} (A2_4);
\end{tikzpicture}
\]

Therefore, it is easy to see that the composition

\begin{equation}\label{eq-06}
\psi_{f^2_{\CATA},\operatorname{v}_{\CATA}}^{\functor{U}_{\SETWA}}\odot
\Gamma_{\CATA}\ast i_{\functor{U}_{\SETWA,1}(\operatorname{v}_{\CATA})}\odot
\Big(\psi_{f^1_{\CATA},\operatorname{v}_{\CATA}}^{\functor{U}_{\SETWA}}\Big)^{-1}
\end{equation}

is represented by the diagram

\[
\begin{tikzpicture}[xscale=2.6,yscale=-0.8]
    \node (A0_2) at (2, 0) {$A'_{\CATA}$};
    \node (A2_0) at (0, 2) {$A'_{\CATA}$};
    \node (A2_2) at (2, 2) {$A'_{\CATA}$};
    \node (A2_4) at (4, 2) {$B_{\CATA}$,};
    \node (A4_2) at (2, 4) {$A'_{\CATA}$};

    \node (A2_1) at (1.15, 2) {$\Downarrow\,i_{\id_{A'_{\CATA}}
      \circ\id_{A'_{\CATA}}}$};
    \node (A2_3) at (2.85, 2) {$\Downarrow\,\gamma_{\CATA}\ast i_{\id_{A'_{\CATA}}}$};
    
    \path (A0_2) edge [->]node [auto,swap] {$\scriptstyle{\id_{A'_{\CATA}}}$} (A2_0);
    \path (A0_2) edge [->]node [auto] {$\scriptstyle{f^1_{\CATA}
      \circ\operatorname{v}_{\CATA}}$} (A2_4);
    \path (A4_2) edge [->]node [auto,swap] {$\scriptstyle{f^2_{\CATA}
      \circ\operatorname{v}_{\CATA}}$} (A2_4);
    \path (A2_2) edge [->]node [auto,swap] {$\scriptstyle{\id_{A'_{\CATA}}}$} (A0_2);
    \path (A2_2) edge [->]node [auto] {$\scriptstyle{\id_{A'_{\CATA}}}$} (A4_2);
    \path (A4_2) edge [->]node [auto] {$\scriptstyle{\id_{A'_{\CATA}}}$} (A2_0);
\end{tikzpicture}
\]
i.e.\ \eqref{eq-06} coincides with $\functor{U}_{\SETWA,2}(\gamma_{\CATA})$ (see~\cite[\S~2.4]{Pr}),
so (\hyperref[B5]{B5}) holds for $\functor{U}_{\SETWA}$.
\end{rem}



\begin{thebibliography}{90}
\bibitem[B]{B} Jean B\'enabou, \emph{Introduction to bicategories}, Reports of the Midwest Category
 Seminar, L.N.M. (47), Springer-Verlag, 1967;

\bibitem[GZ]{GZ} Pierre Gabriel, Michel Zisman, \emph{Calculus of Fractions and Homotopy Theory},
 Springer-Verlag, 1967;

 
\bibitem[Pr]{Pr} Dorette A. Pronk, \emph{\'Etendues and stacks as bicategories of fractions},
 Compositio Mathematica 102, 243--303 (1996), available at
 \url{http://www.numdam.org/item?id=CM_1996__102_3_243_0};

\bibitem[PP]{PP} Simona Paoli, Dorette A. Pronk, \emph{The Weakly Globular Double Category of
 Fractions of a Category} (2014),
 \href{http://arxiv.org/abs/1406.4791v1}{arXiv: math.CT 1406.4791v1};

\bibitem[PW]{PW} Dorette A. Pronk, Michael A. Warren, \emph{Bicategorical fibration structures
 and stacks} (2013), \href{http://arxiv.org/abs/1303.0340v1}{arXiv: math.CT 1303.0340v1};

\bibitem[R1]{R1} David Michael Roberts, \emph{Fundamental bigroupoids and $2$-covering spaces},
 Ph.D. thesis, University of Adelaide (2009);

\bibitem[R2]{R2} David Michael Roberts, \emph{Internal categories, anafunctors and localizations}
 (2012), \href{http://arxiv.org/abs/1101.2363v3}{arXiv: math.CT 1101.2363v3};

\bibitem[St]{St} Ross Street, \emph{Fibrations in bicategories}, Cahier de topologie et
 g\'eom\'etrie diff\'erentielle cat\'egoriques, 21 (2), 111--160 (1980), available at
 \url{http://archive.numdam.org/article/CTGDC_1980__21_2_111_0.pdf};

\bibitem[T1]{T3} Matteo Tommasini, \emph{Some insights on bicategories of fractions - I} (2014),
\href{http://arxiv.org/abs/1410.3990v2}{arXiv: math.CT 1410.3990v2};
 
\bibitem[T2]{T4} Matteo Tommasini, \emph{Some insights on bicategories of fractions - II} (2014),
\href{http://arxiv.org/abs/1410.5075v2}{arXiv: math.CT 1410.5075v2};
 
\bibitem[T3]{T7} Matteo Tommasini, \emph{A bicategory of reduced orbifolds from the point of view of
 differential geometry - I}, (2014) \href{http://arxiv.org/abs/1304.6959v2}{arXiv: math.CT 1304.6959v2}.
\end{thebibliography}
\end{document}